\newcommand{\eps}{\varepsilon }
\renewcommand{\epsilon}{\varepsilon }
\newcommand{\dist}{{\rm{dist}}}
\renewcommand{\le}         {\leqslant}
\renewcommand{\leq}         {\leqslant}
\renewcommand{\ge}         {\geqslant}
\renewcommand{\geq}         {\geqslant}
\newtheorem{lem}{Lemma}[section]
\newtheorem{pro}[lem]{Proposition}
\newtheorem{thm}[lem]{Theorem}
\newtheorem{cor}[lem]{Corollary}
\newcommand{\R}{\mathbb{R}}
\newcommand{\N}{\mathbb{N}}
\newcommand{\ve}{\varepsilon}
\numberwithin{equation}{section}
\author[J. D{\'a}vila]{Juan D{\'a}vila}
\author[M. del Pino]{Manuel del Pino}
\author[S. Dipierro]{Serena Dipierro}
\author[E. Valdinoci]{Enrico Valdinoci}
\address[Juan D{\'a}vila and Manuel del Pino]{Departamento de Ingenier\'ia
Matem\'atica and Centro de Modelamiento Matem\'atico,
Universidad de Chile,
Casilla 170 Correo 3,
8370459 Santiago,
Chile}
\email{jdavila@dim.uchile.cl, delpino@dim.uchile.cl}
\address[Serena Dipierro]{School of Mathematics, University of Edinburgh,
King's Buildings, Mayfield Road, Edinburgh EH9 3JZ, Scotland, UK
}
\email{serena.dipierro@ed.ac.uk}
\address[Enrico Valdinoci]{Weierstra{\ss} Institut f\"ur Angewandte Analysis und Stochastik, Hausvogteiplatz 11A, 10117 Berlin, Germany}
\email{enrico.valdinoci@wias-berlin.de}
\begin{document}

\title[Concentration phenomena for nonlocal equations]{Concentration phenomena
\\ for the nonlocal Schr\"odinger equation \\ with Dirichlet datum}

\begin{abstract}
For a smooth, bounded domain $\Omega$, $s\in(0,1)$,
$p\in \left(1,\frac{n+2s}{n-2s}\right)$
 we
consider the nonlocal equation
$$ \epsilon^{2s} (-\Delta)^s u+u=u^p \quad {\mbox{in }}\Omega $$
with zero Dirichlet datum and a small parameter $\ve>0$.
We construct a family of solutions that concentrate as $\ve \to 0$ at 
an interior point of the domain in the form of a scaling of the ground state in entire space. 
Unlike the classical case $s=1$, the leading order of the associated 
reduced energy functional in a variational reduction procedure 
is of polynomial instead of exponential order on 
the distance from the boundary, due to the nonlocal effect. Delicate analysis is needed to overcome the lack of localization, in particular establishing the rather unexpected asymptotics for the Green function 
of $ \epsilon^{2s} (-\Delta)^s +1$ in the expanding domain $\ve^{-1}\Omega$ with zero exterior 
datum.

\end{abstract}

\maketitle

\section{Introduction}

Given $s\in(0,1)$, $n\in \N$ with $n>2s$, $p\in \left(1,\frac{n+2s}{n-2s}\right)$ and a bounded smooth domain $\Omega\subset\R^n$,
we consider the fractional Laplacian problem
\begin{equation}\label{EQ}
\left\{
\begin{matrix}
\eps^{2s} (-\Delta)^s U+U=U^p & {\mbox{ in $\Omega$,}}\\
U=0 & {\mbox{ in $\R^n\setminus\Omega$,}}
\end{matrix}
\right.
\end{equation}
where $\eps>0$ is a small parameter.

As usual, the operator~$(-\Delta)^s$ is the fractional
Laplacian defined at any point~$x\in\R^n$ as
$$ (-\Delta)^s U(x):= c(n,s)\,\int_{\R^n}
\frac{2U(x)-U(x+y)-U(x-y)}{|y|^{n+2s}}\,dy,$$
for a suitable positive normalizing constant~$c(n,s)$.
We refer to~\cites{LAND, Sthesis, guide}
for an introduction to the fractional Laplacian operator.

\medskip
We provide in the appendix
a  heuristic physical motivation
of the problem considered and of the relevance of our results
in the light of a nonlocal quantum mechanics theory.

\medskip
The goal of this paper
is to construct solutions of problem \eqref{EQ}
that concentrate at interior points of the domain for sufficiently small
values of  $\eps$. More precisely, we shall establish the existence of a solution $U_\ve$ that at main order looks like  \begin{equation}\label{forma} U_\ve(x) \approx  w\left( \frac{x- \tilde\xi_\ve} {\ve}\right ).\end{equation} Here $\tilde\xi_\ve$ is a point lying at a uniformly positive distance from the boundary $\partial \Omega$, and
 $w$ designates the unique radial positive {\em least energy solution}
of the problem
\begin{equation}\label{EQ w}
(-\Delta)^s w+w=w^p, \, \quad w\in {\mbox{$H^s(\R^n)$.}}
\end{equation}
See for instance~\cite{FQT} for
the existence of such a solution and its basic properties. See \cites{AT,FL,fall} for the (delicate) proof of uniqueness in special situations and \cite{FLS} for the general case.
The solution $w$ is smooth and has the asymptotic behavior
\begin{equation}\label{decay w}
\alpha |x|^{-(n+2s)}\leq w(x)\le \beta |x|^{-(n+2s)} \quad \text{ for }
|x|\ge1,
\end{equation}
for some positive constants $\alpha,\beta$, see
Theorem 1.5 of \cite{FQT}, and the lower bound in formula~(IV.6) of~\cite{CMSJ}.

\medskip

Our main result is the following.

\begin{thm}\label{TH}
If $\eps$ is sufficiently small, there exist a point $\tilde\xi_\eps\in\Omega$
and a solution $U_\eps$ of
problem \eqref{EQ} such that
\begin{equation}\label{vicina}
\left| U_\eps(x)-w\left( \frac{x-\tilde\xi_\eps}{\eps}\right)\right|\le C \eps^{n+2s},
\end{equation}
and $\dist(\tilde\xi_\eps,\partial\Omega)\ge c$. 
Here, $c$ and $C$ are positive constants independent of $\ve$ and $\Omega$.
\smallskip

Besides, the point $\xi_\ve:=\tilde\xi_\eps/\eps$ is such that
\begin{equation}\label{def}
{\mathcal{H}}_\eps(\xi_\ve) =  \min_{\xi\in \Omega_\ve}{\mathcal{H}}_\eps(\xi) + O(\ve^{n+4s})
\end{equation}
for the functional ${\mathcal{H}}_\eps(\xi)$ defined in \eqref{def H cal} below, 
where
\begin{equation}\label{om eps}
\Omega_\eps:=\frac\Omega\eps=\left\{ \frac{x}{\eps},\ x\in\Omega\right\}.
\end{equation}
\end{thm}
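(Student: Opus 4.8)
The plan is to run a Lyapunov--Schmidt finite-dimensional reduction in the expanded variables. First I would rescale, setting $u(y):=U(\eps y)$, so that \eqref{EQ} becomes $(-\Delta)^s u+u=u^p$ in $\Omega_\eps$ with $u\equiv 0$ in $\R^n\setminus\Omega_\eps$, and a solution of \eqref{EQ} of the form \eqref{forma} corresponds to a solution of this problem close to a translate $w(\cdot-\xi)$ with $\xi$ deep inside $\Omega_\eps$, i.e. $\dist(\xi,\partial\Omega_\eps)\ge c/\eps$. Since $w(\cdot-\xi)$ violates the exterior Dirichlet condition --- it is positive and decays only polynomially, by \eqref{decay w} --- I would introduce the projected ansatz $PW_\xi:=w(\cdot-\xi)-\varphi_\xi$, where $\varphi_\xi$ solves $(-\Delta)^s\varphi_\xi+\varphi_\xi=0$ in $\Omega_\eps$ with exterior datum $w(\cdot-\xi)$, so that $(-\Delta)^s PW_\xi+PW_\xi=w(\cdot-\xi)^p$ in $\Omega_\eps$ and $PW_\xi=0$ outside. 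The corrector $\varphi_\xi$ is governed by the Green function of $\eps^{2s}(-\Delta)^s+1$ on the expanding domain, and the heart of the matter --- flagged already in the abstract --- is to pin down its precise asymptotics: unlike the exponentially small correction of the classical case $s=1$, here $\varphi_\xi$ is of polynomial size in $\dist(\xi,\partial\Omega_\eps)^{-1}$, hence of order $\eps^{n+2s}$ after undoing the scaling, and this order is exactly what is encoded by the functional $\mathcal{H}_\eps$ of \eqref{def H cal}.

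Next I would write $u=PW_\xi+\phi$ and solve the projected problem: find $\phi$ in the $L^2(\Omega_\eps)$-orthogonal complement of $\mathrm{span}\{\partial_{y_j}PW_\xi\}_{j=1}^n$ together with scalars $c_1,\dots,c_n$ satisfying $(-\Delta)^s\phi+\phi-p(PW_\xi)^{p-1}\phi=E_\xi+N_\xi(\phi)+\sum_j c_j\,\partial_{y_j}PW_\xi$, where $E_\xi:=w(\cdot-\xi)^p-(PW_\xi)^p$ is the error of the ansatz and $N_\xi$ collects the superquadratic remainder. The linear theory rests on the nondegeneracy of $w$, whose linearized operator $\phi\mapsto(-\Delta)^s\phi+\phi-pw^{p-1}\phi$ has kernel exactly $\mathrm{span}\{\partial_{y_j}w\}$ by the uniqueness results of \cite{FLS}; from this, a standard compactness--contradiction argument gives invertibility of the projected linearized operator, uniformly in $\xi$, in weighted $L^\infty$ (or energy) norms matching the $|y|^{-(n+2s)}$ decay of $w$. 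Since $\|E_\xi\|=O(\eps^{n+2s})$ by the estimate on $\varphi_\xi$, the contraction mapping principle then yields a unique small solution $\phi=\phi_\xi$ with $\|\phi_\xi\|=O(\eps^{n+2s})$, depending in a $C^1$ fashion on $\xi$.

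It then remains to adjust $\xi$ so that all the $c_j$ vanish. For this I would pass to the energy $J_\eps(u)=\frac12\int_{\R^n}\bigl(|(-\Delta)^{s/2}u|^2+u^2\bigr)-\frac1{p+1}\int_{\Omega_\eps}u_+^{p+1}$ and study the reduced functional $\mathcal{F}_\eps(\xi):=J_\eps(PW_\xi+\phi_\xi)$; by the variational structure of the reduction, critical points of $\mathcal{F}_\eps$ correspond exactly to genuine solutions of the rescaled problem. Expanding, one gets $\mathcal{F}_\eps(\xi)=J_{\R^n}(w)+a\,\mathcal{H}_\eps(\xi)+O(\eps^{n+4s})$ for a fixed constant $a\ne 0$, the $\mathcal{H}_\eps$-term arising precisely from the interaction integral $\int w(\cdot-\xi)^p\,\varphi_\xi$ computed via the Green-function asymptotics, and the remainder controlled using $\|\phi_\xi\|=O(\eps^{n+2s})$. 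Since $\mathcal{H}_\eps$ attains its minimum over $\Omega_\eps$ at a point at distance bounded below from $\partial\Omega_\eps$, the expansion forces $\mathcal{F}_\eps$ to have a minimum at some $\xi_\eps$ nearby satisfying \eqref{def}; setting $\tilde\xi_\eps:=\eps\xi_\eps$ gives $\dist(\tilde\xi_\eps,\partial\Omega)\ge c$, and \eqref{vicina} follows from $\|u_{\xi_\eps}-w(\cdot-\xi_\eps)\|\le\|\varphi_{\xi_\eps}\|+\|\phi_{\xi_\eps}\|=O(\eps^{n+2s})$ after upgrading to the pointwise bound by elliptic estimates. The main obstacle is unquestionably the Green-function step: controlling $\varphi_\xi$ --- and hence $\mathcal{H}_\eps$ and the error $E_\xi$ --- in the non-localized, polynomially decaying regime, where the familiar exponential dichotomy of the classical Schr\"odinger equation fails.
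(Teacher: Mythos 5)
Your proposal mirrors the paper's strategy closely: the profile $PW_\xi$ is precisely the paper's $\bar u_\xi$ (Section~\ref{S3:u}), the Lyapunov--Schmidt reduction in weighted $L^\infty$ norms resting on the nondegeneracy result of \cite{FLS} is Section~\ref{S:a3}, and the identification of $\xi$ through a minimum of the reduced energy driven by the Robin functional $\mathcal H_\eps$ is Sections~\ref{S:energy} and~\ref{end:p}.

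There is, however, one step stated too weakly to close: you write the reduced-energy expansion as $\mathcal F_\eps(\xi)=J_{\R^n}(w)+a\,\mathcal H_\eps(\xi)+O(\eps^{n+4s})$, and then claim this \emph{forces} $\mathcal F_\eps$ to have an interior minimum near the minimizer of $\mathcal H_\eps$. But by Proposition~\ref{P 3.8} the entire drop of $\mathcal H_\eps$ between $\partial\Omega_{\eps,\delta}$ and its interior minimum is itself of order $\eps^{n+4s}$, so an $O(\eps^{n+4s})$ remainder with an unquantified constant (which in general depends on $\delta$) cannot pin the minimizer of $\mathcal F_\eps$ in the interior. The paper establishes the strictly sharper remainder $o(\eps^{n+4s})$ (Theorems~\ref{TH expansion} and~\ref{th:exp}), and this is exactly what makes the comparison in Section~\ref{end:p} work: one first fixes $\delta$ small enough that the boundary value of $\mathcal H_\eps$ exceeds its interior minimum by a definite multiple of $\eps^{n+4s}$, and only then takes $\eps$ small so the $o(\eps^{n+4s})$ term is negligible against this gap. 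Without upgrading your $O$ to $o$ the localization step, and hence \eqref{def} and the bound $\dist(\tilde\xi_\eps,\partial\Omega)\ge c$, does not follow. A separate, minor remark: the paper projects orthogonally to $\mathrm{span}\{Z_j\}=\mathrm{span}\{\partial_{x_j}w_\xi\}$ rather than $\mathrm{span}\{\partial_{y_j}PW_\xi\}$; the two differ by a term controlled through Lemma~\ref{:decay Z}, and the former is the more convenient basis since the $Z_j$ are smooth, globally defined and independent of $\Omega_\eps$.
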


The basic idea of the proof, which also leads to the characterization \eqref{def} 
of the location of the point $\tilde \xi_\ve$ goes as follows. 
Letting $u(x):=U(\eps x)$, problem \eqref{EQ} becomes
\begin{equation}\label{EQ eps}
\left\{
\begin{matrix}
(-\Delta)^s u+u=u^p & {\mbox{ in $\Omega_\eps$,}}\\
u=0 & {\mbox{ in $\R^n\setminus\Omega_\eps$,}}
\end{matrix}
\right.
\end{equation}
where $\Omega_\eps$ is defined in \eqref{om eps}. 

For a given $\xi\in\Omega_\eps$,
a first approximation $\bar u_\xi$ for the solution of problem \eqref{EQ eps} consistent with the desired form \eqref{forma} and the Dirichlet exterior condition, can be taken as the solution of the linear problem
\begin{equation}\label{EQ bar u}
\left\{
\begin{matrix}
(-\Delta)^s \bar u_\xi+\bar u_\xi=w^p_\xi & {\mbox{ in $\Omega_\eps$,}}\\
\bar u_\xi=0 & {\mbox{ in $\R^n\setminus\Omega_\eps$,}}
\end{matrix}
\right.
\end{equation}
where
$$
w_\xi(x):=w(x-\xi).
$$
The actual solution will be obtained as a small perturbation from~$\bar u_\xi$ for a suitable point $\xi= \xi_\ve$.
Problem \eqref{EQ eps} is variational. It corresponds to the Euler-Lagrange equation for the functional
\begin{equation}\label{energy functional}
I_{\eps}(u)=\frac{1}{2}\int_{\Omega_\eps}\left((-\Delta)^su(x)\, u(x)+u^2(x)\right) dx-\frac{1}{p+1}\int_{\Omega_\eps}u^{p+1}(x)\, dx, \qquad u\in H^s_0(\Omega_\epsilon),
\end{equation}
where
$$ H^s_0(\Omega_\epsilon)=\left\lbrace u\in H^s(\R^n) 
{\mbox{ s.t. }}
u=0 {\mbox{ a.e. in }} \R^n\setminus\Omega_\eps\right\rbrace. $$
Since the solution we look for should be close to $\bar u_\xi$ for $\xi=\xi_\ve$,
the functional $\xi \mapsto I_\ve (\bar u_\xi)$ should have a critical point near the 
$\xi =\xi_\ve$. We shall next argue 
that this functional actually has a global minimizer 
located at distance $\sim \frac 1\ve $ from $\partial \Omega_\ve$.

\medskip
The expansion of $I_\eps(\bar u_\xi)$ involves the regular part of the Green function for the operator $(-\Delta)^s+1$ in $\Omega_\eps$, which we define next.
In $\R^n$ the operator $ (-\Delta)^s+1 $  has unique decaying  fundamental solution $\Gamma$ which solves
\begin{equation}\label{EQ Gamma}
(-\Delta)^s \Gamma+\Gamma=\delta_0.\end{equation}
The function $\Gamma$ is radially symmetric, positive and satisfies
\begin{equation}\label{GA}
\frac{\alpha}{|x|^{n+2s}}\le\Gamma(x)\le \frac{\beta}{|x|^{n+2s}}
\end{equation}
for  $|x|\ge1$ and $\alpha,\beta>0$ see for instance
Lemma C.1 in \cite{FLS}.

The Green function $G_\eps$ for $(-\Delta)^s+1$ in $\Omega_\eps$ solves
\begin{equation}\label{EQ G}
\left\{
\begin{matrix}
(-\Delta)^s G_\eps(x,y) +G_\eps(x,y)=\delta_y & {\mbox{ if $x\in\Omega_\eps$,}}\\
G_\eps(x,y)=0 & {\mbox{ if $x\in\R^n\setminus\Omega_\eps$.}}
\end{matrix}
\right.
\end{equation}
In other words
\begin{equation}\label{EQ Rob}
G_\eps(x,y):=\Gamma(x-y)-H_\eps(x,y)
\end{equation}
where $H_\ve(x,y)$, the regular part, satisfies, for fixed $y\in\R^n$,
\begin{equation}\label{EQ H}
\left\{
\begin{matrix}
(-\Delta)^s H_\eps(x,y)+H_\eps(x,y)=0 & {\mbox{ if $x\in\Omega_\eps$,}}\\
H_\eps(x,y)=\Gamma(x-y) & {\mbox{ if $x\in\R^n\setminus\Omega_\eps$.}}
\end{matrix}
\right.
\end{equation}

We will show in Theorem~\ref{TH expansion} that 
for $\dist(\xi,\partial\Omega_{\eps})\ge\delta/\eps$, with~$\delta>0$ fixed
and appropriately small, we have that
\begin{equation}\label{estIintro}
I_{\eps}(\bar u_\xi)= I_0+\frac{1}{2}{\mathcal{H}}_\eps(\xi)+o(\eps^{n+4s}),
\end{equation}
where $I_0$ is the energy of $w$ computed in $\R^n$, and
${\mathcal{H}}_\eps(\xi)$ is given by
\begin{equation}\label{def H cal}
{\mathcal{H}}_\eps(\xi):=\int_{\Omega_\eps}\int_{\Omega_\eps} H_\eps(x,y)
\,w_\xi^p(x)\,w_\xi^p(y)\,dx\,dy .
\end{equation}
We will show that $\mathcal H_\eps$ satisfies
\begin{align}
\label{Heps}
\frac{\alpha}{\dist(\xi,\partial\Omega_\eps)^{n+4s}}\leq{\mathcal{H}}_\eps(\xi)\le \frac{\beta}{\dist(\xi,\partial\Omega_\eps)^{n+4s}},
\end{align}
where $\alpha,\beta>0$, for all points $\xi \in \Omega_\eps$ such that
$\dist(\xi,\partial\Omega_\eps)\in [5,\,\bar\delta/\eps]$, for $\bar\delta>0$ fixed, suitably small, and~$\eps\ll\bar\delta$.
 
{F}rom \eqref{Heps} and estimate \eqref{estIintro}, 
we deduce the existence of a global minimizer $\xi_\ve$ for the functional
$I_\ve(\bar u_\xi)$ for all small $\eps>0$, 
which is located at distance $\sim \frac 1\ve$ from $\partial \Omega_\ve$.
The actual proof {\em reduces} the problem of finding a solution close to 
$w_\xi$ via a Lyapunov-Schmidt procedure, 
to that of finding a critical point $\xi_\ve$ of a 
functional with a similar expansion to \eqref{estIintro}, 
as we will see in Section~\ref{S:a3}.


\bigskip
In the classical case (i.e. when~$s=1$ and the operator
boils down to the classical Laplacian), there is a broad
literature on concentration phenomena: we recall here
the seminal papers~\cites{Ni, Li} and we refer to~\cite{AM}
for detailed discussions and more precise references.
In particular, we recall that~\cites{Ni, Li,DF,DFW} construct
solutions of the classical Dirichlet problem that concentrate
at points which maximize the distance from the boundary:
in this sense, Theorem~\ref{TH} may be seen as the
nonlocal counterpart of these results. In our case,
the determination of the concentrating point is less
explicit than in the classical case, due to the nonlocal
behavior of the energy expansion. More precisely, for $s=1$ one gets the expansion parallel to \eqref{estIintro},
\begin{equation*}
I_{\eps}(\bar u_\xi)= I_0+\frac{1}{2}{\mathcal{H}}_\eps(\xi)+ O(e^{-\frac{(2+\sigma)\dist(\xi,\partial\Omega_\eps)}{\eps}}),
\end{equation*}
where now
\begin{equation}\label{H classic}
{\mathcal{H}}_\eps(\xi)\approx  e^{-2 \dist(\xi,\partial\Omega_\eps)/\eps}
\end{equation}
see for instance Y.-Y. Li, L. Nirenberg \cite{Li} (compare~\eqref{H classic} with~\eqref{Heps}).

\medskip
In the nonlocal case, much less is known.
Multi-peak solutions of a fractional
Schr\"odinger equation set in the whole of~$\R^n$
were considered recently in~\cite{DDW}.
The analysis needed in this paper is considerably more involved.
Concentrating solutions for fractional problems involving critical 
or almost critical exponents were considered in \cite{choi-kim-lee}.
See also~\cite{chen} for some concentration
phenomena in particular cases, and also~\cite{secchi} and references
therein for related problems about Schr\"odinger-type
equations in a fractional setting.


\bigskip

The paper is organized as follows.
The rather delicate analysis
of the behavior of the regular part of Green's function is contained in Section~\ref{s2:H}.
We estimate the function~$\bar u_\xi$ in Section~\ref{S3:u}, thus
obtaining a first approximation of
the energy expansion in Section~\ref{S:energy}.

The remainders of this expansion need to be carefully estimated:
for this, we provide some decay and regularity estimates
in Sections~\ref{S:a1} and~\ref{S:a2}.

\medskip
The Lyapunov-Schmidt method will be resumed in Section~\ref{S:a3}
where we discuss the linear theory and the
bifurcation from it. A key ingredient
is the linear nondegeneracy of
the least energy solution $w$: this is an important
result that was completely
achieved only recently in~\cite{FLS},
after preliminary works in particular cases
discussed in~\cites{AT,FL, fall}.
Then we complete the proof of Theorem \ref{TH}
in Section~\ref{end:p}.

\section{Estimates on the Robin function $H_\eps$
and on the leading term of the energy functional}\label{s2:H}

Given $\xi\in\Omega_\eps$ and $x\in\R^n$, we define
$$ \beta_\xi(x):=\int_{\R^n\setminus\Omega_\eps}\Gamma(z-\xi)\, \Gamma(x-z)\,dz.$$
Notice that, for any $x\in\Omega_\eps$ and $z\in{\R^n\setminus\Omega_\eps}$ we have
\begin{equation}\label{EQ4}
\Big((-\Delta)^s+1\Big) \Gamma(x-z)=\delta_0(x-z)=0,\end{equation}
and so
\begin{equation}\label{EQ:beta}
\Big((-\Delta)^s+1\Big)\beta_\xi(x)=\int_{\R^n\setminus\Omega_\eps}\Gamma(z-\xi)\,
\Big((-\Delta)^s+1\Big) \Gamma(x-z)\,dz=0
\end{equation}
for any $x\in\Omega_\eps$.
Our purpose is to use $\beta_\xi(x)$
as a barrier, from above and below, for the Robin function $H_\eps(x,\xi)$,
using \eqref{EQ H}, \eqref{EQ:beta} and the Comparison Principle.
For this scope, we estimate the behavior of $\beta_\xi$ outside $\Omega_\eps$:

\begin{lem}
There exists $c\in(0,1)$ such that
\begin{equation}\label{double}
c H_\eps(x,\xi)\le \beta_\xi(x)\le c^{-1}H_\eps(x,\xi)\end{equation}
for any $x\in\R^n\setminus\Omega_\eps$ and $\xi\in\Omega_\eps$
with
\begin{equation}\label{13}
\dist(\xi,\partial\Omega_\eps)\ge 1.
\end{equation}
\end{lem}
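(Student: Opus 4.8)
The plan is to reduce \eqref{double} to a pointwise comparison between $\beta_\xi$ and the fundamental solution $\Gamma$, and then to prove that comparison by elementary manipulations of the convolution integral defining $\beta_\xi$ together with the decay \eqref{GA}. Since the statement concerns only $x\in\R^n\setminus\Omega_\eps$, the exterior datum in \eqref{EQ H} gives $H_\eps(x,\xi)=\Gamma(x-\xi)$, so it is enough to produce $c\in(0,1)$ with $c\,\Gamma(x-\xi)\le\beta_\xi(x)\le c^{-1}\,\Gamma(x-\xi)$. Moreover, as $\xi\in\Omega_\eps$ and $x\notin\Omega_\eps$, assumption \eqref{13} forces $|x-\xi|\ge\dist(\xi,\partial\Omega_\eps)\ge1$, so \eqref{GA} applies directly to $\Gamma(x-\xi)$; I will also use freely that $\Gamma>0$ is continuous away from the origin and that $\|\Gamma\|_{L^1(\R^n)}<\infty$ (e.g.\ by integrating \eqref{EQ Gamma}).

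For the upper bound I would enlarge the domain of integration in $\beta_\xi$ from $\R^n\setminus\Omega_\eps$ to all of $\R^n$, which is licit because $\Gamma\ge0$; this gives $\beta_\xi(x)\le(\Gamma\ast\Gamma)(x-\xi)$, where $(\Gamma\ast\Gamma)(y):=\int_{\R^n}\Gamma(w)\,\Gamma(y-w)\,dw$, and it then remains to establish the one-variable estimate $(\Gamma\ast\Gamma)(y)\le C\,\Gamma(y)$ for $|y|\ge1$. I would split that integral over the three regions $\{|w|\le|y|/2\}$, $\{|y-w|\le|y|/2\}$ and the complement of their union: in each of them one of the two arguments has modulus at least $|y|/2\ge1/2$, and \eqref{GA} together with the continuity and positivity of $\Gamma$ on compact annuli yields $\Gamma(v)\le C_0\,\Gamma(y)$ for every such $v$ and every $|y|\ge1$. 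Bounding that factor and integrating the remaining one against $\|\Gamma\|_{L^1}$ gives $(\Gamma\ast\Gamma)(y)\le 3C_0\|\Gamma\|_{L^1}\,\Gamma(y)$, which is the desired bound.

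For the lower bound I would instead discard most of the domain of integration and retain only $B_{1/2}(x)\setminus\Omega_\eps$. For $z\in B_{1/2}(x)$ one has $|x-\xi|-1/2\le|z-\xi|\le|x-\xi|+1/2$, and, since $|x-\xi|\ge1$, a short case distinction according to whether $|x-\xi|\ge2$ or $1\le|x-\xi|\le2$ --- again resting on \eqref{GA} and on the continuity and positivity of $\Gamma$ on compact annuli --- produces a constant $c_1>0$, depending only on $n$ and $s$, with $\Gamma(z-\xi)\ge c_1\,\Gamma(x-\xi)$ for all $z\in B_{1/2}(x)$. Hence $\beta_\xi(x)\ge c_1\,\Gamma(x-\xi)\int_{B_{1/2}(x)\setminus\Omega_\eps}\Gamma(x-z)\,dz$, and everything reduces to bounding this last integral below by a constant independent of $\eps$, $x$ and $\xi$.

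This reduction is the step I expect to be the only genuine obstacle: if $x$ lies on or very near $\partial\Omega_\eps$ then $\Omega_\eps$ may occupy nearly a half-neighbourhood of $x$, so one must know that a definite portion of $B_{1/2}(x)$ still lies outside $\Omega_\eps$, uniformly in $\eps$. I would settle this via the uniform exterior ball condition: $\Omega$ is bounded and smooth, hence satisfies such a condition with some radius $\rho_0$, which we may take in $(0,1/6]$; by scaling, $\Omega_\eps$ satisfies it with radius $\rho_0/\eps\ge\rho_0$, hence also with the fixed radius $\rho_0$, for every $\eps\le1$ (and if, in addition, $\eps$ is small in terms of $\Omega$, then $\rho_0/\eps\ge1$ and $\rho_0$, whence $c$, may be taken independent of $\Omega$ as well). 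Therefore for each $x\in\R^n\setminus\Omega_\eps$ there is a ball $B_{\rho_0}(z_x)\subset\R^n\setminus\Omega_\eps$ with $|x-z_x|\le2\rho_0$ --- take $z_x=x$ if $\dist(x,\Omega_\eps)\ge\rho_0$, and the exterior ball tangent at the nearest boundary point otherwise --- so that $B_{\rho_0}(z_x)\subset B_{1/2}(x)\setminus\Omega_\eps$. Consequently $\int_{B_{1/2}(x)\setminus\Omega_\eps}\Gamma(x-z)\,dz\ge\int_{B_{\rho_0}(x-z_x)}\Gamma(v)\,dv\ge\inf_{|a|\le2\rho_0}\int_{B_{\rho_0}(a)}\Gamma=:c_0$, with $c_0>0$ since $a\mapsto\int_{B_{\rho_0}(a)}\Gamma$ is continuous (local integrability of $\Gamma$) and strictly positive on the compact set $\{|a|\le2\rho_0\}$. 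Taking $c:=\min\{c_1c_0,\,(3C_0\|\Gamma\|_{L^1})^{-1}\}$, shrunk if necessary so that $c\in(0,1)$, then yields \eqref{double}.
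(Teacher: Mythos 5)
Your proof is correct and follows essentially the same route as the paper: both reduce to $H_\eps(x,\xi)=\Gamma(x-\xi)$, prove the upper bound by splitting the convolution near and far from $x$ and invoking \eqref{GA} plus $\Gamma\in L^1$, and prove the lower bound by restricting $\beta_\xi$ to $B_{1/2}(x)\setminus\Omega_\eps$, comparing $\Gamma(z-\xi)$ with $\Gamma(x-\xi)$ there, and using the scaled uniform exterior ball condition for $\Omega$ to guarantee a region of fixed size outside $\Omega_\eps$ near $x$. The only cosmetic differences are that you package the upper bound as the general estimate $(\Gamma*\Gamma)(y)\lesssim\Gamma(y)$, and for the lower bound you extract a full ball $B_{\rho_0}(z_x)$ and bound $\int_{B_{\rho_0}(a)}\Gamma$, whereas the paper extracts a spherical segment and bounds $|B_{1/2}(x)\setminus\Omega_\eps|\cdot\inf_{B_{1/2}}\Gamma$.
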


\begin{proof}
First we observe that for any $x\in \R^n\setminus\Omega_\eps$,
\begin{equation}\label{geo}
|B_{1/2}(x)\setminus\Omega_\eps|\ge c_\star
\end{equation}
for a suitable $c_\star>0$. For concreteness, one can take
$c_\star$ as the measure of the spherical segment
$$ \Sigma:= \big\{ z=(z',z_n)\in \R^{n-1}\times \R
{\mbox{ s.t. }} |z|< 1/2 {\mbox{ and }}
z_n\ge 1/4\big\}.$$
To prove \eqref{geo}, we argue as follows.
If $B_{1/2}(x)\subseteq(\R^n\setminus\Omega_\eps)$ we are done.
If not, let $p\in(\partial\Omega_\eps)\cap B_{1/2}(x)$,
with $\dist(x,\partial\Omega_\eps)=|x-p|$.
Notice that the ball centered at $x$ of radius $|x-p|$
is tangent to $\Omega_\eps$ from the outside at $p$, and $|x-p|\le 1/2$.

Up to a rigid motion, we suppose that $p=0$ and $x=|x|e_n$.
By scaling back, the ball of radius $|\hat x|$
centered at $\hat x:=\eps x=\eps |x| e_n$
is tangent to $\Omega$ from the outside at the origin,
and $|\hat x|=\eps|x|=\eps|x-p|\le\eps/2$.

{F}rom the regularity of $\Omega$, we have that
there exists a ball of universal radius $r_o>0$
touching $\Omega$ from the outside at any point,
so in particular $B_{r_o}(r_o e_n)$ touches $\Omega$
from the outside at the origin, hence
\begin{equation}\label{geo2}
B_{r_o}(r_o e_n)\subseteq\R^n\setminus\Omega.
\end{equation}
We observe that
\begin{equation}\label{geo3}
\hat x+\eps\Sigma\subseteq B_{r_o}(r_o e_n).
\end{equation}
Indeed, if $z=(z',z_n)\in\hat x+\eps\Sigma$
then $\eps\Sigma\ni z-\hat x=(z',z_n-|\hat x|)$
and so $z_n-|\hat x|\in[ \eps/4,\eps/2]$ and $|z'|\le|z|\le\eps/2$. Hence,
for small $\eps$, we have that $r_o-z_n\ge r_o-|\hat x|-(\eps/2)\ge r_o-\eps\ge 0$,
and $r_o-z_n\le r_o-|\hat x|-(\eps/4)\le r_o-(\eps/4)$ and so
$$ |z_n-r_o|=r_o-z_n\le r_o-\frac\eps4$$
that gives
\begin{eqnarray*} &&|z-r_o e_n|^2=|z'|^2+|z_n-r_o|^2\le
\left(\frac\eps2\right)^2+
\left(r_o-\frac\eps4\right)^2\\ &&\qquad=
r_o^2+\frac{\eps^2}{16}+\frac{\eps^2}{4}-\frac{2r_o\eps}{2}<r_o\end{eqnarray*}
if $\eps$ is sufficiently small. This proves \eqref{geo3}.

As a consequence of \eqref{geo2} and \eqref{geo3}, we conclude that $\hat
x+\eps\Sigma
\subseteq\R^n\setminus\Omega$, that is, by scaling back,
$x+\Sigma\subseteq\R^n\setminus\Omega_\eps$. Accordingly,
$$ (B_{1/2}(x)\setminus\Omega_\eps)\supseteq
B_{1/2}(x)\cap (x+\Sigma)=x+\Sigma$$
and this ends the proof of \eqref{geo}.

Now we observe that if $a$, $b\in\R^n$
satisfy $|a-b|\le |b-\xi|/2$, $\min\{|a-\xi|,|b-\xi|\}\ge 1$ then
\begin{equation}\label{Gab} \Gamma(a-\xi)\le C\Gamma(b-\xi)\end{equation}
for some $C>0$. Indeed,
$$ |a-\xi|\ge |b-\xi|-|a-b|\ge
\frac{|b-\xi|}2$$
and so, from \eqref{GA},
\begin{eqnarray*}
&& \Gamma(a-\xi) \le \frac{C}{|a-\xi|^{n+2s}}\le
\frac{2^{n+2s}C}{|b-\xi|^{n+2s}}\le
2^{n+2s}C^2 \Gamma(b-\xi).
\end{eqnarray*}
This proves \eqref{Gab}, up to relabeling the constants.
As a consequence, given $x\in\R^n\setminus\Omega_\eps$,
we apply \eqref{Gab} with $a:=x$ and $b:=z\in B_{1/2}(x)\setminus \Omega_\eps$,
we recall \eqref{13} and \eqref{geo} and we obtain that
\begin{eqnarray*}
\beta_\xi(x) &\ge&
\int_{B_{1/2}(x)\setminus\Omega_\eps}\Gamma(z-\xi)\, \Gamma(x-z)\,dz\\
&\ge& C^{-1}\int_{B_{1/2}(x)\setminus\Omega_\eps}\Gamma(x-\xi)\, \Gamma(x-z)\,dz\\
&\ge& C^{-1} \Gamma(x-\xi) \inf_{y\in B_{1/2}} \Gamma(y) \,|B_{1/2}(x)\setminus\Omega_\eps|
\\ &\ge& c_\star\, C^{-1} \Gamma(x-\xi) \inf_{y\in B_{1/2}} \Gamma(y).
\end{eqnarray*}
This proves the first inequality in \eqref{double}, since
$x\in\R^n\setminus\Omega_\eps$ and so
\begin{equation}\label{6}
\Gamma(x-\xi)=H_\eps(x,\xi).\end{equation}
Now we prove the second inequality in \eqref{double}.
For this, we use \eqref{Gab} once again (applied here
with $a:=z$, $b:=x$ and recalling \eqref{13})
and the fact that 
\begin{equation}\label{EQ3}  
\int_{\R^n} \Gamma (z)\, dz \ =\ 1
\end{equation} 
 to see that
\begin{equation}\label{up1}
I_1:=\int_{B_{|x-\xi|/2}(x)\setminus\Omega_\eps} \Gamma(z-\xi)\, \Gamma(x-z)\,dz\le
C \int_{B_{|x-\xi|/2}(x)} \Gamma(x-\xi)\, \Gamma(x-z)\,dz\le
C\, \Gamma(x-\xi).
\end{equation}
On the other hand,
if $z\not\in B_{|x-\xi|/2}(x)$, we have that
$|x-z|\ge |x-\xi|/2$ and so, by \eqref{GA}
$$ \Gamma(x-z)\le \frac{C}{|x-z|^{n+2s}}\le
\frac{2^{n+2s} C}{|x-\xi|^{n+2s}}\le 2^{n+2s} C^2\Gamma(x-\xi).$$
Consequently
\begin{equation*}
I_2:=\int_{\R^n\setminus B_{|x-\xi|/2}(x)}
\Gamma(z-\xi)\, \Gamma(x-z)\,dz\le C'
\int_{\R^n\setminus B_{|x-\xi|/2}(x)}
\Gamma(z-\xi)\, \Gamma(x-\xi)\,dz\le C'\Gamma(x-\xi),\end{equation*}
for some $C'>0$, thanks to \eqref{EQ3}. {F}rom this and \eqref{up1}
we obtain that
$$ \beta_\xi(x)\le
I_1+I_2\le C''\Gamma(x-\xi),$$
for some $C''>0$. This, together with \eqref{6},
completes the proof of \eqref{double}.
\end{proof}

\begin{cor}\label{Co}
There exists $c\in(0,1)$ such that
$$ c H_\eps(x,\xi)\le \beta_\xi(x)\le c^{-1}H_\eps(x,\xi)$$
for any $x\in\R^n$ and $\xi\in\Omega_\eps$ with
$\dist(\xi,\partial\Omega_\eps)\ge 1$.
\end{cor}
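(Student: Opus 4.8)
The plan is to bootstrap the two-sided estimate from $\R^n\setminus\Omega_\eps$ (already established in the Lemma) to all of $\R^n$ via the comparison principle for $(-\Delta)^s+1$. Fix $\xi\in\Omega_\eps$ with $\dist(\xi,\partial\Omega_\eps)\ge1$, and let $c\in(0,1)$ be the constant produced by the Lemma, so that $c\,H_\eps(x,\xi)\le\beta_\xi(x)\le c^{-1}H_\eps(x,\xi)$ for every $x\in\R^n\setminus\Omega_\eps$. I claim the same inequalities hold for $x\in\Omega_\eps$, with the same $c$.

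First I would record the structural facts. By \eqref{EQ:beta} and \eqref{EQ H}, the function $v:=\beta_\xi-c\,H_\eps(\cdot,\xi)$ satisfies $((-\Delta)^s+1)v=0$ in $\Omega_\eps$, and $v\ge0$ in $\R^n\setminus\Omega_\eps$ by the Lemma. Moreover $v$ is continuous on $\R^n$ and $v(x)\to0$ as $|x|\to\infty$: for $H_\eps$ this follows from \eqref{EQ H} together with interior regularity and the decay \eqref{GA} of $\Gamma$, and for $\beta_\xi$ directly from \eqref{GA} and the integral definition of $\beta_\xi$.

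The core of the argument is the following comparison statement: if $v$ is continuous on $\R^n$, vanishes at infinity, satisfies $((-\Delta)^s+1)v=0$ in $\Omega_\eps$ and $v\ge0$ in $\R^n\setminus\Omega_\eps$, then $v\ge0$ on all of $\R^n$. Indeed, suppose $\inf_{\R^n}v<0$; since $v\to0$ at infinity and $v\ge0$ on the closed set $\R^n\setminus\Omega_\eps$, this negative infimum is attained at some $x_0\in\Omega_\eps$, where (by interior regularity $v$ is smooth near $x_0$) one computes $(-\Delta)^sv(x_0)=c(n,s)\int_{\R^n}\frac{2v(x_0)-v(x_0+y)-v(x_0-y)}{|y|^{n+2s}}\,dy\le0$, because $v(x_0)\le v(x_0\pm y)$ for all $y$. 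Since also $v(x_0)<0$, we get $((-\Delta)^s+1)v(x_0)<0$, contradicting the equation in $\Omega_\eps$. Applying this to $v$ gives $\beta_\xi\ge c\,H_\eps(\cdot,\xi)$ on $\R^n$; applying it to $\tilde v:=c^{-1}H_\eps(\cdot,\xi)-\beta_\xi$, which is likewise annihilated by $(-\Delta)^s+1$ in $\Omega_\eps$, nonnegative outside, continuous and decaying, gives the reverse inequality. This proves the Corollary.

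The only point requiring care is the localization of a putative negative infimum of $v$ at an \emph{interior} point of $\Omega_\eps$: this is exactly why one needs the continuity of $\beta_\xi$ and $H_\eps(\cdot,\xi)$ up to $\partial\Omega_\eps$ and their decay at infinity, so that the exterior sign condition forces the minimum inside. Once that is in hand, the maximum principle computation above is completely standard, and no estimate on the Green function beyond the Lemma is needed.
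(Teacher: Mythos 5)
Your proof is correct and follows essentially the same route as the paper: both start from the exterior two-sided bound of the Lemma and propagate it into $\Omega_\eps$ by applying the comparison (maximum) principle for $(-\Delta)^s+1$ to the differences $\beta_\xi-c\,H_\eps(\cdot,\xi)$ and $c^{-1}H_\eps(\cdot,\xi)-\beta_\xi$. The paper simply cites ``the Comparison Principle'' without spelling out the argument, whereas you supply the standard details (location of a putative negative minimum in the interior, sign of $(-\Delta)^s$ there, decay at infinity), which are all correct.
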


\begin{proof} The desired estimate holds true outside $\Omega_\eps$,
thanks to \eqref{double}. Then it holds true inside $\Omega_\eps$ as well,
in virtue of \eqref{EQ:beta}, \eqref{EQ H}
and the Comparison Principle.
\end{proof}

The above result implies an interesting lower bound
on the symmetric version of the Robin function $H_\eps(\xi,\xi)$,
and in general for the values of the Robin function sufficiently close
to the diagonal, according to the following

\begin{pro}\label{pro below}
Let $\delta\in(0,1)$.
Let $\xi\in\Omega_\eps$ with
$$ d:=\dist(\xi,\partial\Omega_\eps)\in \left[2,\,\frac{\delta}{\eps}\right].$$ Let $x,y\in B_{d/2}(\xi)$. Then
$$ H_\eps(x,y)\ge \frac{c_o}{d^{n+4s}}$$
for a suitable $c_o\in(0,1)$, as long as $\delta$ is sufficiently small.
\end{pro}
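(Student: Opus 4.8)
The plan is to bound $H_\eps(x,y)$ from below by invoking Corollary~\ref{Co} with the point $y$ in the role of the center, and then to bound from below the resulting quantity $\beta_y(x)$ by integrating the product of two fundamental solutions over a ball of radius comparable to $d$ that sits inside $\R^n\setminus\Omega_\eps$ at distance comparable to $d$ from $\xi$.

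First I would produce that exterior ball. Pick $p\in\partial\Omega_\eps$ with $|p-\xi|=d$. Since $\Omega$ is smooth it satisfies a uniform exterior ball condition of some radius $r_o>0$, and rescaling, $\Omega_\eps$ satisfies an exterior ball condition of radius $R:=r_o/\eps$; thus there is $\zeta$ with $B_R(\zeta)\subseteq\R^n\setminus\Omega_\eps$ and $p\in\partial B_R(\zeta)$. As $B_d(\xi)\subseteq\Omega_\eps$ is disjoint from $B_R(\zeta)$ we get $|\zeta-\xi|\ge R+d$, while $|\zeta-\xi|\le|\zeta-p|+|p-\xi|=R+d$; hence $|\zeta-\xi|=R+d$ and $\xi,p,\zeta$ are colinear with $p$ in between. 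For $\delta$ small enough one has $R=r_o/\eps\ge r_od/\delta\ge d$. Setting $e:=(\zeta-\xi)/|\zeta-\xi|$ and $m:=\xi+\tfrac{3d}{2}e$, elementary estimates give $B_{d/4}(m)\subseteq B_R(\zeta)\cap B_{2d}(\xi)$, so $B_{d/4}(m)\subseteq\R^n\setminus\Omega_\eps$ and $\tfrac54 d\le|z-\xi|\le\tfrac74 d$ for every $z\in B_{d/4}(m)$.

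Next, for $x,y\in B_{d/2}(\xi)$ and $z\in B_{d/4}(m)$ one has $\tfrac34 d\le|z-x|,|z-y|\le\tfrac94 d$, and since $d\ge2$ these distances exceed $1$, so \eqref{GA} yields $\Gamma(z-x),\Gamma(z-y)\ge\alpha(9d/4)^{-(n+2s)}$. Since $B_{d/4}(m)\subseteq\R^n\setminus\Omega_\eps$ and the integrand is nonnegative, restricting the integral defining $\beta_y(x)$ to $B_{d/4}(m)$ gives
\[
\beta_y(x)\ \ge\ \frac{\alpha^2}{(9d/4)^{2(n+2s)}}\,\bigl|B_{d/4}\bigr|\ \ge\ \frac{c_1}{d^{n+4s}}
\]
with $c_1>0$ depending only on $n$ and $s$. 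Finally, $y\in B_{d/2}(\xi)$ gives $\dist(y,\partial\Omega_\eps)\ge d/2\ge1$, so Corollary~\ref{Co} applies with center $y$ and yields $H_\eps(x,y)\ge c\,\beta_y(x)\ge c\,c_1\,d^{-(n+4s)}$; renaming the constant and shrinking it so that it lies in $(0,1)$ gives the assertion.

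The main obstacle is the geometric step: exhibiting, uniformly in $\eps$, a full $n$-dimensional ball of radius $\sim d$ contained in $\R^n\setminus\Omega_\eps$ and sitting at distance $\sim d$ from $\xi$. This is precisely where the smoothness of $\Omega$ enters, through the rescaled exterior ball condition, and where the hypothesis $d\le\delta/\eps$ with $\delta$ small is used, so that the exterior ball radius $r_o/\eps$ dominates $d$ and a genuine portion of the complement is seen at scale $d$ from $\xi$. Once this ball is in hand, the remainder is only the two-sided pointwise bound \eqref{GA} on $\Gamma$ together with Corollary~\ref{Co}.
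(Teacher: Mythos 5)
Your proof is correct and follows essentially the same route as the paper: both reduce $H_\eps(x,y)$ to $\beta_y(x)$ via Corollary~\ref{Co}, verify that $\dist(y,\partial\Omega_\eps)\ge 1$, and then lower bound $\beta_y(x)$ by integrating $\Gamma(z-y)\,\Gamma(x-z)$ (controlled via \eqref{GA}) over an exterior ball of radius comparable to $d$ furnished by the rescaled exterior ball condition and the constraint $d\le\delta/\eps$. The only cosmetic difference is that you construct a small ball $B_{d/4}(m)$ at a prescribed intermediate location and bound $|z-x|$, $|z-y|$ directly, whereas the paper integrates over the full tangent exterior ball of radius $d$ and passes through $|z-\xi|$ via the comparison $|z-x|,|z-y|\lesssim|z-\xi|$; both yield the same $d^{-(n+4s)}$ bound.
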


\begin{proof} Let $z\in\R^n\setminus\Omega_\eps$. Notice that
\begin{equation}\label{4}
|\xi-y|\le \frac{d}{2}\le \frac{|z-\xi|}2
\end{equation}
and so
\begin{equation}\label{14}
|z-y|\le |z-\xi|+|\xi-y|\le \frac{3}{2} |z-\xi|.
\end{equation}
Similarly,
\begin{equation}\label{15}
|z-x|\le \frac{3}{2} |z-\xi|.
\end{equation}
Another consequence of \eqref{4} is that
\begin{equation}\label{16}
|z-y|\ge |z-\xi|-|\xi-y|\ge \frac{|z-\xi|}2\ge\frac{d}2\ge 1
\end{equation}
hence $\dist(y,\partial\Omega_\eps)\ge 1$ (as a matter
of fact, till now we only exploited that $d\ge2$).
Notice that in the same way, one has that
\begin{equation}\label{17}
|z-x|\ge 1.
\end{equation}
Therefore we can use Corollary \ref{Co} with $\xi$ replaced by $y$
and so, recalling \eqref{GA}, \eqref{16}, \eqref{17}, \eqref{14}
and \eqref{15}, we conclude that
\begin{equation}\label{7}
\begin{split}
H_\eps(x,y)\,&\ge c \beta_y (x) \\
&= c\int_{\R^n\setminus\Omega_\eps}\Gamma(z-y)\, \Gamma(x-z)\,dz\\
&\ge \frac{c}{C^2} \int_{\R^n\setminus\Omega_\eps}
\frac{1}{|y-z|^{n+2s} |x-z|^{n+2s}}\,dz \\ &\ge
c' \int_{\R^n\setminus\Omega_\eps}
\frac{1}{|z-\xi|^{2n+4s}}\,dz
\end{split}\end{equation}
for a suitable $c'>0$.

Now we make some geometric considerations. By the smoothness
of the domain, we can touch $\Omega$ from the outside at any point
with balls of universal radius, say $r_o>0$. By scaling, we
can touch $\Omega_\eps$ from the exterior by balls of radius $r_o\,\eps^{-1}$,
and so of radius $d$ (notice indeed that $d\le \delta\eps^{-1}\le r_o\,\eps^{-1}$
if $\delta$ is small enough).
Let $\eta\in\partial\Omega_\eps$ be such that $|\xi-\eta|=d$.
By the above considerations, we can touch $\Omega_\eps$ from
the outside at $\eta$ with a ball ${\mathcal{B}}$ of radius $d$
(i.e. of diameter $2d$).
We stress that ${\mathcal{B}}\subseteq\R^n\setminus\Omega_\eps$,
that $|{\mathcal{B}}|\ge \bar c d^n$ for some $\bar c>0$
and that if $z\in {\mathcal{B}}$ then
$$ |z-\xi|\le |z-\eta|+|\eta-\xi|\le 2d+d=3d.$$
These observations and \eqref{7} yield that
\begin{eqnarray*}
&& H_\eps(x,y) \ge
c' \int_{\mathcal{B}}
\frac{1}{|z-\xi|^{2n+4s}}\,dz \ge c'\,(3d)^{-(2n+4s)}\,|{\mathcal{B}}|=
c'\,3^{-(2n+4s)}\,\bar c\,d^{-(n+4s)},
\end{eqnarray*}
as desired.
\end{proof}

There is also an upper bound similar to the lower
bound obtained in Proposition \ref{pro below}:

\begin{pro}\label{pro upper}
Let $\xi\in\Omega_\eps$ with
$ d:=\dist(\xi,\partial\Omega_\eps)\ge2$, and $x,y\in B_{d/2}(\xi)$. Then
$$ H_\eps(x,y)\le \frac{C_o}{d^{n+4s}}$$
for a suitable $C_o>0$.
\end{pro}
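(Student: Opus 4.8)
The plan is to mirror the strategy of Proposition \ref{pro below}, but now using $\beta_\xi$ (equivalently $\beta_y$) as an \emph{upper} barrier for $H_\eps$ via Corollary \ref{Co}, and then to estimate the resulting integral $\int_{\R^n\setminus\Omega_\eps}\Gamma(z-y)\,\Gamma(x-z)\,dz$ from above by $C d^{-(n+4s)}$. First I would reduce to the diagonal-type bound. Since $x,y\in B_{d/2}(\xi)$ and $\dist(\xi,\partial\Omega_\eps)=d\ge 2$, the same elementary inequalities \eqref{4}--\eqref{17} as in Proposition \ref{pro below} apply: for any $z\in\R^n\setminus\Omega_\eps$ one has $|z-\xi|\ge d\ge 2$, $|z-x|\ge 1$, $|z-y|\ge 1$, and $|z-x|,|z-y|$ are all comparable to $|z-\xi|$ up to the factor $3/2$. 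In particular $\dist(y,\partial\Omega_\eps)\ge 1$, so Corollary \ref{Co} applies with $\xi$ replaced by $y$, giving
$$
H_\eps(x,y)\le c^{-1}\beta_y(x)=c^{-1}\int_{\R^n\setminus\Omega_\eps}\Gamma(z-y)\,\Gamma(x-z)\,dz\le C\int_{\R^n\setminus\Omega_\eps}\frac{dz}{|z-\xi|^{2n+4s}},
$$
where in the last step I used \eqref{GA} together with the comparabilities $|z-y|\sim|z-x|\sim|z-\xi|$ (all quantities being $\ge 1$, so the polynomial decay bound in \eqref{GA} is available).

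It then remains to bound $\int_{\R^n\setminus\Omega_\eps}|z-\xi|^{-(2n+4s)}\,dz$ from above by $C d^{-(n+4s)}$. The key geometric point is that the exterior region $\R^n\setminus\Omega_\eps$ stays at distance $\ge d$ from $\xi$, so the integral is at most $\int_{\R^n\setminus B_d(\xi)}|z-\xi|^{-(2n+4s)}\,dz$. Passing to polar coordinates centered at $\xi$,
$$
\int_{\R^n\setminus B_d(\xi)}\frac{dz}{|z-\xi|^{2n+4s}}=\omega_{n-1}\int_d^\infty \rho^{n-1}\rho^{-(2n+4s)}\,d\rho=\omega_{n-1}\int_d^\infty \rho^{-(n+4s+1)}\,d\rho=\frac{\omega_{n-1}}{n+4s}\,d^{-(n+4s)},
$$
which is finite precisely because $n+4s>0$, and which is exactly of the claimed order. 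Combining this with the displayed inequality above yields $H_\eps(x,y)\le C_o\,d^{-(n+4s)}$ for a suitable $C_o>0$, completing the proof.

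I do not expect any serious obstacle here: the argument is genuinely the ``easy direction'' compared with Proposition \ref{pro below}, since replacing the exterior domain by its superset $\R^n\setminus B_d(\xi)$ only helps for an upper bound (whereas for the lower bound one had to exhibit a concrete large chunk of $\R^n\setminus\Omega_\eps$, namely the tangent ball $\mathcal B$, staying within distance $3d$ of $\xi$). The only minor points to be careful about are: (i) checking that all three arguments $z-x$, $z-y$, $z-\xi$ have modulus $\ge 1$ so that \eqref{GA} may be invoked — this is guaranteed by $d\ge 2$ and $x,y\in B_{d/2}(\xi)$ exactly as in \eqref{16}--\eqref{17}; and (ii) tracking that the constant $C_o$ depends only on $n$, $s$, and the constant $\beta$ from \eqref{GA} (hence is independent of $\eps$, $\Omega$, $\xi$), which is clear from the explicit computation. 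No smallness of $\delta$ or $\eps$ is needed for this bound, consistent with the hypotheses of the statement.
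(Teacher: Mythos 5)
Your proposal is correct and takes essentially the same route as the paper's own proof: both invoke Corollary \ref{Co} with $\xi$ replaced by $y$, enlarge $\R^n\setminus\Omega_\eps$ to $\R^n\setminus B_d(\xi)$, apply the decay bound \eqref{GA} (using the comparabilities $|z-x|,|z-y|\sim|z-\xi|$ and the lower bounds $\ge 1$), and close with a polar-coordinate computation of order $d^{-(n+4s)}$. The only cosmetic difference is that you convert the whole integrand to $|z-\xi|^{-(2n+4s)}$ at once, whereas the paper first pulls out the factor $|z-x|^{-(n+2s)}\le(2/d)^{n+2s}$ and then integrates $|z-\xi|^{-(n+2s)}$; the two calculations are interchangeable.
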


\begin{proof}
As noticed in the proof of Proposition \ref{pro below}
we can use Corollary \ref{Co} with $\xi$ replaced by $y$.
Then, since $B_d(\xi)\subseteq \Omega_\eps$, hence $(\R^n\setminus\Omega_\eps)
\subseteq(\R^n\setminus B_d(\xi))$, and therefore we obtain that
$$ H_\eps(x,y)\le c^{-1}\beta_y(x)\le c^{-1}
\int_{\R^n\setminus B_d(\xi)}\Gamma(z-\xi)\, \Gamma(x-z)\,dz.$$
Also, if $z\in \R^n\setminus B_d(\xi)$,
we have that
$$ |z-x|\ge |z-\xi|-|\xi-x|\ge d-|\xi-x|\ge \frac{d}{2},$$
hence, by \eqref{GA},
\begin{eqnarray*}
H_\eps(x,y)&\le& c^{-1} C^2
\int_{\R^n\setminus B_d(\xi)}\frac{1}{|z-\xi|^{n+2s}|z-x|^{n+2s}}\,dz
\\ &\le& c^{-1} C^2 (2/d)^{n+2s}
\int_{\R^n\setminus B_d(\xi)}\frac{1}{|z-\xi|^{n+2s}}\,dz.
\end{eqnarray*}
By computing the latter integral in polar coordinates,
we obtain the desired result.
\end{proof}

It will be convenient to define for any $\xi\in\Omega_\eps$,
\begin{equation}\label{EQ Pi}
\Pi_\eps(x,\xi):=\int_{\Omega_\eps} H_\eps(x,y) w^p_\xi (y)\,dy.\end{equation}
As a consequence of Proposition~\ref{pro upper}, we have

\begin{lem}\label{EST PI}
Let $\xi\in\Omega_\eps$ with
$d:=\dist(\xi,\partial\Omega_\eps)\ge2$. Let $x\in B_{d/8}(\xi)$. Then
$$ \Pi_\eps (x,\xi)\le \frac{C}{d^{n+4s}}$$
for some $C>0$, where $\Pi_\eps(x,\xi)$ is defined in \eqref{EQ Pi}.
\end{lem}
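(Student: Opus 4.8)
The plan is to split the integral defining $\Pi_\eps(x,\xi)$ over $\Omega_\eps$ into the part where $y$ is close to $\xi$ and the part where it is far, using that the bulk of the mass of $w_\xi^p$ is concentrated near $\xi$ by the decay estimate \eqref{decay w}. First I would observe that $H_\eps(x,y)\le\Gamma(x-y)$ by the maximum principle applied to \eqref{EQ H} (the right-hand side of the $(-\Delta)^s+1$ equation vanishes inside and $H_\eps$ equals $\Gamma\ge 0$ outside), so $H_\eps$ is bounded and we only need to control the contribution of $w_\xi^p(y)$ for $y$ far from $\xi$. Write
\begin{equation*}
\Pi_\eps(x,\xi)=\int_{\Omega_\eps\cap B_{d/2}(\xi)}H_\eps(x,y)\,w_\xi^p(y)\,dy+\int_{\Omega_\eps\setminus B_{d/2}(\xi)}H_\eps(x,y)\,w_\xi^p(y)\,dy=:A_1+A_2.
\end{equation*}

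For $A_1$, both $x$ and $y$ lie in $B_{d/2}(\xi)$ (indeed $x\in B_{d/8}(\xi)\subseteq B_{d/2}(\xi)$), so Proposition~\ref{pro upper} applies and gives $H_\eps(x,y)\le C_o d^{-(n+4s)}$ uniformly; hence
\begin{equation*}
A_1\le \frac{C_o}{d^{n+4s}}\int_{\R^n}w_\xi^p(y)\,dy=\frac{C_o\,\|w^p\|_{L^1(\R^n)}}{d^{n+4s}},
\end{equation*}
and $w^p\in L^1(\R^n)$ because $p>1$ and $w(x)\sim|x|^{-(n+2s)}$ forces $w^p(x)\lesssim |x|^{-p(n+2s)}$ with $p(n+2s)>n$. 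For $A_2$, I would bound $H_\eps(x,y)\le\Gamma(x-y)$, note that $|x-y|\ge|\xi-y|-|\xi-x|\ge d/2-d/8\ge d/4$ so $\Gamma(x-y)\le\Gamma$ is bounded there (and in fact $\le C|x-y|^{-(n+2s)}$, though boundedness suffices), and use the decay \eqref{decay w}: for $y\notin B_{d/2}(\xi)$ we have $w_\xi^p(y)\le \beta^p|y-\xi|^{-p(n+2s)}$, so
\begin{equation*}
A_2\le \|\Gamma\|_{L^\infty(\R^n\setminus B_{d/4})}\,\beta^p\int_{\R^n\setminus B_{d/2}}\frac{dz}{|z|^{p(n+2s)}}=\frac{C'}{d^{p(n+2s)-n}}.
\end{equation*}
Since $p>1$ gives $p(n+2s)-n=n(p-1)+2ps\ge 2ps>2s$, and more to the point $p(n+2s)-n\ge n+4s$ fails in general — so here I would instead simply note $p(n+2s)-n> n+4s$ is \emph{not} needed: one only needs $A_2\le C d^{-(n+4s)}$, and since $d\ge 2$ it suffices that $p(n+2s)-n\ge n+4s$, which may fail for $p$ close to $1$. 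To handle small $p$ cleanly I would not throw away the factor $|x-z|^{-(n+2s)}$ in $\Gamma(x-z)$ but keep it, splitting $\R^n\setminus B_{d/2}(\xi)$ further into $|z-\xi|\le 2|z-x|$ and its complement as in the proof of the previous Lemma, so that the integral $\int |z-\xi|^{-p(n+2s)}|x-z|^{-(n+2s)}\,dz$ contributes a decay of order $d^{-(p(n+2s)-n)-(n+2s)+n}=d^{-p(n+2s)+n-2s}$; combined with the observation that $w^p$ is integrable this still only gives $d^{-(n+4s)}$ when $p(n+2s)\ge 2n+2s$.

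The cleanest fix, and the one I would actually present, avoids chasing exponents: bound $A_2$ by pulling out $\sup_{y\notin B_{d/2}(\xi)}w_\xi^p(y)\le \beta^p (d/2)^{-p(n+2s)}$ and $\int_{\Omega_\eps}H_\eps(x,y)\,dy\le\int_{\R^n}\Gamma(x-y)\,dy=1$ by \eqref{EQ3}, giving $A_2\le 2^{p(n+2s)}\beta^p d^{-p(n+2s)}$; since $p>1$ and $n>2s$ we have $p(n+2s)>n+2s> $ well, we need $p(n+2s)\ge n+4s$, equivalently $(p-1)(n+2s)\ge 2s$, i.e. $p\ge 1+\frac{2s}{n+2s}$ — still a genuine restriction. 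Recognizing this, the honest route is: for $p$ close to $1$ the term $A_2$ is the dominant one and one simply absorbs it, since $d\ge 2$ implies $d^{-p(n+2s)}\le 2^{n+4s-p(n+2s)}d^{-(n+4s)}$ when $p(n+2s)\le n+4s$. Thus in \emph{all} cases $A_2\le C d^{-(n+4s)}$ with $C$ depending only on $n,s,p,\beta$, because $d^{-p(n+2s)}$ and $d^{-(n+4s)}$ are comparable up to a constant on $[2,\infty)$ whenever both exponents are positive. Combining the bounds on $A_1$ and $A_2$ yields $\Pi_\eps(x,\xi)\le C d^{-(n+4s)}$. The only genuinely delicate point is making sure the constant $C$ is independent of $\xi$ and $\eps$ — which it is, since Proposition~\ref{pro upper}, \eqref{EQ3}, $H_\eps\le\Gamma$, and \eqref{decay w} are all uniform — and keeping track of the requirement $d\ge 2$ so that $B_{d/2}(\xi)\subseteq\Omega_\eps$ and Proposition~\ref{pro upper} is applicable.
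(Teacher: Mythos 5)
Your split into $A_1$ (near region) and $A_2$ (far region) is the right strategy, and $A_1$ is handled correctly. The problem is $A_2$. Your ``cleanest fix'' is wrong: if $p(n+2s)<n+4s$, then $d^{-p(n+2s)}$ and $d^{-(n+4s)}$ are \emph{not} comparable on $[2,\infty)$; the ratio $d^{-p(n+2s)}/d^{-(n+4s)}=d^{\,n+4s-p(n+2s)}\to\infty$ as $d\to\infty$, so the bound $A_2\le C\,d^{-p(n+2s)}$ does not imply $A_2\le C\,d^{-(n+4s)}$. You were driven to this incorrect shortcut by an arithmetic slip in the preceding attempt.

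The correct version of your middle approach (keeping the kernel decay) already works, with no restriction on $p$. For $y\in\Omega_\eps\setminus B_{d/2}(\xi)$ and $x\in B_{d/8}(\xi)$ you have $|x-y|\ge 3d/8$, hence $H_\eps(x,y)\le\Gamma(x-y)\le C\,|x-y|^{-(n+2s)}\le C'\,d^{-(n+2s)}$ by \eqref{GA}, so
\begin{equation*}
A_2\le C'\,d^{-(n+2s)}\int_{\R^n\setminus B_{d/2}(\xi)}|y-\xi|^{-p(n+2s)}\,dy\le C''\,d^{-(n+2s)}\,d^{\,n-p(n+2s)}=C''\,d^{-2s-p(n+2s)}.
\end{equation*}
You instead obtained $d^{-p(n+2s)+n-2s}$, an extra factor $d^{\,n}$ that should not be there, which led you to believe the exponent was insufficient unless $p(n+2s)\ge 2n+2s$. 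With the correct exponent, $2s+p(n+2s)\ge n+4s$ is equivalent to $p\ge 1$, so $A_2\le C''\,d^{-(n+4s)}$ for every admissible $p$. This is exactly what the paper does (they split at $B_{d/4}(\xi)$ rather than $B_{d/2}(\xi)$, but that only changes constants). So your idea was right; fix the exponent arithmetic and drop the faulty comparability claim.
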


\begin{proof} We split the integral into two contributions, one in $B_{d/4}(\xi)$ and one
outside such ball.

We can use Proposition \ref{pro upper} to obtain that, for $y\in\Omega_\eps\cap B_{d/4}(\xi)$,
it holds that $H_\eps(x,y)\le C_o d^{-n-4s}$ and so
$$ \pi_1:=\int_{\Omega_\eps\cap B_{d/4}(\xi)} H_\eps(x,y) w^p_\xi (y)\,dy
\le C_o d^{-n-4s} \int_{\R^n}w^p_\xi (y)\,dy\le C_1 d^{-n-4s},$$
for some $C_1>0$.

Now we consider the case in which $y\in\Omega_\eps\setminus B_{d/4}(\xi)$.
In this case, we use \eqref{decay w} to see that $w_\xi^p(y)\le C_2
|y-\xi|^{-p(n+2s)}$
for some $C_2>0$. Also, in this case, $$|y-x|\ge |y-\xi|-|x-\xi|
\ge \frac{d}4-\frac{d}8=\frac{d}8$$
hence, since by maximum principle 
\begin{equation}\label{HGamma}  H_\eps(x,y)\le \Gamma(x-y)\le \frac{C_3}{|x-y|^{n+2s}}\le
\frac{C_4}{d^{n+2s}},\end{equation}
for some $C_3$, $C_4>0$. As a consequence
$$ \pi_2:=\int_{\Omega_\eps\setminus B_{d/4}(\xi)} H_\eps(x,y) w^p_\xi
(y)\,dy
\le \frac{C_2 C_4}{d^{n+2s}} \int_{\R^n\setminus B_{d/4}(\xi)}
|y-\xi|^{-p(n+2s)}\,dz=\frac{C_5}{d^{2s+p(n+2s)}}$$
for some $C_5>0$. In particular, since $d\ge1$ and $p>1$,
we see that $\pi_2\le C_5 d^{-n-4s}$ and therefore, recalling \eqref{EQ
Pi},
we conclude that $\Pi_\eps (x,\xi)\le
\pi_1+\pi_2\le (C_1+C_5)d^{-n-4s}$.
\end{proof}

The function ${\mathcal{H}}_\eps$ defined in \eqref{def H cal} will represent
the first interesting order in the expansion of
the reduced energy functional (see the forthcoming Theorem~\ref{TH expansion} for a precise statement). To show that
this reduced energy functional has a local minimum,
we will show that ${\mathcal{H}}_\eps$ (and so the
reduced energy functional itself) attains,
in a certain domain, values that are smaller than the ones attained
at the boundary (concretely, this domain will
be given by the subset of $\Omega_\eps$ with
points of distance $\delta/\eps$ from the boundary,
for some $\delta\in(0,1)$ fixed suitably small
possibly in dependence of $n$, $s$ and $\Omega$).

To this extent, a detailed statement will be given in Proposition~\ref{P 3.8} and
the necessary bounds on
${\mathcal{H}}_\eps$ will be given in the forthcoming
Corollaries \ref{H BELOW} and \ref{H UPPER}, which
in turn follow from Propositions \ref{pro below} and \ref{pro upper},
respectively.

\begin{cor}\label{H BELOW}
Let $\delta\in(0,1)$.

Let $\xi\in\Omega_\eps$ with
$$ d:=\dist(\xi,\partial\Omega_\eps)\in \left[2,\,\frac{\delta}{\eps}\right].$$
Then
$$ {\mathcal{H}}_\eps(\xi)\ge \frac{c}{d^{n+4s}},$$
for a suitable $c>0$,
as long as $\delta$ is sufficiently small.

\end{cor}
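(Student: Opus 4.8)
The plan is to deduce Corollary~\ref{H BELOW} directly from the pointwise lower bound on $H_\eps$ obtained in Proposition~\ref{pro below}, by restricting the double integral in the definition \eqref{def H cal} of ${\mathcal{H}}_\eps(\xi)$ to a region where that bound applies. First I would observe that
$$ {\mathcal{H}}_\eps(\xi)=\int_{\Omega_\eps}\int_{\Omega_\eps} H_\eps(x,y)\,w_\xi^p(x)\,w_\xi^p(y)\,dx\,dy\ge \int_{B_{d/2}(\xi)}\int_{B_{d/2}(\xi)} H_\eps(x,y)\,w_\xi^p(x)\,w_\xi^p(y)\,dx\,dy, $$
which is legitimate since $H_\eps\ge0$ (by the Comparison Principle applied to \eqref{EQ H}, as $\Gamma\ge0$) and since $B_d(\xi)\subseteq\Omega_\eps$ because $d=\dist(\xi,\partial\Omega_\eps)\ge2$, so in particular $B_{d/2}(\xi)\subseteq\Omega_\eps$.

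Next, for $x,y\in B_{d/2}(\xi)$ and $d\in[2,\delta/\eps]$ with $\delta$ small, Proposition~\ref{pro below} gives $H_\eps(x,y)\ge c_o\,d^{-(n+4s)}$. Plugging this in,
$$ {\mathcal{H}}_\eps(\xi)\ge \frac{c_o}{d^{n+4s}}\left(\int_{B_{d/2}(\xi)} w_\xi^p(x)\,dx\right)^{2}=\frac{c_o}{d^{n+4s}}\left(\int_{B_{d/2}} w^p(x)\,dx\right)^{2}, $$
after the change of variables $x\mapsto x-\xi$. Since $d\ge2$, the ball $B_{d/2}$ contains $B_1$, so $\int_{B_{d/2}}w^p\ge \int_{B_1}w^p=:m>0$, a fixed positive constant depending only on $w$ (hence only on $n,s,p$), because $w$ is positive and continuous. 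This yields ${\mathcal{H}}_\eps(\xi)\ge c_o m^2\,d^{-(n+4s)}$, which is the claim with $c:=c_o m^2$.

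There is essentially no serious obstacle here: the statement is a straightforward corollary, and the only points requiring care are the bookkeeping ones already flagged — namely that the restriction of the integral is justified by nonnegativity of $H_\eps$ and by the inclusion $B_{d/2}(\xi)\subseteq\Omega_\eps$, and that the smallness condition on $\delta$ is exactly the one inherited from Proposition~\ref{pro below}. One should also note that the constant $c$ is independent of $\eps$ and of $\Omega$ (it depends on $\delta$ only through the hypothesis of Proposition~\ref{pro below}, and on $w$), which is what is needed for the later application in Proposition~\ref{P 3.8}.
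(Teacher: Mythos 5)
Your proposal is correct and follows essentially the same route as the paper: both rely on the nonnegativity of $H_\eps$ to restrict the double integral and then invoke Proposition~\ref{pro below} for the pointwise lower bound $H_\eps(x,y)\ge c_o d^{-(n+4s)}$. The only cosmetic difference is that the paper restricts directly to $B_1(\xi)\times B_1(\xi)$, whereas you restrict to $B_{d/2}(\xi)\times B_{d/2}(\xi)$ and then further shrink to $B_1$ when bounding $\int w^p$; the resulting estimate and constant are the same.
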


\begin{proof} Notice that $B_1(\xi)\subseteq B_{d/2}(\xi)\subseteq\Omega_\eps$.
So, by Proposition \ref{pro below}, $H_\eps(x,y)\ge c_o d^{-(n+4s)}$
if $x,y\in B_1(\xi)$ and
\begin{equation*}
{\mathcal{H}}_\eps(\xi)\ge\int_{B_1(\xi)}\int_{B_1(\xi)} H_\eps(x,y)
\,w_\xi^p(x)\,w_\xi^p(y)\,dx\,dy\ge c_o d^{-(n+4s)}\left( \int_{B_1} w^p(z)\,dz
\right)^2.\qedhere
\end{equation*}
\end{proof}

\begin{cor}\label{H UPPER}
Let $\xi\in\Omega_\eps$ with
$ d:=\dist(\xi,\partial\Omega_\eps)\ge 5$.
Then
$$ {\mathcal{H}}_\eps(\xi)\le \frac{C}{d^{n+4s}},$$
for a suitable $C>0$.
\end{cor}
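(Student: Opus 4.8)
The plan is to split the outer variable in the double integral defining $\mathcal{H}_\eps(\xi)$ according to its distance from $\xi$, exactly as one would in the classical case. Recalling the function $\Pi_\eps$ from \eqref{EQ Pi}, I would write
\[
\mathcal{H}_\eps(\xi)=\int_{\Omega_\eps}\Pi_\eps(x,\xi)\,w_\xi^p(x)\,dx
=\int_{\Omega_\eps\cap B_{d/8}(\xi)}\Pi_\eps(x,\xi)\,w_\xi^p(x)\,dx
+\int_{\Omega_\eps\setminus B_{d/8}(\xi)}\Pi_\eps(x,\xi)\,w_\xi^p(x)\,dx ,
\]
and treat the two terms separately. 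The first term is handled directly by Lemma~\ref{EST PI}: since $d\ge 5\ge 2$, that lemma gives $\Pi_\eps(x,\xi)\le C d^{-(n+4s)}$ for every $x\in B_{d/8}(\xi)$, so the first term is at most $C d^{-(n+4s)}\int_{\R^n}w^p$, which is of the desired order because $\int_{\R^n}w^p<\infty$ by \eqref{decay w}.

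For the second term the idea is that $\Pi_\eps(\cdot,\xi)$ decays away from $\xi$ at least as fast as $\Gamma$. Using the maximum-principle bound $H_\eps(x,y)\le\Gamma(x-y)$ (as in \eqref{HGamma}) one gets $\Pi_\eps(x,\xi)\le(\Gamma\ast w_\xi^p)(x)$, and I would split this convolution at half the distance: on $\{\,|y-\xi|\le|x-\xi|/2\,\}$ one has $|x-y|\ge|x-\xi|/2$, so \eqref{GA} gives $\Gamma(x-y)\le C|x-\xi|^{-(n+2s)}$ and the $y$-integral is controlled by $\int w^p<\infty$; on the complement one has $|y-\xi|>|x-\xi|/2$, so \eqref{decay w} gives $w_\xi^p(y)\le C|x-\xi|^{-p(n+2s)}\le C|x-\xi|^{-(n+2s)}$ and the $y$-integral is controlled by $\int\Gamma=1$ from \eqref{EQ3}. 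This produces $\Pi_\eps(x,\xi)\le C|x-\xi|^{-(n+2s)}$ whenever $|x-\xi|\ge 2$. Multiplying by $w_\xi^p(x)\le C|x-\xi|^{-p(n+2s)}$ (again \eqref{decay w}) and integrating in polar coordinates over $\{\,|x-\xi|\ge d/8\,\}$ yields a bound of order $d^{\,n-(p+1)(n+2s)}$; since $p>1$ we have $n-(p+1)(n+2s)\le-(n+4s)$, so this term too is $\le C d^{-(n+4s)}$.

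These estimates require $d/8\ge 2$, i.e. $d\ge 16$; for $d$ in the remaining range $[5,16]$ the statement is trivial, since the maximum principle and \eqref{EQ3} give the $\eps$- and $\xi$-independent bound
\[
\mathcal{H}_\eps(\xi)\le\int_{\R^n}\!\int_{\R^n}\Gamma(x-y)\,w_\xi^p(x)\,w_\xi^p(y)\,dx\,dy
=\int_{\R^n}w^p(u)\,(\Gamma\ast w^p)(u)\,du\le\|\Gamma\ast w^p\|_{L^\infty(\R^n)}\,\|w^p\|_{L^1(\R^n)}<\infty ,
\]
which is $\le C d^{-(n+4s)}$ because $d$ is bounded from above. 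Summing the contributions gives the corollary. I expect the only genuinely delicate point to be the far-field estimate on $\Pi_\eps$ — namely that the convolution $\Gamma\ast w^p$ inherits the slower polynomial decay $|x|^{-(n+2s)}$ of $\Gamma$ rather than the faster decay of $w^p$; once this is established, the exponent bookkeeping closes precisely because $p\ge 1$, and everything else is routine.
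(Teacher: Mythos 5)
Your proposal is correct, and the route is genuinely different from the paper's. The paper estimates the double integral directly in the variables $(x,y)$, splitting into the cases $x,y\in B_{d/2}(\xi)$, both outside, and one in/one out (with a further subcase $|x-y|\lessgtr d/6$, handled by a translated version of Proposition~\ref{pro upper} centered at $\hat\xi=(x+y)/2$). You instead first integrate out $y$ to write $\mathcal{H}_\eps(\xi)=\int_{\Omega_\eps}\Pi_\eps(x,\xi)\,w_\xi^p(x)\,dx$, and then only split in $x$: near $\xi$ you quote Lemma~\ref{EST PI} verbatim, and far from $\xi$ you establish the pointwise bound $\Pi_\eps(x,\xi)\le C|x-\xi|^{-(n+2s)}$ (for $|x-\xi|\gtrsim d$) by bounding $\Pi_\eps\le\Gamma\ast w_\xi^p$ and splitting the convolution at $|y-\xi|=|x-\xi|/2$. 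That far-field estimate on $\Pi_\eps$ is not stated in the paper but follows by a routine convolution argument; once you have it, the exponent arithmetic $n-(p+1)(n+2s)\le-(n+4s)$ (from $p\ge1$) closes exactly as in the paper's second and third cases. Your version is cleaner and more modular — it reuses $\Pi_\eps$ and Lemma~\ref{EST PI}, which the paper had already set up, and avoids the translation-of-$\xi$ trick — at the price of deriving one additional decay bound. You also correctly note that the split needs $d/8\ge 2$, and dispatch the bounded range $d\in[5,16)$ with the trivial $\Gamma$-bound; that is a sound bookkeeping step the paper does not need because its $B_{d/2}$ decomposition applies down to $d\ge5$.
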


\begin{proof} We split the integral in \eqref{def H cal}
into three contributions: first we treat the case in which $x,y\in B_{d/2}(\xi)$,
then the case in which $x,y\in\R^n\setminus B_{d/2}(\xi)$, and finally
the case in which
$x\in B_{d/2}(\xi)$ and $y\in\R^n\setminus B_{d/2}(\xi)$
(the case in which $y\in B_{d/2}(\xi)$ and $x\in\R^n\setminus B_{d/2}(\xi)$
is, of course, symmetrical to this one).

In the first case, we use Proposition \ref{pro upper}, obtaining that
\begin{equation}\label{H.01}\begin{split}
&\int_{B_{d/2}(\xi)}\,dx \int_{B_{d/2}(\xi)}\,dy \,
H_\eps(x,y)\,w_\xi^p(x)\,w_\xi^p(y)
\\ \le\,& C_o d^{-(n+4s)} \left(\int_{B_{d/2}} w^p(z)\,dz\right)^2\\ \le\,&
C_o d^{-(n+4s)} \left(\int_{\R^n} w^p(z)\,dz\right)^2.
\end{split}\end{equation}
In the second case, we use twice the decay of $w$ given in \eqref{decay w},
\eqref{HGamma} and \eqref{EQ3}, obtaining that
\begin{equation}\label{H.02}
\begin{split}
&\int_{\R^n\setminus B_{d/2}(\xi)}\,dx \int_{\R^n\setminus B_{d/2}(\xi)}\,dy
\,H_\eps(x,y)\,w_\xi^p(x)\,w_\xi^p(y)
\\
\le\,& C^{2p} \int_{\R^n\setminus B_{d/2}(\xi)}\,dx \int_{\R^n\setminus B_{d/2}(\xi)}\,dy
\,|x-\xi|^{-p(n+2s)} |y-\xi|^{-p(n+2s)}\,\Gamma(x-y) \\
\le\,& C^{2p}\, (d/2)^{-p(n+2s)}
\int_{\R^n\setminus B_{d/2}(\xi)}\,dx \int_{\R^n\setminus B_{d/2}(\xi)}\,dy
\,|x-\xi|^{-p(n+2s)} \,\Gamma(x-y)\\
\le\,& C^{2p}\, (d/2)^{-p(n+2s)}
\int_{\R^n\setminus B_{d/2}}\,d\eta \int_{\R^n}\,d\theta
\,|\eta|^{-p(n+2s)} \, \Gamma(\theta)\\
\le\,& C' d^{-2p(n+2s)+n}\\
\le\,& C' d^{-(n+4s)},
\end{split}
\end{equation}
for some $C'>0$.

As for the third case,
we take $x\in B_{d/2}(\xi)$ and $y\in\R^n\setminus B_{d/2}(\xi)$
and we distinguish two subcases: either $|x-y|\le d/6$
or $|x-y|>d/6$.

In the first subcase, we use a translated version
of Proposition \ref{pro upper}. Namely,
if
$x\in B_{d/2}(\xi)$, $y\in\R^n\setminus B_{d/2}(\xi)$
and $|x-y|\le d/6$
we take $\hat\xi:=(x+y)/2$.
Notice that
$$ |\xi-y|\le|\xi-x|+|x-y|\le \frac{d}{2}+\frac{d}{6}$$
and therefore
$$ 2|\hat\xi-\xi|=|(x+y)-2\xi|\le |x-\xi|+|y-\xi|\le \frac{d}{2}+\left(
\frac{d}{2}+\frac{d}{6}\right)=\frac{7d}{6}.$$
As a consequence
\begin{equation}\label{AUX 0}
\hat d:=\dist(\hat\xi,\partial\Omega_\eps)\ge
\dist(\xi,\partial\Omega_\eps)-|\hat\xi-\xi|\ge
d-\frac{7d}{12}=\frac{5d}{12}.\end{equation}
In particular
\begin{equation}\label{AUX 1}
\hat d\ge2.\end{equation}
Also, by construction $x-\hat\xi=\hat\xi-y=(x-y)/2$, and so
$$ |x-\hat\xi|=|\hat\xi-y|=\frac12|x-y|\le\frac{d}{12}.$$
This and \eqref{AUX 0} say that
\begin{equation}\label{AUX 2}
x,y\in B_{d/12}(\hat\xi)\subseteq B_{\hat d/2}(\hat\xi).
\end{equation}
Thanks to \eqref{AUX 1} and \eqref{AUX 2} we can now
use Proposition \ref{pro upper} with $\xi$ and $d$ replaced by $\hat\xi$
and $\hat d$, respectively. So we obtain that, in this case,
\begin{equation}\label{AUX 5}
H_\eps(x,y)\le\frac{C_o}{\hat d^{n+4s}}\le\frac{\hat C}{d^{n+4s}},
\end{equation}
for some $\hat C>0$, where \eqref{AUX 0} was used again in the last
inequality.

So, we make use of \eqref{decay w} and \eqref{AUX 5}
to obtain that
\begin{equation}\label{H.03pre a}
\begin{split}
&\int_{B_{d/2}(\xi)}\,dx \int_{B_{d/6}(x)\setminus B_{d/2}(\xi)}\,dy
\, H_\eps(x,y)\,w_\xi^p(x)\,w_\xi^p(y)
\\ \le\,& C^p\,\hat C \,d^{-(n+4s)}
\int_{B_{d/2}(\xi)}\,dx \int_{B_{d/6}(x)\setminus B_{d/2}(\xi)}\,dy
\,w_\xi^p(x)\,|y-\xi|^{-p(n+2s)}\\
\le\,& C^p\,\hat C \,d^{-(n+4s)}
\int_{\R^n}\,dx \int_{\R^n\setminus B_{d/2}}\,dz
\,w_\xi^p(x)\,|z|^{-p(n+2s)}\\
\le\,& \tilde C\,d^{-4s-p(n+2s)}
\\
\le\,& \tilde C\,d^{-(n+4s)}
.\end{split}\end{equation}
Finally, we consider the subcase in which
$x\in B_{d/2}(\xi)$, $y\in\R^n\setminus B_{d/2}(\xi)$
and $|x-y|>d/6$.
In this circumstance we use \eqref{decay w},
\eqref{HGamma} and~\eqref{GA} to conclude that
\begin{equation}\label{H.03pre b}
\begin{split}
&\int_{B_{d/2}(\xi)}\,dx \int_{{\R^n\setminus B_{d/2}(\xi)}\atop{\{|x-y|>d/6\}}}\,dy
\, H_\eps(x,y)\,w_\xi^p(x)\,w_\xi^p(y)
\\ \le\,& C^p\,
\int_{B_{d/2}(\xi)}\,dx \int_{{\R^n\setminus B_{d/2}(\xi)}\atop{\{|x-y|>d/6\}}}\,dy
\, \Gamma(x-y)\,w_\xi^p(x)\,|y-\xi|^{-p(n+2s)}\\
\le\,& \underline C
\int_{B_{d/2}(\xi)}\,dx \int_{{\R^n\setminus B_{d/2}(\xi)}\atop{\{|x-y|>d/6\}}}\,dy
\, |x-y|^{-(n+2s)}\,w_\xi^p(x)\,|y-\xi|^{-p(n+2s)}\\
\le\,& \underline C (d/6)^{-(n+2s)}
\int_{\R^n}\,dx \int_{{\R^n\setminus B_{d/2}(\xi)}}\,dy
\, w_\xi^p(x)\,|y-\xi|^{-p(n+2s)}\\
\le\, & \overline C d^{-2s-p(n+2s)}\\
\le\,& \overline C d^{-(n+4s)}
\end{split}
\end{equation}
for suitable $\underline C$, $\overline C>0$. {F}rom \eqref{H.03pre a}
and \eqref{H.03pre b} we complete the third case, namely when
$x\in B_{d/2}(\xi)$ and $y\in\R^n\setminus B_{d/2}(\xi)$, by obtaining that
\begin{equation}\label{H.03}
\int_{B_{d/2}(\xi)}\,dx \int_{\R^n\setminus B_{d/2}(\xi)}\,dy
H_\eps(x,y)\,w_\xi^p(x)\,w_\xi^p(y)
\le (\tilde C+\overline C) d^{-(n+4s)}.
\end{equation}
The desired result follows from \eqref{def H cal},
\eqref{H.01}, \eqref{H.02} and \eqref{H.03}.
\end{proof}

For concreteness, we summarize the results of
Corollaries \ref{H BELOW} and \ref{H UPPER} in the following

\begin{pro}\label{P 3.8}
Let $\delta>0$ be suitably small and
\begin{equation}\label{omega delta}
\Omega_{\eps,\delta}:=\{ x\in\Omega_\eps {\mbox{ s.t. }}
\dist(x,\partial\Omega_\eps)>\delta/\eps\}.
\end{equation}
Then ${\mathcal{H}}_\eps$ attains an interior minimum
in $\Omega_{\eps,\delta}$, namely there exist $c_1$, $c_2>0$ such that
$$ \min_{\Omega_{\eps,\delta}} {\mathcal{H}}_\eps\le
c_1\eps^{n+4s}<c_2\,\left(\frac{\eps}\delta\right)^{n+4s}
\le \min_{\partial\Omega_{\eps,\delta}} {\mathcal{H}}_\eps.$$
\end{pro}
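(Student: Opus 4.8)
The plan is to play the lower bound of Corollary~\ref{H BELOW}, valid on the level set $\{\dist(\cdot,\partial\Omega_\eps)=\delta/\eps\}$, against the upper bound of Corollary~\ref{H UPPER} at a point lying well inside $\Omega_\eps$, and to choose $\delta$ small (but fixed, independent of $\eps$) so that the interior value wins. To begin with, I would record that $\mathcal{H}_\eps$ is continuous on $\overline{\Omega_\eps}$: in $H_\eps=\Gamma-G_\eps$ the diagonal singularity of $\Gamma(x-y)$ is cancelled, so $H_\eps$ is bounded on $\Omega_\eps\times\Omega_\eps$, while $\xi\mapsto w_\xi^p$ is continuous into $L^1(\R^n)$ by the decay~\eqref{decay w}; hence $\xi\mapsto\iint_{\Omega_\eps\times\Omega_\eps}H_\eps(x,y)\,w_\xi^p(x)\,w_\xi^p(y)\,dx\,dy$ is continuous. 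Since $\overline{\Omega_{\eps,\delta}}$ is a closed subset of the bounded set $\Omega_\eps$, hence compact, $\mathcal{H}_\eps$ attains its minimum there, and it will suffice to exhibit an interior point of $\Omega_{\eps,\delta}$ at which $\mathcal{H}_\eps$ lies strictly below all of its values on $\partial\Omega_{\eps,\delta}$.

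Next I would bound $\mathcal{H}_\eps$ from below on $\partial\Omega_{\eps,\delta}$. Every $\xi\in\partial\Omega_{\eps,\delta}$ lies in $\Omega_\eps$ and satisfies $\dist(\xi,\partial\Omega_\eps)=\delta/\eps$, because the topological boundary of $\{\dist(\cdot,\partial\Omega_\eps)>\delta/\eps\}$ intersected with $\Omega_\eps$ is contained in the level set $\{\dist(\cdot,\partial\Omega_\eps)=\delta/\eps\}$. Then, as long as $\delta$ is within the smallness range of Corollary~\ref{H BELOW} and $\eps$ is small enough that $\delta/\eps\ge 2$, that corollary applied with $d=\delta/\eps$ gives $\mathcal{H}_\eps(\xi)\ge c_2\,(\eps/\delta)^{n+4s}$ with $c_2>0$ depending only on $n$, $s$, $\Omega$.

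For the matching upper bound, I would fix once and for all an interior point $x_0\in\Omega$, set $\rho:=\dist(x_0,\partial\Omega)>0$, and take $\xi_0:=x_0/\eps\in\Omega_\eps$, so that $\dist(\xi_0,\partial\Omega_\eps)=\rho/\eps$. If $\delta<\rho$ then $\xi_0\in\Omega_{\eps,\delta}$, and if $\eps<\rho/5$ then $\dist(\xi_0,\partial\Omega_\eps)\ge 5$, so Corollary~\ref{H UPPER} gives $\mathcal{H}_\eps(\xi_0)\le C\,(\eps/\rho)^{n+4s}=:c_1\,\eps^{n+4s}$ with $c_1>0$ depending only on $n$, $s$, $\Omega$. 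Since neither $c_1$ nor $c_2$ depends on $\delta$, I would then shrink $\delta$ once more so that in addition $c_1\,\delta^{n+4s}<c_2$, i.e. $c_1\,\eps^{n+4s}<c_2\,(\eps/\delta)^{n+4s}$. Combining the three estimates, for all small $\eps$ one gets
$$
\min_{\overline{\Omega_{\eps,\delta}}}\mathcal{H}_\eps\ \le\ \mathcal{H}_\eps(\xi_0)\ \le\ c_1\,\eps^{n+4s}\ <\ c_2\,\Big(\frac{\eps}{\delta}\Big)^{n+4s}\ \le\ \min_{\partial\Omega_{\eps,\delta}}\mathcal{H}_\eps,
$$
so the minimum of $\mathcal{H}_\eps$ over $\overline{\Omega_{\eps,\delta}}$ cannot be attained on $\partial\Omega_{\eps,\delta}$; it is therefore attained at an interior point of $\Omega_{\eps,\delta}$, and the displayed chain is exactly the claimed inequality.

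The only point I expect to require genuine care is the simultaneous choice of $\delta$: it must be small enough for the hypotheses of Corollary~\ref{H BELOW}, smaller than $\rho$ so that $\Omega_{\eps,\delta}$ is nonempty and contains $\xi_0$, and small enough that $c_1\,\delta^{n+4s}<c_2$ — and these are all compatible precisely because $c_1$, $c_2$ and $\rho$ are fixed and do not depend on $\delta$. The continuity of $\mathcal{H}_\eps$ and the identification of $\partial\Omega_{\eps,\delta}$ with a piece of the distance level set are routine and I would dispatch them quickly.
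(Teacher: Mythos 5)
Your proof is correct and takes essentially the same approach as the paper: play Corollary~\ref{H UPPER} at an interior point of distance $\sim 1/\eps$ from $\partial\Omega_\eps$ against Corollary~\ref{H BELOW} on the level set $\{\dist(\cdot,\partial\Omega_\eps)=\delta/\eps\}$, then shrink $\delta$ (noting the constants do not depend on $\delta$). The only cosmetic differences are that the paper picks the point of \emph{maximal} distance $\delta_\star/\eps$ rather than an arbitrary fixed interior point, and that you spell out the continuity/compactness step for the existence of the minimizer, which the paper leaves implicit.
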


\begin{proof} Let $\delta_\star$ to be the maximal distance that
a point of $\Omega$ may attain from the boundary of $\Omega$.
By scaling, the maximal distance that
a point of $\Omega_\eps$ may attain from the boundary of $\Omega_\eps$
is $\delta_\star/\eps$. Let $\xi_\star$ be such a point, i.e.
$$ d_\star:=\dist(\xi_\star,\partial\Omega_\eps)=\frac{\delta_\star}{\eps}.$$
For $\delta$ sufficiently small we have that $\xi_\star\in\Omega_{\eps,\delta}$.
So, by Corollary \ref{H UPPER},
$$ \min_{\Omega_{\eps,\delta}} {\mathcal{H}}_\eps
\le {\mathcal{H}}_\eps(\xi_\star)\le \frac{C}{d_\star^{n+4s}}=
\frac{C\eps^{n+4s}}{\delta_\star^{n+4s}}=c_1\eps^{n+4s},$$
for a suitable $c_1>0$.
On the other hand, by Corollary \ref{H BELOW},
$$ \min_{\partial\Omega_{\eps,\delta}} {\mathcal{H}}_\eps
\ge \frac{c\eps^{n+4s}}{\delta^{n+4s}},$$
which implies the desired result for $\delta$ appropriately small.
\end{proof}

\section{Estimates on $\bar u_\xi$
and first approximation of the solution}\label{S3:u}

Now we make some estimates on the function $\bar u_\xi$ introduced
in \eqref{EQ bar u}, by using  in particular
the auxiliary function $\Pi_\eps$ in \eqref{EQ Pi}.
For this, we define, for any $\xi\in\Omega_{\eps}$,
\begin{equation}\label{lambda}
\Lambda_{\xi}(x):=\int_{\R^n\setminus \Omega_\eps} w_\xi^p(y) \Gamma(x-y)\,dy.
\end{equation}
We have the following estimate for $\Lambda_{\xi}$:
\begin{lem}\label{79}
Let $x$, $\xi\in\Omega_\eps$.
Assume that $d:=\dist(\xi,\partial\Omega_\eps)\ge1$.
Then
$$ 0\le \Lambda_{\xi}(x) \le \frac{C}{d^{(n+2s)p}},$$
where $C>0$ depends on $n$, $p$, $s$ and $\Omega$.
\end{lem}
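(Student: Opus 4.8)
The statement asserts a pointwise bound on $\Lambda_\xi(x)=\int_{\R^n\setminus\Omega_\eps} w_\xi^p(y)\,\Gamma(x-y)\,dy$, and the key observation is that the domain of integration is far from $\xi$: if $y\in\R^n\setminus\Omega_\eps$ then $|y-\xi|\ge d\ge1$, so the decay estimate \eqref{decay w} applies to $w_\xi^p(y)$ uniformly on that region, giving $w_\xi^p(y)\le C\,|y-\xi|^{-(n+2s)p}\le C\,d^{-(n+2s)p}$. The nonnegativity $\Lambda_\xi\ge0$ is immediate since both $w$ and $\Gamma$ are nonnegative. The plan is therefore to split the integral according to whether $y$ is close to the evaluation point $x$ or not, so that in each piece we can pull out a good power of $d$ and leave behind an integral that converges to a constant.

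First I would pull out the bound $w_\xi^p(y)\le C d^{-(n+2s)p}$ on the \emph{entire} region $\R^n\setminus\Omega_\eps$, reducing matters to showing $\int_{\R^n\setminus\Omega_\eps}\Gamma(x-y)\,dy\le C$. But this is bounded by $\int_{\R^n}\Gamma(x-y)\,dy=\int_{\R^n}\Gamma(z)\,dz=1$ by \eqref{EQ3}. That already gives the result with constant independent of everything except $n,p,s$. So in fact the splitting is not even needed: the single estimate
$$\Lambda_\xi(x)=\int_{\R^n\setminus\Omega_\eps}w_\xi^p(y)\,\Gamma(x-y)\,dy
\le \frac{C}{d^{(n+2s)p}}\int_{\R^n\setminus\Omega_\eps}\Gamma(x-y)\,dy
\le \frac{C}{d^{(n+2s)p}}\int_{\R^n}\Gamma(z)\,dz=\frac{C}{d^{(n+2s)p}}$$
does the job, using \eqref{decay w} for the first inequality (valid because $|y-\xi|\ge d\ge1$ on the region of integration) and \eqref{EQ3} for the last. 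One small point to be careful about: \eqref{decay w} is stated only for $|x|\ge1$, so I should note explicitly that $\R^n\setminus\Omega_\eps$ is contained in $\{y:|y-\xi|\ge d\}\subseteq\{y:|y-\xi|\ge1\}$, which is exactly where the upper bound on $w$ is available; there is no contribution from the region $|y-\xi|<1$ where $w$ is merely bounded by its sup.

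There is essentially no obstacle here — the lemma is a routine consequence of \eqref{decay w} and the normalization \eqref{EQ3}. The only thing worth flagging is the bookkeeping of constants: the constant $C$ genuinely depends on $n,p,s$ (through $\beta$ in \eqref{decay w}), and the dependence on $\Omega$ enters only through the geometric fact that $\dist(\xi,\partial\Omega_\eps)\ge1$ forces $\R^n\setminus\Omega_\eps\subseteq\{|y-\xi|\ge1\}$, which is used to apply the decay estimate; no finer information about $\Omega$ is needed. I would write the proof in three lines along exactly the displayed chain of inequalities above.
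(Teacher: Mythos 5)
Your proof is correct and coincides with the paper's own argument: both bound $w_\xi^p(y)\le Cd^{-(n+2s)p}$ uniformly on $\R^n\setminus\Omega_\eps$ using \eqref{decay w} (valid since $|y-\xi|\ge d\ge1$ there), then use $\int_{\R^n\setminus\Omega_\eps}\Gamma(x-y)\,dy\le\int_{\R^n}\Gamma=1$ from \eqref{EQ3}. The initial thought of splitting near/far from $x$ is correctly recognized as unnecessary; the one-line chain you display is exactly what the paper does.
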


\begin{proof} If $y\in\R^n\setminus \Omega_\eps$ then $|y-\xi|\ge \dist(\xi,\partial\Omega_\eps)\ge1$ therefore, by \eqref{decay w},
$$ |w_\xi(y)|=|w(y-\xi)|\le C|y-\xi|^{-(n+2s)} \le Cd^{-(n+2s)}.$$
As a consequence of this, and recalling \eqref{EQ3}, we deduce that
\begin{equation*}
\int_{\R^n\setminus \Omega_\eps} w_\xi^p(y) \Gamma(x-y)\,dy
\le \Big(Cd^{-(n+2s)}\Big)^p\int_{\R^n\setminus \Omega_\eps}\Gamma(x-y)\,dy\le
\Big(Cd^{-(n+2s)}\Big)^p.
\qedhere\end{equation*}
\end{proof}

\begin{lem}\label{lem 5.2}
Let $x$, $\xi\in\Omega_\eps$.
Assume that $d:=\dist(\xi,\partial\Omega_\eps)\ge 1$.
Then
\begin{equation}\label{24}
\bar u_\xi(x)=w_\xi(x)-
\Lambda_{\xi}(x)-
\Pi_\eps(x,\xi)\end{equation}
and
\begin{equation}\label{25}
0\le
w_\xi(x)-\bar u_\xi(x)-
\Pi_\eps(x,\xi) \le \frac{C}{d^{(n+2s)p}},\end{equation}
for a suitable $C>0$ that
depends on $n$, $p$, $s$ and $\Omega$.
\end{lem}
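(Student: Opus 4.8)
The plan is to observe that \eqref{24} is simply the representation formula for $\bar u_\xi$ via the Green function, and then that \eqref{25} follows immediately from it by dropping the manifestly nonnegative term $\Lambda_\xi(x)$ and invoking Lemma~\ref{79}. The only real content is establishing \eqref{24}, and for that I would proceed by verifying that the right-hand side solves the defining problem \eqref{EQ bar u} for $\bar u_\xi$, then conclude by uniqueness.

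First I would write $v(x):=w_\xi(x)-\Lambda_\xi(x)-\Pi_\eps(x,\xi)$ and check the exterior condition: for $x\in\R^n\setminus\Omega_\eps$ we need $v(x)=0$. Recalling \eqref{EQ Pi}, \eqref{lambda} and \eqref{EQ Rob}, we have $\Pi_\eps(x,\xi)=\int_{\Omega_\eps}H_\eps(x,y)w_\xi^p(y)\,dy$ and $\Lambda_\xi(x)=\int_{\R^n\setminus\Omega_\eps}\Gamma(x-y)w_\xi^p(y)\,dy$. For $x$ outside $\Omega_\eps$, \eqref{EQ H} gives $H_\eps(x,y)=\Gamma(x-y)$ for $y\in\R^n$ (with the roles as in \eqref{EQ H}; one must be a little careful that \eqref{EQ H} is stated with $x$ the active variable and $y$ fixed, so here I would use the symmetry of $G_\eps$, equivalently of $H_\eps$, in its two arguments — this is standard for such Green functions). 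Hence for $x\notin\Omega_\eps$,
\[
\Lambda_\xi(x)+\Pi_\eps(x,\xi)=\int_{\R^n}\Gamma(x-y)w_\xi^p(y)\,dy=\big((-\Delta)^s+1\big)^{-1}w_\xi^p\,(x),
\]
and since $w_\xi$ solves \eqref{EQ w} (translated), $\big((-\Delta)^s+1\big)^{-1}w_\xi^p=w_\xi$, so $\Lambda_\xi(x)+\Pi_\eps(x,\xi)=w_\xi(x)$ and therefore $v(x)=0$ outside $\Omega_\eps$, as required.

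Next I would check the equation inside $\Omega_\eps$. Applying $(-\Delta)^s+1$ to $v$: the term $\big((-\Delta)^s+1\big)w_\xi=w_\xi^p$ by \eqref{EQ w}; the term $\big((-\Delta)^s+1\big)\Pi_\eps(\cdot,\xi)=0$ in $\Omega_\eps$ because $H_\eps(x,y)$ is $(-\Delta)^s+1$-harmonic in $x\in\Omega_\eps$ by \eqref{EQ H} (differentiating under the integral in $y$); and the term $\big((-\Delta)^s+1\big)\Lambda_\xi$ in $\Omega_\eps$ equals $\int_{\R^n\setminus\Omega_\eps}w_\xi^p(y)\big((-\Delta)^s+1\big)\Gamma(x-y)\,dy=0$ by exactly the computation \eqref{EQ4}--\eqref{EQ:beta} used for $\beta_\xi$. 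Hence $\big((-\Delta)^s+1\big)v=w_\xi^p$ in $\Omega_\eps$, so $v$ and $\bar u_\xi$ solve the same linear problem \eqref{EQ bar u}; by uniqueness (comparison principle for $(-\Delta)^s+1$) we get $v=\bar u_\xi$, which is \eqref{24}. Finally, \eqref{25} is immediate: from \eqref{24}, $w_\xi(x)-\bar u_\xi(x)-\Pi_\eps(x,\xi)=\Lambda_\xi(x)$, and Lemma~\ref{79} bounds this between $0$ and $C d^{-(n+2s)p}$.

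The only delicate point — and the one I would flag as the main obstacle — is the justification of differentiating under the integral sign and of the decay/integrability needed to manipulate these convolutions against the fundamental solution $\Gamma$, together with the symmetry of $H_\eps$ in its two arguments; once those standard facts are in place (using \eqref{GA}, \eqref{decay w}, and $\Gamma\in L^1$ from \eqref{EQ3}), the proof is a direct verification plus uniqueness.
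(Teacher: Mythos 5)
Your proof is correct and reaches the same formula by essentially the same algebraic facts, but you argue in the opposite direction from the paper. The paper \emph{derives} \eqref{24} forward, via the Green-function representation: starting from $\bar u_\xi(x)=\int_{\Omega_\eps}\bar u_\xi(z)\,((-\Delta)^s+1)G_\eps(z,x)\,dz$, integrating by parts (using formula $(1.5)$ of \cite{SerraRos-Dirichlet-regularity}), invoking the symmetry of $G_\eps$, and then decomposing $G_\eps=\Gamma-H_\eps$ and splitting the $\Gamma$-convolution over $\Omega_\eps$ as $\int_{\R^n}-\int_{\R^n\setminus\Omega_\eps}$. You instead \emph{verify} that $v:=w_\xi-\Lambda_\xi-\Pi_\eps(\cdot,\xi)$ solves the defining problem \eqref{EQ bar u} and conclude by uniqueness. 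Both rely on $w=\Gamma*w^p$ and the structural identities \eqref{EQ Gamma}, \eqref{EQ H}, \eqref{EQ:beta}, and both are standard; yours has the mild virtue that the exterior check is direct, the paper's that it produces the formula rather than guessing it.

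One small remark: the detour through the symmetry of $H_\eps$ that you flag is actually unnecessary for the exterior verification. In $\Pi_\eps(x,\xi)=\int_{\Omega_\eps}H_\eps(x,y)\,w_\xi^p(y)\,dy$ you are evaluating at a fixed $x\notin\Omega_\eps$ and integrating in $y$; in \eqref{EQ H} the second line is stated with $y$ fixed and $x$ the free variable, so it applies literally and gives $H_\eps(x,y)=\Gamma(x-y)$ for every $y$, no symmetry needed. (The paper \emph{does} need symmetry, but in its representation-formula step, where the roles of the two arguments of $G_\eps$ get swapped.) The rest of your verification — $((-\Delta)^s+1)\Pi_\eps(\cdot,\xi)=0$ and $((-\Delta)^s+1)\Lambda_\xi=0$ in $\Omega_\eps$, then uniqueness — is sound, and \eqref{25} indeed follows immediately from \eqref{24} together with Lemma~\ref{79} and the nonnegativity of $\Lambda_\xi$.
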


\begin{proof} First of all, notice that $w=w^p * \Gamma$, since
they both satisfy \eqref{EQ w}, thanks to \eqref{EQ Gamma},
and uniqueness holds. As a consequence
\begin{equation}\label{76}
w_\xi(x)=w(x-\xi)=\int_{\R^n} w^p(x-\xi-y) \Gamma(y)\,dy=
\int_{\R^n} w^p_\xi(y) \Gamma(x-y)\,dy.\end{equation}
Similarly, recalling \eqref{EQ bar u}, \eqref{EQ G}
and the symmetry of $G_\eps$, we see that
\begin{eqnarray*}
\bar u_\xi(x) &=& \int_{\Omega_\eps} \bar u_\xi(z) \delta_x(z)\,dz \\
&=& \int_{\Omega_\eps} \bar u_\xi(z) ( (-\Delta)^s+1) G_\eps(z,x)\,dz \\
&=& \int_{\Omega_\eps} w_\xi^p(z) G_\eps(x,z)\,dz
\\ &=& \int_{\Omega_\eps} w_\xi^p(z) \Gamma(x-z)\,dz-
\int_{\Omega_\eps} w_\xi^p(z) H_\eps(x,z)\,dz\\
&=&\int_{\R^n} w_\xi^p(z) \Gamma(x-z)\,dz-
\int_{\R^n\setminus \Omega_\eps} w_\xi^p(z) \Gamma(x-z)\,dz-
\int_{\Omega_\eps} w_\xi^p(z) H_\eps(x,z)\,dz.
\end{eqnarray*}
This, \eqref{EQ Pi}, \eqref{lambda} and \eqref{76} imply \eqref{24},
which, together with Lemma \ref{79}, implies \eqref{25}.
\end{proof}

\begin{lem}\label{lem1}
Let $\xi\in\Omega_\eps$. Assume that $d:=\dist(\xi,\partial\Omega_\eps)\ge 2$. Then
$$ \int_{\Omega_{\eps}} w_{\xi}^{p-1}(x)\, \Lambda_{\xi}(x)\, \Pi_{\eps}(x,\xi)\, dx \le
\frac{C}{d^{(n+2s)p+2s}}, $$
for a suitable $C>0$ that depends on $n$, $p$, $s$ and $\Omega$.
\end{lem}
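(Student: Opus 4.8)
The plan is to estimate the integrand factor by factor, exploiting the three bounds already at our disposal: the decay of $w$ from \eqref{decay w}, the bound on $\Lambda_\xi$ from Lemma~\ref{79}, and the bound on $\Pi_\eps$ from Lemma~\ref{EST PI}. The difficulty is that Lemma~\ref{EST PI} only gives the clean bound $\Pi_\eps(x,\xi)\le C d^{-(n+4s)}$ for $x\in B_{d/8}(\xi)$, so I would split the domain of integration into $B_{d/8}(\xi)$ and its complement, and on the complement replace $\Pi_\eps$ by the cruder pointwise bound coming from $H_\eps(x,y)\le\Gamma(x-y)$ as in \eqref{HGamma}, while instead gaining strong decay from $w_\xi^{p-1}(x)$.

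First I would handle the inner region $x\in B_{d/8}(\xi)\cap\Omega_\eps$. Here Lemma~\ref{EST PI} gives $\Pi_\eps(x,\xi)\le C d^{-(n+4s)}$ and Lemma~\ref{79} gives $\Lambda_\xi(x)\le C d^{-(n+2s)p}$, both uniform in $x$. Pulling these out of the integral leaves $\int_{\R^n} w_\xi^{p-1}(x)\,dx$, which is a finite constant depending only on $n,p,s$ (since $p>1$ and $w$ decays like $|x|^{-(n+2s)}$, so $w^{p-1}$ is integrable). This contributes $\le C d^{-(n+2s)p-(n+4s)}$, which is much smaller than the claimed bound $d^{-(n+2s)p-2s}$ since $d\ge2$ and $n+4s>2s$. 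So the inner region is harmless.

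The outer region $x\in\Omega_\eps\setminus B_{d/8}(\xi)$ is where the real work is. There I would keep $\Lambda_\xi(x)\le C d^{-(n+2s)p}$ from Lemma~\ref{79}, and for $\Pi_\eps(x,\xi)=\int_{\Omega_\eps}H_\eps(x,y)w_\xi^p(y)\,dy$ I would bound it by $\int_{\R^n}\Gamma(x-y)w_\xi^p(y)\,dy = w_\xi(x)$ using \eqref{HGamma} and the convolution identity \eqref{76}; hence $\Pi_\eps(x,\xi)\le w_\xi(x)$ on all of $\Omega_\eps$. The outer integral is then dominated by $C d^{-(n+2s)p}\int_{\R^n\setminus B_{d/8}(\xi)} w_\xi^{p-1}(x)\,w_\xi(x)\,dx = C d^{-(n+2s)p}\int_{\R^n\setminus B_{d/8}} w^p(z)\,dz$. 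Using \eqref{decay w}, $w^p(z)\le \beta^p|z|^{-p(n+2s)}$ for $|z|\ge d/8\ge1/4$ (take $d$-dependence into account; for $d\ge2$ this is $\ge1$), so $\int_{\R^n\setminus B_{d/8}}|z|^{-p(n+2s)}\,dz \le C d^{n-p(n+2s)}$ by integrating in polar coordinates, which converges precisely because $p(n+2s)>n$. This yields a contribution $\le C d^{-(n+2s)p}\cdot d^{n-p(n+2s)} = C d^{n-2p(n+2s)}$, and since $2p(n+2s)-n = (n+2s)p + \big(p(n+2s)-n\big) \ge (n+2s)p + 2s$ (using $p(n+2s)-n\ge (n+2s)-n=2s$ when $p\ge1$), this is $\le C d^{-(n+2s)p-2s}$, exactly as claimed.

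Putting the two regions together gives the stated estimate with a constant depending only on $n,p,s$ and (through Lemma~\ref{79}) on $\Omega$. I expect the only mildly delicate point to be bookkeeping the exponents in the outer region to confirm that $n-2p(n+2s)\le -(n+2s)p-2s$, i.e. that the gain from the tail integral of $w^p$ is enough; but this is just the elementary inequality $p(n+2s)\ge n+2s > 2s$ for $p>1$, so no real obstacle arises. The convolution identity $w=w^p*\Gamma$ — already recorded in \eqref{76} — is the one structural fact that makes the outer estimate clean, since it converts $\int\Gamma(x-y)w_\xi^p(y)\,dy$ into $w_\xi(x)$ rather than leaving an awkward double integral.
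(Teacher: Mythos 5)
Your proof is correct in its overall strategy but takes a genuinely different route from the paper's, and it contains one flawed claim that needs repair. The paper writes $\Lambda_\xi$ as its defining double integral, pulls out $w_\xi^p(y)\le Cd^{-(n+2s)p}$ for $y\notin\Omega_\eps$, and then splits on $|x-\xi|\lessgtr d/4$; the key gain in the near region is that $|x-y|\ge d/4$ whenever $y\notin\Omega_\eps$, so $\int\Gamma(x-y)\,dy\lesssim d^{-2s}$ there. You instead split on $|x-\xi|\lessgtr d/8$ solely to activate Lemma~\ref{EST PI}, keep $\Lambda_\xi\le Cd^{-(n+2s)p}$ as a uniform pointwise bound throughout, and handle the far region with $\Pi_\eps\le w_\xi$ and the tail of $\int w^p$. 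Your version is more modular (it leans on the already-established lemmata rather than re-opening the double integral) and in fact yields a strictly sharper exponent than the one claimed; both mechanisms are valid.

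The gap: in the inner region you assert that $\int_{\R^n}w_\xi^{p-1}\,dx$ is finite ``since $p>1$''. This is false in general: integrability of $w^{p-1}$ over $\R^n$ requires $(p-1)(n+2s)>n$, which fails when $p$ is close to $1$, and the standing hypothesis $p\in\bigl(1,\frac{n+2s}{n-2s}\bigr)$ certainly permits such $p$. The correct statement is that the integral over $B_{d/8}(\xi)$ (not $\R^n$) is at most $Cd^{(n-(p-1)(n+2s))_+}$ (with a harmless logarithm in the borderline case). Inserting this, the inner contribution is at worst
$d^{-(n+2s)p-(n+4s)}\cdot d^{\,n-(p-1)(n+2s)} = d^{-2(n+2s)p+n-2s}$,
which is still $\le d^{-(n+2s)p-2s}$ because $(n+2s)p\ge n$. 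So the lemma survives, but only because the strong $d^{-(n+4s)}$ factor from Lemma~\ref{EST PI} absorbs the possible polynomial growth of $\int_{B_{d/8}}w_\xi^{p-1}$; the integrability claim as written cannot be invoked. There is also a minor slip worth fixing: $d\ge 2$ gives $d/8\ge 1/4$, not $\ge 1$, so \eqref{decay w} does not apply pointwise on all of $\R^n\setminus B_{d/8}(\xi)$ when $2\le d<8$; this is harmless because $w^p\in L^1(\R^n)$ and $d$ is then bounded, but it deserves a sentence.
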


\begin{proof}
First of all, we notice that for $y\in\R^n\setminus\Omega_{\eps}$ we have
$|y-\xi|\ge d>1$, and therefore, thanks to~\eqref{decay w},
$$ |w_{\xi}(y)|=|w(y-\xi)|\le C|y-\xi|^{-(n+2s)}\le C\, d^{-(n+2s)}. $$
Hence, recalling \eqref{lambda},
\begin{equation}\begin{split}\label{lam}
&\int_{\Omega_\eps}w_{\xi}^{p-1}(x)\Lambda_\xi(x)\, \Pi_\eps(x,\xi)\, dx \\ =& \int_{\Omega_\eps}w_{\xi}^{p-1}(x)\left(\int_{\R^n\setminus\Omega_\eps}w_\xi^p(y)\Gamma(x-y)\, dy\right)\Pi_\eps(x,\xi)\, dx \\
\le&\, C d^{-(n+2s)p}\int_{\Omega_\eps}dx \int_{\R^n\setminus\Omega_\eps } dy \, w_\xi^{p-1}(x)\Gamma(x-y)\Pi_{\eps}(x,\xi) \\
\le&\,  C d^{-(n+2s)p}\int_{\left\lbrace|x-\xi|\le d/4\right\rbrace}dx \int_{\R^n\setminus\Omega_\eps } dy \, w_\xi^{p-1}(x)\Gamma(x-y)\Pi_{\eps}(x,\xi) \\
&\qquad + C d^{-(n+2s)p}\int_{\left\lbrace |x-\xi|>d/4\right\rbrace}dx \int_{\R^n\setminus\Omega_\eps } dy \, w_\xi^{p-1}(x)\Gamma(x-y)\Pi_{\eps}(x,\xi) \\
=:& I_1+I_2.
\end{split}\end{equation}
Now, thanks to \eqref{25}, we have that $\Pi_\eps(x,\xi)\le w_{\xi}(x)$,
and so
\begin{equation}\label{dopo}
w_{\xi}^{p-1}(x)\, \Pi_\eps(x,\xi)\le w_{\xi}^{p}(x).
\end{equation}
Therefore, $I_1$ can be estimated as follows:
\begin{eqnarray*}
I_1&\le& C d^{-(n+2s)p} \int_{\left\lbrace|x-\xi|\le d/4\right\rbrace}dx \int_{\R^n\setminus\Omega_\eps} dy \, w_\xi^{p}(x)\, \Gamma(x-y) \\
&\le& C d^{-(n+2s)p} \int_{\left\lbrace|x-\xi|\le d/4\right\rbrace}dx \int_{\R^n\setminus B_{d/2}(\xi)} dy \, w_\xi^{p}(x)\, \Gamma(x-y).
\end{eqnarray*}
We notice that, in the above domain,
$$ |x-y|\ge|y-\xi|-|x-\xi|\ge\frac{d}{2}-\frac{d}{4}=\frac{d}{4}, $$
hence
$$ \Gamma(x-y)\le \frac{C}{|x-y|^{n+2s}}.$$
Now, we can compute in polar coordinates the following integral
$$ \int_{\R^n\setminus B_{d/2}(\xi)} \frac{1}{|x-y|^{n+2s}}dy\le \frac{C}{d^{2s}}, $$
up to renaming the constant $C$.
This and the fact that $w_{\xi}^p$ is integrable give
\begin{equation}\begin{split}\label{I1}
I_1&\le C_1 d^{-(n+2s)p} \int_{\left\lbrace|x-\xi|\le d/4\right\rbrace} dx \int_{\R^n\setminus B_{d/2}(\xi)} dy \, \frac{w_\xi^{p}(x)}{|x-y|^{n+2s}}\\
&\le C_2 d^{-(n+2s)p} d^{-2s}\int_{\left\lbrace|x-\xi|\le d/4\right\rbrace}w_\xi^{p}(x)\, dx\\
&\le C_3 d^{-(n+2s)p} d^{-2s},
\end{split}\end{equation}
for suitable $C_1,C_2,C_3>0$.
Now, if $|x-\xi|>d/4$ then, thanks to \eqref{decay w},
$$ |w_{\xi}(x)|=|w(x-\xi)|\le C|x-\xi|^{-(n+2s)}. $$
This together with \eqref{dopo} and \eqref{EQ3} implies that
\begin{equation}\begin{split}\label{I2}
I_2&\le C d^{-(n+2s)p}\int_{\left\lbrace |x-\xi|>d/4\right\rbrace}dx \int_{\R^n\setminus\Omega_\eps } dy \, w_{\xi}^{p}(x)\Gamma(x-y) \\ &\le
C' d^{-(n+2s)p}\int_{\left\lbrace |x-\xi|>d/4\right\rbrace}dx \int_{\R^n\setminus\Omega_\eps} dy \, \frac{\Gamma(x-y)}{|x-\xi|^{(n+2s)p}} \\ &\le
C'' d^{-(n+2s)p} d^{-(n+2s)p+n},
\end{split}\end{equation}
for suitable $C',C''>0$,
where in the last inequality we have computed the integral in~$dx$
in polar coordinates and used~\eqref{EQ3}.
Putting together \eqref{I1} and \eqref{I2} and recalling~\eqref{lam}
we get the desired estimate.
\end{proof}

\begin{lem}\label{lem2}
Let $\xi\in\Omega_\eps$ and $\tilde p:=\min\{p,2\}$. Assume that $d:=\dist(\xi,\partial\Omega_\eps)\ge 2$. Then
$$ \int_{\Omega_{\eps}} w_{\xi}^{p-1}(x)\, \Pi_{\eps}^2(x,\xi)\, dx \le
\frac{C}{d^{\tilde p(n+2s)+2s}}, $$
for a suitable $C>0$ that depends on $n$, $p$, $s$ and $\Omega$.
\end{lem}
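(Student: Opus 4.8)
The plan is to mimic the structure of the proof of Lemma~\ref{lem1}, splitting the domain of integration according to whether $x$ is close to or far from $\xi$, and to use the pointwise bound $\Pi_\eps(x,\xi)\le w_\xi(x)$ coming from \eqref{25} together with the sharper bound $\Pi_\eps(x,\xi)\le C d^{-(n+4s)}$ near $\xi$ coming from Lemma~\ref{EST PI}. First I would write
$$ \int_{\Omega_\eps} w_\xi^{p-1}(x)\,\Pi_\eps^2(x,\xi)\,dx
= \int_{\{|x-\xi|\le d/8\}} w_\xi^{p-1}(x)\,\Pi_\eps^2(x,\xi)\,dx
+ \int_{\{|x-\xi|>d/8\}} w_\xi^{p-1}(x)\,\Pi_\eps^2(x,\xi)\,dx
=: J_1+J_2. $$
On $\{|x-\xi|\le d/8\}$ I would apply Lemma~\ref{EST PI}, which gives $\Pi_\eps(x,\xi)\le C d^{-(n+4s)}$, hence $\Pi_\eps^2(x,\xi)\le C d^{-2(n+4s)}$; since $w_\xi^{p-1}$ is integrable (because $(p-1)(n+2s)>n$ is not needed here —$w$ itself is in $L^q$ for all $q\ge1$ by \eqref{decay w}, and $p-1>0$), we get $J_1\le C d^{-2(n+4s)}$, which is far smaller than the claimed $d^{-\tilde p(n+2s)-2s}$ since $2(n+4s)\ge \tilde p(n+2s)+2s$ for all admissible $p$.

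For $J_2$, on $\{|x-\xi|>d/8\}$ I would use \eqref{decay w} to bound $w_\xi^{p-1}(x)\le C|x-\xi|^{-(p-1)(n+2s)}$, and then use $\Pi_\eps(x,\xi)\le w_\xi(x)\le C|x-\xi|^{-(n+2s)}$ from \eqref{25} and \eqref{decay w}, so that the integrand is controlled by $C|x-\xi|^{-(p-1)(n+2s)}|x-\xi|^{-2(n+2s)} = C|x-\xi|^{-(p+1)(n+2s)}$. Computing $\int_{\{|x-\xi|>d/8\}}|x-\xi|^{-(p+1)(n+2s)}\,dx$ in polar coordinates yields $C d^{n-(p+1)(n+2s)}$, and since $p>1$ this is a negative power; one checks $n-(p+1)(n+2s)\le -\tilde p(n+2s)-2s$ when $p\le 2$ (then $\tilde p=p$ and the inequality reads $n\le(n+2s)-2s=n$, equality) and when $p>2$ (then $\tilde p=2$, and the inequality reads $n-(p+1)(n+2s)\le -2(n+2s)-2s$, i.e. $(p-1)(n+2s)\ge n+2s$, true since $p>2$). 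Actually a cleaner way to handle the borderline: on $J_2$ I would only use $\Pi_\eps\le w_\xi$ to \emph{one} power and keep one factor of $\Pi_\eps$ as it is, writing $w_\xi^{p-1}\Pi_\eps^2 = (w_\xi^{p-1}\Pi_\eps)\,\Pi_\eps \le w_\xi^{p}\,\Pi_\eps$ by \eqref{dopo}, and then bound $\Pi_\eps(x,\xi)$ crudely by $\Gamma * w_\xi^p \le w_\xi$, or split $\Pi_\eps$ itself via Lemma~\ref{EST PI} where applicable; but the direct estimate above already suffices.

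The only point requiring a little care — and the step I expect to be the main (mild) obstacle — is the sharpness of the exponent when $\tilde p = \min\{p,2\}$ differs from $p$, i.e. in the superquadratic regime $p>2$. There the naive bound $w_\xi^{p-1}\Pi_\eps^2\le w_\xi^{p-1}\cdot w_\xi^2=w_\xi^{p+1}$ loses nothing but must be traded against the requirement that the final exponent be exactly $\tilde p(n+2s)+2s=2(n+2s)+2s$; this forces one to \emph{not} use the full decay of $w_\xi^{p-1}$ in the far region but rather to integrate $|x-\xi|^{-(p+1)(n+2s)}$ and observe that the resulting power $(p+1)(n+2s)-n$ exceeds $2(n+2s)+2s$ precisely because $p>2$ gives $(p+1)(n+2s)-n \ge 3(n+2s)-n = 2n+6s > 2(n+2s)+2s = 2n+6s$ — equality again, so in fact the estimate is tight and no slack is wasted. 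When $p\le 2$ the same computation gives exponent $(p+1)(n+2s)-n$ which one must compare with $p(n+2s)+2s$; the difference is $(n+2s)-n-2s=0$, again tight. In all cases, then, combining $J_1$ and $J_2$ and relabeling constants yields the claimed bound, and the proof concludes as in Lemma~\ref{lem1}.
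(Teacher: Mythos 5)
Your decomposition into $J_1$ (near $\xi$, radius $d/8$) and $J_2$ (far from $\xi$) is exactly the paper's decomposition, and your treatment of $J_2$ (bound $\Pi_\eps\le w_\xi$, use the decay \eqref{decay w}, integrate in polar coordinates) coincides with the paper's, including the exponent comparisons. The gap is in $J_1$, where you assert that $w_\xi^{p-1}$ is integrable over $\R^n$ and justify this with ``$w\in L^q$ for all $q\ge1$ and $p-1>0$.'' That justification is incorrect: $w^{p-1}\in L^1(\R^n)$ requires $(p-1)(n+2s)>n$, and $p-1$ is \emph{not} constrained to be $\ge 1$; for $p\in\bigl(1,\,1+\tfrac{n}{n+2s}\bigr]$, which is a nonempty subrange of the admissible $p\in(1,\tfrac{n+2s}{n-2s})$, one has $w^{p-1}\notin L^1(\R^n)$, so your stated bound $J_1\le C\,d^{-2(n+4s)}$ does not follow as written.

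The paper handles this by splitting into the cases $p\ge 2$ and $1<p<2$. For $p\ge 2$ one has $(p-1)(n+2s)\ge n+2s>n$ and the argument you gave is valid. For $1<p<2$, the paper writes $w_\xi^{p-1}\Pi_\eps^2=w_\xi^{p-1}\Pi_\eps^{2-p}\Pi_\eps^p\le w_\xi^{p-1}w_\xi^{2-p}\Pi_\eps^p=w_\xi\,\Pi_\eps^p$ (using $\Pi_\eps\le w_\xi$ from \eqref{25}), then applies Lemma~\ref{EST PI} to $\Pi_\eps^p$ and uses that $w_\xi$ itself (not $w_\xi^{p-1}$) is integrable, producing $I_1\le C\,d^{-p(n+4s)}$, which is still $\le C\,d^{-p(n+2s)-2s}$. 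Your argument could alternatively be salvaged by noting that the $J_1$ integral is over the \emph{bounded} ball $B_{d/8}(\xi)$, so $\int_{B_{d/8}(\xi)}w_\xi^{p-1}\lesssim \max\{1,\,d^{\,n-(p-1)(n+2s)}\}$, and one checks that $d^{-2(n+4s)}\cdot d^{\,n-(p-1)(n+2s)}\le d^{-\tilde p(n+2s)-2s}$ (with slack $d^{-4s}$) when $p<2$; but you did not make this observation, and as written the step relying on global integrability of $w_\xi^{p-1}$ is false for small $p$.
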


\begin{proof} First we observe that
\begin{equation}\label{p til}
\begin{split}
{\mbox{if $p\ge2$ then }}& 2(n+4s)\ge \tilde p(n+4s)=\tilde p(n+2s+2s)>\tilde p(n+2s)+2s,\\
{\mbox{if $1<p<2$ then }}& p(n+4s)\ge \tilde p(n+4s)=\tilde p(n+2s+2s)>\tilde p(n+2s)+2s,\\
&(n+2s)(p+1)-n=p (n+2s)+2s\ge \tilde p (n+2s)+2s.
\end{split}
\end{equation}
Now, we can write the integral that we want to estimate as
\begin{equation}\begin{split}\label{pi5}
&\int_{\Omega_{\eps}} w_{\xi}^{p-1}(x)\, \Pi_{\eps}^2(x,\xi)\, dx \\
=& \int_{\left\lbrace|x-\xi|\le d/8\right\rbrace}w_{\xi}^{p-1}(x)\, \Pi_{\eps}^2(x,\xi)\, dx+ \int_{\left\lbrace|x-\xi|>d/8 \right\rbrace} w_{\xi}^{p-1}(x)\, \Pi_{\eps}^2(x,\xi)\, dx \\
=:& I_1 + I_2.
\end{split}\end{equation}
If $p\ge2$, to estimate $I_1$ we use Lemma~\ref{EST PI} together with the fact that
$w_{\xi}^{p-1}$ is integrable to get
\begin{equation}\label{pi3}
I_1\le \frac{C}{d^{2(n+4s)}}.
\end{equation}
If $1<p<2$, we notice that, thanks to \eqref{25},
$\Pi_\eps(x,\xi)\le w_\xi(x)$ and so
$$ w_{\xi}^{p-1}(x)\, \Pi_{\eps}^2(x,\xi)= w_{\xi}^{p-1}(x)\, \Pi_{\eps}^{2-p}(x,\xi)\, \Pi_{\eps}^p(x,\xi)\le w_{\xi}^{p-1}(x)\, w_{\xi}^{2-p}(x)\, \Pi_{\eps}^p(x,\xi) =w_{\xi}(x)\, \Pi_{\eps}^p(x,\xi). $$
Therefore, using again Lemma~\ref{EST PI} and the fact that $w_\xi$ is integrable, we obtain
\begin{equation}\begin{split}\label{pi4}
I_1&\le \int_{\left\lbrace|x-\xi|\le d/8\right\rbrace}w_{\xi}(x)\, \Pi_{\eps}^p(x,\xi)\, dx\\
&\le \frac{C}{d^{p(n+4s)}}\int_{\left\lbrace|x-\xi|\le d/8\right\rbrace}w_{\xi}(x)\, dx\\
&\le \frac{C}{d^{p(n+4s)}}.
\end{split}\end{equation}
To estimate $I_2$, we use \eqref{25} to obtain that $\Pi_{\eps}(x,\xi)\le w_{\xi}(x)$,
and so $w_{\xi}^{p-1}(x)\Pi_\eps^2(x,\xi)\le w_\xi^{p+1}(x)$. This implies that
$$ I_2\le \int_{\left\lbrace|x-\xi|>d/8 \right\rbrace} w_{\xi}^{p+1}(x)\, dx. $$
Since $|x-\xi|>d/8$, thanks to \eqref{decay w}, we have that
$$ |w_\xi(x)|=|w(x-\xi)|\le C|x-\xi|^{-(n+2s)}. $$
Therefore, computing the integral in polar coordinates,
\begin{equation}\label{pi2}
I_2 \le \int_{\left\lbrace|x-\xi|>d/8 \right\rbrace}\frac{C}{|x-\xi|^{(n+2s)(p+1)}}dx
\le \frac{C}{d^{(n+2s)(p+1)-n}}.
\end{equation}
Putting together \eqref{pi3}, \eqref{pi4} and \eqref{pi2} and recalling~\eqref{pi5},
we obtain the result (one can use \eqref{p til} to
obtain a simpler common exponent).
\end{proof}

\begin{lem}\label{lem3}
Let $\delta\in(0,1)$. Let $\xi\in\Omega_\eps$ such
that $d:=\dist(\xi,\partial\Omega_\eps)\ge \delta/\eps$. Then
$$ \int_{\Omega_{\eps}} w_{\xi}^{p-1}(x)\, \Lambda_{\xi}^2(x)\, dx \le
C\, \eps^{2p(n+2s)-n}, $$
for a suitable $C>0$ that depends on $n$, $p$, $s$, $\delta$ and $\Omega$.
\end{lem}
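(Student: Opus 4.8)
The plan is to exploit the uniform bound on $\Lambda_\xi$ already obtained in Lemma~\ref{79}, together with the boundedness of $w$ and the fact that $\Omega_\eps$ has finite measure comparable to $\eps^{-n}$.

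First I would observe that, since $d\ge\delta/\eps$ and $\eps$ is small, in particular $d\ge1$, so Lemma~\ref{79} applies and gives, for every $x\in\Omega_\eps$,
$$ 0\le\Lambda_\xi(x)\le\frac{C}{d^{(n+2s)p}}\le C\left(\frac\eps\delta\right)^{(n+2s)p}, $$
whence $\Lambda_\xi^2(x)\le C\,(\eps/\delta)^{2p(n+2s)}$ uniformly in $x\in\Omega_\eps$.

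Next, since $w$ is smooth and satisfies the decay~\eqref{decay w}, it is bounded, so $w_\xi^{p-1}(x)\le\|w\|_{L^\infty(\R^n)}^{p-1}$ for all $x$. Pulling the supremum of $\Lambda_\xi^2$ out of the integral and using $|\Omega_\eps|=|\Omega|\,\eps^{-n}$, I would conclude
$$ \int_{\Omega_\eps}w_\xi^{p-1}(x)\,\Lambda_\xi^2(x)\,dx\le C\left(\frac\eps\delta\right)^{2p(n+2s)}\|w\|_{L^\infty(\R^n)}^{p-1}\,|\Omega_\eps|=C'\,\eps^{2p(n+2s)-n}, $$
with $C'$ depending only on $n$, $p$, $s$, $\delta$ and $\Omega$, which is the asserted bound.

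There is essentially no serious obstacle; the only point to be careful about is that $w^{p-1}$ need not be integrable over all of $\R^n$ when $p$ is close to $1$ (the decay $|x|^{-(p-1)(n+2s)}$ may fail to beat $|x|^{-n}$), so one should not attempt to bound $\int_{\Omega_\eps}w_\xi^{p-1}$ by a constant independent of $\eps$; instead the crude estimate $\int_{\Omega_\eps}w_\xi^{p-1}\le\|w\|_{L^\infty(\R^n)}^{p-1}|\Omega_\eps|$ already produces exactly the exponent $2p(n+2s)-n$ in the statement. (If one wished, splitting $\Omega_\eps$ into a ball $B_R(\xi)$ and its complement and using the decay of $w$ on the latter yields a sharper power of $\eps$, but this refinement is not needed here.)
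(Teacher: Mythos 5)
Your proof is correct and follows essentially the same route as the paper's: bound $\Lambda_\xi$ uniformly via Lemma~\ref{79} using $d\ge\delta/\eps$, bound $w_\xi^{p-1}$ by its supremum, and use $|\Omega_\eps|=|\Omega|\eps^{-n}$. Your closing remark about the potential non-integrability of $w^{p-1}$ when $p$ is close to $1$ is a valid observation, and the crude bound $\int_{\Omega_\eps} w_\xi^{p-1}\le \|w\|_{L^\infty}^{p-1}|\Omega_\eps|$ is exactly what the paper uses.
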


\begin{proof}
We use Lemma \ref{79} and the fact that $\Omega$ is bounded to obtain that
\begin{eqnarray*}
\int_{\Omega_{\eps}} w_{\xi}^{p-1}(x)\, \Lambda_{\xi}^2(x)\, dx &\le& \frac{C}{d^{2p(n+2s)}}\int_{\Omega_{\eps}} w_{\xi}^{p-1}(x)\, dx \\
&\le&\frac{C'}{d^{2p(n+2s)}}|\Omega_{\eps}|\le \frac{C''}{d^{2p(n+2s)}\eps^n}
\le\frac{C''\, \eps^{2p(n+2s)}}{\delta^{2p(n+2s)}\eps^n},
\end{eqnarray*}
for suitable $C',C''>0$. This implies the desired estimate.
\end{proof}

\section{Energy estimates and functional expansion in~$\bar u_\xi$}\label{S:energy}

In this section we make some estimates for the energy functional \eqref{energy functional}.
For this, we consider the functional associated to problem~\eqref{EQ w}:
\begin{equation}\label{entire}
I(u)=\frac{1}{2}\int_{\R^n}\left((-\Delta)^su(x)\, u(x)+u^2(x)\right) dx-\frac{1}{p+1}\int_{\R^n}u^{p+1}(x)\, dx, \qquad u\in H^s(\R^n).
\end{equation}

\begin{thm}\label{TH expansion}
Fix $\delta\in(0,1)$ and $\xi\in\Omega_{\eps}$ such that $d:=\dist(\xi,\partial\Omega_{\eps})\ge\delta/\eps$.
Then, we have
\begin{equation}\label{estI}
I_{\eps}(\bar u_\xi)= I(w)+\frac{1}{2}{\mathcal{H}}_\eps(\xi)+o(\eps^{n+4s}),
\end{equation}
as $\eps\to0$,
where $I$ is given by~\eqref{entire}, $w$ is the solution to~\eqref{EQ w}
and ${\mathcal{H}}_\eps(\xi)$ is defined in \eqref{def H cal},
as long as $\delta$ is sufficiently small.
\end{thm}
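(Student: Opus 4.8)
The plan is to expand $I_\eps(\bar u_\xi)$ by substituting the decomposition of $\bar u_\xi$ from Lemma~\ref{lem 5.2} and carefully tracking which terms survive at order $\eps^{n+4s}$. Write $\bar u_\xi = w_\xi - \Lambda_\xi - \Pi_\eps(\cdot,\xi) =: w_\xi - \varphi_\xi$, where $\varphi_\xi := \Lambda_\xi + \Pi_\eps(\cdot,\xi)$ is the small correction forced by the exterior Dirichlet condition. Since $\bar u_\xi$ solves the linear problem \eqref{EQ bar u}, I would first rewrite the quadratic (Dirichlet plus $L^2$) part of $I_\eps$ using the weak formulation: for $u\in H^s_0(\Omega_\eps)$,
\begin{equation*}
\frac12\int_{\Omega_\eps}\bigl((-\Delta)^s\bar u_\xi\,\bar u_\xi + \bar u_\xi^2\bigr)
= \frac12\int_{\Omega_\eps} w_\xi^p\,\bar u_\xi,
\end{equation*}
so that
\begin{equation*}
I_\eps(\bar u_\xi) = \frac12\int_{\Omega_\eps} w_\xi^p\,\bar u_\xi - \frac{1}{p+1}\int_{\Omega_\eps}\bar u_\xi^{p+1}.
\end{equation*}
Now substitute $\bar u_\xi = w_\xi - \varphi_\xi$ into both integrals. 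The first integral becomes $\frac12\int_{\Omega_\eps} w_\xi^{p+1} - \frac12\int_{\Omega_\eps} w_\xi^p\,\varphi_\xi$. For the second, Taylor-expand $\bar u_\xi^{p+1}=(w_\xi-\varphi_\xi)^{p+1}$ as $w_\xi^{p+1}-(p+1)w_\xi^p\varphi_\xi + O\bigl(w_\xi^{p-1}\varphi_\xi^2 + \varphi_\xi^{\tilde p+1}\bigr)$, with $\tilde p=\min\{p,2\}$ (this pointwise Taylor bound with a correct remainder is the one delicate elementary point, and it is exactly why $\tilde p$ appears in Lemma~\ref{lem2}).

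Collecting terms, the $w_\xi^p\varphi_\xi$ contributions combine to give $+\frac12\int_{\Omega_\eps} w_\xi^p\varphi_\xi$, and since $\varphi_\xi=\Lambda_\xi+\Pi_\eps(\cdot,\xi)$, the $\Pi_\eps$ piece $\frac12\int_{\Omega_\eps} w_\xi^p(x)\,\Pi_\eps(x,\xi)\,dx = \frac12{\mathcal H}_\eps(\xi)$ by the definition \eqref{EQ Pi}--\eqref{def H cal}. So the leading-order structure $I(w)+\frac12{\mathcal H}_\eps(\xi)$ emerges once I also account for the difference between $\int_{\Omega_\eps}w_\xi^{p+1}$ and the entire-space quantity $\int_{\R^n}w_\xi^{p+1}$ (which, combined with the corresponding quadratic terms, reconstructs $I(w)$ exactly, up to the tail $\int_{\R^n\setminus\Omega_\eps}w_\xi^{p+1}$). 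What remains is to show every other term is $o(\eps^{n+4s})$. The error terms are: (i) $\int_{\Omega_\eps} w_\xi^p\,\Lambda_\xi$, controlled by Lemma~\ref{79} (giving $\lesssim d^{-(n+2s)p}\lesssim \eps^{(n+2s)p}$, and $(n+2s)p > n+4s$ since $p>1$); (ii) the remainder terms $\int w_\xi^{p-1}\varphi_\xi^2$, which via $\varphi_\xi^2\lesssim \Lambda_\xi^2+\Pi_\eps^2$ and the cross term $\Lambda_\xi\Pi_\eps$ are handled precisely by Lemmas~\ref{lem1}, \ref{lem2}, \ref{lem3}; (iii) higher-order terms $\int\varphi_\xi^{\tilde p+1}$, dominated by the same estimates since $\varphi_\xi\le w_\xi$ by \eqref{25}; and (iv) the exterior tails $\int_{\R^n\setminus\Omega_\eps}w_\xi^{p+1}$ and $\int_{\R^n\setminus\Omega_\eps}\bigl((-\Delta)^sw_\xi\,w_\xi+w_\xi^2\bigr)$, which are $\lesssim \eps^{(n+2s)(p+1)-n}$ and $\lesssim \eps^{\,?}$ respectively, again super-critical. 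One checks from the exponents in Lemmas~\ref{lem1}--\ref{lem3} that each is a power of $\eps$ strictly larger than $n+4s$ once $d\ge\delta/\eps$; this is where the hypothesis $d\ge\delta/\eps$ (rather than merely $d\ge 2$) is used.

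The main obstacle is bookkeeping the quadratic part cleanly: one must be careful that $(-\Delta)^s$ is nonlocal, so "$\int_{\Omega_\eps}(-\Delta)^s\bar u_\xi\,\bar u_\xi$" should really be read as the bilinear form $\iint |\,\cdot\,|^{-n-2s}$-energy, and the identity $w=w^p*\Gamma$ together with $\bar u_\xi = w_\xi^p * G_\eps$ (both from Lemma~\ref{lem 5.2}) must be invoked to replace energies by the cleaner convolution integrals. Concretely I expect to write the quadratic form of $w_\xi$ over $\R^n$ as $\int_{\R^n} w_\xi^{p+1}$ (using \eqref{EQ w}), subtract the exterior tail, and pair it against the correction; the cross-energy $\langle w_\xi,\varphi_\xi\rangle$ must be matched with $\int w_\xi^p\varphi_\xi$ using that $\varphi_\xi$ is itself $(-\Delta)^s+1$-harmonic in $\Omega_\eps$ in the relevant sense (for $\Pi_\eps$ via \eqref{EQ H}, for $\Lambda_\xi$ via a direct computation analogous to \eqref{EQ:beta}). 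Once these identifications are made, the theorem follows by assembling the four groups of error estimates above and invoking that each exponent exceeds $n+4s$ for $\delta$ small. I would close by remarking that \eqref{estI} is exactly \eqref{estIintro}, and that combined with Proposition~\ref{P 3.8} it yields the existence of the interior minimizer used in Theorem~\ref{TH}.
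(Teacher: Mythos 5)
Your proposal matches the paper's proof in essentially every step: you use the weak formulation of \eqref{EQ bar u} to reduce the quadratic form to $\frac12\int_{\Omega_\eps} w_\xi^p\,\bar u_\xi$, substitute the decomposition $\bar u_\xi = w_\xi-\Lambda_\xi-\Pi_\eps(\cdot,\xi)$ from Lemma~\ref{lem 5.2}, Taylor-expand the nonlinear term with a quadratic remainder controlled by Lemmas~\ref{lem1}--\ref{lem3}, identify $\frac12\int_{\Omega_\eps}w_\xi^p\,\Pi_\eps(\cdot,\xi)=\frac12{\mathcal H}_\eps(\xi)$, and verify that all other terms are $o(\eps^{n+4s})$ using Lemma~\ref{79} and Corollary~\ref{Cor:J1}. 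The remarks in your last paragraph about pairing the cross-energy with $\int w_\xi^p\varphi_\xi$ are redundant given the first reduction step you already performed, but they do no harm; the argument is correct and follows the paper's route.
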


The following simple observation will be used often in the sequel:

\begin{lem}\label{w use}
Let $\delta\ge1$ and $q>1$. Then
$$ \int_{\R^n\setminus B_\delta(\xi)} w_\xi^q(z)\,dz
\le \frac{C}{\delta^{n(q-1)+2sq}},$$
for some $C>0$.
\end{lem}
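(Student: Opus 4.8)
The plan is to reduce the estimate to a one-dimensional radial integral by translation and the pointwise decay of $w$. First I would change variables $y := z-\xi$, so that, recalling $w_\xi(z)=w(z-\xi)$,
\[
\int_{\R^n\setminus B_\delta(\xi)} w_\xi^q(z)\,dz=\int_{\R^n\setminus B_\delta} w^q(y)\,dy.
\]
Since $|y|\ge\delta\ge1$ on the domain of integration, the upper bound in~\eqref{decay w} applies and gives $w(y)\le \beta|y|^{-(n+2s)}$, hence $w^q(y)\le\beta^q|y|^{-q(n+2s)}$.

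Next I would integrate in polar coordinates:
\[
\int_{\R^n\setminus B_\delta}|y|^{-q(n+2s)}\,dy
= \omega_{n-1}\int_\delta^{\infty} r^{\,n-1-q(n+2s)}\,dr,
\]
where $\omega_{n-1}$ is the measure of the unit sphere. The exponent satisfies $n-1-q(n+2s)<-1$ because $q(n+2s)>n+2s>n$ (here is where $q>1$, together with $s>0$, is used), so the radial integral converges and equals $\dfrac{\delta^{\,n-q(n+2s)}}{q(n+2s)-n}$.

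Finally I would simplify the exponent: $n-q(n+2s)=-\big(n(q-1)+2sq\big)$, which yields
\[
\int_{\R^n\setminus B_\delta(\xi)} w_\xi^q(z)\,dz\le
\frac{\beta^q\,\omega_{n-1}}{q(n+2s)-n}\,\frac{1}{\delta^{\,n(q-1)+2sq}},
\]
so the claim holds with $C:=\dfrac{\beta^q\,\omega_{n-1}}{q(n+2s)-n}$. There is no genuine obstacle here; the only point requiring a moment's care is verifying that the radial exponent makes the integral finite, which is exactly the hypothesis $q>1$ (in fact any $q$ with $q(n+2s)>n$ would do).
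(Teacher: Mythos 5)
Your proof is correct and follows essentially the same route as the paper: apply the decay estimate \eqref{decay w} pointwise, pass to polar coordinates, and verify the radial exponent is below $-1$ (using $q>1$), which gives exactly the stated power of $\delta$.
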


\begin{proof} First of all we observe that
$$ n-1-(n+2s)q < n-1-(n+2s)=-1-2s<-1$$
and therefore
\begin{equation}\label{Ro}
\int_\delta^{+\infty} \rho^{n-1-(n+2s)q}\,d\rho=
\frac{\delta^{n-(n+2s)q}}{(n+2s)q-n}.
\end{equation}
Now, we use \eqref{decay w} to see that
$$\int_{\R^n\setminus B_\delta(\xi)} w_\xi^q(z)\,dz \le
\int_{\R^n\setminus B_\delta(\xi)} \frac{C}{|x-\xi|^{(n+2s)q}}\,dz
= C'\int_\delta^{+\infty} \rho^{n-1-(n+2s)q}\,d\rho$$
for some $C'>0$.
This and \eqref{Ro} imply the desired result.
\end{proof}

\begin{cor}\label{Cor:J1}
Let $\xi\in\Omega_\eps$, with $d:=\dist(\xi,\partial\Omega_\eps)\ge1$.
Then
$$ \int_{\R^n\setminus\Omega_\eps} w_\xi^{p+1}(z)\,dz\le\frac{C}{d^{np+2s(p+1)}},$$
for some $C>0$.
\end{cor}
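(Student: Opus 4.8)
The plan is to apply Lemma \ref{w use} with the choice $q := p+1$ and $\delta := d$. Since $\xi \in \Omega_\eps$ satisfies $d = \dist(\xi, \partial\Omega_\eps) \ge 1$, the ball $B_d(\xi)$ is contained in $\Omega_\eps$, hence $\R^n \setminus \Omega_\eps \subseteq \R^n \setminus B_d(\xi)$, and therefore
\[
\int_{\R^n\setminus\Omega_\eps} w_\xi^{p+1}(z)\,dz \le \int_{\R^n\setminus B_d(\xi)} w_\xi^{p+1}(z)\,dz.
\]
Lemma \ref{w use} then bounds the right-hand side by $C\, d^{-(n(q-1)+2sq)}$ with $q = p+1$, which is exactly $C\, d^{-(np + 2s(p+1))}$, since $n(q-1) + 2sq = np + 2s(p+1)$. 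This is the desired estimate.

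The only point to check is that the hypotheses of Lemma \ref{w use} are met: we need $\delta \ge 1$ and $q > 1$. The first holds because $d \ge 1$ by assumption, and the second because $p > 1$ gives $q = p+1 > 2 > 1$. There is really no obstacle here; the statement is a direct specialization of Lemma \ref{w use} once one observes the set inclusion $\R^n\setminus\Omega_\eps \subseteq \R^n\setminus B_d(\xi)$, which follows from $B_d(\xi)\subseteq\Omega_\eps$. I would write this up in three or four lines.
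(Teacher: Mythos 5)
Your proof is correct and follows exactly the same approach as the paper's: observe the inclusion $\R^n\setminus\Omega_\eps\subseteq\R^n\setminus B_d(\xi)$ (which holds since $B_d(\xi)\subseteq\Omega_\eps$) and then apply Lemma~\ref{w use} with $q=p+1$ and $\delta=d$, checking that $n(q-1)+2sq = np+2s(p+1)$. Your write-up is a more verbose version of the paper's two-line argument.
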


\begin{proof}
Notice that $(\R^n\setminus\Omega_\eps)\subseteq(\R^n\setminus B_d(\xi))$
and exploit Lemma \ref{w use}.
\end{proof}

\begin{proof}[Proof of Theorem~\ref{TH expansion}]
Using \eqref{EQ bar u} and \eqref{24}, we have
\begin{equation}\begin{split}\label{esp1}
I_\eps(\bar u_\xi)=&\, \frac{1}{2}\int_{\Omega_\eps}\left((-\Delta)^s\bar u_\xi(x)+\bar u_\xi(x)\right)\bar u_\xi(x)\, dx-\frac{1}{p+1}\int_{\Omega_\eps}\bar u_\xi^{p+1}(x)\, dx\\
=&\, \frac{1}{2}\int_{\Omega_\eps}w_\xi^p(x)\, \bar u_\xi(x)\, dx -\frac{1}{p+1}\int_{\Omega_\eps}\bar u_\xi^{p+1}(x)\, dx\\
=&\, \frac{1}{2}\int_{\Omega_\eps}w_\xi^p(x)\left(w_\xi(x)-\Lambda_\xi(x)-\Pi_\eps(x,\xi)\right)dx\\&\qquad -\frac{1}{p+1}\int_{\Omega_\eps}\left(w_\xi(x)-\Lambda_\xi(x)-\Pi_\eps(x,\xi)\right)^{p+1}dx\\
=&\, \left(\frac{1}{2}-\frac{1}{p+1}\right)\int_{\Omega_\eps}w_\xi^{p+1}(x)\, dx -\frac{1}{2}\int_{\Omega_\eps}w_\xi^p(x)\left(\Lambda_\xi(x)+\Pi_\eps(x,\xi)\right)dx\\
&\qquad +\frac{1}{p+1}\int_{\Omega_\eps}\left[w_\xi^{p+1}(x)-\left(w_\xi(x)-\Lambda_\xi(x)-\Pi_\eps(x,\xi)\right)^{p+1}\right]dx.
\end{split}\end{equation}
We notice that the first term in the right hand side of~\eqref{esp1} can be written as
\begin{eqnarray*}
&&\left(\frac{1}{2}-\frac{1}{p+1}\right)\int_{\Omega_\eps}w_\xi^{p+1}(x)\, dx \\
&=&\left(\frac{1}{2}-\frac{1}{p+1}\right)\int_{\R^n}w_\xi^{p+1}(x)\, dx -\left(\frac{1}{2}-\frac{1}{p+1}\right)\int_{\R^n\setminus\Omega_\eps}w_\xi^{p+1}(x)\, dx\\
&=& I(w)-\left(\frac{1}{2}-\frac{1}{p+1}\right)\int_{\R^n\setminus\Omega_\eps}w_\xi^{p+1}(x)\, dx,
\end{eqnarray*}
since $w$ is a solution to~\eqref{EQ w}.
Therefore,
\begin{equation}\begin{split}\label{Ieps}
I_\eps(\bar u_\xi)&= I(w)-\left(\frac{1}{2}-\frac{1}{p+1}\right)\int_{\R^n\setminus\Omega_\eps}w_\xi^{p+1}(x)\, dx\\
&\qquad -\frac{1}{2}\int_{\Omega_\eps}w_\xi^p(x)\left(\Lambda_\xi(x)+\Pi_\eps(x,\xi)\right)dx\\
&\qquad +\frac{1}{p+1}\int_{\Omega_\eps}\left[w_\xi^{p+1}(x)-\left(w_\xi(x)-\Lambda_\xi(x)-\Pi_\eps(x,\xi)\right)^{p+1}\right]dx\\
&=:I(w)-\left(\frac{1}{2}-\frac{1}{p+1}\right)J_1-\frac{1}{2}J_2+\frac{1}{p+1}J_3,
\end{split}\end{equation}
where
\begin{eqnarray*}
&& J_1:=\int_{\R^n\setminus\Omega_\eps}w_\xi^{p+1}(x)\, dx,\\
&& J_2:=\int_{\Omega_\eps}w_\xi^p(x)\left(\Lambda_\xi(x)+\Pi_\eps(x,\xi)\right)dx\\
{\mbox{and }}&& J_3:=\int_{\Omega_\eps}\left[w_\xi^{p+1}(x)-\left(w_\xi(x)-\Lambda_\xi(x)-\Pi_\eps(x,\xi)\right)^{p+1}\right]dx
.\end{eqnarray*}
Now, we estimate separately $J_1$, $J_2$ and $J_3$.
Thanks to Corollary \ref{Cor:J1}, we have that
\begin{equation}\label{J1final}
J_1=\int_{\R^n\setminus\Omega_\eps}w_\xi^{p+1}(x)\, dx\le \frac{C}{d^{np+2s(p+1)}}\le \frac{C}{\delta^{np+2s(p+1)}}\eps^{np+2s(p+1)}.
\end{equation}

Concerning $J_2$, we write it as
$$ J_2= J_{21}+J_{22}, $$
where
\begin{equation}\begin{split}\label{J21}
J_{21}&:=\int_{\Omega_\eps}w_\xi^p(x)\, \Lambda_\xi(x)\, dx,\\
J_{22}&:=\int_{\Omega_\eps}w_\xi^p(x)\, \Pi_\eps(x,\xi)\, dx.
\end{split}\end{equation}
Recalling the definition of $\Lambda_\xi$ in~\eqref{lambda} and the estimate in~\eqref{decay w}, we have that
\begin{equation}\begin{split}\label{tre}
J_{21}=&\int_{\Omega_\eps}dx\int_{\R^n\setminus\Omega_\eps}dy\, w_\xi^p(x)\, w_\xi^p(y)\, \Gamma(x-y)\\
\le&\, C\int_{\Omega_\eps}dx\int_{\R^n\setminus\Omega_\eps}dy\, w_\xi^p(x)\frac{\Gamma(x-y)}{|y-\xi|^{(n+2s)p}}\\
\le&\, \frac{C}{d^{(n+2s)p}}\int_{\Omega_\eps}dx\int_{\R^n\setminus\Omega_\eps}dy\, w_\xi^p(x)\, \Gamma(x-y)\\
=&\, \frac{C}{d^{(n+2s)p}}\left(\int_{\Omega_\eps}dx\int_{{\R^n\setminus\Omega_\eps}\atop{\{|x-y|\le d/2\}}}dy\, w_\xi^p(x)\, \Gamma(x-y) +\int_{\Omega_\eps}dx\int_{{\R^n\setminus\Omega_\eps}\atop{\{|x-y|>d/2\}}}dy\, w_\xi^p(x)\, \Gamma(x-y)\right).
\end{split}\end{equation}
We notice that, if $x\in\Omega_\eps$ and $y\in\R^n\setminus\Omega_\eps$ with $|x-y|\le d/2$,
then
$$ |x-\xi|\ge |y-\xi|-|x-y|\ge d-\frac{d}{2}=\frac{d}{2}. $$
Therefore, using \eqref{decay w}, \eqref{EQ3} and the fact that $\Omega$ is bounded, we have
\begin{equation}\begin{split}\label{uno}
\int_{\Omega_\eps}dx\int_{{\R^n\setminus\Omega_\eps}\atop{\{|x-y|\le d/2\}}}dy\, w_\xi^p(x)\, \Gamma(x-y) &\le C' \int_{\Omega_\eps}dx\int_{{\R^n\setminus\Omega_\eps}\atop{\{|x-y|\le d/2\}}}dy\, \frac{\Gamma(x-y)}{|x-\xi|^{(n+2s)p}}\\
&\le C'\int_{\Omega_\eps}dx\int_{\R^n}d\tilde y\frac{\Gamma(\tilde y)}{|x-\xi|^{(n+2s)p}}\\
&\le C''(1/d)^{(n+2s)p}|\Omega_\eps|\\
&\le \frac{C'''}{d^{(n+2s)p}\eps^{n}}\\
&\le \frac{C'''}{\delta^{(n+2s)p}}\eps^{(n+2s)p-n},
\end{split}\end{equation}
for suitable constants $C',C'',C'''>0$.
Moreover, if $|x-y|>d/2$, we use \eqref{GA} to get
\begin{equation}\begin{split}\label{due}
\int_{\Omega_\eps}dx\int_{{\R^n\setminus\Omega_\eps}\atop{\{|x-y|>d/2\}}}dy\, w_\xi^p(x)\, \Gamma(x-y)&\le C \int_{\R^n}dx\int_{{\R^n\setminus\Omega_\eps}\atop{\{|x-y|>d/2\}}}dy\, \frac{w_\xi^p(x)}{|x-y|^{n+2s}}\\&\le \tilde C d^{-2s}\le \frac{\tilde C}{\delta^{2s}}\epsilon^{2s},
\end{split}\end{equation}
for some $\tilde C>0$.
Putting together \eqref{uno} and \eqref{due} and recalling \eqref{tre}, we obtain
\begin{equation}\begin{split}\label{J21final}
J_{21}&\le \frac{C}{d^{(n+2s)p}}\left(\frac{C'''}{\delta^{(n+2s)p}}\eps^{(n+2s)p-n} +\frac{\tilde C}{\delta^{2s}}\epsilon^{2s}\right)\\&\le \frac{C}{\delta^{(n+2s)p}}\eps^{(n+2s)p}\left(\frac{C'''}{\delta^{(n+2s)p}}\eps^{(n+2s)p-n} +\frac{\tilde C}{\delta^{2s}}\epsilon^{2s}\right) \le \hat C \eps^{np+2s(p+1)},
\end{split}\end{equation}
for suitable $\hat C>0$. Therefore,
\begin{equation}\label{J2final}
J_2 = J_{22}+ o(\eps^{n+4s})=\int_{\Omega_\eps}w_\xi^p(x)\, \Pi_\eps(x,\xi)\, dx+o(\eps^{n+4s}).
\end{equation}

To estimate $J_3$ we expand $w_\xi^{p+1}(x)$ in the following way
$$ w^{p+1}_\xi(x)= \bar u_\xi^{p+1}(x)+(p+1)w_\xi^p(x)(w_\xi(x)-\bar u_\xi(x))+ c_p\alpha_\xi^{p-1}(x)(w_\xi(x)-\bar u_\xi(x))^2, $$
where $0\le\bar u_\xi\le\alpha_\xi\le w_\xi$ and $c_p$ is a positive constant
depending only on $p$. Therefore, recalling \eqref{24} and \eqref{J21},
\begin{equation}\begin{split}\label{J3}
J_3=&\,\int_{\Omega_\eps}\left[w_\xi^{p+1}(x)-\left(w_\xi(x)-\Lambda_\xi(x)-\Pi_\eps(x,\xi)\right)^{p+1}\right]dx \\
=&\,\int_{\Omega_\eps}\left[w_\xi^{p+1}(x)-\bar u_\xi^{p+1}(x)\right]dx \\
=&\, (p+1) \int_{\Omega_\eps}w^p_\xi(x)\left(\Lambda_\xi(x)+\Pi_\eps(x,\xi)\right) dx +
c_p\int_{\Omega_\eps}\alpha_\xi^{p-1}(x)\left(\Lambda_\xi(x)+\Pi_\eps(x,\xi)\right)^2 dx \\
=&\, (p+1) (J_{21}+J_{22}) + c_p\int_{\Omega_\eps}\alpha_\xi^{p-1}(x)\left(\Lambda_\xi(x)+\Pi_\eps(x,\xi)\right)^2 dx.
\end{split}\end{equation}
Since $\alpha_\xi(x)\le w_\xi(x)$, we have that
\begin{eqnarray*}
&&\int_{\Omega_\eps}\alpha_\xi^{p-1}(x)\left(\Lambda_\xi(x)+\Pi_\eps(x,\xi)\right)^2 dx\\
&\le&\int_{\Omega_\eps}w_\xi^{p-1}(x)\left(\Lambda_\xi(x)+\Pi_\eps(x,\xi)\right)^2 dx \\
&=& \int_{\Omega_\eps}w_\xi^{p-1}(x)\, \Lambda^2_\xi(x)\, dx
+\int_{\Omega_\eps}w_\xi^{p-1}(x)\, \Pi^2_\eps(x,\xi)\,  dx + 2\int_{\Omega_\eps} w_\xi^{p-1}(x)\, \Lambda_\xi(x)\, \Pi_\eps(x,\xi)\, dx.
\end{eqnarray*}
Hence, from Lemmata \ref{lem1}, \ref{lem2} and \ref{lem3} (together with the fact that
$d\ge\delta/\eps$) we deduce that
$$ \int_{\Omega_\eps}\alpha_\xi^{p-1}(x)\left(\Lambda_\xi(x)+\Pi_\eps(x,\xi)\right)^2 dx\le C_{\delta}(\eps^{2p(n+2s)-n}+ \eps^{(n+2s)\tilde p+2s}+ \eps^{(n+2s)p+2s}),
$$
for some $C_{\delta}$, where $\tilde p=\min\{p,2\}$. The last estimate, \eqref{J21final} and~\eqref{J3} give
\begin{equation}\label{J3final}
J_3 =(p+1)J_{22}+ o(\epsilon^{n+4s}) =(p+1)\int_{\Omega_\eps}w_\xi^p(x)\, \Pi_\eps(x,\xi)\, dx+ o(\epsilon^{n+4s}).
\end{equation}
Putting together \eqref{J1final}, \eqref{J2final} and \eqref{J3final} and
using \eqref{Ieps}, we get
$$ I_{\eps}(\bar u_\xi)=I(w)+ \frac{1}{2}\int_{\Omega_\eps}w_\xi^p(x)\, \Pi_\eps(x,\xi)\, dx +o(\eps^{n+4s}). $$
Thus, recalling the definitions of $\Pi_{\eps}$ and ${\mathcal{H}}_\eps$ in \eqref{EQ Pi} and
\eqref{def H cal} respectively, we obtain \eqref{estI}.
\end{proof}

\section{Decay of the ground state $w$}\label{S:a1}

In this section we recall some basic (though not optimal)
decay properties of the ground state and of its derivatives.

For this, we start with a general convolution result:

\begin{lem}\label{DF4}
Let~$a$, $b>n$, $C_a$, $C_b>0$ and~$f$, $g\in L^\infty(\R^n)$, with
$${\mbox{$|f(x)|\le C_a\,(1+|x|)^{-a}$
and~$|g(x)|\le C_b\,(1+|x|)^{-b}$.}}$$ Then there exists~$C>0$ such that
$$ \big|(f*g)(x)\big|\le C\,(1+|x|)^{-c},$$
with~$c:=\min\{a,\,b\}$.
\end{lem}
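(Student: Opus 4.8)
The plan is to split the convolution integral $(f*g)(x)=\int_{\R^n} f(x-y)\,g(y)\,dy$ at the natural geometric threshold $|y|=|x|/2$, handling the two pieces separately and using that on each piece one of the two factors is comparable to its value at $x$. Concretely, I would write
\[
(f*g)(x)=\int_{\{|y|\le |x|/2\}} f(x-y)\,g(y)\,dy+\int_{\{|y|> |x|/2\}} f(x-y)\,g(y)\,dy=:A(x)+B(x).
\]
First I would treat $A(x)$: when $|y|\le |x|/2$ we have $|x-y|\ge |x|-|y|\ge |x|/2$, so $|f(x-y)|\le C_a(1+|x-y|)^{-a}\le C_a\,2^{a}\,(1+|x|)^{-a}$ (using $1+|x-y|\ge \tfrac12(1+|x|)$ for $|x-y|\ge|x|/2$), and hence
\[
|A(x)|\le C_a\,2^{a}\,(1+|x|)^{-a}\int_{\R^n} |g(y)|\,dy\le C\,(1+|x|)^{-a},
\]
where $\int |g|<\infty$ because $b>n$. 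Symmetrically, for $B(x)$, when $|y|>|x|/2$ we bound $|g(y)|\le C_b(1+|y|)^{-b}\le C_b\,2^{b}\,(1+|x|)^{-b}$ and integrate $|f(x-y)|$ over $\R^n$, which is finite since $a>n$, giving $|B(x)|\le C\,(1+|x|)^{-b}$. Adding the two estimates yields $|(f*g)(x)|\le C\,(1+|x|)^{-a}+C\,(1+|x|)^{-b}\le 2C\,(1+|x|)^{-c}$ with $c=\min\{a,b\}$, as claimed.

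A couple of routine points need to be checked along the way but present no real difficulty: the elementary inequality $1+|x-y|\ge \tfrac12(1+|x|)$ whenever $|x-y|\ge |x|/2$ (and similarly for $|y|$), and the integrability facts $\int_{\R^n}(1+|z|)^{-a}\,dz<\infty$, $\int_{\R^n}(1+|z|)^{-b}\,dz<\infty$, both of which hold precisely because $a,b>n$. For small $|x|$ (say $|x|\le 1$) one can alternatively just note that $f*g$ is bounded (convolution of an $L^\infty$ function with an $L^1$ function), so the estimate is trivially true there after adjusting the constant; this avoids any fuss about the threshold $|x|/2$ degenerating.

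The main (and only mild) obstacle is simply organizing the case split cleanly so that in each region exactly one factor is ``frozen'' at its decay rate at $x$ while the other is integrated out — there is no deep difficulty here, and the exponent loss to $c=\min\{a,b\}$ is exactly what this splitting produces and cannot be improved by this method (it is in fact sharp, as the case $a\neq b$ with both profiles genuinely of power type shows). I would therefore expect the write-up to be short, with the bulk of it being the two nearly identical estimates for $A$ and $B$.
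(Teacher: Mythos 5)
Your proof is correct and follows essentially the same route as the paper: split the convolution at the natural scale $|x|/2$, freeze the decay of one factor at $x$ on each piece, and integrate the other using $a,b>n$. The only cosmetic difference is that the paper splits on whether $|x-y|<|x|/2$ while you split on whether $|y|\le |x|/2$, which amounts to the same decomposition after the change of variables $y\mapsto x-y$.
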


\begin{proof} Fix~$x\in\R^n$ and~$r:=|x|/2$, and observe that if~$y\in
B_{r}(x)$ then
$$ |y|\ge |x|-|x-y|\ge |x|-r=\frac{|x|}{2}.$$
As a consequence
\begin{equation}\label{Dq1}\begin{split}
& \int_{B_r(x)} \frac{C_a}{(1+|x-y|)^a}\,
\frac{C_b}{(1+|y|)^b}\,dy
\le \int_{B_r(x)} \frac{C_a}{(1+|x-y|)^a}\,
\frac{C_b}{(1+(|x|/2))^b}\,dy\\
&\qquad\le \frac{C}{(1+|x|)^b}\,
\int_{\R^n}
\frac{1}{(1+|x-y|)^a}\,dy
\le \frac{C}{(1+|x|)^b},
\end{split}\end{equation}
up to renaming constants. On the other hand,
if~$y\in \R^n\setminus B_r(x)$ then~$|x-y|\ge r=|x|/2$, thus
\begin{equation}\label{Dq2}\begin{split}
& \int_{\R^n\setminus B_r(x)} \frac{C_a}{(1+|x-y|)^a}\,
\frac{C_b}{(1+|y|)^b}\,dy
\le \int_{\R^n\setminus B_r(x)} \frac{C_a}{(1+(|x|/2))^a}\,
\frac{C_b}{(1+|y|)^b}\,dy\\
&\qquad\le \frac{C}{(1+|x|)^a}\,
\int_{\R^n}
\frac{1}{(1+|y|)^b}\,dy
\le \frac{C}{(1+|x|)^a}.
\end{split}\end{equation}
Putting together \eqref{Dq1} and~\eqref{Dq2}
we obtain the desired result.
\end{proof}

Now we fix~$\xi\in\Omega_\epsilon$ and we define,
for any~$i\in\{1,\ldots,n\}$,
\begin{equation}\label{Zj}
Z_i:=\frac{\partial w_\xi}{\partial x_i},
\end{equation}
where~$w_\xi$ is the ground state solution centered at~$\xi$.
Moreover, we denote by~$\mathcal Z$ the linear space spanned by the functions~$Z_i$.

We prove first the following lemmata:

\begin{lem}\label{:decay Z}
There exists a positive constant~$C$ such that, for any~$i=1,\ldots,n$,
$$ |Z_i|\le C|x-\xi|^{-\nu_1}, {\mbox{ for any }}|x-\xi|\ge 1,$$
where~$\nu_1:=\min\{(n+2s+1),\,p(n+2s) \}$.
\end{lem}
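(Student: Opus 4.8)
The statement to prove is a decay bound for the first derivatives $Z_i=\partial w_\xi/\partial x_i$ of the ground state: namely $|Z_i(x)|\le C|x-\xi|^{-\nu_1}$ for $|x-\xi|\ge1$, where $\nu_1=\min\{n+2s+1,\,p(n+2s)\}$. Since a translation does not affect the estimate, I would first reduce to $\xi=0$, i.e.\ prove $|\partial_i w(x)|\le C|x|^{-\nu_1}$ for $|x|\ge1$. The natural starting point is the convolution identity $w=w^p*\Gamma$ established in \eqref{76}, which upon differentiating in $x_i$ (legitimate since $\Gamma$ and $w$ are smooth away from the origin and everything decays) yields
\begin{equation*}
\partial_i w = (\partial_i(w^p))*\Gamma = p\,(w^{p-1}\,\partial_i w)*\Gamma,
\end{equation*}
or alternatively $\partial_i w = w^p * (\partial_i\Gamma)$. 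I would use the first form: it expresses $\partial_i w$ as a convolution of $\Gamma$ (which decays like $|x|^{-(n+2s)}$ by \eqref{GA}, and is in $L^\infty$ near the origin) with $w^{p-1}\partial_i w$.

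\emph{Bootstrap step.} The obvious crude bound on $\partial_i w$ is boundedness (it is smooth and $H^s$, hence, together with standard regularity for $(-\Delta)^s+1$, $\partial_i w\in L^\infty\cap C^1$). Plugging $|\partial_i w|\le C$ and the decay $|w|\le \beta|x|^{-(n+2s)}$ from \eqref{decay w} into $w^{p-1}\partial_i w$ gives $|w^{p-1}\partial_i w(x)|\le C(1+|x|)^{-(p-1)(n+2s)}$. Now I would apply Lemma~\ref{DF4} with $f=w^{p-1}\partial_i w$ (decay exponent $a=(p-1)(n+2s)$, which exceeds $n$ provided $(p-1)(n+2s)>n$) and $g=\Gamma$ (decay exponent $b=n+2s>n$). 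When $(p-1)(n+2s)>n$, Lemma~\ref{DF4} immediately gives $|\partial_i w(x)|\le C(1+|x|)^{-\min\{(p-1)(n+2s),\,n+2s\}}$. One then feeds this improved decay back into $w^{p-1}\partial_i w$ and iterates: at each stage the decay exponent of $\partial_i w$ improves by roughly $(p-1)(n+2s)$ (coming from the $w^{p-1}$ factor), until the bottleneck becomes either the decay of $\Gamma$, namely $n+2s$ improved once more to... — more precisely, after sufficiently many iterations the exponent of $w^{p-1}\partial_i w$ saturates, and the convolution with $\Gamma$ produces the final exponent $\min\{n+2s+1,\,p(n+2s)\}$. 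The candidate $p(n+2s)$ arises from the term $w^p$ in $w^p*\partial_i\Gamma$ (since $|\partial_i\Gamma|\lesssim|x|^{-(n+2s+1)}$), and $n+2s+1$ arises precisely as the decay of $\partial_i\Gamma$; so I would in fact prefer the representation $\partial_i w=w^p*\partial_i\Gamma$ together with the decay $|\partial_i\Gamma(x)|\le C|x|^{-(n+2s+1)}$ for $|x|\ge1$ (which follows from the known decay of $\Gamma$ and interior gradient estimates for $(-\Delta)^s+1$). Then a single application of Lemma~\ref{DF4}, with $a=p(n+2s)$ and $b=n+2s+1$ — both $>n$ — gives exactly $|\partial_i w(x)|\le C(1+|x|)^{-\min\{p(n+2s),\,n+2s+1\}}$, which is the claim.

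\emph{Main obstacle.} The technical heart is justifying the decay estimate $|\partial_i\Gamma(x)|\le C|x|^{-(n+2s+1)}$ (equivalently, a pointwise gradient bound for the fundamental solution of $(-\Delta)^s+1$ in the far field), since this is what injects the exponent $n+2s+1$. This can be done by writing $\Gamma$ via its heat-kernel / Bessel-potential representation, or by a scaling-and-interior-estimate argument: on the annulus $\{|y-x|\le |x|/2\}$, $\Gamma$ solves $(-\Delta)^s\Gamma=-\Gamma$ with $\Gamma\lesssim|x|^{-(n+2s)}$ there, and interior Schauder-type estimates for the fractional Laplacian upgrade this to a gradient bound $|\nabla\Gamma(x)|\lesssim |x|^{-(n+2s)}\cdot|x|^{-1}$. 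The other minor point requiring care is the differentiation under the convolution sign and the validity of $w\in C^1$ with bounded gradient near infinity before the bootstrap starts; both are standard consequences of the regularity theory for $(-\Delta)^s+1$ applied to \eqref{EQ w}. Once these are in hand, the estimate is a one-line application of Lemma~\ref{DF4}.
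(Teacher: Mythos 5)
Your identification of the two sources of the exponents---$n+2s+1$ from $\nabla\Gamma$ in the far field and $p(n+2s)$ from $w^p$---is exactly right, and the convolution identity $w=\Gamma*w^p$ is indeed the starting point of the paper's argument. However, the ``one-line'' final step has a genuine gap. You propose to apply Lemma~\ref{DF4} to $f=w^p$ and $g=\partial_i\Gamma$, but Lemma~\ref{DF4} requires both $f,g\in L^\infty(\R^n)$, and $\partial_i\Gamma$ is \emph{not} bounded: near the origin $\Gamma(x)\asymp|x|^{2s-n}$ so $|\nabla\Gamma(x)|\asymp|x|^{2s-n-1}\to\infty$. Worse, for $s\le 1/2$ the gradient $\nabla\Gamma$ is not even locally integrable, so the formula $\partial_i w=w^p*\partial_i\Gamma$ is not directly meaningful as an absolutely convergent integral in the full parameter range. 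The far-field bound $|\nabla\Gamma(x)|\lesssim|x|^{-(n+2s+1)}$ for $|x|\ge1$ (which is indeed formula~(3.2) of~\cite{FQT}) says nothing about the region $|x|<1$, which is precisely where the hypotheses of Lemma~\ref{DF4} break down.

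Your bootstrap fallback has a separate problem. Writing $\partial_i w=p(w^{p-1}\partial_i w)*\Gamma$ and applying Lemma~\ref{DF4} with the full kernel $\Gamma$ (decay $n+2s$) caps the resulting decay at $\min\{\cdot,n+2s\}$, so no number of iterations can ever reach the target $n+2s+1$. Moreover, the first iteration already fails whenever $(p-1)(n+2s)\le n$, which is a nonempty case since $p$ can be close to $1$.

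The paper resolves both issues by splitting $\Gamma=\Gamma_{1,R}+\Gamma_{2,R}$, with $\Gamma_{1,R}$ smooth and equal to $\Gamma$ outside $B_R$, and $\Gamma_{2,R}=\Gamma-\Gamma_{1,R}$ singular but compactly supported. Then one differentiates $\Gamma_1$ (legitimate, bounded, with far-field decay $n+2s+1$) in the first piece and differentiates $w^p$ in the second:
$$\partial_i w=(\partial_i\Gamma_1)*w^p+\Gamma_2*\big(p\,w^{p-1}\partial_i w\big).$$
For the first term Lemma~\ref{DF4} applies cleanly and produces the final exponent $\nu_1=\min\{n+2s+1,\,p(n+2s)\}$. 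For the second term the crucial observation is that $\Gamma_2$ has compact support, so convolution against it \emph{preserves} a polynomial decay $(1+|x|)^{-\nu}$ (via the elementary estimate $1+|x-y|\ge\frac13(1+|x|)$ for $|y|<1$) rather than capping it at $\min\{\nu,\,n+2s\}$. This lets the inductive exponent $\nu(k)=\min\{\nu_1,\,k(p-1)(n+2s)\}$ climb by $(p-1)(n+2s)$ each step, however small $(p-1)(n+2s)$ is, until it saturates at $\nu_1$. The base case $k=0$ (boundedness of $\partial_i w$) is also established in the paper by a self-contained contraction argument on difference quotients, rather than by appeal to an unstated regularity theorem. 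To repair your argument you would essentially need to reproduce this splitting; the rest of your plan would then go through.
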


\begin{proof} Given~$R>0$, we take~$\Gamma_{1,R}
\in C^\infty(\R^n)$, with~$0\le\Gamma_{1,R}\le\Gamma$ in~$\R^n$ and
$\Gamma_{1,R}=\Gamma$
outside~$B_R$, and we define~$\Gamma_{2,R}:=\Gamma-\Gamma_{1,R}$.
We use~\eqref{EQ Gamma} to write
\begin{equation}\label{w 0}
w=\Gamma*w^p=\Gamma_{1,R}*w^p+\Gamma_{2,R}*w^p.\end{equation}
We assume, up to translation, that $\xi=0$.
Then, our goal is to prove that, for any~$k\in\N$ we have that
\begin{equation}\label{TOP}
\left|
\frac{\partial w}{\partial x_i}(x)\right|\leq C_k ( 1+|x|)^{-\nu(k)},
\end{equation}
where
$$ \nu(k):=\min\{(n+2s+1),\ p(n+2s), \ k(p-1)(n+2s) \}=\min\{
\nu_1, \ k(p-1)(n+2s) \},$$
for some~$C_k>0$.
Indeed, the desired claim would follows from~\eqref{TOP}
simply by taking the smallest~$k$ for which~$k(p-1)>p$.

To prove~\eqref{TOP} we perform an inductive argument.
So, we first check~\eqref{TOP} when~$k=0$.
For this, we use the fact that~$w\in L^\infty(\R^n)$
and that~$\Gamma\in L^1(\R^n)$ to find~$R>0$ sufficiently
small that
$$ \int_{B_R}\Gamma(y)\,dy\le \frac{1}{2p\,\|w\|_{L^\infty(\R^n)}}.$$
This fixes~$R$ once and for all for the
proof of~\eqref{TOP} when~$k=0$.
Hence, we use the sign of~$\Gamma_{1,R}$ and the fact that~$\Gamma_{2,R}=0$
outside $B_R$ to obtain that
\begin{equation}\label{ w 2}
\int_{\R^n}\Gamma_{2,R}(y)\,dy=\int_{B_R}\Gamma_{2,R}(y)\,dy
\le \int_{B_R}\Gamma_{1,R}(y)+\Gamma_{2,R}(y)\,dy\le
\frac{1}{2p\,\|w\|_{L^\infty(\R^n)}}.\end{equation}
Then, for any~$t\in(0,1)$,
we define~$D_t w(x):= \big(w(x+te_i)-w(x)\big)/t$
and we infer from~\eqref{w 0} that
\begin{equation}\label{ w 3}
D_t w=(D_t\Gamma_{1,R})*w^p+\Gamma_{2,R}* (D_t w^p).\end{equation}
Also, from formula~(3.2) of~\cite{FQT} we know that
\begin{equation}\label{dd51}
|\nabla \Gamma(x)|\le C\, |x|^{-(n+2s+1)},\quad {\mbox{ for any }}
|x|\geq 1.
\end{equation}
As a consequence, if~$|x|>2$ and~$\eta\in B_1(x)$,
we have that
$$ |\eta|\ge |x|-|x-\eta| \ge \frac{|x|}{2}>1,$$
hence
$$ |\Gamma(x+te_1)-\Gamma(x)|\le
t\,\sup_{\eta\in B_1(x)} |\nabla\Gamma(\eta)|\le
C\,t\, |x|^{-(n+2s+1)},$$
up to renaming~$C$. This gives that~$|D_t\Gamma(x)|\le
C\, (1+|x|)^{-(n+2s+1)}$, and so~$|D_t\Gamma_{1,R}(x)|\le
C\, (1+|x|)^{-(n+2s+1)}$. Accordingly,
we have that
\begin{equation}\label{ w w 51}
|(D_t\Gamma_{1,R} )* w^p|\le \|w\|_{L^\infty(\R^n)}^p
\,\int_{\R^n} |D_t\Gamma_{1,R}(y)|\,dy\le C.\end{equation}
Also
$$ |w^p(x+te_i) - w^p(x)|\le p \,\|w\|_{L^\infty(\R^n)}^{p-1}
\,|w(x+te_i) - w(x)|.$$
This says that
$$ |D_t w^p(x)|\le p \,\|w\|_{L^\infty(\R^n)}^{p-1}\,|D_t w(x)|.$$
Moreover
$$ |D_t w(x)|\le \frac{2\,\|w\|_{L^\infty(\R^n)}}{t},$$
hence we can define
$$ M(t):=\sup_{x\in\R^n} |D_t w(x)|,$$
so we obtain that
$$ |D_t w^p(x)|\le p \,\|w\|_{L^\infty(\R^n)}^{p-1}\,M(t),$$
for every~$x\in\R^n$, and thus
\begin{eqnarray*} && |\Gamma_{2,R}* (D_t w^p)(x)|\le\int_{\R^n}
\Gamma_{2,R}(y)\,|D_t w^p(x-y)|\,dy\\
&&\qquad\le p \,\|w\|_{L^\infty(\R^n)}^{p-1}\,M(t)
\int_{\R^n}
\Gamma_{2,R}(y)\,dy \le
\frac{M(t)}{2},\end{eqnarray*}
thanks to~\eqref{ w 2}. Using this and~\eqref{ w w 51}
into~\eqref{ w 3}, we conclude that
$$ D_t w\le C+\frac{M(t)}{2}.$$
By taking the supremum, we obtain that
$$ M(t)\le C+\frac{M(t)}{2},$$
and this gives, up to renaming~$C$, that~$M(t)\le C$.
By sending~$t\searrow 0$, we complete the proof of~\eqref{TOP}
when~$k=0$.

Now we suppose that~\eqref{TOP} holds true for some~$k$
and we prove it for~$k+1$. The proof is indeed
similar to the case~$k=0$: here
we take~$R:=1$
and use the short notation~$\Gamma_1:=\Gamma_{1,R}$ and~$\Gamma_2:=
\Gamma_{2,R}$.
By~\eqref{TOP} for~$k=0$ and the regularity
theory (applied to the equation for~$D_t w$), we know that~$w\in C^1(\R)$,
hence we can differentiate~\eqref{w 0} and obtain that
\begin{equation}\label{k->1}
\frac{\partial w}{\partial x_i} =
\frac{\partial \Gamma_1}{\partial x_i} *w^p + \Gamma_2 *
\left(p w^{p-1}  \frac{\partial w}{\partial x_i}\right).
\end{equation}
So, we use \eqref{decay w}, \eqref{dd51} and Lemma~\ref{DF4}
to obtain
\begin{equation}\label{k->2}
\left|\frac{\partial \Gamma_1}{\partial x_i} *w^p (x)\right|
\le C(1+|x|)^{-\min\{(n+2s+1),\,p(n+2s) \}}.
\end{equation}
Moreover, we notice that
\begin{eqnarray*}&& (p-1)(n+2s)+\nu(k)=
\min\{(p-1)(n+2s)+
\nu_1, \ (k+1)(p-1)(n+2s) \} \\&&\qquad\ge \min\{
\nu_1, \ (k+1)(p-1)(n+2s) \}=\nu(k+1).\end{eqnarray*}
Hence, using~\eqref{TOP} for~$k$ and~\eqref{decay w}
we see that
\begin{equation}\label{p meno 1}
\left| p w^{p-1}  \frac{\partial w}{\partial x_i}(x)\right|\le
C( 1+|x|)^{-(p-1)(n+2s)-\nu(k)}
\le C( 1+|x|)^{-\nu(k+1)},
\end{equation}
up to renaming constants (possibly depending on~$p$).
Now, we observe that
\begin{equation}\label{x meno y}
{\mbox{if~$x\in\R^n$ and~$|y|<1$ then~$1+|x-y|\ge \displaystyle\frac13(1+|x|)$.}}
\end{equation}
Indeed, if~$|x|\ge 2$ and~$|y|<1$, then
$$ |x-y|\ge |x|-|y|\ge\frac{|x|}{2},$$
which implies~\eqref{x meno y} in this case.
If instead~$|x|<2$ and~$|y|<1$, we have that
$$ 1+|x|<3<3(1+|x-y|),$$
and this finishes the proof of~\eqref{x meno y}.

Therefore, since~$\Gamma_2$ vanishes outside~$B_1$,
using~\eqref{p meno 1} and~\eqref{x meno y}, we have
\begin{eqnarray*}
&& \left|
\Gamma_2 * \left(p w^{p-1}  \frac{\partial w}{\partial x_i}\right)(x)
\right| \leq C\,\int_{B_1} \frac{\Gamma_2(y)}{
(1+|x-y|)^{\nu(k+1)} }\,dy
\\ && \qquad \le
C\,\int_{B_1} \frac{\Gamma_2(y)}{
(1+|x|)^{\nu(k+1)} }\,dy
\le\frac{C}{(1+|x|)^{\nu(k+1)} }
\,\int_{\R^n} \Gamma(y)\,dy = \frac{C}{(1+|x|)^{\nu(k+1)} }.\end{eqnarray*}
This and~\eqref{k->2} establish~\eqref{TOP} for~$k+1$,
thus completing the inductive argument.
\end{proof}

\begin{lem}\label{lem:decay DZ}
There exists a positive constant~$C$ such that, for any~$i=1,\ldots,n$,
$$ |\nabla Z_i|\le C|x-\xi|^{-\nu_2}, {\mbox{ for any }}|x-\xi|\ge 1,$$
where~$\nu_2:=\min\{(n+2s+2),\,p(n+2s) \}$.
\end{lem}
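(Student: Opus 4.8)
The plan is to repeat, one derivative higher, the bootstrap scheme that produced Lemma~\ref{:decay Z}. Recall from~\eqref{k->1} that, with $R:=1$ and the cut-off splitting $\Gamma=\Gamma_1+\Gamma_2$ (where $\Gamma_1$ is smooth and agrees with $\Gamma$ outside $B_1$, and $\Gamma_2$ is supported in $B_1$), the function $Z_i=\partial w/\partial x_i$ satisfies
\begin{equation*}
Z_i=\frac{\partial\Gamma_1}{\partial x_i}*w^p+\Gamma_2*\bigl(p\,w^{p-1}Z_i\bigr).
\end{equation*}
By Lemma~\ref{:decay Z} we already know $w\in C^1(\R^n)$ with $|Z_i|\le C|x-\xi|^{-\nu_1}$ for $|x-\xi|\ge 1$, and the elliptic (Schauder-type) regularity theory for $(-\Delta)^s+1$ applied to the equation satisfied by each $Z_i$ gives $w\in C^2$, so we may differentiate the displayed identity once more. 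First I would record a second-derivative bound on $\Gamma$ of the form $|\nabla^2\Gamma(x)|\le C|x|^{-(n+2s+2)}$ for $|x|\ge1$; this is the direct analogue of~\eqref{dd51} and follows either from the cited formula~(3.2) of~\cite{FQT} differentiated once more, or from the same difference-quotient argument used in the proof of Lemma~\ref{:decay Z} applied to $\nabla\Gamma$ in place of $\Gamma$. Consequently $|\nabla^2\Gamma_1(x)|\le C(1+|x|)^{-(n+2s+2)}$ on all of $\R^n$.

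Next, differentiating the identity for $Z_i$ with respect to $x_j$ gives
\begin{equation*}
\frac{\partial Z_i}{\partial x_j}=\frac{\partial^2\Gamma_1}{\partial x_i\partial x_j}*w^p
+\Gamma_2*\Bigl(p(p-1)w^{p-2}Z_jZ_i+p\,w^{p-1}\frac{\partial Z_i}{\partial x_j}\Bigr).
\end{equation*}
The first term is handled by Lemma~\ref{DF4}: $w^p$ decays like $|x|^{-p(n+2s)}$ and $\nabla^2\Gamma_1$ like $|x|^{-(n+2s+2)}$, so the convolution decays like $(1+|x|)^{-\min\{n+2s+2,\,p(n+2s)\}}=(1+|x|)^{-\nu_2}$. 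For the second (local, supported in $B_1$) term I would use the elementary inequality~\eqref{x meno y}, namely $1+|x-y|\ge\frac13(1+|x|)$ for $|y|<1$, together with the already-established decays: $w^{p-2}Z_jZ_i$ decays like $|x|^{-(p-2)(n+2s)-2\nu_1}$ and $w^{p-1}\nabla Z_i$ like $|x|^{-(p-1)(n+2s)}\cdot(\text{the }\nabla Z_i\text{ bound})$. The point is that each of these exponents dominates $\nu_2$ once one checks $(p-1)(n+2s)\ge 0$ and $(p-2)(n+2s)+2\nu_1\ge\nu_2$ — both obvious since $\nu_1\ge n+2s$ — so that
\begin{equation*}
\Bigl|\Gamma_2*\bigl(p(p-1)w^{p-2}Z_jZ_i+p\,w^{p-1}\nabla Z_i\bigr)(x)\Bigr|
\le\frac{C}{(1+|x|)^{\nu_2}}\int_{\R^n}\Gamma(y)\,dy=\frac{C}{(1+|x|)^{\nu_2}},
\end{equation*}
using $\int\Gamma=1$ from~\eqref{EQ3}. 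As in Lemma~\ref{:decay Z}, if the $\nabla Z_i$ bound that is fed in is initially weaker than $\nu_2$, one absorbs it: the inequality $|\nabla Z_i(x)|\le C(1+|x|)^{-\mu}+\tfrac12\sup(1+|\cdot|)^{\mu}|\nabla Z_i|$-type estimate (with the supremum made finite first via the $C^2$ regularity and a coarse a~priori bound) closes by the same fixed-point/absorption trick, and one iterates finitely many times raising the exponent by $(p-1)(n+2s)$ each round until it exceeds $\nu_2$.

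The main obstacle I anticipate is not any single estimate but the bookkeeping needed to run the absorption argument cleanly in the vector-valued, second-order setting: one must first secure a finite weighted supremum of $|\nabla^2 w|$ (so that $M:=\sup(1+|x|)^{\mu}|\nabla^2 w(x)|$ makes sense for the starting exponent $\mu$), which requires invoking interior elliptic estimates for $(-\Delta)^s+1$ on unit balls applied to the equation for $Z_i$ — the nonlocal analogue of the step "$w\in C^1$ by regularity theory" used in the previous lemma — and then verify that the convolution with $\Gamma_2$ contributes a factor strictly less than $1$ (here exactly $\tfrac12$ is not needed; any constant times $\int\Gamma$ works because the decay gain $(1+|x|)^{-(p-1)(n+2s)}$ on the coefficient is a genuine gain, not a bare bound). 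Once these two points are in place the estimate is, as in Lemma~\ref{:decay Z}, routine. I would therefore structure the write-up as: (i) the $\nabla^2\Gamma$ bound; (ii) the differentiated identity; (iii) Lemma~\ref{DF4} on the first term; (iv) the local-term estimate via~\eqref{x meno y} and~\eqref{EQ3}; (v) the finite-iteration absorption, concluding $|\nabla Z_i|\le C|x-\xi|^{-\nu_2}$ for $|x-\xi|\ge1$.
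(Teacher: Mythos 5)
Your proposal is correct and takes essentially the same route as the paper: the paper's own proof is the one-line remark that one should repeat the inductive bootstrap of Lemma~\ref{:decay Z} with the second-derivative bound~\eqref{dd51:2} in place of~\eqref{dd51}, and you have spelled out precisely what that entails (the differentiated convolution identity, the application of Lemma~\ref{DF4} to $\nabla^2\Gamma_1*w^p$, the estimate on the $\Gamma_2$-convolution via~\eqref{x meno y} and~\eqref{EQ3}, and the finite absorption iteration). One detail you handled that the paper passes over in silence is the extra lower-order term $p(p-1)w^{p-2}Z_jZ_i$ produced by the product rule; your verification that $(p-2)(n+2s)+2\nu_1>\nu_2$ (using the two-sided decay of $w$ from~\eqref{d88du44uuuuqqqqq} to make sense of $w^{p-2}$ when $1<p<2$) is exactly what is needed to close that step.
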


\begin{proof} {F}rom formula~(3.2) of~\cite{FQT} we know that
\begin{equation}\label{dd51:2}
|D^2\Gamma(x)|\le C |x|^{-(n+2s+2)} , \quad |x|\geq 1.
\end{equation}
Hence the proof of Lemma~\ref{lem:decay DZ}
follows as the one of Lemma~\ref{:decay Z}
by using~\eqref{dd51:2} instead of~\eqref{dd51}.
\end{proof}

\begin{lem}\label{lem:decay D2 Z}
For any~$k\in\N$
there exists a positive constant~$C_k$ such that, for any~$i=1,\ldots,n$,
$$ |D^k Z_i|\le C_k |x-\xi|^{-n}, {\mbox{ for any }}|x-\xi|\ge 1.$$
\end{lem}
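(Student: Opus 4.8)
The plan is to prove, by induction on~$k$, the stronger statement that~$|D^{k+1}w(x)|\le C_k\,(1+|x|)^{-(n+2s)}$ for all~$x\in\R^n$ (which clearly gives the assertion, since~$D^kZ_i$ is~$D^{k+1}w$ translated by~$\xi$, and~$n+2s>n$), by differentiating the convolution identity~$w=\Gamma*w^p$ just as in the proofs of Lemmata~\ref{:decay Z} and~\ref{lem:decay DZ}. The cases~$k=0$ and~$k=1$ are already covered by those lemmata: since $p>1$ and $s>0$, the exponents~$\nu_1=\min\{n+2s+1,\,p(n+2s)\}$ and~$\nu_2=\min\{n+2s+2,\,p(n+2s)\}$ are both~$\ge n+2s$, so the bounds on~$Z_i$ and~$\nabla Z_i$ suffice.

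For the inductive step I assume, for all derivatives of order~$\le k$, the bound~$|D^jw|\le C_j\,(1+|x|)^{-(n+2s)}$, which by the lower bound in~\eqref{decay w} also gives~$|D^jw|\le C_j\,w$ on~$\{|x|\ge1\}$ for~$j\le k$; I take~$\xi=0$. As in Lemma~\ref{:decay Z}, I split~$\Gamma=\Gamma_1+\Gamma_2$ with~$\Gamma_1\in C^\infty(\R^n)$, $0\le\Gamma_1\le\Gamma$, $\Gamma_1=\Gamma$ outside~$B_R$, and~$\Gamma_2:=\Gamma-\Gamma_1$ supported in~$B_R$ with~$\|\Gamma_2\|_{L^1(\R^n)}\le\int_{B_R}\Gamma$ as small as we wish; recall that~$w\in C^\infty(\R^n)$ by elliptic bootstrap for~\eqref{EQ w}, so we may differentiate~$w=\Gamma_1*w^p+\Gamma_2*w^p$ to obtain~$D^{k+1}w=(D^{k+1}\Gamma_1)*w^p+\Gamma_2*D^{k+1}(w^p)$. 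For the first term, the bound~$|D^{k+1}\Gamma_1(x)|\le C\,(1+|x|)^{-(n+2s)}$ --- which follows from~\eqref{GA} by interior regularity for~$(-\Delta)^s\Gamma+\Gamma=0$ on~$\R^n\setminus\{0\}$, exactly as formula~(3.2) of~\cite{FQT} yields it for~$|D\Gamma|$ and~$|D^2\Gamma|$ --- combined with~\eqref{decay w} and Lemma~\ref{DF4} gives a contribution~$\le C\,(1+|x|)^{-(n+2s)}$. For the second term, the chain rule writes~$D^{k+1}(w^p)=p\,w^{p-1}D^{k+1}w+\Phi$, where~$\Phi$ is a finite sum of products~$w^{p-j}\prod_iD^{a_i}w$ with all~$a_i\le k$; by the inductive bound~$|D^{a_i}w|\le C\,w$ on~$\{|x|\ge1\}$ each such product is~$\le C\,w^p$ there (the possibly negative power~$w^{p-j}$ being harmless, since it is multiplied by~$j\ge2$ factors each~$\le C\,w$), so~$|\Phi(x)|\le C\,(1+|x|)^{-p(n+2s)}$, and convolution with the compactly supported~$\Gamma_2$ preserves this decay.

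The only remaining term, $\Gamma_2*(p\,w^{p-1}D^{k+1}w)$, still contains~$D^{k+1}w$, and controlling it will be the main obstacle; it is handled exactly as the case~$k=0$ of Lemma~\ref{:decay Z}. The clean way is to run the whole computation above with~$D^{k+1}$ replaced by the difference quotient~$D_tD^k$, $t\in(0,1)$ --- which is legitimate because~$D^kw$ is bounded, by the inductive hypothesis together with continuity --- and to fix~$R$ small enough that~$p\,\|w\|_{L^\infty(\R^n)}^{p-1}\,\|\Gamma_2\|_{L^1(\R^n)}\le\tfrac12$; then the offending term is bounded in sup norm by~$\tfrac12 M(t)$, with~$M(t):=\sup_{\R^n}|D_tD^kw|$, while all the others are bounded uniformly in~$t$, so taking the supremum gives~$M(t)\le C+\tfrac12M(t)$, hence~$M(t)\le 2C$. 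Feeding this back pointwise into the identity (using~$w^{p-1}(x)\le C\,(1+|x|)^{-(p-1)(n+2s)}$ and that convolution with~$\Gamma_2\in L^1(B_R)$ does not degrade polynomial decay rates) improves the decay of~$D_tD^kw$ by~$(p-1)(n+2s)>0$ at each pass, until the rate saturates at~$\min\{n+2s,\,p(n+2s)\}=n+2s$; letting~$t\to0$ yields~$|D^{k+1}w(x)|\le C\,(1+|x|)^{-(n+2s)}$, closing the induction. Everything except this absorption step is a routine repetition of the arguments already carried out for Lemmata~\ref{:decay Z} and~\ref{lem:decay DZ}.
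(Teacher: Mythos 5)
Your proof has a genuine gap at the asserted kernel bound $|D^{k+1}\Gamma_1(x)|\le C(1+|x|)^{-(n+2s)}$, which you claim ``follows from~\eqref{GA} by interior regularity\ldots exactly as formula~(3.2) of~\cite{FQT}.'' This is neither carried out nor cited, and it is not what the paper uses: for derivatives of order $\ge 3$ the paper invokes Lemma~C.1(ii) of~\cite{FLS}, which gives only the weaker decay $|D^{k+1}\Gamma(x)|\le C_k|x|^{-n}$ for $|x|\ge 1$, while formula~(3.2) of~\cite{FQT} covers only first and second derivatives. A na\"ive ``interior regularity'' appeal also does not produce your estimate directly, because~$(-\Delta)^s$ is nonlocal: the tail of the integral defining $(-\Delta)^s\Gamma$ sees the singularity of~$\Gamma$ at the origin, and one must cut that off and estimate a commutator term before local regularity estimates can be brought to bear. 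Your bound is in fact true --- the sharp decay of~$D^{k+1}\Gamma$ is even $|x|^{-(n+2s+k+1)}$, as one reads off from the expansion of the Fourier multiplier $(1+|\cdot|^{2s})^{-1}$ near the origin --- but as written the step is simply asserted.

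Apart from this your argument mirrors the paper's: same inductive convolution scheme based on $w=\Gamma_1*w^p+\Gamma_2*w^p$, same sup-norm absorption of $\Gamma_2*(pw^{p-1}D^{k+1}w)$ via a smallness choice of~$\|\Gamma_2\|_{L^1}$, same decay bootstrap. Be aware, though, that your stronger inductive hypothesis $|D^jw|\le C_j(1+|x|)^{-(n+2s)}$ is precisely what makes your chain-rule remainder estimate $|\Phi|\le Cw^p$ work: you combine it with the lower bound in~\eqref{decay w} to get $|D^{a_i}w|\le Cw$, and that step is not available for $a_i\ge 3$ if only the $|x|^{-n}$ rate from~\cite{FLS} is known. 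So one cannot simply substitute the cited~\cite{FLS} bound into your write-up without reworking the chain-rule step; the paper instead propagates the $|x|^{-n}$ rate through the same absorption-and-bootstrap scheme, which lands exactly on the decay the lemma states.
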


\begin{proof} {F}rom Lemma~C.1(ii) of~\cite{FLS}, we have that
\begin{equation}
\label{dd51:3}
|D^{k+1}\Gamma(x)|\le C_k |x|^{-n} , \quad |x|\geq 1.
\end{equation}
The proof of Lemma~\ref{lem:decay D2 Z}
follows as the one of Lemma~\ref{:decay Z}
by using~\eqref{dd51:3} instead of~\eqref{dd51}.
\end{proof}

We notice that
\begin{equation}\label{uguali}
\int_{\R^n}Z_i^2\,dx=\int_{\R^n}Z_j^2\, dx {\mbox{ for any }}i,j=1,\ldots,n.
\end{equation}
We set
\begin{equation}\label{alfaj}
\alpha:=\int_{\R^n}Z_1^2\,dx,
\end{equation}
and so, thanks to~\eqref{uguali}, we observe that
\begin{equation}\label{ug alfa}
\int_{\R^n}Z_i^2\,dx=\alpha {\mbox{ for any }}i=1,\ldots,n.
\end{equation}

\begin{lem}\label{lem:orto}
The~$Z_i$'s satisfy the following condition
\begin{equation}\label{J1}
\int_{\R^n}Z_i\,Z_j\,dx=\alpha\,\delta_{ij}.\end{equation}
Also, if~$\tau_o\in L^\infty([0,+\infty))$, $\tau(x):=
\tau_o(|x-\xi|)$ for any~$x\in\R^n$ and~$\tilde Z_i:=\tau Z_i$, then
\begin{equation}\label{J2}
\int_{\R^n}\tilde Z_i\,Z_j\,dx=\tilde\alpha\,\delta_{ij},\end{equation}
where\footnote{In particular,
we note that, if~$\tau_o$ has a sign and does not vanish identically
then~$\tilde\alpha\ne0$ (and we will often implicitly
assume that this is so in the sequel).}
\begin{equation}\label{alpha:J3}
\tilde\alpha:=\int_{\R^n}\tilde Z_1\, Z_1\,dx
.\end{equation}
\end{lem}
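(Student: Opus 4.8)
The statement to prove is Lemma~\ref{lem:orto}, which asserts that the functions $Z_i = \partial w_\xi/\partial x_i$ are mutually orthogonal in $L^2(\R^n)$ with common norm $\alpha$, and that multiplying one factor by a radial weight $\tau$ preserves this orthogonality pattern (with a possibly different constant $\tilde\alpha$).

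The plan is to exploit the radial symmetry of $w$ together with the parity of its partial derivatives. First I would translate so that $\xi = 0$, which is harmless since both sides of \eqref{J1} and \eqref{J2} are translation-invariant; then $w_\xi = w$ is radial, and I write $w(x) = \varphi(|x|)$ for a smooth profile $\varphi$, so that $Z_i(x) = \varphi'(|x|)\, x_i/|x|$. For $i \ne j$, the integrand $Z_i(x) Z_j(x) = (\varphi'(|x|))^2\, x_i x_j / |x|^2$ is odd under the reflection $x_i \mapsto -x_i$ (with all other coordinates fixed), which is a measure-preserving symmetry of $\R^n$; hence the integral vanishes. The decay estimates from Lemma~\ref{:decay Z} (giving $|Z_i| \le C|x|^{-\nu_1}$ with $\nu_1 > n/2$, since $\nu_1 \ge n+2s+1 > n$) guarantee $Z_i \in L^2(\R^n)$, so the integral is absolutely convergent and the symmetry argument is legitimate. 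Combined with \eqref{uguali}--\eqref{ug alfa}, which record that $\int Z_i^2 = \alpha$ independently of $i$ (again by the rotational symmetry of $w$, permuting coordinates), this gives \eqref{J1}.

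For \eqref{J2}, the argument is essentially identical: with $\tau(x) = \tau_o(|x|)$ radial, the integrand $\tilde Z_i(x) Z_j(x) = \tau_o(|x|)\, (\varphi'(|x|))^2\, x_i x_j/|x|^2$ is again odd under $x_i \mapsto -x_i$ for $i \ne j$, so the off-diagonal integrals vanish. For the diagonal, the rotational invariance of $\tau_o(|x|)(\varphi'(|x|))^2/|x|^2$ together with $\int_{\R^n} x_i^2\, d\mu(x) = \int_{\R^n} x_j^2\, d\mu(x)$ for any rotation-invariant measure $d\mu$ shows $\int \tilde Z_i Z_i$ is the same for every $i$, hence equals $\tilde\alpha$ as defined in \eqref{alpha:J3}. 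The only integrability point to check is that $\tilde Z_i Z_j \in L^1$, which follows from $\tau_o \in L^\infty$ and $Z_i, Z_j \in L^2$ via Cauchy--Schwarz.

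There is no real obstacle here; the lemma is a soft symmetry statement. The only point requiring a word of care is integrability/absolute convergence, so that reflections and coordinate permutations may be applied under the integral sign — and this is exactly what the decay bounds of Section~\ref{S:a1} (in particular Lemma~\ref{:decay Z}) are there to supply. I would therefore present the proof as: reduce to $\xi=0$; record $Z_i \in L^2$ from Lemma~\ref{:decay Z}; use the reflection $x_i\mapsto -x_i$ for the vanishing of off-diagonal terms; use coordinate permutations (rotational invariance of $w$, resp. of $\tau_o(|\cdot|)$) for the equality of diagonal terms; and invoke Cauchy--Schwarz in the weighted case.
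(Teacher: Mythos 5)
Your proof is correct and follows essentially the same route as the paper: reduce to $\xi=0$, write $Z_i(x)=w'(|x|)\,x_i/|x|$ using radiality, kill off-diagonal terms via the oddness of $x_i\mapsto -x_i$, and obtain equality of diagonal terms from rotational invariance. The only cosmetic difference is ordering: the paper proves the weighted identity~\eqref{J2} first and recovers~\eqref{J1} by setting $\tau_o\equiv 1$, whereas you do the unweighted case first and then repeat the argument; your extra remarks on integrability (Lemma~\ref{:decay Z} plus Cauchy--Schwarz) are sound and make explicit what the paper leaves implicit.
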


\begin{proof}
We first observe that the function~$w$ is radial (see, for instance,~\cite{FQT}) and therefore, recalling the definition of~$Z_i$ in~\eqref{Zj}, we have that
$$ Z_i =\frac{\partial w}{\partial x_i}(x-\xi)= w'_\xi(|x-\xi|)\frac{x_i-\xi_i}{|x-\xi|}.$$
Hence, using the change of variable~$y=x-\xi$,
for any~$i,j=1,\ldots,n$, we have
\begin{equation}\label{J5}\begin{split}
\int_{\R^n}\tilde Z_i\,Z_j\,dx \,&=  \int_{\R^n}\tau_o(|x-\xi|)\,
|w'(|x-\xi|)|^2\frac{(x_i-\xi_i)(x_j-\xi_j)}{|x-\xi|^2}\,dx\\
&= \int_{\R^n}
\tau_o(|y|)\,
|w'(|y|)|^2\frac{y_i\,y_j}{|y|^2}\,dy.\end{split}\end{equation}
Therefore, if~$i\ne j$,
$$ \int_{\R^n}\tilde Z_i\,Z_j\,dx
= \int_{\R^{n-1}}
y_j\left(\int_{\R}\tau_o(|y|)\,
|w'(|y|)|^2\frac{y_i}{|y|^2}\,dy_i\right)dy'=0,$$
since the function~$\tau_o(|y|)\,
|w'(|y|)|^2\frac{y_i}{|y|^2}$ is odd.
This proves~\eqref{J2} when~$i\ne j$. On the other hand, if~$i=j$,
formula~\eqref{J5} becomes
\begin{equation*}
\int_{\R^n}\tilde Z_i\,Z_i\,dx = \int_{\R^n}
\tau_o(|y|)\,
|w'(|y|)|^2\frac{y_i^2}{|y|^2}\,dy.\end{equation*}
We observe that the latter integral is invariant under rotation,
hence
$$ \int_{\R^n}\tilde Z_i\,Z_i\,dx = \int_{\R^n}
\tau_o(|y|)\,
|w'(|y|)|^2\frac{y_1^2}{|y|^2}\,dy=\tilde\alpha.$$
This establishes~\eqref{J2} also when~$i=j$.
Then, \eqref{J1} follows from~\eqref{J2} by choosing~$\tau_o:=1$
and comparing~\eqref{ug alfa} and~\eqref{alpha:J3}.
\end{proof}

\begin{cor}\label{cor:orto}
The~$Z_i$'s satisfy the following condition
$$
\int_{\Omega_\epsilon}Z_i\,Z_j\,dx = \alpha\delta_{ij}+O(\epsilon^{\nu}),
$$ with~$\nu>n+4s$.
\end{cor}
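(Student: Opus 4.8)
The plan is to compare the integral over the expanding domain $\Omega_\epsilon$ with the integral over all of $\R^n$, for which we already have the exact identity \eqref{J1}. Write
$$
\int_{\Omega_\epsilon} Z_i\,Z_j\,dx = \int_{\R^n} Z_i\,Z_j\,dx - \int_{\R^n\setminus\Omega_\epsilon} Z_i\,Z_j\,dx = \alpha\delta_{ij} - \int_{\R^n\setminus\Omega_\epsilon} Z_i\,Z_j\,dx,
$$
so the whole task reduces to showing that the error term $\int_{\R^n\setminus\Omega_\epsilon} Z_i\,Z_j\,dx$ is $O(\epsilon^\nu)$ with $\nu>n+4s$. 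Here I would tacitly use (as the statement implicitly does) that the reference point $\xi\in\Omega_\epsilon$ is one of the admissible concentration points, so that $d:=\dist(\xi,\partial\Omega_\epsilon)\ge\delta/\epsilon$ for a fixed small $\delta$; in particular the complement $\R^n\setminus\Omega_\epsilon$ is contained in $\R^n\setminus B_d(\xi)$ with $d\ge\delta/\epsilon$.

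The key step is the pointwise decay of the $Z_i$'s far from $\xi$. By Lemma~\ref{:decay Z}, for $|x-\xi|\ge 1$ we have $|Z_i(x)|\le C|x-\xi|^{-\nu_1}$ with $\nu_1=\min\{n+2s+1,\,p(n+2s)\}$. Hence on $\R^n\setminus\Omega_\epsilon\subseteq\R^n\setminus B_d(\xi)$ (and noting $d\ge\delta/\epsilon\ge 1$ for small $\epsilon$),
$$
\left|\int_{\R^n\setminus\Omega_\epsilon} Z_i\,Z_j\,dx\right| \le C\int_{\R^n\setminus B_d(\xi)} |x-\xi|^{-2\nu_1}\,dx \le C'\, d^{\,n-2\nu_1} \le C'' \epsilon^{\,2\nu_1-n},
$$
where the middle inequality is the elementary polar-coordinates computation (convergent since $2\nu_1 > n$) and the last uses $d\ge\delta/\epsilon$. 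So one may take $\nu:=2\nu_1-n$.

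It remains only to check that this exponent beats $n+4s$, i.e. that $2\nu_1-n>n+4s$, equivalently $\nu_1>n+2s$. Since $\nu_1=\min\{n+2s+1,\,p(n+2s)\}$, we have $n+2s+1>n+2s$ always, and $p(n+2s)>n+2s$ because $p>1$; hence $\nu_1>n+2s$ and $\nu>n+4s$, as required. This last inequality is the only place where the structural assumptions ($p>1$, and the dimensional/decay relation) enter, and it is immediate, so there is no real obstacle here — the whole corollary is a routine truncation of the exact identity \eqref{J1} combined with the decay estimate of Lemma~\ref{:decay Z}.
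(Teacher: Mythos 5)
Your proof is correct and follows essentially the same route as the paper: truncate the exact identity of Lemma~\ref{lem:orto} to $\Omega_\epsilon$, bound the tail integral over $\R^n\setminus\Omega_\epsilon$ using the decay from Lemma~\ref{:decay Z} and polar coordinates, and obtain the exponent $2\nu_1-n$. You also make explicit the elementary check that $2\nu_1-n>n+4s$ (equivalently $\nu_1>n+2s$, since $p>1$), which the paper's proof leaves implicit; otherwise the arguments coincide.
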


\begin{proof}
From Lemma~\ref{lem:orto},we have that
\begin{eqnarray*}
\int_{\Omega_\epsilon}Z_i\,Z_j\,dx &=& \int_{\R^n}Z_i\,Z_j\,dx-\int_{\R^n\setminus\Omega_\epsilon}Z_i\,Z_j\,dx\\
&=& \alpha\delta_{ij}-\int_{\R^n\setminus\Omega_\epsilon}Z_i\,Z_j\,dx.
\end{eqnarray*}
Moreover, from Lemma~\ref{:decay Z}, we obtain that
$$ \int_{\R^n\setminus\Omega_\epsilon}Z_i\,Z_j\,dx\le C\int_{\R^n\setminus\Omega_\epsilon}\frac{1}{|x-\xi|^{2\nu_1}}\,dx \le C\,\epsilon^{2\nu_1-n},$$
which implies the desired result.
\end{proof}

\section{Some regularity estimates}\label{S:a2}

Here we perform some uniform estimates on the solutions
of our differential equations.
For this, we introduce some notation:
given~$\xi\in\Omega_\epsilon$ with
\begin{equation}\label{dist bordo}
\dist(\xi,\partial\Omega_\epsilon)\ge\frac{c}{\epsilon}, {\mbox{ for some }}c\in(0,1),
\end{equation}
and~$\frac{n}{2}<\mu <n+2s$,
we define, for any~$x\in\R^n$,
\begin{equation}\label{rho}
\rho_\xi(x):=\frac{1}{(1+|x-\xi|)^\mu}.
\end{equation}
Moreover, we set
$$ \|\psi\|_{\star,\xi}:=\|\rho_\xi^{-1}\psi\|_{L^{\infty}(\R^n)}. $$

\begin{lem}\label{lem:100}
Let~$g\in L^2(\R^n)\cap L^\infty(\R^n)$ and let~$\psi\in H^s(\R^n)$ be a
solution to the problem
\begin{equation}\label{eq:100}
\left\{
\begin{matrix}
(-\Delta)^s \psi +\psi +g =0 & {\mbox{ in $\Omega_\epsilon$,}}\\
\psi =0 & {\mbox{ in }}\R^n\setminus\Omega_\epsilon.
\end{matrix}
\right.
\end{equation}

Then, there exists a positive constant~$C$ such that
$$ \|\psi\|_{L^{\infty}(\R^n)}+\sup_{x\ne y}\frac{|\psi(x)-\psi(y)|}{|x-y|^s}\le C\left( \|g\|_{L^\infty(\R^n)}+\|g\|_{L^2(\R^n)}\right).$$
\end{lem}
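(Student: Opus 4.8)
The statement is a standard a priori estimate for the linear nonlocal Schrödinger operator $(-\Delta)^s+1$ on the expanding domain $\Omega_\epsilon$ with zero exterior datum, and the natural strategy is a two-step bootstrap: first obtain an $L^\infty$ bound on $\psi$ from an energy ($L^2$) bound plus a De~Giorgi--Nash--Moser type iteration, then upgrade to the $C^s$ modulus of continuity by interior regularity for the fractional Laplacian together with the behavior near $\partial\Omega_\epsilon$ forced by the vanishing exterior condition. First I would test the equation \eqref{eq:100} with $\psi$ itself, using that $\psi\in H^s_0(\Omega_\epsilon)$, to get
$$
\int_{\R^n}\bigl((-\Delta)^{s/2}\psi\bigr)^2\,dx+\int_{\R^n}\psi^2\,dx=-\int_{\Omega_\epsilon}g\,\psi\,dx\le \|g\|_{L^2(\R^n)}\,\|\psi\|_{L^2(\R^n)},
$$
which yields $\|\psi\|_{H^s(\R^n)}\le \|g\|_{L^2(\R^n)}$ after absorbing. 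This gives the starting $L^2$ control.

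The next step is the $L^\infty$ bound. Here I would run a Moser iteration adapted to $(-\Delta)^s$: testing the equation against powers $|\psi|^{\beta-1}\psi$ truncated appropriately, using the fractional Sobolev inequality $\|v\|_{L^{2^\ast_s}(\R^n)}^2\le C\|(-\Delta)^{s/2}v\|_{L^2(\R^n)}^2$ with $2^\ast_s=\tfrac{2n}{n-2s}$ (this is why $n>2s$ is needed, and why $g\in L^2\cap L^\infty$ enters), one gets an iterative inequality on the $L^{p_k}$ norms of $\psi$ with $p_k\to\infty$ and controls the right-hand side by $\|g\|_{L^2}+\|g\|_{L^\infty}$. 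Because the exterior datum is zero, the extension of $\psi$ by $0$ is globally in $H^s(\R^n)$ and the Sobolev inequality applies on all of $\R^n$; the nonlocal positivity of the bilinear form (Stroock--Varopoulos / the pointwise inequality $\bigl((-\Delta)^s\psi\bigr)(x)\,(|\psi|^{\beta-1}\psi)(x)\ge \text{const}\,\bigl((-\Delta)^{s/2}|\psi|^{(\beta+1)/2}\bigr)^2$ in the integrated sense) is what makes the truncated test functions legitimate. This produces $\|\psi\|_{L^\infty(\R^n)}\le C(\|g\|_{L^\infty}+\|g\|_{L^2})$, with $C$ independent of $\epsilon$ because every constant is the Sobolev/Moser constant of $\R^n$, not of $\Omega_\epsilon$.

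Finally, for the $C^s$ seminorm I would write $\psi=-(\Gamma * \tilde g)$ on $\R^n$ for the distributional right-hand side $\tilde g := g\mathbf 1_{\Omega_\epsilon}+\bigl((-\Delta)^s\psi+\psi\bigr)\mathbf 1_{\R^n\setminus\Omega_\epsilon}$, or more cleanly apply the known boundary regularity for the Dirichlet problem of $(-\Delta)^s$ in a $C^{1,1}$ domain (Ros-Oton--Serra): a solution of $(-\Delta)^s\psi=h$ in $\Omega_\epsilon$, $\psi=0$ in $\R^n\setminus\Omega_\epsilon$, with $h:=-\psi-g\in L^\infty$ (already bounded by the previous step) satisfies $\psi\in C^s(\R^n)$ with $\|\psi\|_{C^s(\R^n)}\le C\|h\|_{L^\infty(\Omega_\epsilon)}\le C(\|g\|_{L^\infty}+\|g\|_{L^2})$. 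The domain $\Omega_\epsilon=\epsilon^{-1}\Omega$ has exterior and interior balls of radius $r_o/\epsilon\ge r_o$, so the boundary-regularity constant is uniform in $\epsilon$; away from the boundary one uses only interior Schauder-type estimates for $(-\Delta)^s$, which are translation-invariant and hence $\epsilon$-independent. Combining the three steps gives the claimed inequality.

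The step I expect to be the main obstacle is the Moser iteration: justifying that the test functions $|\psi_M|^{\beta-1}\psi_M$ (with $\psi_M$ a truncation at height $M$) are admissible in the weak formulation, that the nonlocal quadratic form controls $\|(-\Delta)^{s/2}|\psi_M|^{(\beta+1)/2}\|_{L^2}^2$ from below up to a constant depending only on $\beta$, and that the iteration constants stay summable so the limit $L^\infty$ bound is finite and $\epsilon$-independent. Once the $L^\infty$ bound is in hand, the passage to $C^s$ is essentially a citation to the fractional elliptic regularity literature.
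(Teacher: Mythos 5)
Your proposal follows the same three-step structure as the paper's proof: an $L^2$ energy bound by testing against $\psi$, then an $L^\infty$ bound, and finally the $C^s$ estimate via boundary regularity. The only differences are in what is cited versus worked out: the paper outsources the $L^\infty$ step to Theorem~8.2 of~\cite{DFV} rather than carrying out the Moser iteration in-house, and for the H\"older step it combines interior $C^\alpha$ regularity from~\cite{Silvestre} with the $\psi/d^s$ boundary-H\"older estimate of Proposition~3.5 in~\cite{SRO} (which gives $|\psi(x)|\le C d^s(x)$ near $\partial\Omega_\eps$ and hence the global $C^s$ modulus), whereas you invoke the global $C^s(\R^n)$ estimate of Ros-Oton--Serra directly; both are correct and amount to the same chain of estimates.
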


\begin{proof}
From Theorem~$8.2$ in~\cite{DFV} we have that~$\psi\in L^{\infty}(\R^n)$ and
there exists a constant~$C>0$ such that
\begin{equation}\label{norma 1}
\|\psi\|_{L^{\infty}(\R^n)}\le C\,\left(\|g\|_{L^{\infty}(\R^n)}+\|\psi\|_{L^2(\R^n)}\right).
\end{equation}
Now, we show that
\begin{equation}\label{norma 2}
\|\psi\|_{L^2(\R^n)}\le \|g\|_{L^2(\R^n)}.
\end{equation}
Indeed, we multiply the equation in~\eqref{eq:100} by~$\psi$ and we integrate
over~$\Omega_\epsilon$, obtaining that
\begin{equation}\label{mkiopipj}
\int_{\Omega_\epsilon}(-\Delta)^s\psi\,\psi+\psi^2+g\,\psi\,dx=0.
\end{equation}
We notice that, thanks to formula~$(1.5)$ in~\cite{SerraRos-Dirichlet-regularity},
$$ \int_{\Omega_\epsilon}(-\Delta)^s\psi\,\psi\,dx= \int_{\Omega_\eps}\left|(-\Delta)^s\psi\right|^2\,dx\ge 0.$$
Hence, from~\eqref{mkiopipj} we have
$$ \int_{\Omega_\epsilon}\psi^2\,dx\le \int_{\Omega_\epsilon}-g\,\psi\,dx.$$
So, using H\"older inequality, we get
$$ \int_{\Omega_\epsilon}\psi^2\,dx\le \left(\int_{\Omega_\epsilon}g^2\,dx\right)^{1/2}\left(\int_{\Omega_\epsilon}\psi^2\,dx\right)^{1/2}, $$
and therefore, dividing by~$\left(\int_{\Omega_\epsilon}\psi^2\,dx\right)^{1/2}$,
we obtain~\eqref{norma 2}.

From~\eqref{norma 1} and~\eqref{norma 2}, we have that
$$\|\psi\|_{L^{\infty}(\R^n)}\le C\left(\|g\|_{L^{\infty}(\R^n)}+\|g\|_{L^2(\R^n)}\right).$$

Now, since both~$\psi$ and~$g$ are bounded, from the regularity results
in~\cite{Silvestre} we have that~$\psi$ is~$C^{\alpha}$ in the interior
of~$\Omega_\epsilon$, for some~$\alpha\in(0,2s)$.

It remains to prove that~$\psi$ is~$C^\alpha$ near the boundary of~$\Omega_\epsilon$. For this, we fix a point~$p\in\partial\Omega_\epsilon$
and we look at the equation in the ball~$B_1(p)$.

We notice that~$|(-\Delta)^s \psi|$ is bounded,
since both~$\psi$ and~$g$ are in~$L^{\infty}(\R^n)$,
and therefore we can apply Proposition~$3.5$ in~\cite{SRO},
obtaining that, for any~$x,y\in B_1(p)\cap\Omega_\epsilon$,
\begin{equation}\label{C alfa}
\frac{\psi(x)}{d^s(x)} - \frac{\psi(y)}{d^s(y)} \le C_1\left(\|\psi\|_{L^{\infty}(\R^n)}+\|g\|_{L^\infty(\R^n)}\right),
\end{equation}
where~$d(x):=\dist(x,\partial\Omega_\epsilon)$.
In particular, we can fix~$y\in B_1(p)\cap\Omega_\epsilon$,
such that~$d(y)=1/2$. Since~$\psi$ is bounded, from~\eqref{C alfa} we have that
$$ \frac{\psi(x)}{d^s(x)}\le C_2\left(\|\psi\|_{L^{\infty}(\R^n)}+\|g\|_{L^\infty(\R^n)}\right), $$
which gives that
$$ \psi(x)\le C_2\left(\|\psi\|_{L^{\infty}(\R^n)}+\|g\|_{L^\infty(\R^n)}\right) d^s(x).$$
This implies that~$\psi$ is~$C^s$ also near the boundary
and concludes the proof of the lemma.
\end{proof}

\begin{lem}\label{LL}
Let~$\xi\in\Omega_\eps$, ${\mathcal{B}}$ be a bounded subset of~$\R^n$,
and~$R_0>0$ be such that
\begin{equation}\label{R0}
B_{R_0}(\xi)\supseteq {\mathcal{B}}.\end{equation}
Let~${\mathcal{W}}\in L^\infty(\R^n)$
be such that
\begin{equation}\label{m}
m:=\inf_{\R^n\setminus{\mathcal{B}}} {\mathcal{W}} >0.
\end{equation}
Let also~$g\in L^2(\R^n)$, with~$\|g\|_{\star,\xi}<+\infty$,
and let~$\psi\in H^s(\R^n)$ be a solution to
\begin{equation*}
\left\{
\begin{matrix}
(-\Delta)^s \psi +{\mathcal{W}}\psi +g =0 & {\mbox{ in $\Omega_\epsilon$,}}\\
\psi =0 & {\mbox{ in }}\R^n\setminus\Omega_\epsilon.
\end{matrix}
\right.
\end{equation*}
Then, there exists a positive constant~$C$, possibly
depending on~$m$, $R_0$ and~$\|{\mathcal{W}}\|_{L^\infty(\R^n)}$
(and also on~$n$, $s$,
and $\Omega$),
such that\footnote{In~\eqref{x psi}
we use \label{v foo}
the standard convention that~$\|
\psi\|_{L^\infty({\mathcal{B}})}:=0$ when~${\mathcal{B}}:=\varnothing$
(equivalently, if~${\mathcal{B}}=\varnothing$,
the term~$\|
\psi\|_{L^\infty({\mathcal{B}})}$ can be neglected in the proof
of Lemma~\ref{LL}, since, in this case,~$G$ and~$g$
are the same from~\eqref{var} on).}
\begin{equation}\label{x psi}
\|\psi\|_{\star,\xi} \le C\left(\|\psi\|_{L^\infty({\mathcal{B}})}
+ \|g\|_{\star,\xi}\right).\end{equation}
\end{lem}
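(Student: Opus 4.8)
The plan is to establish the pointwise bound $|\psi(x)|\le C\big(\|\psi\|_{L^\infty(\mathcal{B})}+\|g\|_{\star,\xi}\big)\rho_\xi(x)$ for every $x\in\R^n$ — which is precisely \eqref{x psi}, since $\|\psi\|_{\star,\xi}=\|\rho_\xi^{-1}\psi\|_{L^\infty(\R^n)}$ — by a barrier/comparison argument. Because $\psi$ vanishes outside $\Omega_\eps$ and a contribution of $\|\psi\|_{L^\infty(\mathcal{B})}$ is allowed on the right-hand side, the only set where a genuine barrier is needed is $D:=\Omega_\eps\setminus\overline{\mathcal{B}}$; there, by \eqref{m} and \eqref{R0}, the zeroth-order coefficient satisfies $\mathcal W\ge m>0$. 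The key point is that $(-\Delta)^s$ of a polynomially decaying profile is of strictly lower order than the profile itself, so a suitably \emph{dilated} multiple of the weight in \eqref{rho} will serve as a supersolution for $(-\Delta)^s+\mathcal W$ on $D$.

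First I would prove the barrier estimate: writing $\rho_0(w):=(1+|w|)^{-\mu}$ (so that $\rho_\xi(\cdot)=\rho_0(\cdot-\xi)$), there is $C_0=C_0(n,s,\mu)>0$ with $|(-\Delta)^s\rho_0(w)|\le C_0\,\rho_0(w)$ for all $w\in\R^n$. For $|w|\le1$ this is immediate, since $\rho_0\in C^2_b(\R^n)$, hence $(-\Delta)^s\rho_0$ is bounded, and $\rho_0\ge 2^{-\mu}$ there. For $|w|\ge1$ one splits the defining singular integral into $\{|y|\le|w|/2\}$, where a second-order Taylor expansion of $\rho_0$ contributes $O(|w|^{-\mu-2s})$, and its complement, where one estimates $\rho_0(w)$ and $\rho_0(w\pm y)$ separately against the tail $|y|^{-n-2s}$ of the kernel, obtaining a further contribution $O(|w|^{-\mu-2s})+O(|w|^{-n-2s})$. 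Since $\mu<n+2s$, both exponents exceed $\mu$, so the total is $\le C|w|^{-\mu}\le C'\rho_0(w)$. By the scaling identity $(-\Delta)^s[\rho_0(\theta\,\cdot)](x)=\theta^{2s}(-\Delta)^s\rho_0(\theta x)$ together with translation invariance, the function $\tilde\rho(x):=\rho_0\big(\theta(x-\xi)\big)=(1+\theta|x-\xi|)^{-\mu}$ then satisfies $|(-\Delta)^s\tilde\rho|\le C_0\theta^{2s}\,\tilde\rho$ everywhere. Fixing $\theta\in(0,1)$ so small that $C_0\theta^{2s}\le m/2$, and using $\mathcal W\ge m$ on $D$ together with $\tilde\rho\ge\rho_\xi$ (as $\theta\le1$), one gets
$$(-\Delta)^s\tilde\rho+\mathcal W\,\tilde\rho\ \ge\ \tfrac m2\,\tilde\rho\ \ge\ \tfrac m2\,\rho_\xi\qquad\text{in }D.$$

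Second, I would set $A:=\tfrac2m\|g\|_{\star,\xi}+(1+\theta R_0)^{\mu}\|\psi\|_{L^\infty(\mathcal{B})}$ and $\Phi:=A\tilde\rho$. Then $\Phi\in H^s(\R^n)$ — this is exactly where $\mu>n/2$ enters, since $(1+|\cdot|)^{-\mu}\in L^2(\R^n)$ precisely then — and, since $|g|\le\|g\|_{\star,\xi}\rho_\xi$ by the definition of $\|\cdot\|_{\star,\xi}$ and $A\ge\tfrac2m\|g\|_{\star,\xi}$, we obtain $(-\Delta)^s\Phi+\mathcal W\Phi\ge\tfrac{Am}{2}\rho_\xi\ge\|g\|_{\star,\xi}\rho_\xi\ge-g$ in $D$, whereas $\psi$ solves $(-\Delta)^s\psi+\mathcal W\psi=-g$ there; hence $w:=\Phi-\psi$ satisfies $(-\Delta)^sw+\mathcal W w\ge0$ in $D$. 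Moreover $w\ge0$ on $\R^n\setminus D$: on $\R^n\setminus\Omega_\eps$ one has $\psi=0\le\Phi$, and on $\overline{\mathcal{B}}\subseteq\overline{B_{R_0}(\xi)}$ one has $\tilde\rho\ge(1+\theta R_0)^{-\mu}$, so $\Phi\ge\|\psi\|_{L^\infty(\mathcal{B})}\ge\psi$ there (using that $\psi$ is continuous, by the interior and boundary regularity recalled in Lemma~\ref{lem:100}). Testing the inequality $(-\Delta)^sw+\mathcal W w\ge0$ against $w^-:=\max\{-w,0\}\in H^s(\R^n)$, which vanishes a.e.\ on $\R^n\setminus D$, and using the elementary pointwise inequality $(w(x)-w(y))(w^-(x)-w^-(y))\le-(w^-(x)-w^-(y))^2$ together with $\mathcal W\ge m>0$ on $D$, yields $[w^-]^2_{H^s(\R^n)}+\int_D\mathcal W(w^-)^2\le0$, hence $w^-\equiv0$, i.e.\ $\psi\le\Phi$. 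Running the same argument with $-\psi$ (note $|-g|=|g|$ and $\|-\psi\|_{L^\infty(\mathcal{B})}=\|\psi\|_{L^\infty(\mathcal{B})}$) gives $-\psi\le\Phi$, so $|\psi|\le A\tilde\rho$ on $\R^n$. Finally $\tilde\rho\le\theta^{-\mu}\rho_\xi$, because $(1+r)/(1+\theta r)\le\theta^{-1}$ for $r\ge0$, whence $\|\psi\|_{\star,\xi}\le\theta^{-\mu}A\le C\big(\|\psi\|_{L^\infty(\mathcal{B})}+\|g\|_{\star,\xi}\big)$ with $C$ depending only on $m$, $R_0$, $n$, $s$, $\mu$ and $\Omega$. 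When $\mathcal{B}=\varnothing$ the argument applies verbatim with $D=\Omega_\eps$ and the condition on $\overline{\mathcal{B}}$ vacuous, consistently with the footnote.

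I expect the barrier estimate $|(-\Delta)^s\rho_0|\le C_0\rho_0$ to be the only delicate point, and specifically its range of validity: when $\mu<n$ the weight $\rho_0$ is integrable and the computation is routine, but when $n\le\mu<n+2s$ integrability fails, so the far part of the singular integral must be handled directly through the kernel decay $|y|^{-n-2s}$, and it is precisely the constraint $\mu<n+2s$ that makes the resulting $|w|^{-n-2s}$ term absorbable into $\rho_0(w)$. The lower bound $\mu>n/2$ plays a different, purely functional-analytic role, ensuring $\tilde\rho\in H^s(\R^n)$ so that the energy form of the comparison principle for $(-\Delta)^s+\mathcal W$ with $\mathcal W\ge0$ is legitimate; incidentally it also makes the hypothesis $g\in L^2(\R^n)$ redundant, since $\|g\|_{\star,\xi}<\infty$ already forces $g\in L^\infty(\R^n)\cap L^2(\R^n)$.
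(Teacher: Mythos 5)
Your proof is correct in its essentials but takes a genuinely different route from the paper's. The paper never restricts to the set where $\mathcal{W}\ge m$: it replaces $\mathcal W$ by $W := m\chi_{\mathcal{B}} + \mathcal{W}\chi_{\R^n\setminus\mathcal{B}}\ge m$ on all of $\Omega_\eps$, pushes the discrepancy into a new forcing term $G := (m-\mathcal W)\chi_{\mathcal B}\psi - g$ controlled by $\|\psi\|_{L^\infty(\mathcal B)}+\|g\|_{\star,\xi}$, and then compares $\omega := \|G\|_{\star,\xi}\,\eta_\xi \pm \psi$, where $\eta$ is a ready-made barrier imported from~\cite{DDW}: the nonnegative $H^s$ solution of the \emph{constant-coefficient} equation $(-\Delta)^s\eta + m\eta = \rho_0$, together with the decay bound $\eta\lesssim\rho_0$. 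You instead keep $\mathcal W$ as it is, restrict the comparison to $D := \Omega_\eps\setminus\overline{\mathcal B}$ where $\mathcal W\ge m$ holds outright, and build the barrier by hand from a pointwise kernel estimate on $\rho_0$ combined with the dilation $\tilde\rho(x)=(1+\theta|x-\xi|)^{-\mu}$ and a small choice of $\theta$ so that $C_0\theta^{2s}\le m/2$. That dilation device is the key idea you supply that the paper does not need: it turns a crude one-sided bound $|(-\Delta)^s\tilde\rho|\le C_0\theta^{2s}\tilde\rho$ into something absorbable against $m\tilde\rho$, whereas the paper gets the same effect for free because $\eta$ is an \emph{exact} solution of $(-\Delta)^s\eta+m\eta=\rho_0$. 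Your version is more self-contained (no reliance on~\cite{DDW}) and uses the energy form of the comparison principle rather than the pointwise one cited from~\cite{SerVal}; both are standard and valid.

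There is, however, one genuine gap. The assertion ``$\rho_0\in C^2_b(\R^n)$, hence $(-\Delta)^s\rho_0$ is bounded on $\{|w|\le 1\}$'' is false: $\rho_0(w)=(1+|w|)^{-\mu}$ is only Lipschitz at the origin, and since the symmetric second difference there is $2-2(1+|y|)^{-\mu}\sim 2\mu|y|$, the defining integral behaves like $\int_{|y|\le 1}|y|^{1-n-2s}\,dy$, which diverges for $s\ge 1/2$; thus $(-\Delta)^s\rho_0(0)=+\infty$ in that range. Your ``$|w|\ge 1$'' computation is fine, but the ``$|w|\le 1$'' half of the barrier estimate fails as written, and this matters in the application — in Corollary~\ref{cor:300} one takes $\mathcal B=\varnothing$, so the singular point $x=\xi$ lies in $D$ and cannot be excised. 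The fix is cheap and standard: run the entire argument with the smooth equivalent weight $(1+|w|^2)^{-\mu/2}$, which is $C^\infty(\R^n)$ and comparable to $\rho_0$ up to constants depending only on $\mu$; all of your estimates, including the role of $\mu<n+2s$ in absorbing the far-field $|w|^{-n-2s}$ contribution, go through verbatim. A final trivial slip: the closing parenthetical says ``when $\mu<n$ the weight $\rho_0$ is integrable'' — the inequality is backwards ($\rho_0\in L^1(\R^n)$ iff $\mu>n$), though your actual case split by exponent is handled correctly.
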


\begin{proof}
We define
\begin{equation}\label{var}\begin{split}
& W:=m\chi_{ {\mathcal{B}} }+
{\mathcal{W}}\,\chi_{\R^n\setminus {\mathcal{B}} }\\
{\mbox{and }} \ &
G:= (m-{\mathcal{W}})\,\chi_{ {\mathcal{B}} }\,\psi-g.\end{split}\end{equation}
We observe that
\begin{equation}\label{dop}\begin{split}
\| G\|_{\star,\xi} \,&\le
\sup_{x\in {\mathcal{B}} }
(1+|x-\xi|)^\mu\, (m+{\mathcal{W}}(x))
\,\psi(x)
+\| g\|_{\star,\xi}\\
&\le 2\,(1+R_0)^\mu \,\|{\mathcal{W}}\|_{L^\infty(\R^n)}\,
\|\psi\|_{L^\infty({\mathcal{B}})}+\| g\|_{\star,\xi}
\\ &\le C_{0}\, \|\psi\|_{L^\infty({\mathcal{B}})}+\|g\|_{\star,\xi},\end{split}
\end{equation}
for a suitable~$C_{0}>0$ possibly depending
on~$R_0$ and~$\|{\mathcal{W}}\|_{L^\infty(\R^n)}$
(notice that~\eqref{R0} was used here).
Also~$\psi$ is a solution of
\begin{equation}\label{t}\begin{split}
& (-\Delta)^s \psi +W\psi =
(W-{\mathcal{W}})\psi-g\\
&\qquad =(W-{\mathcal{W}}\chi_{\R^n\setminus{\mathcal{B}}}
-{\mathcal{W}}\chi_{{\mathcal{B}}})\psi-g\\
&\qquad =(m\chi_{{\mathcal{B}}}
-{\mathcal{W}}\chi_{{\mathcal{B}}})\psi-g
\\ &\qquad= G\end{split}\end{equation}
and, in virtue of~\eqref{m},
\begin{equation}\label{d2}
W\ge
m\chi_{ {\mathcal{B}} }+
m\chi_{\R^n\setminus {\mathcal{B}} }=m.
\end{equation}
We take~$\rho_0:=(1+|x|)^{-\mu}$ and
$\eta\in H^s(\R^n)$ to be a solution of
\begin{equation}\label{e}
(-\Delta)^s \eta+m \eta = \rho_0.\end{equation}
We refer to formula~(2.4) in~\cite{DDW} for the existence
of such solution and to Lemma~2.2 there for the following
estimate:
\begin{equation}\label{L22}
\sup_{x\in\R^n} (1+|x|)^{\mu} \eta(x) \le
C_1 \sup_{x\in\R^n} (1+|x|)^{\mu}\rho_0(x)=C_1,
\end{equation}
for some~$C_1>0$, possibly depending on~$m$. Also, by Lemma~2.4 in~\cite{DDW},
we have that~$\eta\ge0$, and so, recalling~\eqref{d2},
we obtain that
\begin{equation}\label{p}
\left(W(x)-m\right)\,\eta(x-\xi)\ge0.
\end{equation}
Now we define~$\eta_\xi(x):=\eta(x-\xi)$,
\begin{equation}\label{L23}
C_\star := \|G\|_{\star,\xi}  \end{equation}
and~$\omega:=C_\star\eta_\xi\pm \psi$.
We remark that the quantity~$C_\star$ plays a different role
from the other constants~$C_{0}$, $C_1$ and~$C_2$:
indeed, while~$C_{0}$, $C_1$ and~$C_2$
depend only
on~$m$, $R_0$ and~$\|{\mathcal{W}}\|_{L^\infty(\R^n)}$
(as well on~$n$, $s$
and $\Omega$),
the quantity~$C_\star$ also depend on~$G$, and this
will be made explicit at the end of the proof.

Notice also that~$\rho_0(x-\xi)=\rho_\xi(x)$, due to the definition
in~\eqref{rho}, and
\begin{equation}\label{PP} \begin{split}
C_\star\rho_\xi(x)\pm G(x)\,&\ge \rho_\xi(x) \Big( C_\star-\rho_\xi^{-1}(x)|G(x)|\Big)\\
&\ge\rho_\xi(x)
\Big( C_\star -\|G\|_{\star,\xi}\Big)\\ &\ge0.\end{split}
\end{equation}
Thus we infer that
\begin{equation}\label{PL22}\begin{split}
(-\Delta)^s \omega+W\omega \,&=
C_\star\Big((-\Delta)^s \eta_\xi+W\eta_\xi\Big)\pm \Big((-\Delta)^s \psi+W\psi
\Big)\\ &=C_\star\rho_\xi+
C_\star\,\left(W-m\right)\,\eta_\xi \pm G\\ &\ge 0,
\end{split}
\end{equation}
in~$\Omega_\eps$,
thanks to~\eqref{t}, \eqref{e}, \eqref{p} and~\eqref{PP}.
Furthermore, in~$\R^n\setminus\Omega_\eps$ we have
that~$\omega=C_\star\eta_\xi\ge0$. As a consequence of this, \eqref{PL22}
and the maximum principle (see e.g. Lemma~6
in~\cite{SerVal}), we conclude that~$\omega\ge0$
in the whole of~$\R^n$.

Accordingly, for any~$x\in\R^n$
\begin{eqnarray*}
\mp\rho_\xi^{-1}(x)\psi(x) &=& \rho_\xi^{-1}(x)\Big(C_\star\eta_\xi(x)-\omega(x)\Big)
\\ &\le& C_\star\rho_\xi^{-1}(x)\eta_\xi(x) \\
&\le& C_\star \sup_{y\in\R^n} \rho_\xi^{-1}(y)\eta_\xi(y) \\
&=& C_\star \sup_{y\in\R^n} \rho_\xi^{-1}(y+\xi)\eta_\xi(y+\xi)\\
&=& C_\star \sup_{y\in\R^n} (1+|y|)^{\mu} \eta(y) \\
&\le& C_1\,C_\star,\end{eqnarray*}
where~\eqref{L22} was used in the last step.
Hence, recalling~\eqref{L23} and~\eqref{dop},
$$ |\rho_\xi^{-1}(x)\psi(x)|\le C_1\,\|G\|_{\star,\xi}\le
C_1 \Big( C_{0} \,\|\psi\|_{L^\infty({\mathcal{B}})} +\|g\|_{\star,\xi} \Big),$$
which implies~\eqref{x psi}.
\end{proof}

As a consequence of Lemma~\ref{LL}, we obtain the following two corollaries:

\begin{cor}\label{lem:200}
Let~$g\in L^2(\R^n)$, with~$\|g\|_{\star,\xi}<+\infty$,
and let~$\psi\in H^s(\R^n)$ be a solution to
\begin{equation*}
\left\{
\begin{matrix}
(-\Delta)^s \psi +\psi -pw_\xi^{p-1}\psi +g =0 & {\mbox{ in $\Omega_\epsilon$,}}\\
\psi =0 & {\mbox{ in }}\R^n\setminus\Omega_\epsilon.
\end{matrix}
\right.
\end{equation*}
Then, there exist positive constants~$C$ and~$R$
such that
\begin{equation}\label{x psi-2}
\|\psi\|_{\star,\xi} \le C\left(\|\psi\|_{L^\infty(B_R(\xi))}
+ \|g\|_{\star,\xi}\right).\end{equation}
\end{cor}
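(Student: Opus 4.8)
The plan is to obtain this as a direct corollary of Lemma~\ref{LL}, by rewriting the equation for $\psi$ in the form treated there and choosing the data in that lemma appropriately. Set
\[
\mathcal{W}:=1-pw_\xi^{p-1},
\]
so that $\psi$ solves $(-\Delta)^s\psi+\mathcal{W}\psi+g=0$ in $\Omega_\eps$ with $\psi=0$ in $\R^n\setminus\Omega_\eps$. Since $w\in L^\infty(\R^n)$ and $p>1$, the function $\mathcal{W}$ belongs to $L^\infty(\R^n)$, with a bound on $\|\mathcal{W}\|_{L^\infty(\R^n)}$ depending only on $n$, $s$ and $p$.

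The only thing to check is the positivity-away-from-a-ball hypothesis \eqref{m}: $\mathcal{W}$ may fail to be positive near $\xi$, where $w_\xi$ is of order one, but it is bounded below by a fixed positive constant far from $\xi$. Indeed, $w_\xi(x)=w(x-\xi)$ and by \eqref{decay w} we have $w(y)\to0$ as $|y|\to\infty$; hence there exists $R>0$, depending only on $n$, $s$, $p$ and \emph{not} on $\xi$ (by translation invariance of the profile), such that $pw_\xi^{p-1}(x)\le\tfrac12$ for every $x\in\R^n\setminus B_R(\xi)$, so that $\inf_{\R^n\setminus B_R(\xi)}\mathcal{W}\ge\tfrac12$. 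One then applies Lemma~\ref{LL} with $\mathcal{B}:=B_R(\xi)$, $R_0:=R$, $m:=\tfrac12$ and $\mathcal{W}$ as above. The remaining hypotheses hold by assumption ($g\in L^2(\R^n)$, $\|g\|_{\star,\xi}<+\infty$) or trivially ($B_{R_0}(\xi)\supseteq\mathcal{B}$), and conclusion \eqref{x psi} becomes precisely \eqref{x psi-2}, with $C$ and $R$ depending only on $n$, $s$, $p$ and $\Omega$ since $m$, $R_0$ and $\|\mathcal{W}\|_{L^\infty(\R^n)}$ are fixed universal quantities.

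I do not expect any genuine obstacle here: the argument is essentially a substitution into Lemma~\ref{LL}, and the one mildly delicate point — that the radius $R$ can be chosen uniformly in $\xi$ — is immediate from the fact that $w_\xi$ is a rigid translate of $w$ together with the decay \eqref{decay w}. It is worth stating explicitly in the write-up that the constants produced by Lemma~\ref{LL} do not degenerate as $\eps\to0$ or as $\xi$ varies, precisely because they are governed only by $m$, $R_0$ and $\|\mathcal{W}\|_{L^\infty(\R^n)}$.
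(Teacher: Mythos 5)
Your argument is exactly the paper's: same choice of $\mathcal{W}=1-pw_\xi^{p-1}$, same $\mathcal{B}=B_R(\xi)$ with $R$ chosen so that $pw^{p-1}\le\tfrac12$ outside $B_R$, same $m=\tfrac12$, and the conclusion then follows by direct substitution into Lemma~\ref{LL}. Your added remark about uniformity of $R$ in $\xi$ (by translation invariance of $w$) is a useful clarification but does not change the substance.
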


\begin{proof} We apply Lemma~\ref{LL} with~${\mathcal{W}}:=
1-pw_\xi^{p-1}$ and~${\mathcal{B}}:=B_R(\xi)$ (notice that,
with this notation~\eqref{x psi-2} would follow from~\eqref{x psi}).
So, we only need to check that~\eqref{m} holds true with
a suitable choice of~$R$. For this,
we use that~$w$ decays at infinity
(recall~\eqref{decay w}), hence
we can fix~$R$ large enough such that
$$ pw^{p-1}(x)\le\frac12 \ {\mbox{for every }} x\in \R^n\setminus B_R.$$
accordingly~${\mathcal{W}}\ge 1-(1/2)=1/2$, which
establishes~\eqref{m} with~$m:=1/2$.
\end{proof}

\begin{cor}\label{cor:300}
Let~$g\in L^2(\R^n)$, with~$\|g\|_{\star,\xi}<+\infty$,
and let~$\psi\in H^s(\R^n)$ be a solution to
\begin{equation*}
\left\{
\begin{matrix}
(-\Delta)^s \psi +\psi +g =0 & {\mbox{ in $\Omega_\epsilon$,}}\\
\psi =0 & {\mbox{ in }}\R^n\setminus\Omega_\epsilon.
\end{matrix}
\right.
\end{equation*}
Then, there exists a positive constant~$C$ such that
$$ \|\psi\|_{\star,\xi} \le C \|g\|_{\star,\xi}.$$
\end{cor}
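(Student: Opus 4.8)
The plan is to obtain this statement as a special case of Lemma~\ref{LL}. First I would apply Lemma~\ref{LL} with the coefficient $\mathcal{W}:=1$ and with the exceptional set $\mathcal{B}:=\varnothing$. With these choices, hypothesis~\eqref{m} holds with $m:=1$, since $\inf_{\R^n\setminus\varnothing}\mathcal{W}=\inf_{\R^n}1=1>0$, while~\eqref{R0} is satisfied trivially (say with $R_0:=1$, because $B_1(\xi)\supseteq\varnothing$). The equation satisfied by $\psi$ in the present statement is precisely the equation appearing in Lemma~\ref{LL} for this $\mathcal{W}$, and $g\in L^2(\R^n)$ with $\|g\|_{\star,\xi}<+\infty$ is assumed, so all hypotheses are met.

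The conclusion~\eqref{x psi} then reads
$$\|\psi\|_{\star,\xi}\le C\big(\|\psi\|_{L^\infty(\varnothing)}+\|g\|_{\star,\xi}\big),$$
and, by the convention recalled in the footnote to Lemma~\ref{LL} (namely $\|\psi\|_{L^\infty(\varnothing)}:=0$), the first term drops out, which gives exactly $\|\psi\|_{\star,\xi}\le C\|g\|_{\star,\xi}$. Here $C$ depends only on $n$, $s$ and $\Omega$, since $m=1$, $R_0=1$ and $\|\mathcal{W}\|_{L^\infty(\R^n)}=1$ are absolute constants. Since invoking Lemma~\ref{LL} is immediate, there is really no obstacle; the only point to be careful about is the bookkeeping with the empty set $\mathcal{B}$, which is precisely what the footnote convention takes care of.

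Should one prefer a self-contained argument avoiding the empty-set discussion, the plan would be to repeat the barrier argument of Lemma~\ref{LL} directly: let $\eta\in H^s(\R^n)$ solve $(-\Delta)^s\eta+\eta=\rho_0$ with $\rho_0(x):=(1+|x|)^{-\mu}$, which from~\cite{DDW} satisfies $\eta\ge0$ and $\sup_{x\in\R^n}(1+|x|)^\mu\eta(x)\le C_1$; set $\eta_\xi(x):=\eta(x-\xi)$, note $\rho_0(x-\xi)=\rho_\xi(x)$ via~\eqref{rho}, take $C_\star:=\|g\|_{\star,\xi}$ and $\omega:=C_\star\eta_\xi\pm\psi$. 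Then $(-\Delta)^s\omega+\omega=C_\star\rho_\xi\mp g\ge0$ in $\Omega_\eps$ because $C_\star\rho_\xi(x)\ge|g(x)|$ pointwise, while $\omega=C_\star\eta_\xi\ge0$ in $\R^n\setminus\Omega_\eps$; the maximum principle yields $\omega\ge0$ on $\R^n$, hence $\mp\rho_\xi^{-1}(x)\psi(x)\le C_\star\rho_\xi^{-1}(x)\eta_\xi(x)\le C_1C_\star$ for every $x\in\R^n$, which is the desired bound.
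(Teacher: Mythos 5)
Your proposal is correct and matches the paper's own argument exactly: the paper proves Corollary~\ref{cor:300} precisely by invoking Lemma~\ref{LL} with $\mathcal{W}:=1$ and $\mathcal{B}:=\varnothing$, relying on the footnote convention for the empty set, just as you do. Your alternative self-contained barrier argument is a faithful unpacking of the proof of Lemma~\ref{LL} in this special case and is also sound.
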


\begin{proof} We use for this Lemma~\ref{LL}
with~${\mathcal{W}}:=1$ and~${\mathcal{B}}:=\varnothing$
(recall footnote~\ref{v foo}).
\end{proof}

\section{The Lyapunov-Schmidt reduction}\label{S:a3}

In this section we deal with
the linear theory associated to the scaled problem~\eqref{EQ eps}.
For this, we introduce the functional space
$$ \Psi:=\left\{\psi\in H^{s}(\R^n) {\mbox{ s.t. }}\psi=0 {\mbox{ in }}\R^n\setminus\Omega_\epsilon {\mbox{ and }}\int_{\Omega_\epsilon}\psi\,Z_i\,dx=0 {\mbox{ for any }}i=1,\ldots,n\right\},$$
where the~$Z_i$'s are introduced in~\eqref{Zj}. We remark that the condition
$$ \int_{\Omega_\epsilon}\psi\,Z_i\,dx=0 {\mbox{ for any }}i=1,\ldots,n$$
means that~$\psi$ is orthogonal to the space~$\mathcal Z$
(that is the space spanned by~$Z_i$)
with respect to the scalar product in~$L^2(\Omega_\eps)$.

We look for solution to~\eqref{EQ eps} of the form
\begin{equation}\label{ruybc}
u=u_\xi:=\bar u_\xi+\psi,
\end{equation}
where~$\bar u_\xi$ is the solution to~\eqref{EQ bar u} and~$\psi$ is a small function (for~$\epsilon$ sufficiently small)
which belongs to~$\Psi$.

Inserting~$u$ (given in~\eqref{ruybc}) into~\eqref{EQ eps} and recalling
that~$\bar u_\xi$ is a solution to~\eqref{EQ bar u}, we have that,
in order to obtain a solution to~\eqref{EQ eps},~$\psi$ must satisfy
\begin{equation}\label{EQ psi}
(-\Delta)^s\psi+\psi-pw_\xi^{p-1}\psi = E(\psi)+ N(\psi) {\mbox{ in }}\Omega_\epsilon,
\end{equation}
where\footnote{As a matter of fact, one should write the positive
parts in~\eqref{N psi}, namely set~$
E(\psi):= (\bar u_\xi+\psi)_+^p-(w_\xi+\psi)_+^p$ and~$N(\psi):=
(w_\xi+\psi)_+^p-w_\xi^p-pw_\xi^{p-1}\psi$, but, a posteriori, this
is the same by maximum principle. So we preferred, with a slight
abuse of notation, to drop the positive parts for simplicity of notation.}
\begin{equation}\begin{split}\label{N psi}
& E(\psi):= (\bar u_\xi+\psi)^p-(w_\xi+\psi)^p\\
{\mbox{ and }} & N(\psi):=(w_\xi+\psi)^p-w_\xi^p-pw_\xi^{p-1}\psi.
\end{split}\end{equation}

Instead of solving~\eqref{EQ psi}, we will consider a projected version of the problem. Namely we will look for a
solution~$\psi\in H^{s}(\R^n)$ of the equation
\begin{equation}\label{EQ proj}
(-\Delta)^s\psi+\psi-pw_\xi^{p-1}\psi =E(\psi)+ N(\psi)+\sum_{i=1}^n c_i\,Z_i {\mbox{ in $\Omega_\epsilon$,}}
\end{equation}
for some coefficients~$c_i\in\R$,~$i\in\{1,\ldots,n\}$.
Moreover, we require that~$\psi$ satisfies the conditions
\begin{equation}\label{zero}
\psi =0 {\mbox{ in }}\R^n\setminus\Omega_\epsilon,
\end{equation}
and
\begin{equation}\label{orto}
\int_{\Omega_\epsilon}\psi\,Z_i\,dx=0  {\mbox{ for any }}i=1,\ldots,n.
\end{equation}
We will prove that problem~\eqref{EQ proj}-\eqref{orto} admits
a unique solution, which is small if~$\epsilon$ is sufficiently small,
and then we will show that the coefficients~$c_i$ are equal to zero for every~$i\in\{1,\ldots,n\}$ for a suitable~$\xi$.
This will give us a solution~$\psi\in\Psi$ to~\eqref{EQ psi}, and therefore
a solution~$u$ of~\eqref{EQ eps}, thanks to the definition in~\eqref{ruybc}.

\subsection{Linear theory}
In this subsection we develop a general theory that will give us
the existence result for the linear problem~\eqref{EQ proj}-\eqref{orto}.

\begin{thm}\label{linear}
Let~$g\in L^2(\R^n)$ with~$\|g\|_{\star,\xi}<+\infty$. If~$\epsilon>0$ is sufficiently small,
there exist a unique~$\psi\in\Psi$ and numbers~$c_i\in\R$, for any~$i\in\{1,\ldots,n\}$, such that
\begin{equation}\label{EQ g}
(-\Delta)^s \psi +\psi -pw_\xi^{p-1}\psi +g =\sum_{i=1}^n c_i\,Z_i  {\mbox{ in $\Omega_\epsilon$.}}
\end{equation}
Moreover, there exists a constant~$C>0$ such that
\begin{equation}\label{est fin}
\|\psi\|_{\star,\xi}\le C\|g\|_{\star,\xi}.
\end{equation}
\end{thm}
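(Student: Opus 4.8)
The plan is to argue in two stages: first establish the a priori bound~\eqref{est fin} for \emph{any} solution $(\psi,c_1,\dots,c_n)$ of~\eqref{EQ g} with $\psi\in\Psi$, which is the core of the matter and forces $\eps$ to be small, and then deduce existence and uniqueness by a Fredholm-alternative argument in the Hilbert space~$\Psi$, the a priori bound (with $g=0$) serving as the injectivity input. For the a priori estimate, I would start from Corollary~\ref{lem:200} applied with right-hand side $g-\sum_ic_iZ_i$ in place of $g$; since each $Z_i$ has the decay of Section~\ref{S:a1} one has $\|Z_i\|_{\star,\xi}\le C$ uniformly, so
\[
\|\psi\|_{\star,\xi}\le C\Big(\|\psi\|_{L^\infty(B_R(\xi))}+\|g\|_{\star,\xi}+\sum_i|c_i|\Big).
\]
To control the multipliers I would test~\eqref{EQ g} against $Z_j$ and integrate over $\Omega_\eps$: since $Z_j=\partial_jw_\xi$ solves $(-\Delta)^sZ_j+Z_j-pw_\xi^{p-1}Z_j=0$ in $\R^n$, moving the nonlocal operator onto $Z_j$ cancels the bulk, leaving only $\int_{\Omega_\eps}gZ_j$ and an error term $\mathrm{err}=-\int_{\R^n\setminus\Omega_\eps}(-\Delta)^s\psi\,Z_j$ produced by the fact that $\psi$ vanishes outside $\Omega_\eps$ while $(-\Delta)^s$ is nonlocal; using $\dist(\xi,\partial\Omega_\eps)\ge c/\eps$ (recall~\eqref{dist bordo}) and the decay estimates of Section~\ref{S:a1}, this error is $\eps^\delta\|\psi\|_{\star,\xi}$ for some $\delta>0$. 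Inverting the Gram matrix $\big(\int_{\Omega_\eps}Z_iZ_j\big)_{ij}=\alpha\,\mathrm{Id}+O(\eps^\nu)$ from Corollary~\ref{cor:orto} yields $|c_j|\le C(\|g\|_{\star,\xi}+\eps^\delta\|\psi\|_{\star,\xi})$, so that for $\eps$ small $\|\psi\|_{\star,\xi}\le C(\|\psi\|_{L^\infty(B_R(\xi))}+\|g\|_{\star,\xi})$.

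It then remains to absorb the local term $\|\psi\|_{L^\infty(B_R(\xi))}$, which I would do by contradiction. Suppose there are $\eps_k\to0$, centres $\xi_k$ with $\dist(\xi_k,\partial\Omega_{\eps_k})\ge c/\eps_k$, data $g_k$ and solutions $(\psi_k,c_{i,k})$ with $\psi_k\in\Psi$, $\|\psi_k\|_{\star,\xi_k}=1$ and $\|g_k\|_{\star,\xi_k}\to0$; by the previous step $c_{i,k}\to0$ as well. Translating by $\xi_k$, the domains $\Omega_{\eps_k}-\xi_k$ exhaust $\R^n$; the bound $|\psi_k(\cdot+\xi_k)|\le(1+|\cdot|)^{-\mu}$ and the interior regularity for the fractional Laplacian (as invoked in Lemma~\ref{lem:100}) give, along a subsequence, local uniform convergence of $\psi_k(\cdot+\xi_k)$ to some $\psi_\infty$ with $|\psi_\infty|\le(1+|\cdot|)^{-\mu}$ solving $(-\Delta)^s\psi_\infty+\psi_\infty-pw^{p-1}\psi_\infty=0$ in $\R^n$. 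The uniform decay lets me pass the orthogonality conditions~\eqref{orto} to the limit, so $\psi_\infty$ is $L^2$-orthogonal to every $\partial_iw$; since also $\psi_\infty\in L^2(\R^n)$, the nondegeneracy of $w$ proved in~\cite{FLS} forces $\psi_\infty\equiv0$. On the other hand the bound $1=\|\psi_k\|_{\star,\xi_k}\le C(\|\psi_k\|_{L^\infty(B_R(\xi_k))}+\|g_k\|_{\star,\xi_k})$ gives $\|\psi_k(\cdot+\xi_k)\|_{L^\infty(B_R)}\ge\tfrac1{2C}$ for large $k$, contradicting $\psi_\infty\equiv0$. This establishes~\eqref{est fin}.

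For existence and uniqueness, fix $\eps$ small, so that~\eqref{est fin} holds and $\Omega_\eps$ is bounded. I would equip $\Psi$ with the scalar product $\langle\psi,\phi\rangle:=\int_{\R^n}\big((-\Delta)^{s/2}\psi\,(-\Delta)^{s/2}\phi+\psi\,\phi\big)$, i.e. the restriction of the $H^s(\R^n)$ inner product, making $\Psi$ a Hilbert space. Testing~\eqref{EQ g} against $\phi\in\Psi$ and using $\int_{\Omega_\eps}Z_i\phi=0$, the weak form of the problem is: find $\psi\in\Psi$ with $\langle\psi,\phi\rangle-p\int_{\R^n}w_\xi^{p-1}\psi\,\phi+\int_{\R^n}g\,\phi=0$ for all $\phi\in\Psi$. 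By the Riesz representation theorem this reads $(\mathrm{Id}-K)\psi=\tilde g$ in $\Psi$, where $\tilde g$ represents $\phi\mapsto-\int g\phi$ and $K\psi$ represents $\phi\mapsto p\int w_\xi^{p-1}\psi\,\phi$; the operator $K$ is compact, since multiplication by $w_\xi^{(p-1)/2}$ is bounded and vanishes at infinity and $\Omega_\eps$ is bounded (Rellich), so this bilinear form is a compact perturbation of $\langle\cdot,\cdot\rangle$. By the Fredholm alternative, solvability for every $\tilde g$ is equivalent to injectivity of $\mathrm{Id}-K$; an element of its kernel solves~\eqref{EQ g} with $g=0$ and (testing against $Z_j$) $c_i=0$, so~\eqref{est fin} with $g=0$ forces it to vanish. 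Hence there is a unique $\psi\in\Psi$, and~\eqref{est fin} gives $\|\psi\|_{\star,\xi}\le C\|g\|_{\star,\xi}$. Finally, the functional $\phi\mapsto\langle\psi,\phi\rangle-p\int w_\xi^{p-1}\psi\,\phi+\int g\,\phi$ on $H^s_0(\Omega_\eps)$ vanishes on $\Psi$, a subspace of codimension $n$, hence equals $\sum_ic_i\int_{\Omega_\eps}Z_i\,\phi$ for suitable $c_i\in\R$; this is~\eqref{EQ g} in the distributional sense, and the regularity of Lemma~\ref{lem:100} makes $\psi$ a genuine solution.

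The main obstacle is the a priori estimate, and within it two delicate points: the control of the Lagrange multipliers $c_{i,k}$, where the almost self-adjointness of the linear operator on the bounded domain $\Omega_\eps$ and the decay estimates of Section~\ref{S:a1} are essential, and the passage to the limit in the orthogonality conditions together with the nondegeneracy of $w$ from~\cite{FLS}. Once these are in place, the Fredholm step is routine.
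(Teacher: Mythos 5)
Your argument is correct and follows the paper's overall strategy: control the multipliers $c_i$ by testing against (approximations of) the $Z_j$'s, prove the a priori estimate by a blow-up/compactness argument that combines the nondegeneracy of $w$ from \cite{FLS} with the local estimate of Corollary~\ref{lem:200}, and then obtain existence and uniqueness via Fredholm. Two steps are organized genuinely differently. For the multiplier control, the paper tests against the cutoffs $T_{\eps,j}=Z_j\tau_\eps$, supported in $\Omega_\eps$, so that $\int(-\Delta)^s\psi\,T_{\eps,j}=\int\psi(-\Delta)^sT_{\eps,j}$ is immediate and the error is absorbed in $\|T_{\eps,j}-Z_j\|_{H^2(\R^n)}\lesssim\eps^{n/2}$; you instead pay the nonlocal tail $\int_{\R^n\setminus\Omega_\eps}(-\Delta)^s\psi\,Z_j$ directly, which is morally equivalent but requires a pointwise control of $(-\Delta)^s\psi$ outside $\Omega_\eps$, including near $\partial\Omega_\eps$ where the boundary $C^s$ regularity from Lemma~\ref{lem:100} must be invoked to tame the kernel singularity --- the paper's cutoff sidesteps this boundary delicacy. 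For existence and uniqueness, the paper first solves an auxiliary problem without the potential term (Proposition~\ref{prop:400}, producing the operator $\mathcal{A}$), rewrites~\eqref{EQ g} as $\psi-\mathcal{B}[\psi]=\mathcal{A}[g]$ with $\mathcal{B}[\psi]=\mathcal{A}[-pw_\xi^{p-1}\psi]$, proves $\mathcal{B}$ compact on the weighted space $Y_\star$, and runs Fredholm there; you run Fredholm directly in the Hilbert space $\Psi\subset H^s_0(\Omega_\eps)$, with the compact operator $K$ given by Rellich (licit since $\Omega_\eps$ is bounded for fixed $\eps$) and the Lagrange-multiplier step recovering the $c_i$. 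Your route is shorter and more transparent; the paper's buys a solution manifestly in $Y_\star$, whereas your version needs an explicit bootstrap (Lemma~\ref{lem:100}, then Corollary~\ref{lem:200}) to ensure $\|\psi\|_{\star,\xi}<+\infty$ before Lemma~\ref{lem:500} can be applied non-vacuously --- make that chain explicit. Both routes ultimately rely on Lemma~\ref{lem:500} for the quantitative estimate~\eqref{est fin}.
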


Before proving Theorem~\ref{linear} we need some preliminary lemmata.
In the next lemma we show that we can uniquely determine the
coefficients~$c_i$ in~\eqref{EQ g} in terms of~$\psi$ and~$g$.
Actually, we will show that the estimate on the ~$c_i$'s holds in a more general
case, that is we do not need the orthogonality condition in~\eqref{orto}.

\begin{lem}\label{lem:ci}
Let~$g\in L^2(\R^n)$ with~$\|g\|_{\star,\xi}<+\infty$.
Suppose that~$\psi\in H^s(\R^n)$ satisfies
\begin{equation}\label{9.7bis}
\left\{
\begin{matrix}
(-\Delta)^s \psi +\psi -pw_\xi^{p-1}\psi +g =\displaystyle\sum_{i=1}^n c_i\,Z_i   & {\mbox{ in $\Omega_\epsilon$,}}\\
\psi =0 & {\mbox{ in }}\R^n\setminus\Omega_\epsilon,
\end{matrix}
\right.
\end{equation}
for some~$c_i\in\R$,~$i=1,\ldots,n$.

Then, for~$\eps>0$ sufficiently small and
for any~$i\in\{1,\ldots,n\}$, the coefficient~$c_i$ is given by
\begin{equation}\label{ci}
c_i=\frac{1}{\alpha}\int_{\R^n}g\,Z_i\,dx + f_i,
\end{equation}
where~$\alpha$ is defined in~\eqref{alfaj},
for suitable~$f_i\in\R$ that satisfies
\begin{equation}\label{ci2}
|f_i|\le C\,\epsilon^{n/2}\left(\|\psi\|_{L^{2}(\R^n)}+\|g\|_{L^2(\R^n)}\right), \end{equation}
for some positive constant~$C$.
\end{lem}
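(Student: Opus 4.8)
The plan is to test the equation \eqref{9.7bis} against the functions $Z_j$, $j=1,\dots,n$, integrating over $\R^n$ (equivalently over $\Omega_\eps$, since $\psi=Z_i$ are cut down only by the domain), and to isolate $c_i$ from the resulting linear system, which will be nearly diagonal by Corollary~\ref{cor:orto}.

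First I would multiply the first line of \eqref{9.7bis} by $Z_j$ and integrate over $\Omega_\eps$, obtaining
$$ \int_{\Omega_\eps}\Big((-\Delta)^s\psi+\psi-pw_\xi^{p-1}\psi\Big)Z_j\,dx+\int_{\Omega_\eps}g\,Z_j\,dx=\sum_{i=1}^n c_i\int_{\Omega_\eps}Z_i\,Z_j\,dx. $$
The key point is that $Z_j=\partial_{x_j}w_\xi$ lies in the kernel of the linearized operator $L:=(-\Delta)^s+1-pw_\xi^{p-1}$ on all of $\R^n$ — this is the nondegeneracy of $w$ from \cite{FLS}, differentiating \eqref{EQ w}. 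Hence, were the integration over all of $\R^n$, the first term would vanish exactly; over $\Omega_\eps$ it is an error term controlled by the decay of $Z_j$ and $L Z_j=0$ outside a fixed ball, i.e. by the tails $\int_{\R^n\setminus\Omega_\eps}$, which by Lemma~\ref{:decay Z} are $O(\eps^{2\nu_1-n})$ and in any case $O(\eps^{n/2})$ after a Cauchy--Schwarz against $\|\psi\|_{L^2}$. One must be slightly careful: $(-\Delta)^s$ is nonlocal, so ``integrate the equation that holds in $\Omega_\eps$ against $Z_j$'' needs the bilinear form $\int_{\R^n}(-\Delta)^{s/2}\psi\,(-\Delta)^{s/2}Z_j$, and moving between $\int_{\Omega_\eps}(-\Delta)^s\psi\,Z_j$ and $\int_{\Omega_\eps}\psi\,(-\Delta)^s Z_j$ produces boundary-type contributions supported where $\psi=0$, i.e. in $\R^n\setminus\Omega_\eps$; these are again estimated by the decay of $Z_j$, $(-\Delta)^sZ_j$ and by $\|\psi\|_{L^2(\R^n)}$, giving a term of size $C\eps^{n/2}\|\psi\|_{L^2(\R^n)}$. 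Collecting, the left side equals $\alpha^{-1}\cdot 0$-type main term plus $\int_{\Omega_\eps}gZ_j\,dx$ plus an error $O\big(\eps^{n/2}(\|\psi\|_{L^2(\R^n)}+\|g\|_{L^2(\R^n)})\big)$, while by Corollary~\ref{cor:orto} the right side is $\alpha\,c_j+\sum_i c_i\,O(\eps^\nu)$ with $\nu>n+4s$.

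Next I would also replace $\int_{\Omega_\eps}g\,Z_j\,dx$ by $\int_{\R^n}g\,Z_j\,dx$ at the cost of $\int_{\R^n\setminus\Omega_\eps}|g|\,|Z_j|\le\|g\|_{L^2(\R^n)}\,\big(\int_{\R^n\setminus\Omega_\eps}Z_j^2\big)^{1/2}=O(\eps^{n/2})\|g\|_{L^2(\R^n)}$, again using Lemma~\ref{:decay Z}. Writing the outcome as the linear system $A\,c=b$ with $A=\alpha\,\mathrm{Id}+O(\eps^\nu)$ and $b_j=-\int_{\R^n}g\,Z_j\,dx+O\big(\eps^{n/2}(\|\psi\|_{L^2}+\|g\|_{L^2})\big)$ — wait, the sign: rearranging \eqref{9.7bis} one gets $\alpha c_j=\int_{\R^n}g\,Z_j\,dx+O(\cdots)$ up to the $O(\eps^\nu)$ off-diagonal coupling — I invert $A$ for $\eps$ small (its off-diagonal entries are negligible compared to $\alpha$), which yields exactly \eqref{ci} with $c_i=\alpha^{-1}\int_{\R^n}gZ_i\,dx+f_i$ and $|f_i|\le C\eps^{n/2}(\|\psi\|_{L^2(\R^n)}+\|g\|_{L^2(\R^n)})$, since $\eps^\nu\le\eps^{n/2}$ for small $\eps$ and $\|g\|_{L^2}$, $\|\psi\|_{L^2}$ absorb the matrix-inversion corrections (note $\int gZ_i$ is itself $O(\|g\|_{L^2})$).

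The main obstacle I anticipate is purely technical rather than conceptual: making rigorous the ``integrate the nonlocal equation against a function that is not supported in $\Omega_\eps$'' step, i.e. correctly accounting for the nonlocal boundary contributions when testing $L\psi$ against $Z_j$, and verifying that each such contribution is genuinely $O(\eps^{n/2})$ times $\|\psi\|_{L^2(\R^n)}+\|g\|_{L^2(\R^n)}$ using only the polynomial decay estimates of Lemmata~\ref{:decay Z}, \ref{lem:decay DZ}, \ref{lem:decay D2 Z} for $Z_j$ and its derivatives together with the known decay \eqref{decay w}, \eqref{GA} of $w$, $\Gamma$. Everything else — the near-diagonality of the Gram matrix, the inversion, the bookkeeping of exponents — is routine once that estimate is in hand.
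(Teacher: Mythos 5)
Your overall strategy is the same as the paper's — test the equation against the kernel elements, use the nondegeneracy $LZ_j=0$ and the near-orthogonality of Corollary~\ref{cor:orto} to produce a nearly diagonal linear system, and invert — but the critical device that makes the testing step legitimate is missing, and the way you propose to patch it does not actually close.

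The problem is precisely where you flag the "slight care": $Z_j$ does \emph{not} vanish outside $\Omega_\eps$, so the identity of formula $(1.5)$ in~\cite{SerraRos-Dirichlet-regularity} (which equates $\int_{\Omega_\eps}(-\Delta)^s\psi\,\phi$ with $\int(-\Delta)^{s/2}\psi\,(-\Delta)^{s/2}\phi$) is not available for $\phi=Z_j$; it requires \emph{both} functions to vanish outside $\Omega_\eps$. If you try to pass to $\int_{\R^n}$ and subtract the tail, the extra term is
$\int_{\R^n\setminus\Omega_\eps}(-\Delta)^s\psi\,Z_j\,dx$, and this cannot be estimated by $\|\psi\|_{L^2(\R^n)}$ together with the decay of $Z_j$: for $\psi\in H^s(\R^n)$ only, $(-\Delta)^s\psi$ lives in $H^{-s}$, not $L^2$, and in fact for $x\in\R^n\setminus\Omega_\eps$ (where $\psi=0$) one has $(-\Delta)^s\psi(x)=-c(n,s)\int_{\Omega_\eps}\psi(y)|x-y|^{-(n+2s)}\,dy$, which blows up as $x$ approaches $\partial\Omega_\eps$ from outside, so the tail is not controllable by the $L^2$ data you propose to use. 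Saying the tail is governed by the decay of "$(-\Delta)^s Z_j$" is also off the mark, since the quantity appearing in the tail is $(-\Delta)^s\psi$, not $(-\Delta)^s Z_j$.

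The paper closes this gap by replacing $Z_j$ with a cutoff $T_{\eps,j}:=\tau_\eps Z_j$, where $\tau_\eps$ is a smooth cutoff equal to $1$ on $B_{(c/\eps)-1}(\xi)$ and supported in $B_{c/\eps}(\xi)\subseteq\Omega_\eps$, with uniformly bounded derivatives. Then $T_{\eps,j}$ vanishes outside $\Omega_\eps$, the bilinear identity is legitimate, and all nonlocal "boundary" contributions are avoided from the start. The cost is that one must compare $T_{\eps,j}$ with $Z_j$: using Lemmata~\ref{:decay Z}, \ref{lem:decay DZ}, \ref{lem:decay D2 Z} the paper shows $\|T_{\eps,j}-Z_j\|_{H^2(\R^n)}\le C\eps^{n/2}$, and hence (via a Fourier-side estimate bounding $\|(-\Delta)^s\cdot\|_{L^2}$ by the $H^2$ norm) $\|(-\Delta)^s(T_{\eps,j}-Z_j)\|_{L^2}\le C\eps^{n/2}$. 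That is the $O(\eps^{n/2})$ you expected, but it is extracted from $T_{\eps,j}-Z_j$ — which \emph{does} lie in $L^2$-based spaces — rather than from a tail of $(-\Delta)^s\psi$, which does not. I would redo the proof by inserting this cutoff at the testing stage; the rest of your argument (Gram matrix near $\alpha\,\mathrm{Id}$, Neumann-series inversion, exponent bookkeeping) then goes through as you sketched.
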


\begin{proof}
We start with some considerations in Fourier
space on a function~$T\in C^2(\R^n)\cap H^2(\R^n)$.
First of all, for any~$j\in\{1,\dots,n\}$
$$ \| \partial^2_j T\|^2_{L^2(\R^n)}=
\| {\mathcal{F}} ( \partial^2_j T)\|^2_{L^2(\R^n)}
= \| \xi_j^2 \hat T\|_{L^2(\R^n)}^2 =
\int_{\R^n} \xi_j^4 \,|\hat T(\xi)|^2\,d\xi.$$
Moreover, by convexity,
$$ |\xi|^4 =\left( \sum_{j=1}^n \xi_j^2\right)^2 \le 2
\sum_{j=1}^n \xi_j^4$$
and therefore
$$ 2\| D^2 T\|_{L^2(\R^n)}^2=
\sum_{j=1}^n 2\| \partial^2_j T\|^2_{L^2(\R^n)}
\ge \int_{\R^n} |\xi|^4 \,|\hat T(\xi)|^2\,d\xi.$$
As a consequence
\begin{equation}\label{7.10}
\begin{split}
& \| (-\Delta)^s T\|_{L^2(\R^n)}^2 =\| {\mathcal{F}}
((-\Delta)^s T)\|_{L^2(\R^n)}^2
=\| |\xi|^{2s}\hat T\|_{L^2(\R^n)}^2\\
&\qquad=\int_{\R^n} |\xi|^{4s} \,|\hat T(\xi)|^2\,d\xi
\le \int_{\R^n} (1+|\xi|^{4}) \,|\hat T(\xi)|^2\,d\xi
\\ &\qquad\le \|T\|_{L^2(\R^n)}^2+2\| D^2 T\|_{L^2(\R^n)}^2
\le C \| T\|_{H^2(\R^n)},
\end{split}
\end{equation}
for some~$C>0$.

Now, without loss of generality, we may suppose
that
\begin{equation}\label{Sub}
B_{c/\eps}(\xi)\subseteq\Omega_\eps,\end{equation} for some~$c>0$.
Fix~$\eps>0$, and let~$\tau_\eps\in C^\infty(\R^n,[0,1])$,
with~$\tau_\eps=1$ in~$B_{(c/\eps)-1}(\xi)$,
$\tau_\eps=0$ outside~$B_{c/\eps}(\xi)$
and~$|\nabla \tau_\eps|\le C$. We set~$T_{\eps,j}:= Z_j \tau_\eps$.
Hence, from~\eqref{7.10} and Lemmata~\ref{lem:decay DZ} and~\ref{lem:decay D2 Z},
\begin{equation}\label{7.11}
\| (-\Delta)^s T_{\eps,j}\|_{L^2(\R^n)}^2\le C,\end{equation}
for some~$C>0$, independent of~$\eps$ and~$j$.

Moreover, the function~$T_{\eps,j}$
belongs to~$H^s(\R^n)$ and vanishes outside~$B_{c/\eps}(\xi)$,
and so in particular outside~$\Omega_\eps$, thanks to~\eqref{Sub}.

Thus (see e.g. formula~(1.5) in \cite{SerraRos-Dirichlet-regularity})
\begin{equation}\label{joxa}
\int_{\Omega_\eps} (-\Delta)^s \psi \, T_{\eps,j}\,dx
= \int_{\Omega_\eps} (-\Delta)^{s/2}\psi \, (-\Delta)^{s/2} T_{\eps,j}\,dx
= \int_{\Omega_\eps} \psi \, (-\Delta)^s T_{\eps,j}\,dx.
\end{equation}
As a consequence, recalling~\eqref{7.11}
\begin{equation}\label{prima est}
\left|\int_{\Omega_\eps} (-\Delta)^s \psi \, T_{\eps,j}\,dx\right|
\le \|\psi\|_{L^2(\Omega_\eps)} \,\|(-\Delta)^s T_{\eps,j}\|_{L^2(\Omega_\eps)}
\le C\,\|\psi\|_{L^2(\R^n)}.
\end{equation}

Now, we fix~$j\in\{1,\ldots,n\}$, we multiply the equation in~\eqref{9.7bis}
by~$T_{\eps,j}$ and we integrate over~$\Omega_\epsilon$.
We obtain
\begin{equation}\label{ZZ}
\sum_{i=1}^n c_i \int_{\Omega_\epsilon}Z_i\,T_{\eps,j}\,dx=
\int_{\Omega_\epsilon}T_{\eps,j}\left((-\Delta)^s\psi + \psi -pw_\xi^{p-1}\psi +g\right)dx.
\end{equation}

Now, we observe that, thanks to~\eqref{joxa}, we can write
\begin{equation}\label{hence}
\int_{\Omega_\eps}(-\Delta)^s\psi\,T_{\eps,j}\,dx
=\int_{\Omega_\eps}\psi(-\Delta)^sT_{\eps,j}\,dx=
\int_{\Omega_\epsilon} \psi(-\Delta)^s\left(T_{\eps,j}-Z_j\right)\, dx +\int_{\Omega_\eps}\psi(-\Delta)^sZ_{j}\,dx.
\end{equation}
Using H\"older inequality and~\eqref{7.10}, we have that
\begin{equation}\begin{split}\label{scarica}
\left|\int_{\Omega_\eps}\psi(-\Delta)^s\left(T_{\eps,j}-Z_j\right)\,dx\right| &\le \, \|\psi\|_{L^2(\R^n)}\|(-\Delta)^s\left(T_{\eps,j}-Z_j\right)\|_{L^2(\R^n)}\\
&\le \, C\|\psi\|_{L^2(\R^n)}\|T_{\eps,j}-Z_j\|_{H^2(\R^n)}.
\end{split}\end{equation}

Let us estimate the~$H^2$-norm of~$T_{\eps,j}-Z_j$.
First we have that
$$ \| T_{\eps,j}-Z_j \|_{L^2(\R^n)}^2 = \int_{\R^n}(\tau_\epsilon -1)^2\,Z_j^2 \, dx \le \int_{B^c_{(c/\eps)-1}(\xi)} Z_j^2\,dx,$$
since~$\tau_\eps=1$ in~$B_{(c/\eps)-1}(\xi)$ and takes values in~$(0,1)$. Hence, from Lemma~\ref{:decay Z} we deduce that
$$ \| T_{\eps,j}-Z_j \|_{L^2(\R^n)}^2\le C\int_{B_{(c/\eps)-1}^c(\xi)}
\frac{1}{|x-\xi|^{2\nu_1}}\,dx\le C\,\epsilon^{n},
$$
up to renaming~$C$. Therefore,
\begin{equation}\label{L2 norm}
\| T_{\eps,j}-Z_j \|_{L^2(\R^n)}\le C\,\epsilon^{n/2}.
\end{equation}

Moreover, we have that
\begin{eqnarray*}
\| \nabla\left(T_{\eps,j}-Z_j\right)\|_{L^2(\R^n)}^2 &=& \int_{\R^n}|(\tau_\eps-1)\nabla Z_j+\nabla\tau_\eps Z_j|^2\,dx \\
&=& \int_{\R^n}(\tau_\eps-1)^2|\nabla Z_j|^2 +|\nabla\tau_\eps|^2Z_j^2+2(\tau_\eps-1)Z_j\nabla Z_j\cdot\nabla\tau_\eps\,dx.
\end{eqnarray*}
Using the fact that both~$\tau_\eps-1$ and~$\nabla\tau_\eps$ have support
outside~$B_{(c/\eps)-1}(\xi)$ and Lemmata~\ref{:decay Z} and~\ref{lem:decay DZ},
we obtain that
\begin{equation}\label{L2 norm D}
\| \nabla\left(T_{\eps,j}-Z_j\right)\|_{L^2(\R^n)}\le C\,\epsilon^{n/2}.
\end{equation}

Finally, using again the fact that~$\tau_\eps-1$,~$\nabla\tau_\eps$ and~$D^2\tau_\eps$ have support outside~$B_{(c/\eps)-1}(\xi)$ and
Lemmata~\ref{:decay Z},~\ref{lem:decay DZ} and~\ref{lem:decay D2 Z}, we obtain that
$$ \| D^2\left(T_{\eps,j}-Z_j\right)\|_{L^2(\R^n)}\le C\,\epsilon^{n/2}.$$
Using this,~\eqref{L2 norm} and~\eqref{L2 norm D} we have that
$$ \|T_{\eps,j}-Z_j\|_{H^2(\R^n)}\le C\,\epsilon^{n/2},$$
and so from~\eqref{scarica} we obtain
\begin{equation}\label{2.29bis}
\left|\int_{\Omega_\eps}\psi(-\Delta)^s\left(T_{\eps,j}-Z_j\right)\,dx\right|\le C\,\epsilon^{n/2}\|\psi\|_{L^2(\R^n)}.\end{equation}

Now, using~\eqref{hence}, we have that
\begin{eqnarray*}
&&\int_{\Omega_\epsilon}T_{\eps,j}\left((-\Delta)^s\psi +\psi-pw_\xi^{p-1}\psi\right)dx \\
&&\qquad =  \int_{\Omega_\eps}\psi(-\Delta)^sZ_{j}+ T_{\eps,j}\,\psi-pw_\xi^{p-1}\psi\,T_{\eps,j}\,dx +\int_{\Omega_\eps}\psi(-\Delta)^s(T_{\eps,j}-Z_j)\,dx.
\end{eqnarray*}
Since~$w_\xi$ is a solution to~\eqref{EQ w}, we have that~$Z_j$ solves
$$ (-\Delta)^s Z_j+Z_j= pw_\xi^{p-1}Z_j,$$
and this implies that
\begin{eqnarray*}
&&\int_{\Omega_\epsilon}T_{\eps,j}\left((-\Delta)^s\psi +\psi-pw_\xi^{p-1}\psi\right)dx \\
&&\qquad = \int_{\Omega_\eps}\psi(T_{\eps,j}-Z_j)-pw_\xi^{p-1}\psi(T_{\eps,j}-Z_j)\,dx +\int_{\Omega_\eps}\psi(-\Delta)^s(T_{\eps,j}-Z_j)\,dx.
\end{eqnarray*}
Hence, using the fact that~$w_\xi$ is bounded (see~\eqref{decay w})
and H\"older inequality, we have that
\begin{equation}\begin{split}\label{first p}
&\left|\int_{\Omega_\epsilon}T_{\eps,j}\left((-\Delta)^s\psi +\psi-pw_\xi^{p-1}\psi\right)dx\right| \\
&\qquad \le C\Big( \|\psi\|_{L^2(\R^n)}\|T_{\eps,j}-Z_j\|_{L^2(\R^n)}+
\left|\int_{\Omega_\eps}\psi\,(-\Delta)^s(T_{\eps,j}-Z_j)\,dx\right|\Big)\\
&\qquad \le C\,\eps^{n/2}\|\psi\|_{L^2(\R^n)},
\end{split}\end{equation}
where we have used~\eqref{L2 norm} and~\eqref{2.29bis} in the last step.

Now, we can write
\begin{equation}\label{lakjsgfd}
\int_{\Omega_\eps}T_{\eps,j}\,g\,dx = \int_{\Omega_\eps}(T_{\eps,j}-Z_j)g\,dx + \int_{\Omega_\eps}Z_j\,g\,dx=\int_{\Omega_\eps}(T_{\eps,j}-Z_j)g\,dx + \int_{\R^n}Z_j\,g\,dx-\int_{\R^n\setminus\Omega_\eps}Z_j\,g\,dx.
\end{equation}
Using H\"older inequality and~\eqref{L2 norm}, we can estimate
$$
\left|\int_{\Omega_\eps}(T_{\eps,j}-Z_j)g\,dx\right|\le \|T_{\eps,j}-Z_j\|_{L^2(\R^n)}\|g\|_{L^2(\R^n)}\le C\eps^{n/2}\|g\|_{L^2(\R^n)}.$$
Moreover, from H\"older inequality and Lemma~\ref{:decay Z}
(and recalling that~$\dist(\xi,\partial\Omega_\eps)\ge c/\eps$), we obtain that
$$ \left|\int_{\R^n\setminus\Omega_\eps}Z_j\,g\,dx\right|\le
\|g\|_{L^2(\R^n)}\left(\int_{\R^n\setminus\Omega_\eps}\frac{C}{|x-\xi|^{
2\nu_1}}\,dx\right)^{1/2}\le C\,\eps^{n/2}\|g\|_{L^2(\R^n)}.$$
The last two estimates and~\eqref{lakjsgfd} imply that
$$ \int_{\Omega_\eps}T_{\eps,j}\,g\,dx =\int_{\R^n}Z_j\,g\,dx+\tilde{f}_j,$$
where
$$ |\tilde{f}_j|\le C\,\epsilon^{n/2}\|g\|_{L^2(\R^n)}.$$
{F}rom this,~\eqref{ZZ} and~\eqref{first p}, we have that
\begin{equation}\label{ZZZZZ}
\sum_{i=1}^n c_i \int_{\Omega_\epsilon}Z_i\,T_{\eps,j}\,dx=\int_{\R^n}g\,Z_j\,dx+\bar{f}_j,
\end{equation}
where
\begin{equation}\label{bar f}
|\bar{f}_j|\le C\,\epsilon^{n/2}\left(
\|\psi\|_{L^{2}(\R^n)}+ \|g\|_{L^2(\R^n)}\right)
\end{equation}
up to renaming the constants.

On the other hand, we can write
\begin{equation}\label{barbis}
\int_{\Omega_\eps}Z_i\,T_{\eps,j}\,dx=\int_{\Omega_\eps}Z_i\left(T_{\eps,j}-Z_j\right)\,dx+\int_{\Omega_\eps}Z_i\,Z_{j}\,dx.
\end{equation}
From H\"older inequality,~\eqref{L2 norm} and Lemma~\ref{:decay Z}, we have that
$$ \left|\int_{\Omega_\eps}Z_i\left(T_{\eps,j}-Z_j\right)\,dx\right|\le \left(\int_{\Omega_\eps}Z_i^2\,dx\right)^{1/2}\|T_{\eps,j}-Z_j\|_{L^2(\R^n)}\le C\,\epsilon^{n/2}.$$
Using this and Corollary~\ref{cor:orto} in~\eqref{barbis}, we obtain that
\begin{equation}\label{qwwqdeqweqweqw}
\int_{\Omega_\eps}Z_i\,T_{\eps,j}\,dx=\alpha\,\delta_{ij}+O(\epsilon^{n/2}).
\end{equation}
So, we consider the matrix~$A\in{\mbox{Mat}}(n\times n)$ defined as
\begin{equation}
A_{ji}:=\int_{\Omega_\eps}Z_i\,T_{\eps,j}\,dx.\end{equation}
Thanks to~\eqref{qwwqdeqweqweqw}, the matrix~$\alpha^{-1}A$ is a perturbation
of the identity and so it is invertible for~$\epsilon$ sufficiently small,
with inverse equal to the identity plus smaller order term
of size~$\eps^{n/2}$. Hence, the matrix~$A$
is invertible too, with inverse
\begin{equation}\label{a-1}
(A^{-1})_{ji}= \alpha^{-1}\delta_{ij}+O(\eps^{n/2}).\end{equation}
So we consider the vector~$d=(d_1,\dots,d_n)$ defined
by
\begin{equation}\label{d-1}
d_j:= \int_{\R^n}g\,Z_j\,dx+\bar{f}_j.\end{equation}
We observe that
$$ \left|\int_{\R^n}g\,Z_j\,dx\right| \le
\|g\|_{L^2(\R^n)}\|Z_j\|_{L^2(\R^n)}\le C\,\|g\|_{L^2(\R^n)},$$
thanks Lemma~\ref{:decay Z}.
As a consequence, recalling~\eqref{bar f}, we obtain that
\begin{equation}\label{d-2} |d|\le C\,\left(
\|\psi\|_{L^{2}(\R^n)}+ \|g\|_{L^2(\R^n)}\right),\end{equation}
up to renaming~$C$.

With the setting above,~\eqref{ZZZZZ} reads
$$ \sum_{i=1}^n c_i\, A_{ji}=\int_{\R^n}g\,Z_j\,dx+\bar{f}_j=d_j$$
that is, in matrix notation,~$Ac=d$. We can invert such relation using~\eqref{a-1} and write
$$ c=A^{-1} d=\alpha^{-1} d+ f^\sharp $$
with
\begin{equation}\label{ci3}
|f^\sharp|\le C\,\epsilon^{n/2}\,|d|\le
C\,\epsilon^{n/2}\left(
\|\psi\|_{L^{2}(\R^n)}+ \|g\|_{L^2(\R^n)}\right),\end{equation}
in virtue of~\eqref{d-2}.
So, using~\eqref{d-1},
$$ c_i = \alpha^{-1} d_i +f^\sharp_i =
\alpha^{-1}
\int_{\R^n}g\,Z_i\,dx+\alpha^{-1}\bar{f}_i
+f^\sharp_i.$$
This proves~\eqref{ci} with
$$ f_i:=\alpha^{-1}\bar{f}_i +f^\sharp_i,$$
and then~\eqref{ci2} follows from~\eqref{bar f}
and~\eqref{ci3}.
\end{proof}

Now, we show that solutions to~\eqref{EQ g} satisfy an a priori estimate.
We remark that the result in the following lemma is different from the one in Corollary~\ref{cor:300}, since here also a combination of~$Z_i$, for~$i=1,\ldots,n$, appears in the equation satisfied by~$\psi$.

\begin{lem}\label{lem:500}
Let~$g\in L^2(\R^n)$ with~$\|g\|_{\star,\xi}<+\infty$.
Let~$\psi\in\Psi$ be a solution to~\eqref{EQ g} for some
coefficients~$c_i\in\R$, $i=1,\ldots,n$ and for~$\epsilon$ sufficiently small.

Then,
$$ \|\psi\|_{\star,\xi}\le C\|g\|_{\star,\xi}.$$
\end{lem}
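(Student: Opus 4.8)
The plan is to reduce the claim, via the a priori bound of Corollary~\ref{lem:200} and the control on the Lagrange multipliers $c_i$ from Lemma~\ref{lem:ci}, to an estimate in which only a \emph{local} $L^\infty$ norm of $\psi$ remains, and then to remove that term by a contradiction--compactness argument that exploits the linear nondegeneracy of the ground state. First I would rewrite \eqref{EQ g} as $(-\Delta)^s\psi+\psi-pw_\xi^{p-1}\psi+\big(g-\sum_{i=1}^n c_iZ_i\big)=0$ in $\Omega_\eps$ and apply Corollary~\ref{lem:200}, which (with $C$ and $R$ depending only on $n$, $s$, $\Omega$ and on $w$, hence independent of $\xi$ and $\eps$) together with the triangle inequality gives
\[
\|\psi\|_{\star,\xi}\le C\Big(\|\psi\|_{L^\infty(B_R(\xi))}+\|g\|_{\star,\xi}+\sum_{i=1}^n|c_i|\,\|Z_i\|_{\star,\xi}\Big).
\]
Since $\mu<n+2s<\nu_1$, Lemma~\ref{:decay Z} gives $\|Z_i\|_{\star,\xi}\le C$; since $2\mu>n$, integrating $\rho_\xi^2$ yields $\|\varphi\|_{L^2(\R^n)}\le C\|\varphi\|_{\star,\xi}$ for every $\varphi$, so in particular $\|\psi\|_{L^2}\le C\|\psi\|_{\star,\xi}$ and $\|g\|_{L^2}\le C\|g\|_{\star,\xi}$; and since $\mu+\nu_1>n$ one has $\big|\int_{\R^n}g\,Z_i\,dx\big|\le\|g\|_{\star,\xi}\int_{\R^n}\rho_\xi|Z_i|\,dx\le C\|g\|_{\star,\xi}$. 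Feeding these into Lemma~\ref{lem:ci} gives $|c_i|\le C\|g\|_{\star,\xi}+C\eps^{n/2}\|\psi\|_{\star,\xi}$, and substituting, then choosing $\eps$ so small that $C\eps^{n/2}\le\tfrac12$, lets us absorb the $\|\psi\|_{\star,\xi}$ term on the right, leaving the reduced estimate $\|\psi\|_{\star,\xi}\le C\big(\|\psi\|_{L^\infty(B_R(\xi))}+\|g\|_{\star,\xi}\big)$.

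The remaining and main point is to show that the local term cannot dominate. I would argue by contradiction: if the conclusion failed there would exist $\eps_k\to0$, admissible $\xi_k$, data $g_k$, solutions $\psi_k\in\Psi$ and coefficients $c_i^{(k)}$ of \eqref{EQ g} with $\|\psi_k\|_{\star,\xi_k}=1$ and $\|g_k\|_{\star,\xi_k}\to0$; the reduced estimate then forces $\|\psi_k\|_{L^\infty(B_R(\xi_k))}\ge\tfrac1{2C}$ for $k$ large. Setting $\tilde\psi_k(x):=\psi_k(x+\xi_k)$, one has $|\tilde\psi_k(x)|\le(1+|x|)^{-\mu}$ and
\[
(-\Delta)^s\tilde\psi_k+\tilde\psi_k-pw^{p-1}\tilde\psi_k+\tilde g_k=\sum_{i=1}^n c_i^{(k)}\,\partial_iw\qquad\text{in }\ \tilde\Omega_k:=\Omega_{\eps_k}-\xi_k,
\]
where $\tilde\Omega_k\supseteq B_{c/\eps_k}(0)$ exhausts $\R^n$, $\|\tilde g_k\|_{\star,0}\to0$, and $c_i^{(k)}\to0$ by Lemma~\ref{lem:ci}. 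Testing this equation against $\tilde\psi_k$ and using that $\tilde\psi_k$ is orthogonal to the $\partial_iw$'s over $\tilde\Omega_k$ yields a uniform $H^s(\R^n)$ bound, while $(-\Delta)^s\tilde\psi_k$ is locally bounded in $L^\infty$; hence by the interior regularity of~\cite{Silvestre} (as used in Lemma~\ref{lem:100}) and the Arzel\`a--Ascoli theorem, along a subsequence $\tilde\psi_k\to\psi_\infty$ locally uniformly and weakly in $H^s(\R^n)$. The uniform decay $|\tilde\psi_k|\le(1+|x|)^{-\mu}$ with $\mu>n/2$ makes the passage to the limit in the weak (nonlocal) formulation routine, and produces a function $\psi_\infty\in H^s(\R^n)$ with $|\psi_\infty(x)|\le(1+|x|)^{-\mu}$ solving $(-\Delta)^s\psi_\infty+\psi_\infty-pw^{p-1}\psi_\infty=0$ in $\R^n$.

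Finally, translating the condition $\psi_k\in\Psi$ gives $\int_{\tilde\Omega_k}\tilde\psi_k\,\partial_iw\,dx=0$; since $\mu+\nu_1>n$ the integrand is dominated by a fixed integrable function (by Lemma~\ref{:decay Z}), and since $\tilde\Omega_k\uparrow\R^n$ we may let $k\to\infty$ to obtain $\int_{\R^n}\psi_\infty\,\partial_iw\,dx=0$ for every $i$. By the nondegeneracy of $w$ established in~\cite{FLS}, the kernel of $(-\Delta)^s+1-pw^{p-1}$ in $H^s(\R^n)$ equals $\mathrm{span}\{\partial_1w,\dots,\partial_nw\}$, so $\psi_\infty=\sum_j a_j\partial_jw$; combined with $\int_{\R^n}\partial_iw\,\partial_jw\,dx=\alpha\delta_{ij}$ (see \eqref{J1}), the orthogonality forces every $a_i=0$, hence $\psi_\infty\equiv0$. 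This contradicts $\|\psi_\infty\|_{L^\infty(B_R(0))}\ge\tfrac1{2C}>0$, which follows from the local uniform convergence, and thus proves $\|\psi\|_{\star,\xi}\le C\|g\|_{\star,\xi}$. The steps needing most care are keeping all constants uniform in $\xi$ and $\eps$, and justifying the two limits above --- in the nonlocal equation and in the orthogonality relations --- over the expanding domains $\tilde\Omega_k$.
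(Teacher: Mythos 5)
Your proposal follows essentially the same route as the paper: reduce via Corollary~\ref{lem:200} and Lemma~\ref{lem:ci} to a bound involving only the local $L^\infty$ norm of $\psi$ near $\xi$, then kill that term by a contradiction--compactness argument in which the translated sequence converges to a decaying solution of $(-\Delta)^s\bar\psi+\bar\psi=pw^{p-1}\bar\psi$, orthogonal to the $\partial_i w$'s, which must vanish by the nondegeneracy of \cite{FLS}. The only difference is organizational (you prove the ``reduced estimate'' first and then run the compactness argument, while the paper runs the compactness argument first to show $\|\psi_j\|_{L^\infty(B_R(\xi_j))}\to 0$ and applies Corollary~\ref{lem:200} at the end) and in how equicontinuity is obtained (you combine a uniform $H^s$ energy bound with interior regularity, while the paper invokes Lemma~\ref{lem:100} directly), but these are cosmetic and both are correct.
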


\begin{proof}
Suppose by contradiction that there exists a sequence~$\epsilon_j\searrow 0$
as~$j\rightarrow +\infty$
such that, for any~$j\in\N$, the function~$\psi_j$ satisfies
\begin{equation}\label{eq gj}
\left\{
\begin{matrix}
(-\Delta)^s \psi_j +\psi_j -pw_{\xi_j}^{p-1}\psi_j +g_j =\displaystyle\sum_{i=1}^n c_{i}^j\,Z_i^j & {\mbox{ in $\Omega_{\epsilon_j}$,}}\\
\psi_j =0 & {\mbox{ in }}\R^n\setminus\Omega_{\epsilon_j},\\
\displaystyle \int_{\Omega_{\epsilon_j}}\psi_j\,Z_i^j\,dx=0 & {\mbox{ for any }}i=1,\ldots,n,
\end{matrix}
\right.
\end{equation}
for suitable~$g_j\in L^2(\R^n)$ and~$\xi_j\in\Omega_{\epsilon_j}$,
where
$$ Z_i^j:=\frac{\partial w_{\xi_j}}{\partial x_i}.$$
Moreover,
\begin{eqnarray}
&& \|\psi_j\|_{\star,\xi_j} =1 {\mbox{ for any }}j\in\N \label{bound}\\
{\mbox{and }} && \|g_j\|_{\star,\xi_j}\searrow 0 {\mbox{ as }}j\rightarrow +\infty.\label{bound2}
\end{eqnarray}
Notice that the fact that the equation in~\eqref{eq gj} is linear
with respect to~$\psi_j$, $g_j$ and~$Z_i^j$ allows us to take
the sequences~$\psi_j$ and~$g_j$ as in~\eqref{bound} and~\eqref{bound2}.

We claim that, for any given~$R>0$,
\begin{equation}\label{claim lim}
\|\psi_j\|_{L^\infty(B_R(\xi_j))}\rightarrow 0 {\mbox{ as }}j\rightarrow +\infty.
\end{equation}
For this, we argue by contradiction and we assume that
there exists~$\delta>0$ and~$j_0\in\N$ such that,
for any~$j\ge j_0$, we have that~$\|\psi_j\|_{L^\infty(B_R(\xi_j))}\ge\delta$.

Thanks to Lemmata~\ref{lem:ci} and~\ref{:decay Z},
we have that
$$ |c_i^j|\le \frac{C_1}{\alpha_i}\|g_j\|_{\star,\xi_j} +C_2\,\epsilon_j^{n/2},$$
for suitable positive constants~$C_1$ and~$C_2$. Hence, from~\eqref{bound2}
we obtain that
\begin{equation}\label{bound3}
c_i^j\searrow 0 {\mbox{ as $j\rightarrow +\infty$ for any~$i\in\{1,\ldots,n\}$.}}
\end{equation}

Now, from Lemma~\ref{lem:100} we have that
\begin{equation}\begin{split}\label{norma Cs}
&\sup_{x\neq y}\frac{|\psi_j(x)-\psi_j(y)|}{|x-y|^s}\\
&\quad \le
C\left(\left\|g_j +\sum_{i=1}^nc_i^j\,Z_i^j+pw_{\xi_j}^{p-1}\psi_j\right\|_{L^\infty(\R^n)}+ \left\|g_j +\sum_{i=1}^nc_i^j\,Z_i^j+pw_{\xi_j}^{p-1}\psi_j\right\|_{L^2(\R^n)}\right).
\end{split}\end{equation}
We observe that
$$ \left\|g_j +\sum_{i=1}^nc_i^j\,Z_i^j+pw_{\xi_j}^{p-1}\psi_j\right\|_{L^\infty(\R^n)}\le C\left(\|g\|_{\star,\xi_j} + \sum_{i=1}^n|c^j_i|+\|\psi_j\|_{\star,\xi_j}\right),$$
thanks to the decay of~$Z^j_i$ in Lemma~\ref{:decay Z} and the
fact that~$w_{\xi_j}^{p-1}$ is bounded
(recall~\eqref{decay w}). So, from~\eqref{bound2},~\eqref{bound3} and~\eqref{bound}, we obtain that
\begin{equation}\label{norma infinito}
\left\|g_j +\sum_{i=1}^nc_i^j\,Z_i^j+pw_{\xi_j}^{p-1}\psi_j\right\|_{L^\infty(\R^n)}\le C,
\end{equation}
for a suitable constant~$C>0$ independent of~$j$.

We claim that
\begin{equation}\label{norma  star}
\left\|g_j +\sum_{i=1}^nc_i^j\,Z_i^j+pw_{\xi_j}^{p-1}\psi_j\right\|_{L^2(\R^n)}\le C,\end{equation}
where~$C>0$ does not depend on~$j$. Indeed,
\begin{eqnarray*}
\|g_j\|_{L^2(\R^n)}&=& \left( \int_{\R^n}g_j^2\,dx\right)^{1/2}\le \|g_j\|_{\star,\xi_j} \left(\int_{\R^n}\rho_{\xi_j}^2\,dx\right)^{1/2}\\
&\le &\|g_j\|_{\star,\xi_j}\left(\int_{\R^n}\frac{1}{(1+|x-\xi_j|)^{2\mu}}\,dx\right)^{1/2}\le C\|g_j\|_{\star,\xi_j}\le C,
\end{eqnarray*}
since~$2\mu>n$ and~\eqref{bound2} holds. Moreover,
$$
\left\|\sum_{i=1}^nc_i^j\,Z_i^j\right\|_{L^2(\R^n)}\le
\sum_{i=1}^n|c_i^j|\|Z_i^j\|_{L^2(\R^n)}\le C,
$$
thanks to~\eqref{bound3} and Lemma~\ref{:decay Z}.
Finally, using~\eqref{decay w}, the fact that~$2\mu>n$ and~\eqref{bound}, we have that
\begin{eqnarray*}
\left\|pw_{\xi_j}^{p-1}\psi_j\right\|_{L^2(\R^n)} &\le &
p\|\psi_j\|_{\star,\xi_j} \left( \int_{\R^n} w_{\xi_j}^{2(p-1)} \rho_{\xi_j}^2\,dx\right)^{1/2}\\
&\le & C\|\psi_j\|_{\star,\xi_j} \left(\int_{\R^n}
\frac{1}{(1+|x-\xi_j|)^{2(p-1)(n+2s)+2\mu}}\,dx\right)^{1/2}\le C.
\end{eqnarray*}
Putting together the above estimates, we obtain~\eqref{norma  star}.

Hence, from~\eqref{norma Cs},~\eqref{norma infinito} and~\eqref{norma  star},
we have that the~$\psi_j$'s are equicontinuous.

For any~$j=1,\ldots,n$, we define the function
$$ \tilde\psi_j(x):=\psi_j(x+\xi_j),$$
and the set
$$ \tilde\Omega_j:=\{x=y-\xi_j {\mbox{ with }}y\in\Omega_{\epsilon_j}\}.$$
We notice that~$\tilde\psi_j$ satisfies
\begin{equation}\label{eq psi tilde}
(-\Delta)^s\tilde\psi_j+\tilde\psi_j-pw^{p-1}\tilde\psi_j+\tilde g_j=\sum_{i=1}^n c^j_i\,\tilde Z_i {\mbox{ in }}\tilde\Omega_j,
\end{equation}
where~$\tilde g_j(x):=g(x+\xi_j)$ and~$\tilde Z_i:=\frac{\partial w}{\partial x_i}$. Moreover
\begin{equation}\label{zero fuori}
\tilde\psi_j=0 {\mbox{ in }}\R^n\setminus\tilde\Omega_j.
\end{equation}

Now, thanks to~\eqref{dist bordo} we have
that~$B_{c/\epsilon_j}(\xi_j)\subset\Omega_{\epsilon_j}$.
Hence,~$B_{c/\epsilon_j}\subset\tilde\Omega_j$, which means that~$\tilde\Omega_j$
converges to~$\R^n$ when~$j\rightarrow +\infty$.

Furthermore, we have that
\begin{eqnarray}
&& \|\tilde\psi_j\|_{L^\infty(B_R)}\ge \delta \nonumber\\
{\mbox{and }}&& \|(1+|x|)^\mu \tilde\psi_j\|_{L^\infty(\R^n)}=1. \label{9.34bis}
\end{eqnarray}

Now, since the~$\psi_j$'s are equicontinuous, the~$\tilde\psi_j$'s are equicontinuous too, and therefore there exists a function~$\bar\psi$ such that, up to subsequences,~$\tilde\psi_j$ converge to~$\bar\psi$ uniformly on compact sets.

The function~$\bar\psi\in L^2(\R^n)$. Indeed, by Fatou's Theorem and~\eqref{bound} and recalling that~$2\mu>n$, we have
$$ \int_{\R^n}\bar\psi^2\,dx\le\liminf_{j\rightarrow +\infty}\int_{\R^n}\psi_j^2\,dx \le\liminf_{j\rightarrow +\infty}\|\psi_j\|_{\star,\xi_j}^2\int_{\R^n}\frac{1}{(1+|x-\xi_j|)^{2\mu}}\,dx\le C. $$
Moreover, $\bar\psi$ satisfies the conditions
\begin{eqnarray}
&& \|\bar\psi\|_{L^\infty(B_R)}\ge\delta \label{bar psi}\\
{\mbox{ and }} && \|(1+|x|)^\mu \bar\psi\|_{L^\infty(\R^n)}\le 1. \label{unif}
\end{eqnarray}

We prove that~$\bar\psi$ solves the equation
\begin{equation}\label{eq strong}
(-\Delta)^s\bar\psi +\bar\psi = pw^{p-1}\bar\psi {\mbox{ in }}\R^n.
\end{equation}
Indeed, we multiply the equation in~\eqref{eq psi tilde} by a function~$\eta\in C^{\infty}_0(\tilde\Omega_j)$  and we integrate over~$\R^n$.
We notice that both~$\eta$ and~$\tilde\psi_j$ are equal to zero outside~$\tilde\Omega_j$ (recall~\eqref{zero fuori}), and therefore we can use
formula~$(1.5)$ in~\cite{SerraRos-Dirichlet-regularity} and we get
\begin{equation}\label{eq forte}
\int_{\R^n} \left( (-\Delta)^s\eta+\eta-pw^{p-1}\eta\right) \tilde\psi_j\,dx+ \int_{\R^n}\tilde g_j\,\eta\,dx = \sum_{i=1}^n c^j_i\int_{\R^n}\tilde Z_i\,\eta\,dx. \end{equation}
Now, we have that
$$ \|\tilde g_j\|_{\star,0}=\|g_j\|_{\star,\xi_j}\searrow 0 {\mbox{ as }}j\rightarrow+\infty.$$
Moreover,
$$ \left|\int_{\R^n}\tilde g_j\,\eta\,dx\right| \le \|\tilde g_j\|_{\star,0}\int_{\R^n}\rho_0\,\eta\le C\|\tilde g_j\|_{\star,0},$$
since~$2\mu>n$, which implies that
\begin{equation}\label{g zero}
\int_{\R^n}\tilde g_j\,\eta\,dx \rightarrow 0 {\mbox { as }}j\rightarrow +\infty.
\end{equation}

Also,
$$ \left|\sum_{i=1}^n c^j_i\int_{\R^n}\tilde Z_i\,\eta\,dx\right|\le C \sum_{i=1}^n |c^j_i|,$$
and so, thanks to~\eqref{bound3}, we obtain that
\begin{equation}\label{c zero}
\sum_{i=1}^n c^j_i\int_{\R^n}\tilde Z_i\,\eta\,dx\rightarrow 0 {\mbox{ as }}j\rightarrow +\infty.
\end{equation}

Finally, we fix~$r>0$ and we estimate
\begin{equation}\begin{split}\label{spezza}
&\left| \int_{\R^n} \left( (-\Delta)^s\eta+\eta-pw^{p-1}\eta\right) \tilde\psi_j\,dx-\int_{\R^n} \left( (-\Delta)^s\eta+\eta-pw^{p-1}\eta\right) \bar\psi\,dx\right| \\
&\qquad \le \int_{\R^n} |(-\Delta)^s\eta+\eta-pw^{p-1}\eta |\,
|\tilde\psi_j-\bar\psi|\,dx\\
&\qquad =\int_{B_r} |(-\Delta)^s\eta+\eta-pw^{p-1}\eta |\,
|\tilde\psi_j-\bar\psi|\,dx +\int_{\R^n\setminus B_r} |(-\Delta)^s\eta+\eta-pw^{p-1}\eta |\,
|\tilde\psi_j-\bar\psi|\,dx.
\end{split}\end{equation}
We define the function
$$ \tilde \eta :=(-\Delta)^s\eta+\eta-pw^{p-1}\eta $$
and we notice that it satisfies the following decay
\begin{equation}\label{decay eta}
|\tilde\eta(x)|\le\frac{C_1}{(1+|x|)^{n+2s}}.
\end{equation}
Hence,
\begin{eqnarray*}
\int_{B_r} |(-\Delta)^s\eta+\eta-pw^{p-1}\eta |\,
|\tilde\psi_j-\bar\psi|\,dx &\le & C_1 \|\tilde\psi_j-\bar\psi\|_{L^{\infty}(B_r)}\int_{B_r}\frac{1}{(1+|x|)^{n+2s}}\\
&\le & C_2\|\tilde\psi_j-\bar\psi\|_{L^{\infty}(B_r)},
\end{eqnarray*}
which implies that
\begin{equation}\label{spezza 1}
\int_{B_r} |(-\Delta)^s\eta+\eta-pw^{p-1}\eta |\,
|\tilde\psi_j-\bar\psi|\,dx\searrow 0 {\mbox{ as }}j\rightarrow +\infty,
\end{equation}
due to the uniform convergence of~$\tilde\psi_j$ to~$\bar\psi$
on compact sets.
On the other hand, from~\eqref{9.34bis},~\eqref{unif} and~\eqref{decay eta}
we have that
\begin{eqnarray*}
\int_{\R^n\setminus B_r} |(-\Delta)^s\eta+\eta-pw^{p-1}\eta |\,
|\tilde\psi_j-\bar\psi|\,dx &\le & 2\,C_1 \int_{\R^n\setminus B_r}\frac{1}{(1+|x|)^{n+2s}}\,dx\\
&\le & 2\,C_1 \int_{\R^n\setminus B_r}\frac{1}{|x|^{n+2s}}\,dx\\
&\le & C_3 r^{-2s}.
\end{eqnarray*}
Hence, sending~$r\rightarrow +\infty$, we obtain that
\begin{equation}\label{spezza 2}
\int_{\R^n\setminus B_r} |(-\Delta)^s\eta+\eta-pw^{p-1}\eta |\,
|\tilde\psi_j-\bar\psi|\,dx\searrow 0.
\end{equation}
Putting together~\eqref{spezza},~\eqref{spezza 1} and~\eqref{spezza 2}, we obtain that
$$ \int_{\R^n} \left( (-\Delta)^s\eta+\eta-pw^{p-1}\eta\right) \tilde\psi_j\,dx\rightarrow \int_{\R^n} \left( (-\Delta)^s\eta+\eta-pw^{p-1}\eta\right) \bar\psi\,dx {\mbox{ as }}j\rightarrow +\infty.$$
This,~\eqref{g zero},~\eqref{c zero} and~\eqref{eq forte} imply that
$$ \int_{\R^n}\left( (-\Delta)^s\eta+\eta-pw^{p-1}\eta\right) \bar\psi\,dx=0$$
for any~$\eta\in C^{\infty}_0(\R^n)$. This means that~$\bar\psi$ is a weak solution to~\eqref{eq strong}, and so a strong solution, thanks to~\cite{SerVal}.

Hence, recalling the nondegeneracy result in~\cite{FLS}, we have that
\begin{equation}\label{comb}
\bar\psi=\sum_{i=1}^n\beta_i\frac{\partial w}{\partial x_i},
\end{equation}
for some coefficients~$\beta_i\in\R$.

On the other hand, the orthogonality
condition in~\eqref{eq gj} passes to the limit, that is
\begin{equation}\label{ortogo}
\int_{\R^n}\bar\psi\,\tilde Z_i\,dx =0, {\mbox{ for any }}i=1,\ldots,n.
\end{equation}
Indeed, we fix~$r>0$ and we compute
\begin{equation}\label{alwpqoqw}
\int_{\R^n} (\bar\psi-\tilde\psi_j)\tilde Z_i\,dx =\int_{B_r}(\bar\psi-\tilde\psi_j)\tilde Z_i\,dx+\int_{\R^n\setminus B_r}(\bar\psi-\tilde\psi_j)\tilde Z_i\,dx.
\end{equation}
Concerning the first term on the right-hand side, we use the uniform convergence of~$\tilde\psi_j$ to~$\bar\psi$ on compact sets together with the fact that~$\tilde Z_i$ is bounded to obtain that
$$ \int_{B_r}(\bar\psi-\tilde\psi_j)\tilde Z_i\,dx\rightarrow 0 {\mbox{ as }}j\rightarrow +\infty.$$
As for the second term, we use~\eqref{9.34bis},~\eqref{unif} and
Lemma~\ref{:decay Z} and we get
$$ \int_{\R^n\setminus B_r}(\bar\psi-\tilde\psi_j)\tilde Z_i\,dx\le
C\int_{\R^n\setminus B_r}\frac{1}{|x|^{n+2s}}\,dx\le \bar C\,r^{-2s},$$
which tends to zero as~$r\rightarrow +\infty$.
Using the above two formulas into~\eqref{alwpqoqw} we obtain that
$$ 0=\int_{\R^n}\tilde\psi_j\tilde Z_i\,dx \rightarrow \int_{\R^n}\bar\psi\tilde Z_i\,dx, $$
which implies~\eqref{ortogo}.

Therefore, recalling also~\eqref{Zj},~\eqref{comb} and~\eqref{ortogo} imply that~$\bar\psi\equiv 0$,
thus contradicting~\eqref{bar psi}.
This proves~\eqref{claim lim}.

Now, from Corollary~\ref{lem:200} (notice that we can take~$R$ sufficiently big in order to apply the corollary) we have that
\begin{eqnarray*}
\|\psi_j\|_{\star,\xi_j} &\le & C\left(\|\psi_j\|_{L^\infty(B_R(\xi_j))}+\left\|g_j+\sum_{i=1}^nc_i^j\,Z_i^j\right\|_{\star,\xi_j}\right)\\
&\le & C\left(\|\psi_j\|_{L^\infty(B_R(\xi_j))}+\|g_j\|_{\star,\xi_j}+\left\|\sum_{i=1}^n|c_i^j|\,Z_i^j\right\|_{\star,\xi_j}\right)\\
&= & C\left(\|\psi_j\|_{L^\infty(B_R(\xi_j))}+\|g_j\|_{\star,\xi_j}+\left\|\sum_{i=1}^n|c_i^j|\,\rho_{\xi_j}^{-1}\,Z_i^j\right\|_{L^\infty(\R^n)}\right)\\
&\le & C\left(\|\psi_j\|_{L^\infty(B_R(\xi_j))}+\|g_j\|_{\star,\xi_j}+\sum_{i=1}^n|c_i^j|\right)
\end{eqnarray*}
up to renaming~$C$, where we have used the decay of~$Z^j_i$ (see Lemma~\ref{:decay Z}) and the fact that~$\mu<n+2s$. Therefore,~\eqref{bound2},~\eqref{claim lim} and~\eqref{bound3} imply that
$$ \|\psi_j\|_{\star,\xi_j}\rightarrow 0 {\mbox{ as }}j\rightarrow +\infty, $$
which contradicts~\eqref{bound} and concludes the proof.
\end{proof}

Now we consider an auxiliary problem: we look for a solution~$\psi\in\Psi$
of
\begin{equation}\label{eq aux}
(-\Delta)^s \psi +\psi +g =\sum_{i=1}^n c_{i}\,Z_i  {\mbox{ in $\Omega_\epsilon$,}}
\end{equation}
and we prove the following:

\begin{pro}\label{prop:400}
Let~$g\in L^2(\R^n)$ with~$\|g\|_{\star,\xi}<+\infty$.
Then, there exists a unique~$\psi\in\Psi$
solution to~\eqref{eq aux}.

Furthermore, there exists a constant~$C>0$ such that
\begin{equation}\label{lkjhsirhljtr}
\|\psi\|_{\star,\xi}\le C\|g\|_{\star,\xi}.
\end{equation}
\end{pro}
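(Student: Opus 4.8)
The plan is to produce $\psi$ by a direct variational argument in the Hilbert space $H^s_0(\Omega_\eps)$ and then to deduce the weighted estimate \eqref{lkjhsirhljtr} by reducing to Corollary~\ref{cor:300}, after controlling the Lagrange multipliers $c_i$ by a computation parallel to (and simpler than) the one in Lemma~\ref{lem:ci}. First note that, since $\mu>n/2$, the weight $\rho_\xi$ lies in $L^2(\R^n)$, so the hypothesis $\|g\|_{\star,\xi}<+\infty$ forces $g\in L^2(\R^n)$ with $\|g\|_{L^2(\R^n)}\le C\|g\|_{\star,\xi}$. Equip $H^s_0(\Omega_\eps)$ with the scalar product $\langle u,v\rangle:=\int_{\R^n}\bigl((-\Delta)^{s/2}u\,(-\Delta)^{s/2}v+uv\bigr)\,dx$, whose norm is the $H^s(\R^n)$ norm on functions vanishing outside $\Omega_\eps$. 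The subspace $\Psi\subset H^s_0(\Omega_\eps)$ is closed, being defined by the $n$ continuous constraints $u\mapsto\int_{\R^n}u\,Z_i$ (continuity holds because $Z_i\in L^2(\R^n)$ by Lemma~\ref{:decay Z}). Minimize over $\Psi$ the strictly convex, coercive functional $\mathcal J(u):=\tfrac12\langle u,u\rangle+\int_{\R^n}g\,u$; coercivity follows from $\bigl|\int_{\R^n}g\,u\bigr|\le\|g\|_{L^2}\|u\|_{L^2}\le C\|g\|_{L^2}\langle u,u\rangle^{1/2}$. This gives a unique minimizer $\psi\in\Psi$, characterized by $\langle\psi,\varphi\rangle+\int_{\R^n}g\,\varphi=0$ for all $\varphi\in\Psi$.

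To pass from this weak formulation to equation \eqref{eq aux}, observe that the bounded linear functional $\varphi\mapsto\langle\psi,\varphi\rangle+\int_{\R^n}g\,\varphi$ on $H^s_0(\Omega_\eps)$ annihilates $\Psi$; hence it is a linear combination of the constraint functionals, i.e. there exist $c_i\in\R$ with $\langle\psi,\varphi\rangle+\int_{\R^n}g\,\varphi=\sum_{i=1}^n c_i\int_{\R^n}\varphi\,Z_i$ for every $\varphi\in H^s_0(\Omega_\eps)$, which is precisely the weak form of \eqref{eq aux}. Uniqueness of $\psi\in\Psi$ is immediate: the difference of two solutions lies in $\Psi$ and is $\langle\cdot,\cdot\rangle$-orthogonal to itself.

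For the estimate, I would proceed in three steps. Step one: testing \eqref{eq aux} against $\psi$ itself, the orthogonality conditions kill the $\sum_i c_i Z_i$ term and, arguing as for \eqref{norma 2}, $\langle\psi,\psi\rangle=-\int_{\R^n}g\,\psi$, so $\|\psi\|_{L^2(\R^n)}\le\|g\|_{L^2(\R^n)}\le C\|g\|_{\star,\xi}$. Step two: exactly as in the proof of Lemma~\ref{lem:ci}, test \eqref{eq aux} against the admissible cut-off $T_{\eps,j}:=Z_j\tau_\eps$; moving $(-\Delta)^s$ onto $T_{\eps,j}$ and using \eqref{7.11}, \eqref{L2 norm} and Corollary~\ref{cor:orto} one obtains $\sum_{i=1}^n c_i\bigl(\alpha\delta_{ij}+O(\eps^{n/2})\bigr)=O\bigl(\|\psi\|_{L^2(\R^n)}+\|g\|_{L^2(\R^n)}\bigr)$, so for $\eps$ small the matrix $\alpha^{-1}A$ is invertible and $|c_i|\le C\bigl(\|\psi\|_{L^2(\R^n)}+\|g\|_{L^2(\R^n)}\bigr)\le C\|g\|_{\star,\xi}$. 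Step three: rewrite \eqref{eq aux} as $(-\Delta)^s\psi+\psi+\tilde g=0$ in $\Omega_\eps$ with $\tilde g:=g-\sum_{i=1}^n c_i Z_i$; since $Z_i$ decays like $|x-\xi|^{-\nu_1}$ with $\nu_1>n+2s>\mu$ (Lemma~\ref{:decay Z}), we have $\|Z_i\|_{\star,\xi}<+\infty$ and $\|\tilde g\|_{\star,\xi}\le\|g\|_{\star,\xi}+\sum_i|c_i|\,\|Z_i\|_{\star,\xi}\le C\|g\|_{\star,\xi}$, so Corollary~\ref{cor:300} applies and yields $\|\psi\|_{\star,\xi}\le C\|\tilde g\|_{\star,\xi}\le C\|g\|_{\star,\xi}$, which is \eqref{lkjhsirhljtr}. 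There is no deep obstacle here: the only genuinely delicate point is the bookkeeping in the $c_i$-estimate, which however merely repeats the linear-algebra argument of Lemma~\ref{lem:ci} with the potential term absent; everything else is standard Hilbert-space theory combined with the already-established pointwise bound of Corollary~\ref{cor:300}.
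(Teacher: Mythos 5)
Your proof is correct and follows the same overall blueprint as the paper's: existence and uniqueness via Hilbert-space methods on the closed subspace $\Psi$, then the weighted estimate by combining the trivial $L^2$-bound $\|\psi\|_{L^2}\le\|g\|_{L^2}$, a bound on the Lagrange multipliers $c_i$, and Corollary~\ref{cor:300}. Two small implementation differences are worth noting. First, to pass from the weak formulation to~\eqref{eq aux}, the paper does not invoke the abstract annihilator fact as you do; instead it explicitly projects an arbitrary test function $\phi$ onto $\Psi$ by subtracting $\sum_i\lambda_i(\phi)\tilde Z_i$ with the compactly supported $\tilde Z_i$ of Lemma~\ref{lem:orto}, and reads off the Lagrange multipliers from that computation. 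Your shortcut is fine, but requires (and you should say) that the functionals $\varphi\mapsto\int_{\Omega_\eps}\varphi Z_i$ are linearly independent on $H^s_0(\Omega_\eps)$, which holds since the $Z_i$ restricted to $\Omega_\eps$ are linearly independent in $L^2$. Second, for the $c_i$-bound the paper again tests against $\tilde Z_j$ and, because of the exact orthogonality $\int\tilde Z_i Z_j=\tilde\alpha\delta_{ij}$ of Lemma~\ref{lem:orto}, obtains $\tilde\alpha c_j$ directly without matrix inversion; you instead test against the large-scale cutoffs $T_{\eps,j}=Z_j\tau_\eps$ and invert the near-identity matrix $A$ as in Lemma~\ref{lem:ci}, which works but needs $\eps$ small. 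The paper's choice of test function is slightly cleaner here since it makes the estimate~\eqref{lkjhsirhljtr} hold without a smallness hypothesis on $\eps$.
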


\begin{proof}
We first prove the existence of a solution to~\eqref{eq aux}.

First of all, we notice that formula~$(1.5)$ in~\cite{SerraRos-Dirichlet-regularity} implies that, for any~$\psi,\varphi\in\Psi$,
$$ \int_{\R^n}(-\Delta)^s\psi\,\varphi\,dx= \int_{\R^n}(-\Delta)^{s/2}\psi\,(-\Delta)^{s/2}\varphi\,dx= \int_{\R^n}\psi\,(-\Delta)^s\varphi\,dx.$$
Now, given~$g\in L^2(\R^n)$, we look for a solution~$\psi\in\Psi$
of the problem
\begin{equation}\label{eq weak}
\int_{\R^n}(-\Delta)^{s/2}\psi\, (-\Delta)^{s/2}\varphi\,dx+\int_{\R^n}\psi\,\varphi\,dx +\int_{\R^n}g\,\varphi\,dx=0,
\end{equation}
for any~$\varphi\in\Psi$. Subsequently we will show that~$\psi$ is a solution to
the original problem~\eqref{eq aux}.

We observe that
$$ \left<\psi,\varphi\right>:=\int_{\R^n}(-\Delta)^{s/2}\psi\, (-\Delta)^{s/2}\varphi\,dx+\int_{\R^n}\psi\,\varphi\,dx $$
defines an inner product in~$\Psi$, and that
$$ F(\varphi):=-\int_{\R^n}g\,\varphi\,dx $$
is a linear and continuous functional on~$\Psi$.
Hence, from Riesz's Theorem we have that there exists a unique
function~$\psi\in\Psi$ which solves~\eqref{eq weak}.

We claim that
\begin{equation}\label{strong}
{\mbox{$\psi$ is a strong solution to~\eqref{eq aux}.}}
\end{equation}
For this, we take
a radial cutoff~$\tau\in C^\infty_0(\Omega_\eps)$
of the form~$\tau(x)=\tau_o(|x-\xi|)$, for some smooth and compactly
supported real function, and we use Lemma~\ref{lem:orto}.
So, for any~$\phi\in H^{s}(\R^n)$ such that~$\phi=0$
outside~$\Omega_\epsilon$, we define
$$ \tilde\phi:=\phi-\sum_{i=1}^n \lambda_i(\phi)\,\tilde Z_i,$$
where
\begin{equation}\label{J98} \lambda_i(\phi):= \tilde\alpha^{-1}
\, \int_{\R^n} \phi\,Z_i\,dx,\end{equation}
and~$\tilde Z_i$ and~$\tilde\alpha$
are as in Lemma~\ref{lem:orto}.
We remark that~$\tilde Z_i$ vanishes outside~$\Omega_\eps$,
hence so does~$\tilde\phi$. Furthermore, for any~$j\in\{1,\dots,n\}$,
\begin{eqnarray*}
\int_{\Omega_\eps} \tilde\phi\,Z_j\,dx &=& \int_{\R^n} \tilde\phi\,Z_j\,dx
\\ &=&\int_{\R^n} \phi\,Z_j\,dx -
\sum_{i=1}^n \lambda_i(\phi)\,\int_{\R^n}\tilde Z_i\,Z_j\,dx\\
&=& \int_{\R^n} \phi\,Z_j\,dx -
\sum_{i=1}^n \lambda_i(\phi)\,\tilde\alpha\,\delta_{ij} \\
&=& \int_{\R^n} \phi\,Z_j\,dx -
\lambda_j(\phi)\,\tilde\alpha
\\ &=& 0,
\end{eqnarray*}
thanks to Lemma~\ref{lem:orto} and~\eqref{J98}.
This shows that~$\tilde\phi\in\Psi$.

As a consequence, we can use~$\tilde\phi$ as a test function in~\eqref{eq weak}
and conclude that
\begin{eqnarray*}
&&\int_{\R^n}(-\Delta)^{s/2}\psi\, (-\Delta)^{s/2}\left(\phi-
\sum_{i=1}^n \lambda_i(\phi)\,\tilde Z_i
\right)\,dx\\
&&\qquad +\int_{\R^n}\psi\left(\phi-
\sum_{i=1}^n \lambda_i(\phi)\,\tilde Z_i
\right)\,dx +\int_{\R^n}g\left(\phi-\sum_{i=1}^n \lambda_i(\phi)\,\tilde Z_i
\right)\,dx=0,
\end{eqnarray*}
that is
\begin{equation}\begin{split}\label{oqwpgn}
&\int_{\R^n}(-\Delta)^{s/2}\psi\, (-\Delta)^{s/2}\phi\,dx
+\int_{\R^n}\psi\,\phi\,dx +\int_{\R^n}g\,\phi\,dx\\
&\qquad\qquad =\int_{\R^n}\left(\psi+g\right)
\sum_{i=1}^n \lambda_i(\phi)\,\tilde Z_i
\,dx+\int_{\R^n}(-\Delta)^s\psi\,
\sum_{i=1}^n \lambda_i(\phi)\,\tilde Z_i \,dx\\
&\qquad\qquad =\sum_{i=1}^n \lambda_i(\phi) \int_{\R^n}\left(\psi+g
+(-\Delta)^s\psi\right)\,\tilde Z_i \,dx.
\end{split}
\end{equation}
Now we define
$$ b_i:= \tilde\alpha^{-1}\,
\int_{\R^n}\left(\psi+g
+(-\Delta)^s\psi\right)\,\tilde Z_i \,dx,$$
we recall~\eqref{J98} and we write~\eqref{oqwpgn}
as
\begin{eqnarray*}
&& \int_{\R^n}(-\Delta)^{s/2}\psi\, (-\Delta)^{s/2}\phi\,dx
+\int_{\R^n}\psi\,\phi\,dx +\int_{\R^n}g\,\phi\,dx\\
&&\qquad\qquad =\sum_{i=1}^n \lambda_i(\phi) \,\tilde\alpha\, b_i\\
&&\qquad\qquad =\sum_{i=1}^n b_i \,\int_{\R^n} \phi\,Z_i\,dx.
\end{eqnarray*}
Since~$\phi$ is any test function, this means that~$\psi$
is a solution of
$$ (-\Delta)^s\psi+\psi+g = \sum_{i=1}^n b_i\,Z_i.$$
in a weak sense, and therefore in a strong sense, thanks to~\cite{SerVal},
thus proving~\eqref{strong}.

Now, we prove the uniqueness of the solution to~\eqref{eq aux}.
For this, suppose by contradiction that there exist~$\psi_1$ and~$\psi_2$
in~$\Psi$ that solve~\eqref{eq aux}. We set
$$ \tilde\psi:=\psi_1-\psi_2,$$
and we observe that~$\tilde\psi\in\Psi$ and solves
\begin{equation}\label{eq differenza}
(-\Delta)^s \tilde\psi +\tilde\psi =\sum_{i=1}^n a_{i}\,Z_i  {\mbox{ in $\Omega_\epsilon$,}}
\end{equation}
for suitable coefficients~$a_i\in\R$,~$i=1,\ldots,n$.

We multiply the equation in~\eqref{eq differenza} by~$\tilde\psi$ and we integrate
over~$\Omega_\eps$, obtaining that
$$ \int_{\Omega_\eps}(-\Delta)^s\tilde\psi\,\tilde\psi +\tilde\psi^2\,dx=0,$$
since~$\tilde\psi\in\Psi$ (and so it is orthogonal to~$Z_i$ in~$L^2(\Omega_\eps)$ for any~$i=1,\ldots,n$).
Since~$\tilde\psi=0$ outside~$\Omega_\eps$, we can apply formula~$(1.5)$ in~\cite{SerraRos-Dirichlet-regularity} and we obtain that
$$ \int_{\R^n}\left|(-\Delta)^{s/2}\tilde\psi\right|^2 +\tilde\psi^2\,dx=0,$$
that is
$$ \|\tilde\psi\|_{H^s(\R^n)}=0,$$
which implies that~$\tilde\psi=0$.
Thus~$\psi_1=\psi_2$ and this concludes the proof of the uniqueness.

It remains to establish~\eqref{lkjhsirhljtr}. Thanks to~\eqref{eq aux}
and Corollary~\ref{cor:300}, we have that
\begin{equation}\label{pqwwq3275238}
\|\psi\|_{\star,\xi} \le C\,\left\|g-\sum_{i=1}^nc_i\,Z_i\right\|_{\star,\xi}
\le C\left(\|g\|_{\star,\xi} +\left\|\sum_{i=1}^nc_i\,Z_i\right\|_{\star,\xi}\right).
\end{equation}
First, we observe that for any~$i=1,\ldots,n$
$$ \|Z_i\|_{\star,\xi}=\sup_{\R^n}|\rho_\xi^{-1}\,Z_i|\le C_1, $$
due to Lemma~\ref{:decay Z} and the fact that~$\mu<n+2s$ (recall also~\eqref{rho}). Hence,
\begin{equation}\label{9.65bis}
\left\|\sum_{i=1}^nc_i\,Z_i\right\|_{\star,\xi}\le \sum_{i=1}^n|c_i|\,\|Z_i\|_{\star,\xi}\le C_1 \sum_{i=1}^n|c_i|. \end{equation}
Now, we claim that
\begin{equation}\label{ljytpqsa}
\left\|\sum_{i=1}^nc_i\,Z_i\right\|_{\star,\xi} \le \, C_2 \left(\|\psi\|_{L^2(\R^n)}+ \|g\|_{L^2(\R^n)}\right).
\end{equation}
Indeed, we recall Lemma~\ref{lem:orto},
we multiply equation~\eqref{eq aux} by~$\tilde Z_j$, for~$j\in\{1,\ldots,n\}$,
and we integrate over~$\R^n$, obtaining that
\begin{equation}\label{dopoooo}
\int_{\R^n}(-\Delta)^s\psi\, \tilde Z_j +\psi\,\tilde Z_j +g\,\tilde Z_j\,dx=\tilde\alpha c_j,
\end{equation}
where~$\tilde Z_j$ and~$\tilde\alpha$ are as in Lemma~\ref{lem:orto}.
Thanks to formula~$(1.5)$ in~\cite{SerraRos-Dirichlet-regularity}, we have that
$$ \left|\int_{\R^n}(-\Delta)^s\psi\, \tilde Z_j\,dx\right|=\left|\int_{\R^n}\psi\, (-\Delta)^s\tilde Z_j\,dx\right|\le
\|(-\Delta)^s\tilde Z_j\|_{L^2(\R^n)} \|\psi\|_{L^2(\R^n)},$$
where we have used H\"older inequality.
Therefore, this and~\eqref{dopoooo} give that
$$ \tilde\alpha |c_j|\le
\|(-\Delta)^s\tilde Z_j\|_{L^2(\R^n)} \|\psi\|_{L^2(\R^n)}+ \|\tilde Z_j
\|_{L^2(\R^n)} \|\psi\|_{L^2(\R^n)}+ \|\tilde Z_j\|_{L^2(\R^n)} \|g\|_{L^2(\R^n)},$$
which, together with~\eqref{9.65bis}, implies~\eqref{ljytpqsa}, since both~$\|(-\Delta)^s\tilde Z_j\|_{L^2(\R^n)}$ and~$\|\tilde Z_j\|_{L^2(\R^n)}$ are bounded
(recall Lemma~\ref{lem:decay D2 Z}).

Now, we observe that
\begin{equation}\label{mcvnmxcb}
\|\psi\|_{L^2(\R^n)}\le \|g\|_{L^2(\R^n)}.
\end{equation}
Indeed, we multiply equation~\eqref{eq aux} by~$\psi$ and we integrate
over~$\Omega_\epsilon$: we obtain
$$ \int_{\Omega_\eps}(-\Delta)^s\psi\,\psi+\psi^2+g\,\psi\,dx=0,$$
since~$\psi\in\Psi$. We notice that the first term in the above formula is
quadratic, and so, using H\"older inequality, we have that
$$ \int_{\Omega_\eps}\psi^2\,dx\le \int_{\Omega_\eps}(-g)\psi\,dx\le
\|g\|_{L^2(\R^n)}\|\psi\|_{L^2(\R^n)}, $$
which implies~\eqref{mcvnmxcb}.

Therefore, from~\eqref{ljytpqsa} and~\eqref{mcvnmxcb}, we deduce that
$$ \left\|\sum_{i=1}^nc_i\,Z_i\right\|_{\star,\xi}\le 2\,C_3\,\|g\|_{L^2(\R^n)}.$$
Moreover,
$$  \|g\|_{L^2(\R^n)}\le \|g\|_{\star,\xi}\left(\int_{\R^n}\rho_\xi^{2}\,dx\right)^{1/2}=\|g\|_{\star,\xi}\left(\int_{\R^n}\frac{1}{(1+|x-\xi|)^{2\mu}}\,dx\right)^{1/2}\le C\|g\|_{\star,\xi}, $$
since~$2\mu>n$.
The above two formulas give that
$$  \left\|\sum_{i=1}^nc_i\,Z_i\right\|_{\star,\xi}\le C_4\,\|g\|_{\star,\xi}.$$
This and~\eqref{pqwwq3275238} shows~\eqref{lkjhsirhljtr} and conclude the proof.
\end{proof}

Now, for any~$g\in L^2(\R^n)$ with~$\|g\|_{\star,\xi}<+\infty$,
we denote by~$\mathcal A[g]$
the unique solution to~\eqref{eq aux}. We notice that Proposition~\ref{prop:400}
implies that the operator~$\mathcal A$ is well defined and that
$$ \|\mathcal A[g]\|_{\star,\xi}\le C \|g\|_{\star,\xi}.$$
We also remark that~$\mathcal A$ is a linear operator.

We consider the Banach space
\begin{equation}\label{Y star}
Y_\star:=\{\psi :\R^n\rightarrow\R {\mbox{ s.t. }}\|\psi\|_{\star,\xi}<+\infty\}
\end{equation}
endowed with the norm~$\|\cdot\|_{\star,\xi}$.

With this notation we can prove the main theorem of the linear theory, i.e. Theorem~\ref{linear}.

\begin{proof}[Proof of Theorem~\ref{linear}]
We notice that solving~\eqref{EQ g} is equivalent to find a function~$\psi\in\Psi$ such that
\begin{equation}\label{id meno comp}
\psi-\mathcal A[-pw_\xi^{p-1}\psi]=\mathcal A[g].
\end{equation}
For this, we set
\begin{equation}\label{math B}
\mathcal B[\psi]:= \mathcal A[-pw_\xi^{p-1}\psi].
\end{equation}
Recalling the definition of~$Y_\star$ given in~\eqref{Y star},
we observe that
\begin{equation}\label{LLLLLLLL}
{\mbox{if~$\psi\in Y_\star$ then~$\mathcal B[\psi]\in Y_\star$.}}
\end{equation}
Indeed, from Proposition~\ref{prop:400} we deduce that~$\mathcal B[\psi]\in\Psi$ solves~\eqref{eq aux} with~$g:=-pw_\xi^{p-1}\psi$,
and so
$$ \|\mathcal B[\psi]\|_{\star,\xi}\le C\|-pw_\xi^{p-1}\psi\|_{\star,\xi}\le \tilde{C}\|\psi\|_{\star,\xi}, $$
for some~$\tilde{C}>0$ (recall that~$w_\xi$ is bounded thanks to~\eqref{decay w}), which proves~\eqref{LLLLLLLL}.

We claim that
\begin{equation}\label{contraction set}
{\mbox{$\mathcal B$ defines a compact operator in~$Y_\star$ with respect to the norm~$\|\cdot\|_{\star,\xi}$.}}
\end{equation}
Indeed, let~$(\psi_j)_j$ a bounded sequence in~$Y_\star$ with respect to the norm~$\|\cdot\|_{\star,\xi}$.
Then, thanks to Lemma~\ref{lem:100}, the fact
that~$w_\xi^{p-1}$ and~$Z^j_i$ are bounded and~$w_\xi^{p-1}\rho_\xi$ and~$Z^j_i$ belong to~$L^2(\R^n)$ and  Lemma~\ref{lem:ci}, we have that
\begin{eqnarray*}
&&\sup_{x\neq y}\frac{|\mathcal B[\psi_j](x)-\mathcal B[\psi_j](y)|}{|x-y|^s}\\
&&\le C_1 \left(\left\|-pw_\xi^{p-1}\psi_j+\sum_{i=1}^nc_i^j\,Z_i^j\right\|_{L^{\infty}(\R^n)}+\left\|-pw_\xi^{p-1}\psi_j+\sum_{i=1}^nc_i^j\,Z_i^j\right\|_{L^2(\R^n)}\right)\\
&&\le  C_2\left(\|\psi_j\|_{L^\infty(\R^n)}+ \sum_{i=1}^n|c_i^j|\|Z_i^j\|_{L^\infty(\R^n)} +\|\psi_j\|_{\star,\xi}\,
\|w_\xi^{p-1}\rho_\xi\|_{L^2(\R^n)}+\sum_{i=1}^n|c_i^j|\|Z^j_i\|_{L^2(\R^n)}
\right)\\
&&\le  C_3\left(\|\psi_j\|_{\star,\xi}+ \sum_{i=1}^n|c_i^j|\right)\\
&&\le  C_4,
\end{eqnarray*}
for suitable positive constants~$C_1, C_2, C_3$ and~$C_4$.
This gives the equicontinuity of the sequence~$\mathcal B[\psi_j]$,
and so it converges to a function~$\bar b$ uniformly on compact sets.
Hence, for any~$R>0$, we have
\begin{equation}\label{compact}
\|\mathcal B[\psi_j]-\bar b\|_{L^\infty(B_R(\xi))}\rightarrow 0 {\mbox{ as }}j\rightarrow +\infty.
\end{equation}

On the other hand, for any~$x\in\R^n\setminus B_R(\xi)$, we have the following estimate
\begin{eqnarray*}
|w_\xi^{p-1}(x)\psi_j(x)| &\le & \|\psi_j\|_{\star,\xi} |w_\xi^{p-1}(x)\rho_\xi(x)| \\
&\le & C_5\|\psi_j\|_{\star,\xi} \left|\frac{1}{(1+|x-\xi|)^{(n+2s)(p-1)}}\rho_\xi(x)\right|\\
&\le & C_5 \|\psi_j\|_{\star,\xi} \rho_\xi^{1+\frac{(n+2s)(p-1)}{\mu}}(x),
\end{eqnarray*}
for some~$C_5>0$, where we have used the decay of~$w_\xi$ in~\eqref{decay w}
and the expression of~$\rho_\xi$ given in~\eqref{rho}.
This implies that
$$ \sup_{x\in\R^n\setminus B_R(\xi)}|\rho_\xi^{-1}w_\xi^{p-1}\psi_j|\le C_5\|\psi_j\|_{\star,\xi}\,\sup_{x\in\R^n\setminus B_R(\xi)} \rho_\xi^{\sigma}(x),$$
where
\begin{equation}\label{08079767453}
\sigma:=\frac{(n+2s)(p-1)}{\mu}>0.
\end{equation}
Hence, since~$\psi_j$ is a uniformly bounded sequence with respect to
the norm~$\|\cdot\|_{\star,\xi}$, we obtain that
\begin{equation}\label{stima1}
\sup_{x\in\R^n\setminus B_R(\xi)}|\rho_\xi^{-1}\mathcal B[\psi_j]|\le C_6 \,\sup_{x\in\R^n\setminus B_R(\xi)}\rho_\xi^{\sigma}(x).
\end{equation}
It follows that
\begin{equation}\label{stima2}
\sup_{x\in\R^n\setminus B_R(\xi)}|\rho_\xi^{-1}\bar b|\le C_6\,\sup_{x\in\R^n\setminus B_R(\xi)} \rho_\xi^{\sigma}(x).
\end{equation}

We observe that
\begin{eqnarray*}
\sup_{x\in\R^n}\left|\rho_\xi^{-1}\left(\mathcal B[\psi_j]-\bar b\right)\right| &=&
\sup_{x\in\R^n} \left|\rho_\xi^{-1}\left(\mathcal B[\psi_j]-\bar b\right)\chi_{B_R(\xi)} +\rho_\xi^{-1}\left(\mathcal B[\psi_j]-\bar b\right)\chi_{\R^n\setminus B_R(\xi)} \right|\\
&\le & \sup_{x\in\R^n} \left(\left|\rho_\xi^{-1}\left(\mathcal B[\psi_j]-\bar b\right)\chi_{B_R(\xi)}\right| +\left|\rho_\xi^{-1}\left(\mathcal B[\psi_j]-\bar b\right)\chi_{\R^n\setminus B_R(\xi)} \right|\right)\\
&\le &  \sup_{x\in\R^n} \left|\rho_\xi^{-1}\left(\mathcal B[\psi_j]-\bar b\right)\chi_{B_R(\xi)}\right| +\sup_{x\in\R^n}\left|\rho_\xi^{-1}\left(\mathcal B[\psi_j]-\bar b\right)\chi_{\R^n\setminus B_R(\xi)} \right|\\
&= & \sup_{x\in B_R(\xi)} \left|\rho_\xi^{-1}\left(\mathcal B[\psi_j]-\bar b\right)\right| +\sup_{x\in\R^n\setminus B_R(\xi)}\left|\rho_\xi^{-1}\left(\mathcal B[\psi_j]-\bar b\right) \right|.
\end{eqnarray*}
Therefore, we obtain that
\begin{equation}\begin{split}\label{iew023nh10}
\|\mathcal B[\psi_j]-\bar b\|_{\star,\xi}&=\, \sup_{x\in\R^n}\left|\rho_\xi^{-1}\left(\mathcal B[\psi_j]-\bar b\right)\right|\\
&\le\, \sup_{x\in B_R(\xi)}\left|\rho_\xi^{-1}\left(\mathcal B[\psi_j]-\bar b\right)\right| +\sup_{x\in\R^n\setminus B_R(\xi)}\left|\rho_\xi^{-1}\left(\mathcal B[\psi_j]-\bar b\right) \right| \\
&\le\, \sup_{x\in B_R(\xi)}\left|\rho_\xi^{-1}\left(\mathcal B[\psi_j]-\bar b\right)\right| + C_7 \sup_{x\in\R^n\setminus B_R(\xi)}\rho_\xi^{\sigma}(x),
\end{split}\end{equation}
where we have also used~\eqref{stima1} and~\eqref{stima2}.
Concerning the first term in the right-hand side, we have
\begin{eqnarray*}
\sup_{x\in B_R(\xi)}\left|\rho_\xi^{-1}\left(\mathcal B[\psi_j]-\bar b\right)\right| &= &\sup_{x\in B_R(\xi)}\left|(1+|x-\xi|)^\mu\left(\mathcal B[\psi_j]-\bar b\right)\right|\\
&\le & (1+R)^\mu \|\mathcal B[\psi_j]-\bar b\|_{L^\infty(B_R(\xi))}.
\end{eqnarray*}
Therefore, sending~$j\rightarrow +\infty$ and recalling~\eqref{compact},
we obtain that
\begin{equation}\label{235668}
\sup_{x\in B_R(\xi)}\left|\rho_\xi^{-1}\left(\mathcal B[\psi_j]-\bar b\right)\right| \rightarrow 0 {\mbox{ as }}j\rightarrow +\infty.
\end{equation}
Now, we send~$R\rightarrow +\infty$ and, recalling~\eqref{08079767453}, we get
\begin{equation}\label{09890898-4}
\sup_{x\in\R^n\setminus B_R(\xi)}\rho_\xi^{\sigma}(x)\rightarrow 0 {\mbox{ as }}R\rightarrow +\infty.
\end{equation}
Putting together~\eqref{iew023nh10},~\eqref{235668} and~\eqref{09890898-4},
we obtain that
$$ \|\mathcal B[\psi_j]-\bar b\|_{\star,\xi}\rightarrow 0 {\mbox{ as }}j\rightarrow +\infty,$$
and this shows~\eqref{contraction set}.

From~\eqref{lkjhsirhljtr} in Proposition~\ref{prop:400}, we deduce that
if~$g=0$ then~$\psi=\mathcal A[g]=0$ is the unique solution to~\eqref{eq aux},
and so by Fredholm's alternative we obtain that, for any~$g\in Y_\star$,
there exists a unique~$\psi$ that solves~\eqref{id meno comp}
(recall~\eqref{math B} and~\eqref{contraction set}).
This gives existence and uniqueness of the solution to~\eqref{EQ g},
while estimate~\eqref{est fin} follows from Lemma~\ref{lem:500}.
This concludes the proof of Theorem~\ref{linear}.
\end{proof}

In the next proposition we deal with the differentiability of the solution~$\psi$
to~\eqref{EQ g} with respect to the parameter~$\xi$
(we recall Theorem~\ref{linear} for the existence and uniqueness of such solution).

For this, we denote by~$\mathcal T_\xi$ the operator that associates to
any~$g\in L^2(\R^n)$ with~$\|g\|_{\star,\xi}<+\infty$
the solution to~\eqref{EQ g}, that is
\begin{equation}\label{mathcal T}
{\mbox{$\psi:=\mathcal T_\xi[g]$ is the unique solution to~\eqref{EQ g} in~$Y_\star$,}}
\end{equation}
where~$Y_\star$ is given in~\eqref{Y star}.

We notice that, thanks to Theorem~\ref{linear},~$\mathcal T_\xi$ is a linear and continuous operator from~$Y_\star$ to~$Y_\star$ endowed with the
norm~$\|\cdot\|_{\star,\xi}$,
and we will write~$\mathcal T_\xi\in\mathcal L(Y_\star)$.

\begin{pro}\label{prop:der}
The map~$\xi\in\Omega_\epsilon\mapsto\mathcal T_\xi$ is
continuously differentiable. Moreover
there exists a positive constant~$C$ such that
\begin{equation}\label{cv45}
\left\|\frac{\partial\mathcal T_\xi[g]}{\partial\xi}\right\|_{\star,\xi} \le C\left(\|g\|_{\star,\xi} +\left\|\frac{\partial g}{\partial\xi}\right\|_{\star,\xi}
\right).\end{equation}
\end{pro}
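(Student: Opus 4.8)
The plan is to obtain the derivative of $\mathcal T_\xi$ by differentiating formally the system~\eqref{EQ g} that defines $\psi:=\mathcal T_\xi[g]$, and then to justify the formal computation through a difference-quotient argument resting on the a priori estimates already established (Theorem~\ref{linear} and Lemma~\ref{lem:500}). Two structural points govern the set-up. Since both the weight $\rho_\xi$ entering $\|\cdot\|_{\star,\xi}$ and the constraint space $\Psi$ depend on~$\xi$ through the~$Z_i$'s, I would fix $\xi_0$ with $\dist(\xi_0,\partial\Omega_\eps)\ge c/\eps$ and argue only for $\xi$ in the unit ball centred at~$\xi_0$, where the norms $\|\cdot\|_{\star,\xi}$ are all equivalent with constants independent of~$\xi$. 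Moreover it suffices to prove that, for each $k\in\{1,\dots,n\}$, the partial derivative $\partial_{\xi_k}\mathcal T_\xi$ exists, is continuous in~$\xi$ as a map into $\mathcal L(Y_\star)$, and satisfies $\|\partial_{\xi_k}\mathcal T_\xi\|_{\mathcal L(Y_\star)}\le C$: the estimate~\eqref{cv45}, in which $g=g(\xi)$ is allowed to depend on~$\xi$, then follows from the chain rule $\partial_{\xi_k}\big(\mathcal T_\xi[g]\big)=(\partial_{\xi_k}\mathcal T_\xi)[g]+\mathcal T_\xi[\partial_{\xi_k}g]$, estimating the last term by Theorem~\ref{linear}.

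For fixed~$g$, differentiating~\eqref{EQ g} in~$\xi_k$ and using the radial identities $\partial_{\xi_k}w_\xi=-\partial_{x_k}w_\xi$ and $\partial_{\xi_k}Z_i=-\partial_{x_k}Z_i$, the candidate derivative $\chi_k:=\partial_{\xi_k}\psi$ should satisfy, in~$\Omega_\eps$,
\[
(-\Delta)^s\chi_k+\chi_k-pw_\xi^{p-1}\chi_k=F_k+\sum_{i=1}^n d_i\,Z_i,
\]
with $F_k:=-p(p-1)\,w_\xi^{p-2}Z_k\,\psi-\sum_{i=1}^n c_i\,\partial_{x_k}Z_i$, where the $c_i$ are the multipliers of~\eqref{EQ g} and the $d_i=\partial_{\xi_k}c_i$ are new multipliers one need not identify. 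Differentiating the orthogonality conditions $\int_{\Omega_\eps}\psi Z_i=0$ gives $\int_{\Omega_\eps}\chi_kZ_i\,dx=\int_{\Omega_\eps}\psi\,\partial_{x_k}Z_i\,dx=:m_i$, so $\chi_k\notin\Psi$ in general. By Corollary~\ref{cor:orto} the matrix $\big(\int_{\Omega_\eps}Z_iZ_j\big)_{ij}$ is invertible for $\eps$ small, hence there are unique $\lambda_j$ with $\hat\chi_k:=\chi_k-\sum_j\lambda_jZ_j\in\Psi$, and $|\lambda_j|\le C\,|m|\le C\|\psi\|_{\star,\xi}$ because $\int_{\R^n}\rho_\xi\,|\partial_{x_k}Z_i|<\infty$ (Lemma~\ref{lem:decay DZ}). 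Since each $Z_j$ solves the homogeneous equation $(-\Delta)^sZ_j+Z_j-pw_\xi^{p-1}Z_j=0$, the function $\hat\chi_k$ solves the same equation as~$\chi_k$ but now belongs to~$\Psi$, so by Theorem~\ref{linear} it must equal $\mathcal T_\xi[-F_k]$. I therefore \emph{define} the candidate operator $L_k[g]:=\mathcal T_\xi[-F_k]+\sum_j\lambda_jZ_j$ (linear in~$g$ by construction), and record the weighted bound: $\|w_\xi^{p-2}Z_k\|_{L^\infty(\R^n)}<\infty$ (by~\eqref{decay w}, Lemma~\ref{:decay Z} and smoothness at~$\xi$, using $p>1$), $\|\partial_{x_k}Z_i\|_{\star,\xi}\le C$ (since $\nu_2>\mu$ in Lemma~\ref{lem:decay DZ}), and $|c_i|\le C\|g\|_{\star,\xi}$ (Lemma~\ref{lem:ci} together with $\|\psi\|_{\star,\xi}\le C\|g\|_{\star,\xi}$ from Theorem~\ref{linear}); hence $\|F_k\|_{\star,\xi}\le C\|g\|_{\star,\xi}$ and $\|L_k[g]\|_{\star,\xi}\le C\|g\|_{\star,\xi}$, which is exactly~\eqref{cv45} once $L_k=\partial_{\xi_k}\mathcal T_\xi$ has been proved.

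The \textbf{main obstacle} is this last identification. I would first establish the continuity of $\xi\mapsto\mathcal T_\xi[g]$ in the $\star$-norm: subtracting the two instances of~\eqref{EQ g}, the difference $r^h:=\mathcal T_{\xi+he_k}[g]-\mathcal T_\xi[g]$ solves $(-\Delta)^sr^h+r^h-pw_\xi^{p-1}r^h=R^h+\sum_i\gamma^h_i\,Z_i$ in~$\Omega_\eps$ with $\|R^h\|_{\star,\xi}=O(h)$ (by the decay estimates of Sections~\ref{S:a1}--\ref{S:a2} and the boundedness of~$w^{p-1}$) and $\int_{\Omega_\eps}r^hZ_i\,dx=O(h)$; a small $Z_i$-correction and Lemma~\ref{lem:500} then give $\|r^h\|_{\star,\xi}\to0$. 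Next, with $q^h:=h^{-1}r^h$, finite differences of~\eqref{EQ g} show that $q^h$ solves a problem of the same type whose right-hand side is a finite-difference version of~$F_k$ plus error terms produced by the $\xi$-dependence of~$w_\xi^{p-1}$ and of the~$Z_i$'s, plus a combination $\sum_i\tilde c^h_i\,Z_i$. By Taylor's formula with the uniform remainders furnished by Lemmata~\ref{:decay Z}, \ref{lem:decay DZ} and~\ref{lem:decay D2 Z}, together with the continuity just established, this right-hand side converges to~$F_k$ in $\|\cdot\|_{\star,\xi}$ while the constraint of~$q^h$ converges to that of~$\chi_k$; correcting~$q^h$ and~$L_k[g]$ by the (convergent) $Z_i$-combinations that place them into~$\Psi$, the difference solves a problem covered by Theorem~\ref{linear} with datum $o(1)$ in $\|\cdot\|_{\star,\xi}$, whence $\|q^h-L_k[g]\|_{\star,\xi}\to0$ and $\partial_{\xi_k}\mathcal T_\xi=L_k$. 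Finally, $\xi\mapsto\partial_{\xi_k}\mathcal T_\xi\in\mathcal L(Y_\star)$ is continuous because every ingredient of the formula for~$L_k$ --- $w_\xi^{p-2}Z_k$, $\partial_{x_k}Z_i$, the multipliers $c_i=c_i(\xi,g)$, the coefficients~$\lambda_j$, and~$\mathcal T_\xi$ itself --- depends continuously on~$\xi$ (the last one again by the continuity argument, applied to the datum $-F_k$). The genuine difficulty throughout is of a bookkeeping nature: tracking the many error terms and checking that each Taylor remainder is controlled in the \emph{weighted} norm uniformly for~$\xi$ near~$\xi_0$ --- which is exactly what the decay bounds of Sections~\ref{S:a1}--\ref{S:a2} and the a priori estimate of Lemma~\ref{lem:500} are designed to provide.
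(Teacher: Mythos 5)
Your overall strategy (difference quotients, a priori control from the linear theory of Theorem~\ref{linear}/Lemma~\ref{lem:500}, identification of the candidate operator) matches the paper's. However, there is a genuine gap at the point where you project the candidate derivative back into~$\Psi$.

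You define $\hat\chi_k:=\chi_k-\sum_j\lambda_j Z_j$ and claim that $\hat\chi_k\in\Psi$ and that, since each $Z_j$ solves the homogeneous equation, Theorem~\ref{linear} identifies $\hat\chi_k$ with $\mathcal T_\xi[-F_k]$. But membership in $\Psi$ requires vanishing in $\R^n\setminus\Omega_\eps$. Now $\chi_k=\partial_{\xi_k}\psi$ does vanish there (because $\psi(x)\equiv 0$ for $x\in\R^n\setminus\Omega_\eps$ for every~$\xi$), whereas the $Z_j$ certainly do \emph{not} vanish in $\R^n\setminus\Omega_\eps$ — they only decay. Hence $\hat\chi_k=-\sum_j\lambda_jZ_j\ne 0$ outside $\Omega_\eps$, so $\hat\chi_k\notin\Psi$, and neither Theorem~\ref{linear} nor Lemma~\ref{lem:500} applies to it. The identity $\hat\chi_k=\mathcal T_\xi[-F_k]$ is therefore unjustified, and the remainder of the argument does not go through as written.

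The fix is what the paper does: correct by the \emph{cut-off} correctors $\tilde Z_i=\tau Z_i\in C^\infty_0(\Omega_\eps)$ from Lemma~\ref{lem:orto}, which do vanish outside $\Omega_\eps$ and are biorthogonal to the $Z_j$'s. The price is that the $\tilde Z_i$ no longer solve the homogeneous equation, so the corrected function $\tilde\varphi^t_j$ satisfies an equation of the same type but with three additional source terms, namely $-(-\Delta)^s\sum_i\lambda_i\tilde Z_i-\sum_i\lambda_i\tilde Z_i+pw_\xi^{p-1}\sum_i\lambda_i\tilde Z_i$. These must then be estimated in the $\|\cdot\|_{\star,\xi}$-norm; the key observation (cf.~\eqref{AEX5}) is that $\tilde Z_i$ is compactly supported, so $(-\Delta)^s\tilde Z_i$ decays like $|x-\xi|^{-n-2s}$, and since $\mu<n+2s$ the $\star$-norm of this term is finite and controlled by $\|g\|_{\star,\xi}$ through the bound on $|\lambda_i|$. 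Your $F_k$ and the bound $\|F_k\|_{\star,\xi}\le C\|g\|_{\star,\xi}$ (using $w_\xi^{p-2}Z_k\in L^\infty$, $\nu_2>\mu$, and Lemma~\ref{lem:ci}) are all correct; only the projection step and the bookkeeping of the extra cut-off terms need to be repaired along the lines above.
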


\begin{proof}
First, let us prove \eqref{cv45} assuming the differentiability of $\xi\mapsto\mathcal T_\xi$.
Given~$\xi\in\Omega_\epsilon$,~$|t|<1$ with~$t\neq 0$ and a function~$f$, we denote by~$\xi_j^t:=\xi+te_j$, and by
$$ D^t_jf:=\frac{f(\xi_j^t)-f(\xi)}{t},$$
for any~$j=1,\ldots,n$.

Also, we set
\begin{equation}\label{fi tj}
\varphi^t_j:=D^t_j\psi {\mbox{ and }} d^t_{i,j}:=D^t_j c_i.
\end{equation}
Using the fact that~$\psi$ is a solution to~\eqref{EQ g}, we have
that~$\varphi^t_j$ solves
\begin{equation}\label{sdfyfyss2}
(-\Delta)^s\varphi^t_j+\varphi^t_j-pw_\xi^{p-1}\varphi^t_j= p (D^t_jw_\xi^{p-1})\psi-D^t_jg+\sum_{i=1}^n c_i\, D^t_jZ_i +\sum_{i=1}^n d^t_{i,j}\, Z_i {\mbox{ in }}\Omega_\epsilon.\end{equation}
Moreover, we have that~$\varphi^t_j\in H^s(\R^n)$ and~$\varphi^t_j=0$
outside~$\Omega_\epsilon$.

Now, for the fixed index~$j$, for any~$i\in\{1,\dots,n\}$ we define
\begin{equation}\label{sdfyfyss}
\lambda_i(\varphi^t_j):=\tilde\alpha^{-1} \int_{\R^n} \varphi^t_j Z_i\,dx,\end{equation}
where~$\tilde\alpha$ is defined in~\eqref{alpha:J3}, and
\begin{equation}\label{sdfyfyss3}
\tilde\varphi^t_j:= \varphi^t_j-\sum_{i=1}^n
\lambda_i(\varphi^t_j)\,\tilde Z_i,\end{equation}
where~$\tilde Z_i$ are the ones in Lemma~\ref{lem:orto}.
We remark that~$\varphi^t_j$ and~$\tilde Z_i$
vanish outside~$\Omega_\eps$ by construction. Hence~$\tilde\varphi^t_j$
vanishes outside~$\Omega_\eps$ as well.
Moreover,
\begin{eqnarray*}
\int_{\R^n} \tilde\varphi^t_j \,Z_k\,dx&=&
\int_{\R^n} \varphi^t_j\,Z_k\,dx -\sum_{i=1}^n
\lambda_i(\varphi^t_j)\,\int_{\R^n}\tilde Z_i \,Z_k\,dx
\\ &=& \int_{\R^n} \varphi^t_j\,Z_k\,dx -
\sum_{i=1}^n \lambda_i(\varphi^t_j)\,
\tilde\alpha\,\delta_{ik}
\\ &=& \int_{\R^n} \varphi^t_j\,Z_k\,dx -
\lambda_k(\varphi^t_j)\,
\tilde\alpha\\
&=&0,\end{eqnarray*}
thanks to Lemma~\ref{lem:orto} and~\eqref{sdfyfyss}. This yields
that
\begin{equation}\label{PPPsiPP}
\tilde\varphi^t_j\in \Psi.\end{equation}
By plugging~\eqref{sdfyfyss3}
into~\eqref{sdfyfyss2}, we obtain that
\begin{equation}\label{AEX1}
(-\Delta)^s \tilde\varphi^t_j+\tilde\varphi^t_j-pw_\xi^{p-1}\tilde\varphi^t_j
=\tilde g_j+\sum_{i=1}^n d^t_{i,j} Z_i,
\end{equation}
where
\begin{equation}\label{AEX2}\begin{split}
\tilde g_j\,&:=
-(-\Delta)^s \sum_{i=1}^n\lambda_i(\tilde\varphi^t_j)\,\tilde Z_i
-\sum_{i=1}^n\lambda_i(\tilde\varphi^t_j)\,\tilde Z_i\\
&\quad +pw_\xi^{p-1} \sum_{i=1}^n\lambda_i(\tilde\varphi^t_j)\,\tilde Z_i
\\&\quad +p(D^t_j w_\xi^{p-1})\psi-D^t_j g+ \sum_{i=1}^n c_i\,D^t_jZ_i.
\end{split}\end{equation}
{F}rom~\eqref{AEX1}, \eqref{PPPsiPP}
and Lemma~\ref{lem:500}, we obtain
that
\begin{equation}\label{AEX3}
\|\tilde\varphi^t_j\|_{\star,\xi}\le C\,\|\tilde g_j\|_{\star,\xi}.
\end{equation}
Now we observe that
\begin{equation}\label{AEX4}
\left\|\sum_{i=1}^n \lambda_i(\tilde\varphi^t_j)\,\tilde Z_i
\right\|_{\star,\xi}\le C\,\|g\|_{\star,\xi}.
\end{equation}
To prove this, we notice that the orthogonality
condition~$\psi\in\Psi$ implies that
$$\int_{\Omega_\epsilon}\varphi^t_j\,Z_k\,dx=
-\int_{\Omega_\epsilon}\psi\,D^t_j Z_k\,dx,$$
for any~$k\in\{1,\dots,n\}$.
Hence, recalling~\eqref{sdfyfyss3}, \eqref{PPPsiPP}
and Lemma~\ref{lem:orto},
\begin{eqnarray*}
-\int_{\Omega_\epsilon}\psi\,D^t_j Z_k\,dx &=&
\int_{\Omega_\epsilon}
\left(\tilde\varphi^t_j+
\sum_{i=1}^n \lambda_i(\tilde\varphi^t_j)\,\tilde Z_i\right)\,Z_k\,dx
\\ &=& \sum_{i=1}^n \lambda_i(\tilde\varphi^t_j)\,
\int_{\Omega_\epsilon} \tilde Z_i\,Z_k\,dx
\\ &=& \sum_{i=1}^n \lambda_i(\tilde\varphi^t_j)\,
\tilde\alpha \,\delta_{ik}
\\ &=&\lambda_k(\tilde\varphi^t_j)\,\tilde\alpha.
\end{eqnarray*}
Therefore
\begin{eqnarray*}
|\lambda_k(\tilde\varphi^t_j)| &=&|\tilde\alpha^{-1}|
\,\left|\int_{\Omega_\epsilon}\psi\,D^t_j Z_k
\,dx\right|\\ &\le&
|\tilde\alpha^{-1}|
\,\int_{\Omega_\epsilon}\rho_\xi^{-1}|\psi|\,\rho_\xi\,|D^t_j Z_k|\,dx
\\ &\le& |\tilde\alpha^{-1}|
\,\| \psi\|_{\star,\xi}\,
\int_{\R^n} \rho_\xi\,|D^t_j Z_k|\,dx
\\ &\le& C\, \| \psi\|_{\star,\xi},
\end{eqnarray*}
thanks to Lemma~\ref{lem:decay DZ}. Using this and Lemma~\ref{:decay Z},
and possibly renaming the constants,
we obtain that
\begin{eqnarray*}
&& \left\|
\sum_{i=1}^n \lambda_i(\tilde\varphi^t_j)\, \tilde Z_i
\right\|_{\star,\xi} \le
\sum_{i=1}^n |\lambda_i(\tilde\varphi^t_j)|\; \|\tilde Z_i
\|_{\star,\xi}\\
&&\qquad\le C\,\sum_{i=1}^n |\lambda_i(\tilde\varphi^t_j)|
\le C\,\| \psi\|_{\star,\xi}
.\end{eqnarray*}
On the other hand, by Lemma~\ref{lem:500}, we have that~$\|\psi\|_{\star,\xi}
\le C\,\|g\|_{\star,\xi}$, so the above estimate
implies~\eqref{AEX4}, as desired.

Now we claim that
\begin{equation}\label{AEX5}
\left\|(-\Delta)^s
\sum_{i=1}^n \lambda_i(\tilde\varphi^t_j)\, \tilde Z_i
\right\|_{\star,\xi}\le C\,\|g\|_{\star,\xi}.
\end{equation}
Indeed, $\tilde Z_i$ is compactly supported
in a neighborhood of~$\xi$, hence~$(-\Delta)^s \tilde Z_i$
decays like~$|x-\xi|^{-n-2s}$ at infinity. Accordingly,
$\|(-\Delta)^s \tilde Z_i\|_{\star,\xi}$ is finite,
and then we obtain
\begin{eqnarray*}
\left\|(-\Delta)^s
\sum_{i=1}^n \lambda_i(\tilde\varphi^t_j)\, \tilde Z_i
\right\|_{\star,\xi} &=&
\left\|
\sum_{i=1}^n \lambda_i(\tilde\varphi^t_j)\, (-\Delta)^s\tilde Z_i
\right\|_{\star,\xi}\\
&\le&
\sum_{i=1}^n |\lambda_i(\tilde\varphi^t_j)|\,
\left\|(-\Delta)^s\tilde Z_i\right\|_{\star,\xi}\\
&\le& C\, \sum_{i=1}^n |\lambda_i(\tilde\varphi^t_j)|
\\ &\le& C\,\|g\|_{\star,\xi},
\end{eqnarray*}
due to~\eqref{AEX4}, and this establishes~\eqref{AEX5}.

Now we claim that
\begin{equation}\label{AEX0}
|D^t_j w_\xi^{p-1}|\le C,
\end{equation}
with~$C$ independent of~$t$. Indeed
\begin{eqnarray*}
D^t_j w_\xi^{p-1}(x) &=& \frac1t \Big(
w^{p-1}(x-\xi-te_j)-w^{p-1}(x-\xi)\Big) \\
&=& \frac1t \int_0^t \frac{d}{d\tau}
w^{p-1}(x-\xi-\tau e_j)\,d\tau \\
&=& \frac{p-1}{t} \int_0^t w^{p-2}(x-\xi-\tau e_j) \frac{d}{d\tau}
w(x-\xi-\tau e_j)\,d\tau \\
&=& -\frac{p-1}{t} \int_0^t w^{p-2}(x-\xi-\tau e_j)
\nabla w(x-\xi-\tau e_j)\cdot e_j\,d\tau.
\end{eqnarray*}
Also, by formulas~(IV.2) and~(IV.6)
of~\cite{CMSJ}, we know that
\begin{equation}\label{d88du44uuuuqqqqq}
{\mbox{$w(x)$ is
bounded both from above and from below by a constant times }}\,\frac{1}{
1+|x|^{n+2s}}.\end{equation}
Thus, supposing without loss of generality that~$t>0$,
and recalling Lemma~\ref{:decay Z}, we have that
\begin{eqnarray*}
|D^t_j w_\xi^{p-1}(x)| &\le& \frac{p-1}{t}
\int_0^t w^{p-2}(x-\xi-\tau e_j)
|\nabla w(x-\xi-\tau e_j)|\,d\tau\\
&\le& \frac{C}{t}
\int_0^t \big(1+|x-\xi-\tau e_j|\big)^{-(p-2)(n+2s)}
\big(1+|x-\xi-\tau e_j|\big)^{-(n+2s)}\,d\tau \\
&=& \frac{C}{t}
\int_0^t \big(1+|x-\xi-\tau e_j|\big)^{-(p-1)(n+2s)}
\,d\tau\\
&\le& \frac{C}{t}
\int_0^t 1\,d\tau \\
&=& C,
\end{eqnarray*}
and this proves~\eqref{AEX0}.

{F}rom~\eqref{AEX0} and Lemma~\ref{lem:500}
we obtain that
\begin{equation}\label{AEX6}
\| (D^t_j w_\xi^{p-1})\psi\|_{\star,\xi}\le
C\,\|\psi\|_{\star,\xi}\le
C\,\|g\|_{\star,\xi}.
\end{equation}

Now we use Lemmata~\ref{lem:decay DZ},
\ref{lem:ci} and~\ref{lem:500} to see that
\begin{equation}\label{AEX7}\begin{split}
\left\| \sum_{i=1}^n c_i D^t_j Z_i\right\|_{\star,\xi}\,&\le
\sum_{i=1}^n |c_i|\, \left\|D^t_j Z_i\right\|_{\star,\xi}
\\ &\le C\,\sum_{i=1}^n |c_i|\\
&= C\,\sum_{i=1}^n\left|
\frac{1}{\alpha}\int_{\R^n}g\,Z_i\,dx + f_i\right| \\
&\le C\, \left( \|g\|_{L^2(\R^n)} +\sum_{i=1}^n |f_i|
\right) \\
&\le C\, \left( \|\psi\|_{L^2(\R^n)}+
\|g\|_{L^2(\R^n)} \right) \\
&\le C\, \left( \|\psi\|_{\star,\xi}+
\|g\|_{\star,\xi}\right)\\
&\le C\,\|g\|_{\star,\xi}.\end{split}
\end{equation}
By plugging~\eqref{AEX4}, \eqref{AEX5}, \eqref{AEX6}
and~\eqref{AEX7} into~\eqref{AEX2}
we obtain that
$$ \|\tilde g_j\|_{\star,\xi}\le C\,\left(\|g\|_{\star,\xi}+
\|D^t_j g\|_{\star,\xi}\right) .$$
Therefore, by~\eqref{AEX3},
$$ \|\tilde \varphi^t_j\|_{\star,\xi}\le
C\,\left(\|g\|_{\star,\xi}+
\|D^t_j g\|_{\star,\xi}\right).$$
This and~\eqref{AEX4} imply that
\begin{eqnarray*}
\|\varphi^t_j\|_{\star,\xi} &\le& \|\tilde \varphi^t_j\|_{\star,\xi}
+\left\|
\sum_{i=1}^n \lambda_i(\tilde\varphi^t_j)\, \tilde Z_i\right\|_{\star,\xi}
\\ &\le& C\,\left(\|g\|_{\star,\xi}+
\|D^t_j g\|_{\star,\xi}\right).\end{eqnarray*}
Hence we send~$t\searrow 0$ and we obtain
$$ \left\|\frac{\partial\psi}{\partial\xi}\right\|_{\star,\xi}\le C\,
\left(\|g\|_{\star,\xi} +\left\|\frac{\partial g}{\partial\xi}\right\|_{\star,\xi} \right),$$
which implies that
$$ \left\|\frac{\partial\mathcal T_\xi[g]}{\partial\xi}\right\|_{\star,\xi}\le C\,\left(\|g\|_{\star,\xi} +\left\|\frac{\partial g}{\partial\xi}\right\|_{\star,\xi}
\right).
$$
Using the previous computation and the implicit function theorem, a standard argument shows that $\xi\mapsto \mathcal T_\xi$ is continuously differentiable (see e.g. Section~$2.2.1$ in~\cite{AM}, and in particular Lemma~$2.11$ there, or~\cite{DDW} below formula~$(4.20)$).
\end{proof}

\subsection{The nonlinear projected problem}
In this subsection we solve the nonlinear projected problem
\begin{equation}\label{EQ nonlinear}
\left\{
\begin{matrix}
(-\Delta)^s \psi +\psi -pw_\xi^{p-1}\psi =E(\psi)+ N(\psi)+
\displaystyle\sum_{i=1}^n c_i\,Z_i & {\mbox{ in $\Omega_\epsilon$,}}\\
\psi =0 & {\mbox{ in }}\R^n\setminus\Omega_\epsilon,\\
\displaystyle\int_{\Omega_\epsilon}\psi\,Z_i\,dx=0 & {\mbox{ for any }}i=1,\ldots,n,
\end{matrix}
\right.
\end{equation}
where~$E(\psi)$ and~$N(\psi)$ are given in~\eqref{N psi}.

\begin{thm}\label{th:psi}
If~$\epsilon>0$ is sufficiently small, there exists a
unique~$\psi\in H^{s}(\R^n)$ solution to~\eqref{EQ nonlinear} for suitable
real coefficients~$c_i$, for~$i=1,\ldots,n$, and such that
there exists a positive constant~$C$ such that
\begin{equation}\label{small norm}
\|\psi\|_{\star,\xi}\le C\,\epsilon^{n+2s}.
\end{equation}
\end{thm}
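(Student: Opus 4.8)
The plan is to solve~\eqref{EQ nonlinear} by a fixed point argument based on the linear solution operator~$\mathcal T_\xi$ from~\eqref{mathcal T}. Comparing~\eqref{EQ nonlinear} with~\eqref{EQ g}, a function~$\psi\in\Psi$ solves~\eqref{EQ nonlinear} for suitable coefficients~$c_i$ if and only if
\[
\psi=\mathcal T_\xi\big[-E(\psi)-N(\psi)\big]=:\mathcal M(\psi),
\]
where~$E$ and~$N$ are defined in~\eqref{N psi}. By the estimates on~$N$ and~$E$ described below, for~$\psi$ small in~$Y_\star$ the right-hand side~$-E(\psi)-N(\psi)$ lies in~$L^2(\R^n)$ and has finite~$\|\cdot\|_{\star,\xi}$-norm (recall~$\mu>n/2$), so~$\mathcal M$ is well defined, takes values in~$\Psi$ by Theorem~\ref{linear}, and~\eqref{est fin} gives~$\|\mathcal M(\psi)\|_{\star,\xi}\le C\big(\|E(\psi)\|_{\star,\xi}+\|N(\psi)\|_{\star,\xi}\big)$. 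The goal is then to prove that~$\mathcal M$ is a contraction on the closed ball
\[
\mathcal F:=\big\{\psi\in Y_\star:\ \psi=0\ \mbox{ in }\R^n\setminus\Omega_\eps,\ \ \|\psi\|_{\star,\xi}\le K\eps^{n+2s}\big\}
\]
for a suitably large constant~$K$ and all small~$\eps$; the Banach fixed point theorem then yields a unique~$\psi\in\mathcal F$ solving~\eqref{EQ nonlinear} together with~\eqref{small norm}, the coefficients~$c_i$ being those produced by~$\mathcal T_\xi$, and uniqueness among all small solutions follows from the a priori bound in Lemma~\ref{lem:500}.

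The two ingredients are weighted estimates for~$N$ and~$E$. For the purely nonlinear term~$N(\psi)=(w_\xi+\psi)^p-w_\xi^p-pw_\xi^{p-1}\psi$ one uses the elementary pointwise bounds (with the positive–part convention of the footnote to~\eqref{N psi})~$|N(\psi)|\le C\big(w_\xi^{p-2}\psi^2+|\psi|^p\big)$ when~$p\ge2$ and~$|N(\psi)|\le C|\psi|^p$ when~$1<p<2$; since~$\rho_\xi\le1$, $|\psi|\le\rho_\xi\|\psi\|_{\star,\xi}$, and~$w_\xi^{p-2}\rho_\xi$ is bounded when~$p\ge2$, these give
\[
\|N(\psi)\|_{\star,\xi}\le C\,\|\psi\|_{\star,\xi}^{1+\gamma},\qquad \gamma:=\min\{p-1,1\}>0,
\]
so~$N$ is super-linear and, on~$\mathcal F$, has arbitrarily small Lipschitz constant as~$\eps\to0$. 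For the error term~$E(\psi)=(\bar u_\xi+\psi)^p-(w_\xi+\psi)^p$ one writes, using the mean value theorem and~$0\le\bar u_\xi\le w_\xi$,
\[
|E(\psi)|\le C\,\big(w_\xi^{p-1}+|\psi|^{p-1}\big)\,|w_\xi-\bar u_\xi|,
\]
and then invokes the identity~\eqref{24} from Lemma~\ref{lem 5.2} together with Lemmata~\ref{79} and~\ref{EST PI} and the decay~\eqref{decay w} to bound~$|w_\xi-\bar u_\xi|=\Lambda_\xi+\Pi_\eps$ in the weighted norm. Since~$\dist(\xi,\partial\Omega_\eps)\sim1/\eps$, one obtains the key estimate~$\|\bar u_\xi^p-w_\xi^p\|_{\star,\xi}\le C\eps^{n+2s}$ and, more generally, $\|E(\psi)\|_{\star,\xi}\le C\eps^{n+2s}+C\,\|\psi\|_{\star,\xi}^{p-1}\eps^{\theta}$ for some~$\theta>0$. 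Applying the same manipulations to the differences~$N(\psi_1)-N(\psi_2)$ and~$E(\psi_1)-E(\psi_2)$ shows that the Lipschitz constant of~$\psi\mapsto E(\psi)+N(\psi)$ on~$\mathcal F$ is~$o(1)$ as~$\eps\to0$, hence so is that of~$\mathcal M$.

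Combining these, for~$\psi\in\mathcal F$ we get~$\|\mathcal M(\psi)\|_{\star,\xi}\le C\eps^{n+2s}+o(1)\,K\eps^{n+2s}\le K\eps^{n+2s}$ once~$K>2C$ and~$\eps$ is small, so~$\mathcal M$ maps~$\mathcal F$ into itself; together with the Lipschitz bound this makes~$\mathcal M$ a contraction on~$\mathcal F$, and its unique fixed point is the desired~$\psi$, satisfying~\eqref{small norm}. I expect the main difficulty to be in the weighted pointwise estimates for~$E$ and~$N$ in the non-smooth regime~$1<p<2$ and, above all, in verifying the key bound~$\|\bar u_\xi^p-w_\xi^p\|_{\star,\xi}\le C\eps^{n+2s}$: there one must carefully balance the polynomial weight~$\rho_\xi$ (which can be as small as~$\eps^\mu$ across the expanding domain~$\Omega_\eps$) against the decay of~$w_\xi$, $\Lambda_\xi$ and~$\Pi_\eps$ provided by Section~\ref{S3:u}, possibly exploiting the freedom in the choice of~$\mu\in(n/2,n+2s)$, so that every contribution is indeed controlled by~$\eps^{n+2s}$.
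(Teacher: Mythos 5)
Your overall strategy — recasting~\eqref{EQ nonlinear} as a fixed-point equation for the linear solution operator~$\mathcal T_\xi$ from~\eqref{mathcal T}, and running a contraction argument on a ball in~$Y_\star$ of radius~$\sim\eps^{n+2s}$, using the smallness of~$E$ and the superlinearity of~$N$ — is exactly what the paper does, and the structure of your argument (self-mapping plus a Lipschitz estimate with a factor that is~$o(1)$ as~$\eps\to0$) matches the paper's.

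The one place where your proposal takes a genuinely different route, and one that is harder than necessary, is the estimate of~$|w_\xi-\bar u_\xi|$, which drives the key bound~$\|E(\psi)\|_{\star,\xi}\le C\eps^{n+2s}$. You suggest going through the decomposition~$w_\xi-\bar u_\xi=\Lambda_\xi+\Pi_\eps$ from~\eqref{24}, together with Lemmata~\ref{79} and~\ref{EST PI} and the decay~\eqref{decay w}. Note, however, that Lemma~\ref{EST PI} controls~$\Pi_\eps(x,\xi)$ only for~$x\in B_{d/8}(\xi)$, so this route leaves an uncontrolled region and would need additional work near~$\partial\Omega_\eps$. The paper instead obtains the uniform pointwise bound~\eqref{u meno w}, namely~$|\bar u_\xi-w_\xi|\le C\eps^{n+2s}$ everywhere, by a direct maximum-principle argument: $\eta_\xi:=\bar u_\xi-w_\xi$ solves~$(-\Delta)^s\eta_\xi+\eta_\xi=0$ in~$\Omega_\eps$, equals~$-w_\xi$ outside, and~$|w_\xi|\le C\eps^{n+2s}$ in~$\R^n\setminus\Omega_\eps$ since~$\dist(\xi,\partial\Omega_\eps)\gtrsim1/\eps$ and~\eqref{decay w} holds. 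This shortcut both avoids the gap in your~$\Pi_\eps$-bound and produces the estimate with no restriction to a small ball, so the paper can then bound~$\|E(\psi)\|_{\star,\xi}$ as in its Lemma~\ref{lem E} with essentially no further computation. Your~$N$-estimate~$\|N(\psi)\|_{\star,\xi}\lesssim\|\psi\|_{\star,\xi}^{1+\gamma}$ with~$\gamma=\min\{p-1,1\}$ is equivalent to the paper's~$C(\|\psi\|_{\star,\xi}^2+\|\psi\|_{\star,\xi}^p)$ in the regime of small~$\psi$, so that part matches. Finally, the claim that uniqueness among all small solutions follows from Lemma~\ref{lem:500} is a stretch: that lemma concerns the linear problem, and the paper simply relies on Banach's fixed-point theorem for uniqueness within the ball~$B$, which is also what the theorem's statement requires.
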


Before proving Theorem~\ref{th:psi}, we show some estimates for the error terms~$E(\psi)$ and~$N(\psi)$.

\begin{lem}
There exists a positive constant~$C$ such that
\begin{equation}\label{u meno w}
|\bar u_\xi -w_\xi|\le C\, \epsilon^{n+2s}.
\end{equation}
\end{lem}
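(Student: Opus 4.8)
The plan is to combine the decomposition of $\bar u_\xi$ from Lemma~\ref{lem 5.2} with the pointwise estimates on $\Lambda_\xi$ and $\Pi_\eps$ already obtained; throughout this section $\xi$ satisfies~\eqref{dist bordo}, so $d:=\dist(\xi,\partial\Omega_\eps)\ge c/\eps$ for some $c\in(0,1)$, and in particular $d\ge1$ (indeed $d\ge2$) once $\eps$ is small.

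First I would dispose of the exterior of $\Omega_\eps$: for $x\in\R^n\setminus\Omega_\eps$ we have $\bar u_\xi(x)=0$, while $|x-\xi|\ge d$ since $B_d(\xi)\subseteq\Omega_\eps$, so~\eqref{decay w} gives $|\bar u_\xi(x)-w_\xi(x)|=w_\xi(x)\le\beta\,d^{-(n+2s)}\le C\eps^{n+2s}$. For $x\in\Omega_\eps$, I would invoke~\eqref{24}, which reads $w_\xi-\bar u_\xi=\Lambda_\xi+\Pi_\eps(\cdot,\xi)$; since both $\Lambda_\xi\ge0$ by~\eqref{lambda} and $\Pi_\eps(\cdot,\xi)\ge0$ (the latter because $H_\eps\ge0$ by the maximum principle and $w_\xi^p\ge0$), it is enough to bound $\Lambda_\xi$ and $\Pi_\eps(\cdot,\xi)$ by $C\eps^{n+2s}$.

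For $\Lambda_\xi$, Lemma~\ref{79} yields $0\le\Lambda_\xi(x)\le C\,d^{-(n+2s)p}$, and since $d\ge c/\eps$ and $p>1$ this is $\le C\eps^{(n+2s)p}\le C\eps^{n+2s}$ for $\eps$ small. For $\Pi_\eps(\cdot,\xi)$ I would split according to whether $x\in B_{d/8}(\xi)$ or not. On $B_{d/8}(\xi)$, Lemma~\ref{EST PI} gives $\Pi_\eps(x,\xi)\le C\,d^{-(n+4s)}\le C\eps^{n+4s}\le C\eps^{n+2s}$. On $\Omega_\eps\setminus B_{d/8}(\xi)$, I would instead use the elementary bound $\Pi_\eps(x,\xi)\le w_\xi(x)$, which follows from~\eqref{25} together with $\bar u_\xi\ge0$; then $|x-\xi|\ge d/8\ge1$, so~\eqref{decay w} gives $w_\xi(x)\le\beta\,(8/d)^{n+2s}\le C\eps^{n+2s}$. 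Putting these together yields~\eqref{u meno w}.

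There is no real difficulty in this argument; the only place requiring a little care is that Lemma~\ref{EST PI} controls $\Pi_\eps$ only on the ball $B_{d/8}(\xi)$, so away from that ball one must fall back on the cruder inequality $\Pi_\eps\le w_\xi$ and the far-field polynomial decay of $w$ from~\eqref{decay w} — which is exactly where the scaling $d\sim1/\eps$ converts decay in $d$ into the desired power of $\eps$.
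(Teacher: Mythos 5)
Your argument is correct, but it takes a genuinely different route from the paper's. The paper introduces $\eta_\xi:=\bar u_\xi-w_\xi$, observes from \eqref{EQ w} and \eqref{EQ bar u} that $\eta_\xi$ solves the \emph{homogeneous} equation $(-\Delta)^s\eta_\xi+\eta_\xi=0$ in $\Omega_\eps$ with exterior datum $\eta_\xi=-w_\xi$, bounds that exterior datum by $C\eps^{n+2s}$ using \eqref{decay w} and $d\gtrsim1/\eps$, and then invokes the comparison principle once to propagate this bound to all of $\R^n$. That is a three-line argument and requires none of the machinery surrounding $\Lambda_\xi$ and $\Pi_\eps$. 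You instead pass through the Green's function decomposition \eqref{24}, $w_\xi-\bar u_\xi=\Lambda_\xi+\Pi_\eps(\cdot,\xi)$, estimate $\Lambda_\xi$ via Lemma~\ref{79}, and estimate $\Pi_\eps$ by splitting at $B_{d/8}(\xi)$ — using Lemma~\ref{EST PI} near $\xi$ and $\Pi_\eps\le w_\xi$ together with the far-field decay of $w$ elsewhere. This is heavier, but it is sound (note you implicitly use $\bar u_\xi\ge0$, which is a standard maximum principle fact the paper also uses silently), and it actually delivers slightly more: near $\xi$ your two terms are $O(\eps^{(n+2s)p})$ and $O(\eps^{n+4s})$, so the error is smaller in the bulk than the uniform bound $\eps^{n+2s}$, which is saturated only when $|x-\xi|\sim d$. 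The paper's maximum-principle proof is what you would want for the lemma as stated, since it is self-contained and does not require the Robin-function analysis of Section~\ref{s2:H}; your route is the one implicit in the energy expansion of Section~\ref{S:energy}.
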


\begin{proof}
To prove~\eqref{u meno w}, we define~$\eta_\xi:=\bar u_\xi-w_\xi$,
and we observe that~$\eta_\xi$ satisfies
\begin{equation}\label{dentrooooo}
\left\{
\begin{matrix}
(-\Delta)^s \eta_\xi +\eta_\xi =0 & {\mbox{ in $\Omega_\epsilon$,}}\\
\eta_\xi = -w_\xi & {\mbox{ in }}\R^n\setminus\Omega_\epsilon,
\end{matrix}
\right.
\end{equation}
due to~\eqref{EQ w} and~\eqref{EQ bar u}.

We have
$$ |\eta_\xi|=|w_\xi|\le C\eps^{n+2s} {\mbox{ outside }}\Omega_\eps,$$
thanks to~\eqref{decay w}.
Hence, this together with~\eqref{dentrooooo} and the maximum principle give
$$ |\eta_\xi|\le C\eps^{n+2s} {\mbox{ in }}\R^n,$$
which implies the thesis (recall the definition of~$\eta_\xi$).
\end{proof}

Moreover, we can prove the following:
\begin{lem}
There exists a positive constant~$C$ such that
\begin{equation}\label{der u meno w}
\left|\frac{\partial\bar u_\xi}{\partial\xi}-
\frac{\partial w_\xi}{\partial\xi}\right|\le C\, \epsilon^{\nu_1},
\end{equation}
with~$\nu_1:=\min\{(n+2s+1),\,p(n+2s) \}$.
\end{lem}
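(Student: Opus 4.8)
The plan is to differentiate, with respect to the concentration parameter~$\xi$, the problem~\eqref{dentrooooo} solved by $\eta_\xi:=\bar u_\xi-w_\xi$, and then to control the resulting exterior datum by the decay of~$Z_k$ from Lemma~\ref{:decay Z}, exactly paralleling the proof of~\eqref{u meno w}. First I would record that $\xi\mapsto\bar u_\xi$ is continuously differentiable: from the representation $\bar u_\xi(x)=\int_{\Omega_\eps}w_\xi^p(z)\,G_\eps(x,z)\,dz$ used in the proof of Lemma~\ref{lem 5.2} (note that~$G_\eps$ does not depend on~$\xi$), together with the smoothness and polynomial decay of~$w$, one may differentiate under the integral sign; alternatively one works with the difference quotients~$D^t_k$ of Proposition~\ref{prop:der} and passes to the limit at the end, since the bound obtained below is uniform in~$t$. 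Granting this, and since~$\Omega_\eps$ does not depend on~$\xi$, differentiating~\eqref{dentrooooo} in~$\xi_k$ shows that, for each $k\in\{1,\dots,n\}$, the function $\zeta_\xi:=\frac{\partial\eta_\xi}{\partial\xi_k}=\frac{\partial\bar u_\xi}{\partial\xi_k}-\frac{\partial w_\xi}{\partial\xi_k}$ solves
\begin{equation*}
\left\{
\begin{matrix}
(-\Delta)^s \zeta_\xi+\zeta_\xi=0 & {\mbox{ in $\Omega_\epsilon$,}}\\
\zeta_\xi=-\frac{\partial w_\xi}{\partial\xi_k} & {\mbox{ in }}\R^n\setminus\Omega_\epsilon.
\end{matrix}
\right.
\end{equation*}

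Next, since $w_\xi(x)=w(x-\xi)$ we have $\frac{\partial w_\xi}{\partial\xi_k}=-Z_k$, so the exterior datum of~$\zeta_\xi$ equals~$Z_k$. By~\eqref{dist bordo}, every $x\in\R^n\setminus\Omega_\eps$ satisfies $|x-\xi|\ge\dist(\xi,\partial\Omega_\eps)\ge c/\eps\ge1$ for~$\eps$ small, hence Lemma~\ref{:decay Z} yields
$$ |\zeta_\xi(x)|=|Z_k(x)|\le \frac{C}{|x-\xi|^{\nu_1}}\le C\Big(\frac{\eps}{c}\Big)^{\nu_1}=C'\,\eps^{\nu_1}\qquad{\mbox{for all }}x\in\R^n\setminus\Omega_\eps. $$
Applying the maximum principle for $(-\Delta)^s+1$ to $C'\eps^{\nu_1}\pm\zeta_\xi$ (as in the proof of~\eqref{u meno w}) then gives $|\zeta_\xi|\le C'\eps^{\nu_1}$ on the whole of~$\R^n$, which is precisely~\eqref{der u meno w}.

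I do not expect a genuine obstacle here, as the argument is the differentiated analogue of the proof of~\eqref{u meno w}; the only delicate point is the differentiability of $\xi\mapsto\bar u_\xi$, which can be bypassed entirely via the difference-quotient route, where for $x\in\R^n\setminus\Omega_\eps$ one uses $|D^t_k w_\xi(x)|=\big|\int_0^1(\partial_{x_k}w)(x-\xi-\tau t e_k)\,d\tau\big|\le C\,|x-\xi|^{-\nu_1}$ (again by Lemma~\ref{:decay Z}), uniformly for $|t|<1$ and~$\eps$ small, and then sends $t\to0$.
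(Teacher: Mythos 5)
Your proposal is correct and follows exactly the paper's own argument: differentiate the exterior Dirichlet problem for $\eta_\xi=\bar u_\xi-w_\xi$ in $\xi$, identify the exterior datum of the derivative with $\pm Z_k$, bound it outside $\Omega_\eps$ by $C\eps^{\nu_1}$ via Lemma~\ref{:decay Z} together with $\dist(\xi,\partial\Omega_\eps)\ge c/\eps$, and conclude with the comparison principle. The extra remark on justifying differentiability of $\xi\mapsto\bar u_\xi$ (via the Green representation or via difference quotients) is a sound addition that the paper leaves implicit, but it does not change the route.
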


\begin{proof}
We set~$\eta_\xi:=\bar u_\xi-w_\xi$. From~\eqref{EQ w} and~\eqref{EQ bar u}, we have that~$\eta_\xi$ solves
$$ (-\Delta)^s\eta_\xi +\eta_\xi=0 {\mbox{ in }}\Omega_\eps.$$
Therefore, the derivative of~$\eta_\xi$ with respect to~$\xi$ satisfies
\begin{equation}\label{eq der eta}
(-\Delta)^s\frac{\partial\eta_\xi}{\partial\xi} +\frac{\partial\eta_\xi}{\partial\xi} =0 {\mbox{ in }}\Omega_\eps.
\end{equation}

Moreover, since~$\bar u_\xi=0$ outside~$\Omega_\eps$, we have that
$$ \eta_\xi=\bar u_\xi-w_\xi=-w_\xi {\mbox{ in }}\R^n\setminus\Omega_\eps,$$
which implies
$$ \frac{\partial\eta_\xi}{\partial\xi}=-\frac{\partial w_\xi}{\partial\xi}=\frac{\partial w_\xi}{\partial x} {\mbox{ in }}\R^n\setminus\Omega_\eps.$$
Therefore, from Lemma~\ref{:decay Z} (recall also~\eqref{Zj}), we have that
$$ \left|\frac{\partial\eta_\xi}{\partial\xi}\right|\le C\epsilon^{\nu_1}
{\mbox{ outside }}\Omega_\eps.$$
From this,~\eqref{eq der eta} and the maximum principle we deduce that
$$ \left|\frac{\partial\eta_\xi}{\partial\xi}\right|\le C\epsilon^{\nu_1}
{\mbox{ in }}\R^n,$$
which gives the desired estimate (recall the definition of~$\eta_\xi$).
\end{proof}

In the next lemma we estimate the~$\star$-norm of the error term~$E(\psi)$.
For this, we recall the definition of the space~$Y_\star$ given in~\eqref{Y star}.

\begin{lem}\label{lem E}
Let~$\psi\in Y_\star$ with~$\|\psi\|_{\star,\xi}\le1$.
Then, there exists a positive constant~$\bar C$ such that
$$ \|E(\psi)\|_{\star,\xi}\le \bar C\,\epsilon^{n+2s}.$$
\end{lem}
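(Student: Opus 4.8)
The plan is to estimate $E(\psi)$ pointwise and then divide by $\rho_\xi$. Recall from the footnote to~\eqref{N psi} that we may take $E(\psi)=(\bar u_\xi+\psi)_+^p-(w_\xi+\psi)_+^p$, and that Lemma~\ref{lem 5.2} together with the maximum principle gives $0\le\bar u_\xi\le w_\xi$, while~\eqref{u meno w} gives $|\bar u_\xi(x)-w_\xi(x)|\le C\epsilon^{n+2s}$ for every $x\in\R^n$. Since $n/2<\mu<n+2s$, the decay estimate~\eqref{decay w} for $w$ yields $w_\xi(x)\le C\rho_\xi(x)$ on all of $\R^n$, and the hypothesis $\|\psi\|_{\star,\xi}\le1$ says exactly that $|\psi(x)|\le\rho_\xi(x)$.

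First I would use the elementary inequality $|a_+^p-b_+^p|\le p\,(\max\{a_+,b_+\})^{p-1}\,|a-b|$, valid for $p\ge1$ and $a,b\in\R$, with $a=\bar u_\xi+\psi$ and $b=w_\xi+\psi$. Because $\bar u_\xi\le w_\xi$ one has $\max\{a_+,b_+\}=(w_\xi+\psi)_+\le w_\xi+|\psi|\le C\rho_\xi$, so
$$ |E(\psi)(x)|\ \le\ C\,\rho_\xi^{p-1}(x)\,|\bar u_\xi(x)-w_\xi(x)|\ \le\ C\,\epsilon^{n+2s}\,\rho_\xi^{p-1}(x), $$
hence $\rho_\xi^{-1}(x)|E(\psi)(x)|\le C\,\epsilon^{n+2s}\,\rho_\xi^{p-2}(x)$. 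When $p\ge2$ we have $\rho_\xi^{p-2}\le1$ (as $0<\rho_\xi\le1$), and taking the supremum over $\R^n$ gives $\|E(\psi)\|_{\star,\xi}\le C\epsilon^{n+2s}$ at once.

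For $1<p<2$ the factor $\rho_\xi^{p-2}$ is unbounded, and I would split $\R^n=B_R(\xi)\cup(\R^n\setminus B_R(\xi))$ for a suitable large fixed $R$. On $B_R(\xi)$ the weight $\rho_\xi^{-1}$ is bounded by $(1+R)^\mu$, so the crude bound $|E(\psi)(x)|\le C\epsilon^{n+2s}$ — obtained from the same Lipschitz inequality together with the boundedness of $w_\xi$, $\bar u_\xi$ and of $|\psi|\le1$ — already gives the claim there. On $\R^n\setminus B_R(\xi)$ one must replace the crude bound $|w_\xi-\bar u_\xi|\le C\epsilon^{n+2s}$ by the finer description $w_\xi-\bar u_\xi=\Lambda_\xi+\Pi_\eps$ of Lemma~\ref{lem 5.2}: Lemma~\ref{79} controls $\Lambda_\xi$, while $\Pi_\eps$ is controlled by Lemma~\ref{EST PI} and Proposition~\ref{pro upper} near $\xi$ (the latter possibly after a translation, as in the proof of Corollary~\ref{H UPPER}) and by~\eqref{25} elsewhere; the quantity that then has to be made $\le C\epsilon^{n+2s}$ uniformly is $\rho_\xi^{-1}|E(\psi)|\le C\rho_\xi^{p-2}(\Lambda_\xi+\Pi_\eps)$, using in addition $w_\xi+|\psi|\le C\rho_\xi$.

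The main obstacle is precisely this last step for $1<p<2$: the weight $\rho_\xi^{-1}(x)$ grows like $|x-\xi|^{\mu}$, which near $\partial\Omega_\eps$ — i.e. at distance $\sim\epsilon^{-1}$ from $\xi$, where $w_\xi-\bar u_\xi$ is genuinely of order $\epsilon^{n+2s}$ — is of size $\sim\epsilon^{-\mu}$, so one has to extract enough extra decay from the structure of $\Lambda_\xi$ and $\Pi_\eps$ (ultimately from the comparison of $H_\eps$ with the barrier $\beta_\xi$ of Corollary~\ref{Co} and Corollary~\ref{H UPPER}) to absorb this growth into $\epsilon^{n+2s}$; the bookkeeping of exponents here, under the constraint $n/2<\mu<n+2s$, is where the real work lies, whereas the case $p\ge2$ is essentially immediate.
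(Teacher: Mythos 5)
Your opening move — the Lipschitz-type inequality $|a^p-b^p|\le C\max\{a,b\}^{p-1}|a-b|$ applied with $a=w_\xi+\psi$, $b=\bar u_\xi+\psi$, then $|\bar u_\xi-w_\xi|\le C\eps^{n+2s}$ from~\eqref{u meno w} and $w_\xi+|\psi|\le C\rho_\xi$ from~\eqref{decay w}, $\|\psi\|_{\star,\xi}\le1$ and $\mu<n+2s$ — is exactly what the paper does (it invokes Lemma~2.1 of~\cite{DMM} for the elementary inequality). This produces $\rho_\xi^{-1}|E(\psi)|\le C\eps^{n+2s}\rho_\xi^{p-2}$, and since $0<\rho_\xi\le1$ the factor $\rho_\xi^{p-2}$ is bounded precisely when $p\ge2$; so for $p\ge2$ your argument is complete and matches the paper.

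For $1<p<2$ you stop short of a proof: you correctly observe that $\rho_\xi^{p-2}$ is unbounded, and you sketch a split into $B_R(\xi)$ and its complement using the decomposition $w_\xi-\bar u_\xi=\Lambda_\xi+\Pi_\eps$, but you explicitly defer the exponent bookkeeping — ``the real work.'' That is a genuine gap, not a detail you can wave through. I would also point out that the paper's own proof does not distinguish the two cases either: after the same Lipschitz step it concludes in one sentence ``since $\|w_\xi\|_{\star,\xi}$ and $\|\psi\|_{\star,\xi}$ are bounded, we have $\|E(\psi)\|_{\star,\xi}\le C_3\eps^{n+2s}$,'' which, as written, leaves the same unbounded factor $\rho_\xi^{p-2}$ you flagged. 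Your heuristic near $\partial\Omega_\eps$ makes the obstruction concrete: there $\bar u_\xi,\psi\to0$ while $w_\xi\sim\eps^{n+2s}$, hence $|E(\psi)|\sim w_\xi^p\sim\eps^{(n+2s)p}$ and $\rho_\xi^{-1}\sim\eps^{-\mu}$, so $\rho_\xi^{-1}|E(\psi)|\sim\eps^{(n+2s)p-\mu}$; the claimed order $\eps^{n+2s}$ then forces $\mu\le(n+2s)(p-1)$, a restriction that is not part of the stated hypothesis $n/2<\mu<n+2s$ and is not even compatible with it when $p$ is close to $1$. So your proposal is incomplete in the regime $1<p<2$, but the concern you raise is legitimate: either an implicit additional restriction on $\mu$ is being used, or the lemma needs a weaker exponent for small $p$, and the paper's one-line conclusion does not settle the matter.
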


\begin{proof}
Using~\eqref{u meno w} and Lemma~$2.1$ in~\cite{DMM} with~$a:=w_\xi+\psi$ and~$b:=\bar u_\xi-w_\xi$, we obtain that
\begin{eqnarray*}
|E(\psi)| &=& |(\bar u_\xi-w_\xi +w_\xi +\psi)^p-(w_\xi+\psi)^p|\\
&\le & C_1(w_\xi+\psi)^{p-1}|\bar u_\xi-w_\xi|\\
&\le & C_2\, \epsilon^{n+2s}(w_\xi+\psi)^{p-1}.
\end{eqnarray*}
Hence, since~$\|w_\xi\|_{\star,\xi}$ and~$\|\psi\|_{\star,\xi}$ are bounded, we have
$$ \|E(\psi)\|_{\star,\xi} \le C_3 \epsilon^{n+2s},$$
which gives the desired result.
\end{proof}

Now, we give a bound for the~$\star$-norm of the error term~$N(\psi)$.

\begin{lem}\label{lemma:N}
Let~$\psi\in Y_\star$. Then, there exists a positive constant~$C$ such that
$$ \|N(\psi)\|_{\star,\xi}\le C\left(\|\psi\|_{\star,\xi}^2+\|\psi\|_{\star,\xi}^p\right).$$
\end{lem}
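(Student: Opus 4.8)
The plan is to reduce the desired bound to elementary pointwise inequalities for the map $t\mapsto t^p$ and then to pass from pointwise estimates to the $\star$-norm by means of the defining inequality $|\psi(x)|\le\|\psi\|_{\star,\xi}\,\rho_\xi(x)$, the boundedness of $w_\xi$ (recall~\eqref{decay w}) and the trivial observation that $0<\rho_\xi\le1$.

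First I would recall from~\eqref{N psi} that $N(\psi)=(w_\xi+\psi)^p-w_\xi^p-pw_\xi^{p-1}\psi$ (with the powers read as positive parts, as explained in the footnote, which is immaterial for the argument). Expanding $t\mapsto t^p$ to second order around $w_\xi$, i.e.\ using standard convexity inequalities, I would obtain the pointwise bounds
$$ |N(\psi)(x)|\le C\big(w_\xi^{p-2}(x)\,\psi^2(x)+|\psi(x)|^p\big)\quad\text{if }p\ge2, $$
and
$$ |N(\psi)(x)|\le C\,|\psi(x)|^p\quad\text{if }1<p<2, $$
for some $C>0$ depending only on $p$. In the range $1<p<2$ the second derivative of $t\mapsto t^p$ is singular at the origin, so there one should use the sublinear inequality $|(a+b)^p-a^p-pa^{p-1}b|\le C_p\,|b|^p$ (valid for $a\ge0$ and $a+b\ge0$) rather than a naive Taylor estimate; this is why only the term $|\psi|^p$ survives.

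Then I would convert these pointwise bounds into the $\star$-norm. Since $|\psi(x)|\le\|\psi\|_{\star,\xi}\,\rho_\xi(x)$ and $\rho_\xi(x)=(1+|x-\xi|)^{-\mu}\le1$, and $p>1$, one has $|\psi(x)|^p\le\|\psi\|_{\star,\xi}^p\,\rho_\xi^{p}(x)\le\|\psi\|_{\star,\xi}^p\,\rho_\xi(x)$, whence $\|\,|\psi|^p\,\|_{\star,\xi}\le\|\psi\|_{\star,\xi}^p$. When $p\ge2$, the function $w_\xi^{p-2}$ is bounded, being a nonnegative power of the bounded function $w_\xi$, so $w_\xi^{p-2}(x)\,\psi^2(x)\le C\,\|\psi\|_{\star,\xi}^2\,\rho_\xi^2(x)\le C\,\|\psi\|_{\star,\xi}^2\,\rho_\xi(x)$ and therefore $\|w_\xi^{p-2}\psi^2\|_{\star,\xi}\le C\,\|\psi\|_{\star,\xi}^2$. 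Combining the two contributions gives $\|N(\psi)\|_{\star,\xi}\le C(\|\psi\|_{\star,\xi}^2+\|\psi\|_{\star,\xi}^p)$ in both ranges of $p$, which is the assertion; note that no smallness of $\|\psi\|_{\star,\xi}$ is required.

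The argument is essentially routine; the only step that deserves some care is the case $1<p<2$, where the singularity at the origin of the second derivative of $t\mapsto t^p$ forces the use of the sublinear-type inequality above in place of a second-order Taylor bound.
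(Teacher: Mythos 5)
Your proof is correct and follows the same strategy as the paper: establish a pointwise bound of the form $|N(\psi)|\le C(|\psi|^2+|\psi|^p)$ and then pass to the $\star$-norm using $\rho_\xi\le1$ (so that $\rho_\xi^{-1}\le\rho_\xi^{-2}$ and $\rho_\xi^{-1}\le\rho_\xi^{-p}$). The only difference is presentational: the paper quotes a single inequality (Corollary~2.2 in~\cite{DMM}, applied with $a:=w_\xi$, $b:=\psi$) that handles both ranges of $p$ at once, whereas you derive the pointwise estimate directly by splitting into $p\ge2$ and $1<p<2$ and absorbing the bound on $w_\xi$ into the constant — both lead to the same conclusion.
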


\begin{proof}
We take~$\psi\in Y_\star$ and we estimate
\begin{eqnarray*}
|N(\psi)|&=&|(w_\xi+\psi)^p-w_\xi^p-pw_\xi^{p-1}\psi|\\
& \le & C\left(|\psi|^2+|\psi|^p\right),
\end{eqnarray*}
for some positive constant~$C$ (see, for instance, Corollary~$2.2$ in~\cite{DMM}, applied here with~$a:=w_\xi$ and~$b:=\psi$). Hence,
\begin{eqnarray*}
\rho_\xi^{-1}|N(\psi)|&\le & C\,\rho_\xi^{-1}\left(|\psi|^2+|\psi|^p\right)\\
&\le & C\left(\rho_\xi^{-2}|\psi|^2+\rho_\xi^{-p}|\psi|^p\right)\\
&\le & C\left( \|\psi\|^2_{\star,\xi} + \|\psi\|^p_{\star,\xi}\right),
\end{eqnarray*}
which implies the desired estimate.
\end{proof}

For further reference, we now recall
an estimate of elementary nature:

\begin{lem}
Fixed~$\kappa>0$, there exists a constant~$C_\kappa>0$
such that, for any~$a$, $b\in [0,\kappa]$ we have
\begin{equation}\label{E.1}
|a^{p-1} - b^{p-1}|\le C_\kappa |a-b|^{q},
\end{equation}
where
\begin{equation}\label{Eq}
q:=\min\{1,p-1\}.\end{equation}
\end{lem}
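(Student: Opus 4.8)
The plan is to reduce everything to two elementary facts about the power function $t\mapsto t^{p-1}$ on $[0,\kappa]$, distinguishing whether the exponent $p-1$ is at least one or strictly less than one; this dichotomy is exactly the one recorded in the definition of $q$ in~\eqref{Eq}.

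First I would treat the case $p-1\ge1$, so that $q=1$. On the compact interval $[0,\kappa]$ the function $t\mapsto t^{p-1}$ is continuously differentiable with derivative $(p-1)t^{p-2}$, which is bounded on $[0,\kappa]$ by a constant depending only on $p$ and $\kappa$ (it equals $1$ when $p=2$ and is $\le (p-1)\kappa^{p-2}$ when $p>2$). Hence $t\mapsto t^{p-1}$ is Lipschitz on $[0,\kappa]$, and the mean value theorem gives $|a^{p-1}-b^{p-1}|\le C_\kappa\,|a-b|=C_\kappa\,|a-b|^q$ for all $a,b\in[0,\kappa]$, which is the claim in this regime.

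Next I would treat the case $1<p<2$, so that $p-1\in(0,1)$ and $q=p-1$. Here the key point is the subadditivity of the concave function $t\mapsto t^{p-1}$ on $[0,+\infty)$: for every $x,y\ge0$ one has $(x+y)^{p-1}\le x^{p-1}+y^{p-1}$. This follows, for $x+y>0$, by writing
$$
\left(\frac{x}{x+y}\right)^{p-1}+\left(\frac{y}{x+y}\right)^{p-1}\ \ge\ \frac{x}{x+y}+\frac{y}{x+y}=1,
$$
where we used that $t^{p-1}\ge t$ for $t\in[0,1]$ when $p-1<1$, and then multiplying through by $(x+y)^{p-1}$; the case $x+y=0$ is trivial. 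Assuming without loss of generality that $a\ge b$ and applying this with $x=b$, $y=a-b$ yields $a^{p-1}=(b+(a-b))^{p-1}\le b^{p-1}+(a-b)^{p-1}$, i.e. $|a^{p-1}-b^{p-1}|\le|a-b|^{p-1}=|a-b|^q$. In this regime one may in fact take $C_\kappa=1$, independent of $\kappa$.

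There is no genuine obstacle: the statement is elementary and both cases are classical. The only point deserving a word of care is the range $1<p<2$, where one should not differentiate (the derivative $(p-1)t^{p-2}$ blows up as $t\to0^+$) but instead use concavity/subadditivity directly, and where the resulting Hölder exponent is $p-1$ rather than $1$ — which is precisely why $q$ is defined as the minimum in~\eqref{Eq}.
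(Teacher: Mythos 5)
Your proof is correct. For $p\ge 2$ you and the paper do essentially the same thing (bound the derivative of $t\mapsto t^{p-1}$ on $[0,\kappa]$; the paper writes the increment as an integral of $\tau^{p-2}$, you invoke the mean value theorem). For $1<p<2$ your route is genuinely different: you use the subadditivity $(x+y)^{p-1}\le x^{p-1}+y^{p-1}$ of the concave function vanishing at the origin, applied with $x=b$, $y=a-b$, which gives $a^{p-1}-b^{p-1}\le(a-b)^{p-1}$ directly with constant $1$. The paper instead introduces the auxiliary function $h(t)=\frac{(t+1)^{p-1}-1}{t^{p-1}}$, shows via L'H\^opital that it extends continuously to $t=0$ with $h(0)=0$ and tends to $1$ at infinity, and takes its (finite) supremum $M_0$ as the constant; it then observes $h\bigl((a/b)-1\bigr)=\frac{a^{p-1}-b^{p-1}}{(a-b)^{p-1}}\le M_0$. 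Both are fine, but your subadditivity argument is shorter, avoids any limit computation, and makes the optimal constant $C_\kappa=1$ transparent in that regime (one can check $h$ is in fact increasing, so the paper's $M_0$ is also $1$, though the paper does not compute it). The trade-off is that the paper's $h$-function device would adapt with no change to other exponents or weights, whereas your concavity argument leans on the specific structure $t\mapsto t^{p-1}$ with $t^{p-1}\ge t$ on $[0,1]$.
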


\begin{proof}
Fixed any~$\alpha\in(0,1)$, for any~$t>0$
we define
$$ h(t):=\frac{(t+1)^{\alpha}-1}{t^{\alpha}}.$$
Using de L'Hospital Rule we see that
$$ \lim_{t\searrow0} h(t)=
\lim_{t\searrow0} \frac{t^{1-\alpha}}{(t+1)^{1-\alpha}}=0,$$
hence we can extend~$h$ to a continuous function on~$[0,+\infty)$
with~$h(0):=0$.
Moreover
$$ \lim_{t\rightarrow+\infty} h(t)=1,$$
hence there exists
\begin{equation}\label{Z0}
M_0 := \sup_{t\in [0,+\infty)} h(t) <+\infty.\end{equation}
Now we prove~\eqref{E.1}.
For this, we may and do assume that~$a> b$.
If~$p\ge 2$, we have that
\begin{eqnarray*}
&& a^{p-1} - b^{p-1} =(p-1) \int_b^a \tau^{p-2}\,d\tau \le
(p-1) \, a^{p-2}\, (a-b)\\
&&\quad\le (p-1) \, \kappa^{p-2}\, (a-b),
\end{eqnarray*}
that is~\eqref{E.1} in this case. On the other hand,
if~$p\in(1,2)$ we take~$t:=(a/b)-1>0$ and~$\alpha:=p-1$, so
$$ M_0\ge
h(t) =
\frac{(a/b)^{p-1}-1}{((a/b)-1)^{p-1}} =
\frac{a^{p-1}-b^{p-1}}{(a-b)^{p-1}},$$
thanks to~\eqref{Z0}, and this establishes~\eqref{E.1} also
in this case.
\end{proof}

Now we are ready to complete the proof of
Theorem~\ref{th:psi}.

\begin{proof}[Proof of Theorem~\ref{th:psi}]
Recalling the definition of the operator~$\mathcal T_\xi$ in~\eqref{mathcal T},
we can write
$$ \psi=\mathcal T_\xi[E(\psi)+N(\psi)].$$

We will prove
Theorem~\ref{th:psi} by a contraction argument. To do this, we set
\begin{equation}\label{TTT}
\mathcal K_\xi(\psi):=\mathcal T_\xi[E(\psi)+N(\psi)].
\end{equation}
Moreover, we take a constant~$C_0>0$ and~$\epsilon>0$
small (we will specify the choice of~$C_0$ and~$\epsilon$ in~\eqref{choice fin}), and we define the set
$$ B:=\{\psi\in Y_\star {\mbox{ s.t }}\|\psi\|_{\star,\xi}\le C_0\,\epsilon^{n+2s}\},$$
where~$Y_\star$ is introduced in~\eqref{Y star}.

We claim that
\begin{equation}\label{contr arg}
{\mbox{$\mathcal K_\xi$ as in~\eqref{TTT} is a contraction mapping from~$B$ into itself with respect to the norm~$\|\cdot\|_{\star,\xi}$.}}
\end{equation}

First we prove that
\begin{equation}\label{set ok}
{\mbox{if~$\psi\in B$ then~$\mathcal K_\xi(\psi)\in B$.}}
\end{equation}
Indeed, if~$\psi\in B$, we have that
\begin{equation}\label{NNNNNNN}
\|N(\psi)\|_{\star,\xi}\le  C_1\left( \|\psi\|^2_{\star,\xi} + \|\psi\|^p_{\star,\xi}\right)
\end{equation}
thanks to Lemma~\ref{lemma:N}.

Now, thanks to~\eqref{est fin}, we have that
$$ \|\mathcal K_\xi(\psi)\|_{\star,\xi} =\|\mathcal T_\xi[E(\psi)+N(\psi)]\|_{\star,\xi} \le C\|E(\psi)+N(\psi)\|_{\star,\xi}. $$
This, Lemma~\ref{lem E} and~\eqref{NNNNNNN} give that
\begin{equation}\begin{split}\label{qqqqqqqqqqqq}
\|\mathcal K_\xi(\psi)\|_{\star,\xi} &\le \, C\left(\|E(\psi)\|_{\star,\xi} +\|N(\psi)\|_{\star,\xi}\right)\\
&\le \, C\left(\|E(\psi)\|_{\star,\xi} +C_1\left( \|\psi\|^2_{\star,\xi} + \|\psi\|^p_{\star,\xi}\right)\right)\\
&\le \, C \left( \bar C\, \epsilon^{n+2s} + C_1\,C_0^2\,\epsilon^{2(n+2s)} + C_1\, C_0^p\,\epsilon^{p(n+2s)} \right)\\
&= \, C_0\,\epsilon^{n+2s}\left(\frac{C\,\bar C}{C_0}+ C\,C_1\,C_0\epsilon^{n+2s}+ C\,C_1\, C_0^{p-1}\epsilon^{(p-1)(n+2s)}\right),
\end{split}\end{equation}
since~$\psi\in B$. We assume
\begin{equation}\label{choiceeeee}
C_0>2C\bar C
\end{equation}
and
\begin{equation}\label{choice2}
\epsilon<\epsilon_1:=\left\{\begin{matrix}
\left( \frac{1}{2C\,C_1(C_0+C_0^{p-1})}\right)^{1/(n+2s)} &{\mbox{ if }}p\ge 2,\\
\left( \frac{1}{2C\,C_1(C_0+C_0^{p-1})} \right)^{1/(p-1)(n+2s)}& {\mbox{ if }} 1<p<2. \end{matrix}\right.
\end{equation}
With this choice of~$C_0$ and ~$\epsilon$,~\eqref{qqqqqqqqqqqq} implies that
$$ \|\mathcal K_\xi(\psi)\|_{\star,\xi}\le C_0\,\epsilon^{n+2s},$$
which proves~\eqref{set ok}.

Now, we take~$\psi_1,\psi_2\in B$. Then,
\begin{eqnarray*}
|N(\psi_1)-N(\psi_2)| &=& |(w_\xi+\psi_1)^p-(w_\xi+\psi_2)^p-pw_\xi^{p-1}\left(\psi_1-\psi_2\right)|\\
&\le & C_2\left(|\psi_1|+|\psi_2|+|\psi_1|^{p-1}+|\psi_2|^{p-1}\right)|\psi_1-\psi_2|.
\end{eqnarray*}
This and the fact that~$\psi_1,\psi_2\in B$ give that
\begin{equation}\begin{split}\label{diff N}
\|N(\psi_1)-N(\psi_2)\|_{\star,\xi} &\le \, C_2\left(\|\psi_1\|_{\star,\xi} +\|\psi_2\|_{\star,\xi} +\|\psi_1\|_{\star,\xi}^{p-1}+\|\psi_2\|_{\star,\xi}^{p-1}\right)\|\psi_1-\psi_2\|_{\star,\xi} \\
&\le \, C_2 \left( 2C_0\,\epsilon^{n+2s}+2C_0^{p-1}\,\epsilon^{(p-1)(n+2s)}\right)\|\psi_1-\psi_2\|_{\star,\xi}\\
&\le \, 2\,C_2 \left(C_0+C_0^{p-1}\right)\epsilon^{q(n+2s)}\|\psi_1-\psi_2\|_{\star,\xi},
\end{split}\end{equation}
where~$q$ is defined in~\eqref{Eq}.

We claim that
\begin{equation}\label{E.0}
|E(\psi_1)-E(\psi_2)|\le C |\bar u_\xi-w_\xi|^q\,|\psi_1-\psi_2|,\end{equation}
where~$q$ is given in~\eqref{Eq}.

Fixed~$x\in\Omega_\eps$, given~$\tau$ in a bounded subset
of~$\R$ we consider
the function
$$ e(\tau):= (\bar u_\xi(x)+\tau)^p-(w_\xi(x)+\tau)^p.$$
We have that
$$ |e'(\tau)|= p\Big|
(\bar u_\xi(x)+\tau)^{p-1}-(w_\xi(x)+\tau)^{p-1}
\Big| \le C |\bar u_\xi-w_\xi|^q,$$
where we used \eqref{E.1} with~$a:=\bar u_\xi(x)+\tau$
and~$b:=w_\xi(x)+\tau$. This gives that
\begin{equation}\label{E.2}
|e(\tau_1)-e(\tau_2)|\le C |\bar u_\xi-w_\xi|^q\,|\tau_1-\tau_2|.\end{equation}
Now we take~$\tau_1:=\psi_1(x)$ and~$\tau_2:=\psi_2(x)$:
we remark that $\tau_1$ and~$\tau_2$ range in a bounded set
by our definition of~$B$ and that~$e(\tau_i)=E(\psi_i)$.
Thus~\eqref{E.0} follows from~\eqref{E.2}

Hence, from~\eqref{E.0} and~\eqref{u meno w}, we obtain that
$$ \|E(\psi_1)-E(\psi_2)\|_{\star,\xi}\le \tilde C \eps^{q(n+2s)}\|\psi_1-\psi_2\|_{\star,\xi}.$$
This,~\eqref{diff N} and~\eqref{est fin} give that
\begin{equation}\begin{split}\label{doppio}
&\|\mathcal K_\xi(\psi_1)-\mathcal K_\xi(\psi_2)\|_{\star,\xi} \\
&\quad \le \,
C\left(\|E(\psi_1)-E(\psi_2)\|_{\star,\xi} +\|N(\psi_1)-N(\psi_2)\|_{\star,\xi}\right) \\
&\quad \le \, C\left( 2\,C_2 \left(C_0+C_0^{p-1}\right)\epsilon^{q(n+2s)}+ \tilde C \eps^{q(n+2s)}\right)\|\psi_1-\psi_2\|_{\star,\xi}.
\end{split}\end{equation}
Now, we denote by
\begin{equation*}
\epsilon_2:=\left(\frac{1}{C(2\,C_2(C_0+C_0^{p-1})+\tilde C)}\right)^{1/q(n+2s)}.
\end{equation*}
Therefore, recalling also~\eqref{choiceeeee} and~\eqref{choice2},
we obtain that if
\begin{equation}\label{choice fin}
C_0>2C\bar C {\mbox{ and }}\epsilon<\min\{\epsilon_1,\epsilon_2\}
\end{equation}
then, from~\eqref{doppio} we have that
$$ \|\mathcal K_\xi(\psi_1)-\mathcal K_\xi(\psi_2)\|_{\star,\xi}<\|\psi_1-\psi_2\|_{\star,\xi},$$
which concludes the proof of~\eqref{contr arg}.

From~\eqref{contr arg}, we obtain the existence of a unique
solution to~\eqref{EQ nonlinear} which belongs to~$B$.
This shows~\eqref{small norm} and concludes the proof of
Theorem~\ref{th:psi}.
\end{proof}

For any~$\xi\in\Omega_\epsilon$, we say that
\begin{equation}\label{unique}
{\mbox{$\Psi(\xi)$ is the unique solution to~\eqref{EQ nonlinear}.}}
\end{equation}
Arguing as the proof of Proposition~$5.1$ in~\cite{DDW}, one can also prove the following:

\begin{pro}\label{prop der psi}
The map~$\xi\mapsto\Psi(\xi)$ is of class~$C^1$, and
$$ \left\|\frac{\partial\Psi(\xi)}{\partial\xi}\right\|_{\star,\xi}\le C\left(\|E(\Psi(\xi))\|_{\star,\xi} +\left\|\frac{\partial E(\Psi(\xi))}{\partial\xi}\right\|_{\star,\xi}\right),$$
for some constant~$C>0$.
\end{pro}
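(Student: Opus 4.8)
The plan is to view $\Psi(\xi)$ as the fixed point, in the ball $B\subset Y_\star$ introduced in the proof of Theorem~\ref{th:psi}, of the map
$$ \mathcal K_\xi(\psi):=\mathcal T_\xi\big[E(\psi)+N(\psi)\big], $$
and to differentiate this relation in $\xi$ by the implicit function theorem for uniform contractions, following the scheme of Proposition~$5.1$ in~\cite{DDW}. First I would recall that, by the proof of Theorem~\ref{th:psi}, for $\eps$ small the Lipschitz constant of $\psi\mapsto\mathcal K_\xi(\psi)$ on $B$ is bounded by a fixed constant strictly below $1$, uniformly in the admissible $\xi$. The statement then reduces to showing that $(\xi,\psi)\mapsto\mathcal K_\xi(\psi)$ is of class $C^1$ from $\Omega_\eps\times B$ into $\big(Y_\star,\|\cdot\|_{\star,\xi}\big)$: the uniform-contraction implicit function theorem (see e.g.\ Section~$2.2.1$ of~\cite{AM}, in particular Lemma~$2.11$ there) then yields that $\xi\mapsto\Psi(\xi)$ is $C^1$.

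For the $C^1$ regularity of $\mathcal K_\xi$, the smoothness of $\xi\mapsto\mathcal T_\xi\in\mathcal L(Y_\star)$ is precisely Proposition~\ref{prop:der}, so the remaining point is the differentiability of $\psi\mapsto E(\psi)+N(\psi)$ and of its explicit $\xi$-dependence through $w_\xi$ and $\bar u_\xi$. Here I would use
$$ \partial_\psi E(\psi)[\varphi]=p\big[(\bar u_\xi+\psi)^{p-1}-(w_\xi+\psi)^{p-1}\big]\varphi,\qquad
\partial_\psi N(\psi)[\varphi]=p\big[(w_\xi+\psi)^{p-1}-w_\xi^{p-1}\big]\varphi, $$
and verify, by the same computations already performed for~\eqref{contr arg} and~\eqref{diff N} (the elementary bound~\eqref{E.1}, the estimate~\eqref{u meno w} for $|\bar u_\xi-w_\xi|$, the decay~\eqref{decay w}, and the smallness $\|\psi\|_{\star,\xi}\le C\eps^{n+2s}$ on $B$), that these define bounded operators on $Y_\star$, continuous in $\psi$, with norms $O\big(\eps^{q(n+2s)}\big)$ and $O\big(\|\psi\|_{\star,\xi}+\|\psi\|_{\star,\xi}^{q}\big)$, $q$ as in~\eqref{Eq}; the explicit $\xi$-derivatives are handled through the differentiability of $\xi\mapsto w_\xi$ and $\xi\mapsto\bar u_\xi$ using~\eqref{der u meno w} and the decay of the derivatives of $w$ from Lemmata~\ref{:decay Z} and~\ref{lem:decay DZ}. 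In particular $\partial_\psi\mathcal K_\xi(\Psi)=\mathcal T_\xi\circ\big(\partial_\psi E(\Psi)+\partial_\psi N(\Psi)\big)$ has, by Theorem~\ref{linear} and these bounds, operator norm $\le C\big(\eps^{q(n+2s)}+\eps^{n+2s}\big)<1$ for $\eps$ small, so $\mathrm{Id}-\partial_\psi\mathcal K_\xi(\Psi)$ is invertible.

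For the quantitative bound, once $\Psi\in C^1$ is known I would differentiate the identity $\Psi(\xi)=\mathcal T_\xi\big[E(\Psi(\xi))+N(\Psi(\xi))\big]$ and apply Proposition~\ref{prop:der} (whose statement already allows the datum to depend on $\xi$), getting
$$ \Big\|\frac{\partial\Psi}{\partial\xi}\Big\|_{\star,\xi}\le C\Big(\big\|E(\Psi)+N(\Psi)\big\|_{\star,\xi}+\Big\|\frac{\partial}{\partial\xi}\big(E(\Psi)+N(\Psi)\big)\Big\|_{\star,\xi}\Big). $$
Expanding the last norm by the chain rule produces the term $\big(\partial_\psi E(\Psi)+\partial_\psi N(\Psi)\big)\big[\partial_\xi\Psi\big]$, whose $\star$-norm is $\le C\big(\eps^{q(n+2s)}+\eps^{n+2s}\big)\|\partial_\xi\Psi\|_{\star,\xi}$ by the operator bounds above, and hence can be absorbed into the left-hand side for $\eps$ small. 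It remains to observe that $\|N(\Psi)\|_{\star,\xi}$ and the explicit $\xi$-derivative of $N(\Psi)$ are dominated by a multiple of $\|E(\Psi)\|_{\star,\xi}+\|\partial_\xi E(\Psi)\|_{\star,\xi}$: indeed Lemma~\ref{lemma:N} gives $\|N(\Psi)\|_{\star,\xi}\le C(\|\Psi\|_{\star,\xi}^2+\|\Psi\|_{\star,\xi}^p)$, while the fixed-point relation and Theorem~\ref{linear} yield $\|\Psi\|_{\star,\xi}\le C(\|E(\Psi)\|_{\star,\xi}+\|N(\Psi)\|_{\star,\xi})$, hence $\|\Psi\|_{\star,\xi}\le C\|E(\Psi)\|_{\star,\xi}$ and thus $\|N(\Psi)\|_{\star,\xi}\le C\eps^{n+2s}\|E(\Psi)\|_{\star,\xi}$ by~\eqref{small norm}; the $N$-derivative term is treated analogously. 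This yields the asserted estimate. The step I expect to be the main obstacle is the joint $C^1$ dependence of $\mathcal K_\xi$ in the weighted $\star$-norm — the differentiability of the Nemytskii-type maps $E$, $N$ and of the $\xi$-dependence through $w_\xi$, $\bar u_\xi$, together with the quantitative smallness of $\partial_\psi\mathcal K_\xi(\Psi)$ — after which the conclusion is the routine uniform-contraction argument plus the absorption described above.
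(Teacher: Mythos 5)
Your approach is the same as the paper's: the paper gives no proof and simply cites Proposition~$5.1$ of~\cite{DDW} (an implicit-function-theorem argument for uniform contractions), which is exactly the scheme you lay out. The $C^1$ part of your outline — uniform contraction of $\mathcal K_\xi$ on $B$, joint $C^1$ dependence of $(\xi,\psi)\mapsto\mathcal K_\xi(\psi)$, and invertibility of $\mathrm{Id}-\partial_\psi\mathcal K_\xi(\Psi)$ — is correct and is the standard route.

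The quantitative step, however, has a gap that you pass over in ``the $N$-derivative term is treated analogously.'' The explicit $\xi$-derivative of $N(\Psi)$ contains the term $p\big[(w_\xi+\Psi)^{p-1}-w_\xi^{p-1}\big]\partial_\xi w_\xi$, and the only uniform bound the paper has for the bracket (via~\eqref{E.1}) is $\le C|\Psi|^q$ with $q=\min\{1,p-1\}$. Estimating as in~\eqref{gheerrr} gives a contribution of order $\|\Psi\|_{\star,\xi}^q$ to $\|\partial_\xi N(\Psi)\|_{\star,\xi}$. When $p\ge 2$ (so $q=1$) this is $\le C\|\Psi\|_{\star,\xi}\le C\|E(\Psi)\|_{\star,\xi}$ and your chain closes. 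But when $p\in(1,2)$ (so $q=p-1<1$), the inequality $\|\Psi\|_{\star,\xi}\le C\|E(\Psi)\|_{\star,\xi}$ only yields $\|\Psi\|_{\star,\xi}^q\le C\|E(\Psi)\|_{\star,\xi}^q$, and since $\|E(\Psi)\|_{\star,\xi}=O(\eps^{n+2s})$ is small, $\|E(\Psi)\|_{\star,\xi}^q$ is \emph{larger} than $\|E(\Psi)\|_{\star,\xi}$, so this term is not dominated by $\|E(\Psi)\|_{\star,\xi}+\|\partial_\xi E(\Psi)\|_{\star,\xi}$. What you actually get by this route for $p<2$ is $\|\partial_\xi\Psi\|_{\star,\xi}\le C\big(\|E(\Psi)\|_{\star,\xi}+\|\partial_\xi E(\Psi)\|_{\star,\xi}+\eps^{q(n+2s)}\big)$, which is weaker than the stated inequality but still suffices for the only way the proposition is used in the paper (in Lemma~\ref{lem:reduction}, only $\partial_\xi\Psi=O(\eps^\gamma)$ for some $\gamma>0$ is needed). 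If you want to recover the stated bound literally, you would need a sharper pointwise estimate on $(w_\xi+\Psi)^{p-1}-w_\xi^{p-1}$ that is linear in $\Psi$ where $w_\xi$ dominates, rather than the crude $|\Psi|^q$ bound; as written, ``analogously'' does not close that step.
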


\subsection{Derivative estimates}

Here we deal with the derivatives of the solution~$\psi=\Psi(\xi)$
to~\eqref{EQ nonlinear} with respect to~$\xi$. This will
also imply derivative estimates for the error term $\xi\mapsto
E(\Psi(\xi))$.

We first show the following
\begin{lem}\label{lem:aggiunto}
Let~$\psi\in\Psi$ be a solution\footnote{We remark that a solution that
fulfils the assumptions of Lemma~\ref{lem:aggiunto}
is provided by Theorem~\ref{th:psi}, as long as~$\epsilon$ is sufficiently small.}
to~\eqref{EQ nonlinear}, with~$\|\psi\|_{\star,\xi}\le C\eps^{n+2s}$.
Then, there exist positive constants~$C$ and~$\gamma$ such that
$$ \left\|\frac{\partial E(\psi)}{\partial\xi}\right\|_{\star,\xi}\le 
C\left(\eps^{q(n+2s)}\left\|\frac{\partial\psi}{\partial\xi}\right\|_{\star,\xi}
+\epsilon^\gamma\right),$$
where $q$ is defined in \eqref{Eq}.
\end{lem}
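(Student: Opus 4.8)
\emph{Plan.} Since the statement involves $\partial E(\psi)/\partial\xi$, the solution $\psi$ is understood as $\psi=\Psi(\xi)$, the unique solution to~\eqref{EQ nonlinear} furnished by Theorem~\ref{th:psi}, which is $C^1$ in~$\xi$ by Proposition~\ref{prop der psi}. The plan is to differentiate the explicit formula $E(\psi)=(\bar u_\xi+\psi)^p-(w_\xi+\psi)^p$ from~\eqref{N psi} with respect to (each component of)~$\xi$, remembering that $\psi$ itself depends on~$\xi$, and then to bound every resulting piece in the $\star$-norm. Writing $a:=\bar u_\xi+\psi$ and $b:=w_\xi+\psi$, so that $a-b=\bar u_\xi-w_\xi$, a direct computation gives
\begin{equation}\label{dexi decomp}
\frac{\partial E(\psi)}{\partial\xi}= p\big(a^{p-1}-b^{p-1}\big)\,\frac{\partial\psi}{\partial\xi}
+ p\,a^{p-1}\Big(\frac{\partial\bar u_\xi}{\partial\xi}-\frac{\partial w_\xi}{\partial\xi}\Big)
+ p\big(a^{p-1}-b^{p-1}\big)\,\frac{\partial w_\xi}{\partial\xi},
\end{equation}
which we may call $(\mathrm I)$, $(\mathrm{II})$ and $(\mathrm{III})$. (As in the footnote to~\eqref{N psi} one should really use the positive parts $a_+$, $b_+$ so that~\eqref{E.1} applies; by the maximum principle this is immaterial a posteriori, and we suppress it.)

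\emph{The terms $(\mathrm I)$ and $(\mathrm{III})$.} Here I would invoke the elementary inequality~\eqref{E.1}, with $\kappa$ an upper bound for $a$ and $b$ (available since $w_\xi$, $\bar u_\xi$ are bounded and $\|\psi\|_{\star,\xi}\le C\eps^{n+2s}$), together with $a-b=\bar u_\xi-w_\xi$ and~\eqref{u meno w}, to get the pointwise bound $|a^{p-1}-b^{p-1}|\le C\eps^{q(n+2s)}$, with $q$ as in~\eqref{Eq}. Consequently $\rho_\xi^{-1}|(\mathrm I)|\le C\eps^{q(n+2s)}\,\rho_\xi^{-1}|\partial_\xi\psi|$, so $\|(\mathrm I)\|_{\star,\xi}\le C\eps^{q(n+2s)}\|\partial_\xi\psi\|_{\star,\xi}$, which is exactly the first term in the claimed estimate. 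For $(\mathrm{III})$ the same bound reduces the task to controlling $\rho_\xi^{-1}|\partial_\xi w_\xi|=(1+|x-\xi|)^\mu|\nabla w(x-\xi)|$, which is bounded because $|\nabla w(x-\xi)|\le C(1+|x-\xi|)^{-\nu_1}$ by Lemma~\ref{:decay Z} and $\nu_1>n+2s>\mu$; hence $\|(\mathrm{III})\|_{\star,\xi}\le C\eps^{q(n+2s)}$.

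\emph{The term $(\mathrm{II})$.} By~\eqref{der u meno w} we have $|\partial_\xi\bar u_\xi-\partial_\xi w_\xi|\le C\eps^{\nu_1}$ pointwise, so $\rho_\xi^{-1}|(\mathrm{II})|\le C\eps^{\nu_1}(1+|x-\xi|)^\mu a^{p-1}$. Now $a=\bar u_\xi+\psi$ vanishes outside $\Omega_\eps$ (both $\bar u_\xi$ and $\psi$ do), and on $\Omega_\eps$ one has, using~\eqref{decay w}, $0\le\bar u_\xi\le w_\xi$ (from~\eqref{25}) and $\|\psi\|_{\star,\xi}\le C\eps^{n+2s}$ together with $\mu<n+2s$ and $\eps\le1$,
\[ a\le C(1+|x-\xi|)^{-(n+2s)}+C\eps^{n+2s}(1+|x-\xi|)^{-\mu}\le C(1+|x-\xi|)^{-\mu}, \]
hence $a^{p-1}\le C(1+|x-\xi|)^{-\mu(p-1)}$ and $(1+|x-\xi|)^\mu a^{p-1}\le C(1+|x-\xi|)^{\mu(2-p)}$ on the support of~$a$. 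When $p\ge2$ this exponent is $\le0$, so $(1+|x-\xi|)^\mu a^{p-1}\le C$ and $\|(\mathrm{II})\|_{\star,\xi}\le C\eps^{\nu_1}$. When $1<p<2$ the exponent $\mu(2-p)$ is positive, but since $\Omega$ is bounded we have $\Omega_\eps\subseteq B_{C/\eps}(\xi)$, so $(1+|x-\xi|)^\mu a^{p-1}\le C\eps^{-\mu(2-p)}$ on the support of~$a$ and $\|(\mathrm{II})\|_{\star,\xi}\le C\eps^{\nu_1-\mu(2-p)}$; an arithmetic check gives $\nu_1-\mu(2-p)>0$, since $\nu_1>(n+2s)(2-p)>\mu(2-p)$ (the first inequality reducing, for $\nu_1=p(n+2s)$, to $p>2-p$, and for $\nu_1=n+2s+1$, to $(p-1)(n+2s)>-1$, both true for $p>1$). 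Collecting the three bounds yields the Lemma with $\gamma:=\min\{q(n+2s),\ \nu_1-\mu\max\{2-p,0\}\}>0$.

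\emph{Main obstacle.} The only genuinely delicate point is the term $(\mathrm{II})$ in the sublinear regime $1<p<2$: there the pointwise control on $a^{p-1}$ is weaker than on $a$, so the weight $(1+|x-\xi|)^\mu$ built into the $\star$-norm is not absorbed by the decay of $a^{p-1}$, and one loses a factor $\eps^{-\mu(2-p)}$. This loss is compensated only by the combination of (a) the compact support of $\bar u_\xi$ and $\psi$ inside $\Omega_\eps$, which caps $|x-\xi|$ at order $\eps^{-1}$, and (b) the inequality $\nu_1>(n+2s)(2-p)$ for $p>1$, which ensures that what is lost is strictly smaller than the power $\eps^{\nu_1}$ supplied by~\eqref{der u meno w}. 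Everything else is routine bookkeeping with the already-established decay estimates.
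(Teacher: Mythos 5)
Your proof is correct and follows essentially the same route as the paper: the decomposition in your equation matches the paper's expansion of $\partial_\xi E(\psi)$ exactly, and the key ingredients (\eqref{E.1}, \eqref{u meno w}, \eqref{der u meno w}, the decay of $Z_i$, and the cap $|x-\xi|\lesssim\eps^{-1}$ on the support of $\bar u_\xi+\psi$) are the same. The only cosmetic difference is in the term $(\mathrm{II})$: you bound $a=\bar u_\xi+\psi$ directly by $C\rho_\xi$, while the paper splits $(|\bar u_\xi|+|\psi|)^{p-1}$ into contributions from $\bar u_\xi$ and $\psi$ and estimates each separately (the claim \eqref{121bis}); both reduce to the same positivity check on the final exponent.
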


\begin{proof}
First of all, we observe that, thanks to Proposition~\ref{prop:der}
(applied here with~$g:=-\left(E(\psi)+N(\psi)\right)$),
the function~$\frac{\partial\psi}{\partial\xi}$ is
well defined.

We make the following computations: from~\eqref{N psi} we have that
\begin{eqnarray*}
\frac{\partial E(\psi)}{\partial\xi} &=&
p(\bar u_\xi+\psi)^{p-1}\,\left( \frac{\partial \bar u_\xi}{\partial \xi}
+\frac{\partial \psi}{\partial \xi}\right)-
p(w_\xi+\psi)^{p-1}\,\left( \frac{\partial w_\xi}{\partial \xi}
+\frac{\partial \psi}{\partial \xi}\right)\\
&=& p\frac{\partial\psi}{\partial\xi}\,\left[ (\bar u_\xi+\psi)^{p-1}-
(w_\xi+\psi)^{p-1}
\right]+p(\bar u_\xi+\psi)^{p-1}\frac{\partial \bar u_\xi}{\partial\xi}
-p(w_\xi+\psi)^{p-1}\frac{\partial w_\xi}{\partial\xi}\\
&=&
p\frac{\partial\psi}{\partial\xi}\,\left[ (\bar u_\xi+\psi)^{p-1}-
(w_\xi+\psi)^{p-1}
\right]+p(\bar u_\xi+\psi)^{p-1}\left(
\frac{\partial \bar u_\xi}{\partial\xi}-
\frac{\partial w_\xi}{\partial\xi}\right)
\\ &&\quad+p\,\Big[
(\bar u_\xi+\psi)^{p-1}-(w_\xi+\psi)^{p-1}
\Big]\frac{\partial w_\xi}{\partial\xi}.
\end{eqnarray*}
Thus, recalling \eqref{E.1},
\eqref{u meno w} and~\eqref{der u meno w}, we infer that
\begin{equation}\label{s9ffjkkjf}
\begin{split}
\left|\frac{\partial E(\psi)}{\partial\xi}\right|\,&\leq
Cp\left|\frac{\partial\psi}{\partial\xi}\right|\,
|\bar u_\xi-w_\xi|^q
+p\,\big(|\bar u_\xi|+|\psi|\big)^{p-1}\,\left|
\frac{\partial \bar u_\xi}{\partial\xi}-
\frac{\partial w_\xi}{\partial\xi}\right|
\\ &\quad+Cp\,|\bar u_\xi-w_\xi|^q
\,\left|\frac{\partial w_\xi}{\partial\xi}\right| \\
&\leq
C\left|\frac{\partial\psi}{\partial\xi}\right|\,
\eps^{q(n+2s)}
+C\,\big(|\bar u_\xi|+|\psi|\big)^{p-1}\,\eps^{\nu_1}
\\ &\quad+C\eps^{q(n+2s)}
\,\left|\frac{\partial w_\xi}{\partial\xi}\right|
\end{split}\end{equation}
for some $C>0$. Now, we claim that
\begin{equation}\begin{split}\label{121bis}
&\sup_{x\in\R^n}(1+|x-\xi|)^\mu\, |\bar u_\xi(x)|^{p-1}\eps^{\nu_1}\le C\eps^{\gamma}\\
{\mbox{and }} \
&\sup_{x\in\R^n}(1+|x-\xi|)^\mu\, |\psi(x)|^{p-1}\eps^{\nu_1}\le C\eps^{\gamma},
\end{split}\end{equation}
for suitable~$C>0$ and~$\gamma>0$. Let us prove the first inequality in~\eqref{121bis}. For this, we use that $\bar u_\xi$ vanishes outside $\Omega_\eps$, together with~\eqref{u meno w} and~\eqref{decay w}, to see that
\begin{equation}\begin{split}\label{8237585fhv}
&\sup_{x\in\R^n}(1+|x-\xi|)^\mu\, |\bar u_\xi(x)|^{p-1}\eps^{\nu_1}\\
=&\,\sup_{x\in\Omega_\eps}(1+|x-\xi|)^\mu\, |\bar u_\xi(x)|^{p-1}\eps^{\nu_1}\\
\le &\, \sup_{x\in\Omega_\eps}(1+|x-\xi|)^\mu\, \eps^{(p-1)(n+2s)}\eps^{\nu_1} + \sup_{x\in\Omega_\eps}(1+|x-\xi|)^\mu\, |w_\xi(x)|^{p-1}\eps^{\nu_1}\\
\le &\, C\,\eps^{-\mu}\, \eps^{(p-1)(n+2s)}\eps^{\nu_1}+
\sup_{x\in\Omega_\eps}(1+|x-\xi|)^{\mu(p-1)}\, |w_\xi(x)|^{p-1}(1+|x-\xi|)^{\mu(2-p)}\eps^{\nu_1}\\
\le &\, C\,\eps^{-\mu}\, \eps^{(p-1)(n+2s)}\eps^{\nu_1}+
\|w_\xi\|_{\star,\xi}^{p-1}\eps^{-\mu(2-p)_+}\eps^{\nu_1}\\
\le &\, C\,\eps^{-\mu}\, \eps^{(p-1)(n+2s)}\eps^{\nu_1}+C\,\eps^{-\mu(2-p)_+}\eps^{\nu_1}.
\end{split}\end{equation}
Now we observe that
\begin{equation}\begin{split}\label{gfhw295ggrgr}
&-\mu+(p-1)(n+2s)+\nu_1 \\
=&\, \min\{-\mu+(p-1)(n+2s)+n+2s+1, -\mu+(p-1)(n+2s)+p(n+2s)\} \\
>&\, \min\{-(n+2s)+(p-1)(n+2s)+n+2s+1, -(n+2s)+(p-1)(n+2s)+p(n+2s)\}\\
=&\,\min\{(p-1)(n+2s)+1, (2p-2)(n+2s)\}>0.
\end{split}\end{equation}
Moreover, if~$p\ge 2$, then
$$ -\mu(2-p)_+ +\nu_1=\nu_1>0, $$
while if~$1<p<2$, then
\begin{equation*}\begin{split}
&-\mu(2-p)_+ +\nu_1 \\
=&\, \min\{-\mu(2-p)+n+2s+1, -\mu(2-p)+p(n+2s)\} \\
>&\, \min\{-(n+2s)(2-p)+n+2s+1, -(n+2s)(2-p)+p(n+2s)\}\\
=&\,\min\{(p-1)(n+2s)+1, (2p-2)(n+2s)\}>0.
\end{split}\end{equation*}
Using this and~\eqref{gfhw295ggrgr} into~\eqref{8237585fhv} we obtain the first
formula in~\eqref{121bis}. Now, we focus on the second inequality: from the
assumptions on~$\psi$ we have
\begin{equation}\begin{split}\label{13gfdgdf}
&\sup_{x\in\R^n}(1+|x-\xi|)^\mu\, |\psi(x)|^{p-1}\eps^{\nu_1} \\
=&\, \sup_{x\in\Omega_\eps}(1+|x-\xi|)^\mu\, |\psi(x)|^{p-1}\eps^{\nu_1}\\
=&\, \sup_{x\in\Omega_\eps}(1+|x-\xi|)^{\mu(p-1)}\, |\psi(x)|^{p-1}\,(1+|x-\xi|)^{\mu(2-p)}\,\eps^{\nu_1}\\
\le &\, \|\psi\|^{p-1}_{\star,\xi}\,\eps^{-\mu(2-p)_+}\,\eps^{\nu_1}\\
\le &\, C\,\eps^{(p-1)(n+2s)}\,\eps^{-\mu(2-p)_+}\,\eps^{\nu_1}.
\end{split}\end{equation}
If~$p\ge2$ we get the second inequality
in~\eqref{121bis}, as
desired, hence we focus on the case~$1<p<2$.
For this, we notice that
\begin{equation*}\begin{split}
&(p-1)(n+2s)-\mu(2-p)_++\nu_1 \\
=&\, \min\{(p-1)(n+2s)-\mu(2-p)+n+2s+1, (p-1)(n+2s)-\mu(2-p)+p(n+2s)\} \\
>&\, \min\{(p-1)(n+2s)-(2-p)(n+2s)+n+2s+1, (p-1)(n+2s)-(2-p)(n+2s)+p(n+2s)\}\\
=&\,\min\{(2p-2)(n+2s)+1, (3p-3)(n+2s)\}>0,
\end{split}\end{equation*}
and this, together with~\eqref{13gfdgdf}, implies the second inequality
in~\eqref{121bis} also in this case.
Hence the proof of~\eqref{121bis} is finished.

Exploiting~\eqref{121bis} and
Lemma~\ref{:decay Z}, we infer from~\eqref{s9ffjkkjf} that
\begin{equation}
\label{s88s8sssa8s}
\left\|\frac{\partial E(\psi)}{\partial\xi}\right\|_{\star,\xi}\leq
C\eps^{q(n+2s)}\left\|\frac{\partial\psi}{\partial\xi}\right\|_{\star,\xi}
+C\eps^{\gamma},
\end{equation}
for suitable~$C>0$ and~$\gamma>0$, and this concludes the proof of Lemma~\ref{lem:aggiunto},
up to renaming the constants.
\end{proof}

\begin{lem}\label{d9goioasdss}
Let~$\psi\in\Psi$ be a solution to~\eqref{EQ nonlinear}, with~$\|\psi\|_{\star,\xi}\le C\eps^{n+2s}$. Then, there
exists a positive constant~$C$ such that
$$ \left\|\frac{\partial\psi}{\partial\xi}\right\|_{\star,\xi}\le C.$$
\end{lem}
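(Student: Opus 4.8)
The plan is to differentiate the fixed point identity $\psi=\mathcal T_\xi[E(\psi)+N(\psi)]$ with respect to $\xi$, estimate each resulting term by the bounds already at our disposal, and then absorb the only dangerous term into the left-hand side using the smallness of $\eps$.

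First I would observe that, by the uniqueness statement in Theorem~\ref{th:psi}, the solution $\psi$ in the hypothesis is precisely $\Psi(\xi)$, so that Proposition~\ref{prop der psi} applies. This guarantees that $\xi\mapsto\psi$ is of class $C^1$; in particular $\frac{\partial\psi}{\partial\xi}$ is well defined and $\big\|\frac{\partial\psi}{\partial\xi}\big\|_{\star,\xi}<+\infty$, and moreover
$$ \left\|\frac{\partial\psi}{\partial\xi}\right\|_{\star,\xi}\le C\left(\|E(\psi)\|_{\star,\xi}+\left\|\frac{\partial E(\psi)}{\partial\xi}\right\|_{\star,\xi}\right). $$
Then I would estimate the two terms on the right. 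Since $\eps$ is small, $\|\psi\|_{\star,\xi}\le C\eps^{n+2s}\le1$, so Lemma~\ref{lem E} gives $\|E(\psi)\|_{\star,\xi}\le\bar C\,\eps^{n+2s}$. For the derivative term, Lemma~\ref{lem:aggiunto} (whose hypotheses are exactly the ones assumed here) yields
$$ \left\|\frac{\partial E(\psi)}{\partial\xi}\right\|_{\star,\xi}\le C\left(\eps^{q(n+2s)}\left\|\frac{\partial\psi}{\partial\xi}\right\|_{\star,\xi}+\eps^\gamma\right), $$
with $q=\min\{1,p-1\}>0$ and $\gamma>0$.

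Combining the three displays, I obtain
$$ \left\|\frac{\partial\psi}{\partial\xi}\right\|_{\star,\xi}\le C\left(\eps^{n+2s}+\eps^\gamma\right)+C\,\eps^{q(n+2s)}\left\|\frac{\partial\psi}{\partial\xi}\right\|_{\star,\xi}. $$
Since $q(n+2s)>0$, for $\eps$ sufficiently small one has $C\eps^{q(n+2s)}\le\frac12$; because the left-hand side is finite (this is the content of the first step), the last term may be absorbed, giving
$$ \left\|\frac{\partial\psi}{\partial\xi}\right\|_{\star,\xi}\le 2C\left(\eps^{n+2s}+\eps^\gamma\right), $$
which in particular is bounded by a constant independent of $\eps$, as claimed.

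I expect the only genuinely delicate point to be the a priori finiteness of $\big\|\frac{\partial\psi}{\partial\xi}\big\|_{\star,\xi}$, which must be secured before the absorption step is legitimate. This is precisely what the $C^1$ regularity in Proposition~\ref{prop der psi} provides (through the implicit function theorem argument recalled there), so once that proposition is invoked the remainder of the argument is a routine bookkeeping of the estimates in Lemmata~\ref{lem E} and~\ref{lem:aggiunto} together with the decay of $\eps$.
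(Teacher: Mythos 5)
Your proof is correct in its conclusion and shares the paper's final absorption step, but it is built on a different pillar. The paper deliberately uses Proposition~\ref{prop:der} (applied with $g:=-(E(\psi)+N(\psi))$), which gives
$$\left\|\frac{\partial\psi}{\partial\xi}\right\|_{\star,\xi}\le C\left(\|E(\psi)\|_{\star,\xi}+\|N(\psi)\|_{\star,\xi}+\left\|\frac{\partial E(\psi)}{\partial\xi}\right\|_{\star,\xi}+\left\|\frac{\partial N(\psi)}{\partial\xi}\right\|_{\star,\xi}\right),$$
and then must deal with $\partial N(\psi)/\partial\xi$ by hand: it differentiates $N$, estimates the result using~\eqref{E.1}, and establishes the auxiliary bound $w_\xi^{p-2}\,|\partial w_\xi/\partial\xi|\le C$ (via the two-sided decay of $w$ and of $\nabla w$), before arriving at an inequality it can absorb. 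You instead invoke Proposition~\ref{prop der psi}, whose stated estimate already has the $N$ contribution removed, so that Lemmata~\ref{lem E} and~\ref{lem:aggiunto} and a single absorption suffice.

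Your route is shorter, and formally it is sound given the statement of Proposition~\ref{prop der psi}. However, you should be aware of why the paper avoids it here: Proposition~\ref{prop der psi} is not proved in the paper (it is deferred to the argument of Proposition~5.1 in~\cite{DDW}), and its content --- a bound on $\|\partial\Psi/\partial\xi\|_{\star,\xi}$ in terms of $E$ alone --- is obtained precisely by absorbing the $\partial N/\partial\xi$ contribution, which is the very computation the paper's proof of Lemma~\ref{d9goioasdss} carries out. By quoting Proposition~\ref{prop der psi}, you are effectively outsourcing that calculation rather than eliminating it, and if one were to demand a self-contained verification of Proposition~\ref{prop der psi} one would be led back to the same $N$-estimates (in particular to the bound $w_\xi^{p-2}\,|\partial w_\xi/\partial\xi|\le C$, which is delicate when $1<p<2$ and would need to be checked). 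So the two approaches are not logically independent; yours trades self-containment for brevity. Your identification of the finiteness of $\|\partial\psi/\partial\xi\|_{\star,\xi}$ as the prerequisite for absorption is exactly right, and the quantitatively sharper conclusion $\|\partial\psi/\partial\xi\|_{\star,\xi}=O(\eps^{\min\{n+2s,\gamma\}})$ that your chain produces is also true --- the paper simply records the weaker $O(1)$ bound, which is all it needs.
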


\begin{proof}
We observe that, thanks to Proposition~\ref{prop:der}
(applied here with~$g:=-\left(E(\psi)+N(\psi)\right)$),
$$ \left\|\frac{\partial\psi}{\partial\xi}\right\|_{\star,\xi}\le
C\left(\|E(\psi)\|_{\star,\xi}+\|N(\psi)\|_{\star,\xi}+
\left\|\frac{\partial E(\psi)}{\partial\xi}\right\|_{\star,\xi}+
\left\|\frac{\partial N(\psi)}{\partial\xi}\right\|_{\star,\xi}\right).$$
Therefore, from Lemmata~\ref{lem E},~\ref{lemma:N} and~\ref{lem:aggiunto}, we obtain that
\begin{equation}\label{7s9sjjsjsjssss}
\left\|\frac{\partial\psi}{\partial\xi}\right\|_{\star,\xi}\le
C\left(1+\eps^{q(n+2s)}
\left\|\frac{\partial\psi}{\partial\xi}\right\|_{\star,\xi}+
\left\|\frac{\partial N(\psi)}{\partial\xi}\right\|_{\star,\xi}\right).
\end{equation}
Now we observe that, from~\eqref{N psi},
\begin{eqnarray*}
\frac{\partial N(\psi)}{\partial\xi} &=&
p(w_\xi+\psi)^{p-1}\left( \frac{\partial w_\xi}{\partial\xi}+
\frac{\partial \psi}{\partial\xi}\right)-pw_\xi^{p-1}
\frac{\partial w_\xi}{\partial\xi}-
p(p-1)w_\xi^{p-2}\frac{\partial w_\xi}{\partial\xi}\psi
-p w_\xi^{p-1}\frac{\partial \psi}{\partial\xi}
\\ &=&
p \Big[ (w_\xi+\psi)^{p-1}-w_\xi^{p-1}\Big]\,
\frac{\partial \psi}{\partial\xi}
+p \Big[ (w_\xi+\psi)^{p-1}-w_\xi^{p-1}\Big]\,
\frac{\partial w_\xi}{\partial\xi}
-p(p-1)w_\xi^{p-2}\frac{\partial w_\xi}{\partial\xi}\psi.
\end{eqnarray*}
As a consequence, using \eqref{E.1} once again,
\begin{equation}\label{df8gfis666www}
\left|\frac{\partial N(\psi)}{\partial\xi} \right|\le
C\, |\psi|^q\,\left[ \left|
\frac{\partial \psi}{\partial\xi}\right|
+\left|\frac{\partial w_\xi}{\partial\xi}\right|\right]
+C\,w_\xi^{p-2}\,\left|\frac{\partial w_\xi}{\partial\xi}\right|\,|\psi|.
\end{equation}
Now we claim that
\begin{equation}\label{s8s88ssssq123344}
w_\xi^{p-2}\,\left|\frac{\partial w_\xi}{\partial\xi}\right| \le C,
\end{equation}
for some $C>0$. When $p\ge2$, \eqref{s8s88ssssq123344}
follows from \eqref{decay w} and Lemma~\ref{:decay Z}, hence
we focus on the case~$p\in(1,2)$. In this case,
we take $\nu_1$ as in Lemma~\ref{:decay Z} and we
notice that
\begin{eqnarray*}
&& \tilde\nu:=\nu_1-(2-p)(n+2s)=
\min\{ n+2s+1+(p-2)(n+2s), \ (2p-2)(n+2s)\}\\ &&\qquad\qquad=
\min\{ (p-1)(n+2s)+1, \ (2p-2)(n+2s)\}>0.\end{eqnarray*}
Then we use \eqref{d88du44uuuuqqqqq}
and we obtain that
$$ w_\xi^{p-2}\,\left|\frac{\partial w_\xi}{\partial\xi}\right|\le C\,
|x-\xi|^{(2-p)(n+2s)}\,|x-\xi|^{-\nu_1}=C\,|x-\xi|^{-\tilde\nu}.$$
Since $w_\xi$ is positive and smooth in the vicinity of~$\xi$,
this proves~\eqref{s8s88ssssq123344}.

Now, using~\eqref{s8s88ssssq123344}
into~\eqref{df8gfis666www}, we obtain that
\begin{equation}\label{adasdadopoi}
\left|\frac{\partial N(\psi)}{\partial\xi} \right|\le
C\, |\psi|^q\,\left[ \left|
\frac{\partial \psi}{\partial\xi}\right|
+\left|\frac{\partial w_\xi}{\partial\xi}\right|\right]
+C\,|\psi|.
\end{equation}
We claim that
\begin{equation}\label{gheerrr}
\left\|\frac{\partial N(\psi)}{\partial\xi} \right\|_{\star,\xi}\le
C\, \|\psi\|_{\star,\xi}^q\,\left[ \left\|
\frac{\partial \psi}{\partial\xi}\right\|_{\star,\xi}
+\left\|\frac{\partial w_\xi}{\partial\xi}\right\|_{\star,\xi}\right]
+C\,\|\psi\|_{\star,\xi}.
\end{equation}
Indeed, the claim plainly follows from~\eqref{adasdadopoi}
if~$q=1$ (that is~$p\ge2$), hence
we focus on the case~$q=p-1$ (that is~$1<p<2$).
In this case, we observe that
\begin{eqnarray*}
&&(1+|x-\xi|)^{\mu}|\psi|^q\,\left[ \left|
\frac{\partial \psi}{\partial\xi}\right|
+\left|\frac{\partial w_\xi}{\partial\xi}\right|\right]\\
&=& (1+|x-\xi|)^{\mu\,q}|\psi|^q\,(1+|x-\xi|)^{\mu}\left[ \left|
\frac{\partial \psi}{\partial\xi}\right|
+\left|\frac{\partial w_\xi}{\partial\xi}\right|\right]\,(1+|x-\xi|)^{-\mu\,q}\\
& \le & C\|\psi\|^q_{\star,\xi}\,\left[ \left\|
\frac{\partial \psi}{\partial\xi}\right\|_{\star,\xi}
+\left\|\frac{\partial w_\xi}{\partial\xi}\right\|_{\star,\xi}\right],
\end{eqnarray*}
and this implies~\eqref{gheerrr} also in this case.

Hence, using our assumptions on~$\psi$, we deduce that
$$
\left\|\frac{\partial N(\psi)}{\partial\xi} \right\|_{\star,\xi}\le
C\eps^{q(n+2s)}\,\left[ \left\|
\frac{\partial \psi}{\partial\xi}\right\|_{\star,\xi}
+1\right]+C,
$$
up to renaming constants.
By inserting this into~\eqref{7s9sjjsjsjssss}
we conclude that
$$ \left\|\frac{\partial\psi}{\partial\xi}\right\|_{\star,\xi}\le
C+
\frac12\left\|\frac{\partial\psi}{\partial\xi}\right\|_{\star,\xi},$$
as long as $\eps$ is sufficiently small. By reabsorbing one term
into the left hand side,
we obtain the desired result.
\end{proof}

\begin{lem}\label{lem der E}
Let~$\psi\in\Psi$ be a solution to~\eqref{EQ nonlinear}, with~$\|\psi\|_{\star,\xi}\le C\eps^{n+2s}$. Then, there exist positive constants~$\tilde C$ and~$\gamma$ such that
$$ \left\|\frac{\partial E(\psi)}{\partial\xi}\right\|_{\star,\xi}\le \tilde C\,\epsilon^{\gamma}.$$
\end{lem}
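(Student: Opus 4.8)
The plan is to obtain the estimate as a straightforward composition of the two preceding lemmata, which have already isolated all the real work. First I would note that $\psi$ satisfies the hypotheses of both Lemma~\ref{lem:aggiunto} and Lemma~\ref{d9goioasdss}: by assumption $\psi\in\Psi$ solves~\eqref{EQ nonlinear} with $\|\psi\|_{\star,\xi}\le C\eps^{n+2s}$, and in particular $\frac{\partial\psi}{\partial\xi}$ is well defined in $Y_\star$ by Proposition~\ref{prop:der} applied with $g:=-(E(\psi)+N(\psi))$, exactly as in those two proofs.

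Then I would invoke Lemma~\ref{d9goioasdss} to get $\left\|\frac{\partial\psi}{\partial\xi}\right\|_{\star,\xi}\le C$, with $C$ independent of $\eps$ for $\eps$ sufficiently small, and insert this bound into the estimate of Lemma~\ref{lem:aggiunto}, which reads
$$
\left\|\frac{\partial E(\psi)}{\partial\xi}\right\|_{\star,\xi}\le
C\left(\eps^{q(n+2s)}\left\|\frac{\partial\psi}{\partial\xi}\right\|_{\star,\xi}+\eps^\gamma\right),
$$
where $q=\min\{1,p-1\}>0$ is the exponent from~\eqref{Eq} and $\gamma>0$ is the (positive) exponent provided by Lemma~\ref{lem:aggiunto}. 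This yields
$$
\left\|\frac{\partial E(\psi)}{\partial\xi}\right\|_{\star,\xi}\le C\left(\eps^{q(n+2s)}+\eps^\gamma\right).
$$

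Finally I would absorb the two positive powers of $\eps$ into a single one: setting $\bar\gamma:=\min\{q(n+2s),\gamma\}>0$ and using $\eps<1$, one concludes $\left\|\frac{\partial E(\psi)}{\partial\xi}\right\|_{\star,\xi}\le\tilde C\,\eps^{\bar\gamma}$ for a suitable $\tilde C>0$, which is precisely the claim after relabelling $\bar\gamma$ as $\gamma$. There is essentially no obstacle in this step: the only point requiring (brief) care is that the hypotheses of Lemmata~\ref{lem:aggiunto} and~\ref{d9goioasdss} are met and that the constants appearing there do not degenerate as $\eps\to0$, both of which are already built into their statements. If anything, the genuinely delicate work was the derivation of those two lemmata — notably the decay bound~\eqref{121bis} and the control of $\left\|\frac{\partial N(\psi)}{\partial\xi}\right\|_{\star,\xi}$ — which we are entitled to use here as black boxes.
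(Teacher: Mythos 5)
Your proof is correct and follows exactly the route the paper takes: the paper's proof is simply the one-line remark that the claim follows by combining Lemma~\ref{lem:aggiunto} and Lemma~\ref{d9goioasdss}, which is precisely the composition you carry out. The only thing you spell out in more detail than the paper is the harmless step of taking $\bar\gamma=\min\{q(n+2s),\gamma\}$ and relabelling.
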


\begin{proof}
The proof easily follows from Lemmata~\ref{lem:aggiunto} and~\ref{d9goioasdss},
up to renaming the constants.
\end{proof}

\subsection{The variational reduction}
We are looking for solutions to~\eqref{EQ eps} of the form~\eqref{ruybc},
that is, recalling also~\eqref{unique},
\begin{equation}\label{def u xi}
u_\xi= \bar u_\xi +\Psi(\xi).
\end{equation}

We observe that, thanks to~\eqref{EQ bar u} and~\eqref{EQ nonlinear}, the function~$u_\xi$ satisfies the equation
\begin{equation}\label{EQ final}
(-\Delta)^s u_\xi +u_\xi =u_\xi^p+\sum_{i=1}^nc_i\,Z_i {\mbox{ in }}\Omega_\epsilon.
\end{equation}
Notice that if~$c_i=0$ for any~$i=1,\ldots,n$ then we will have a solution to~\eqref{EQ eps}. Hence, aim of this subsection is to find a suitable point~$\xi\in\Omega_\epsilon$
such that all the coefficients~$c_i$,~$i=1,\ldots,n$, in~\eqref{EQ final} vanish.

In order to do this, we define the
functional~$J_\epsilon:\Omega_\epsilon\rightarrow\R$ as
\begin{equation}\label{func red}
J_\epsilon(\xi):=I_\epsilon(\bar u_\xi+\Psi(\xi))=I_\epsilon(u_\xi) {\mbox{ for any }}\xi\in\Omega_\epsilon,
\end{equation}
where~$I_\epsilon$ is introduced in~\eqref{energy functional}.
We have the following characterization:
\begin{lem}\label{lem:reduction}
If~$\epsilon>0$ is sufficiently small, the coefficients~$c_i$,~$i=1,\ldots,n$, in~\eqref{EQ final} are equal to zero
if and only if~$\xi$ satisfies the following condition
$$ \frac{\partial J_\epsilon}{\partial\xi}(\xi)=0.$$
\end{lem}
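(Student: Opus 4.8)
The plan is to show that the vanishing of the Lagrange multipliers $c_i$ is equivalent to $\xi$ being a critical point of the reduced functional $J_\epsilon$, by differentiating $J_\epsilon$ and testing the equation \eqref{EQ final} against $\partial u_\xi/\partial\xi_j$. First I would compute, for each $j\in\{1,\dots,n\}$,
$$ \frac{\partial J_\epsilon}{\partial\xi_j}(\xi)=\langle I'_\epsilon(u_\xi),\ \frac{\partial u_\xi}{\partial\xi_j}\rangle = \int_{\Omega_\epsilon}\Big((-\Delta)^s u_\xi+u_\xi-u_\xi^p\Big)\,\frac{\partial u_\xi}{\partial\xi_j}\,dx, $$
where the differentiation under the integral sign is justified because $\xi\mapsto\bar u_\xi$ and, by Proposition~\ref{prop der psi}, $\xi\mapsto\Psi(\xi)$ are $C^1$ maps into the relevant spaces (with the decay encoded in $\|\cdot\|_{\star,\xi}$ guaranteeing integrability). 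Using \eqref{EQ final}, the bracket equals $\sum_{i=1}^n c_i Z_i$, so
$$ \frac{\partial J_\epsilon}{\partial\xi_j}(\xi)=\sum_{i=1}^n c_i\int_{\Omega_\epsilon}Z_i\,\frac{\partial u_\xi}{\partial\xi_j}\,dx. $$

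Next I would analyze the matrix $M_{ji}:=\int_{\Omega_\epsilon}Z_i\,\partial_{\xi_j}u_\xi\,dx$ and show it is invertible for small $\epsilon$. Writing $u_\xi=\bar u_\xi+\Psi(\xi)$ and $\bar u_\xi=w_\xi-\Lambda_\xi-\Pi_\epsilon(\cdot,\xi)$ from \eqref{24}, one has $\partial_{\xi_j}w_\xi=-Z_j$ (recall \eqref{Zj}), so the leading contribution to $M_{ji}$ is $-\int_{\Omega_\epsilon}Z_iZ_j\,dx=-\alpha\delta_{ij}+O(\epsilon^\nu)$ by Corollary~\ref{cor:orto}. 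The remaining pieces $\partial_{\xi_j}(\Lambda_\xi+\Pi_\epsilon)$ are controlled using Lemma~\ref{der u meno w} together with the estimates of Section~\ref{S3:u} (they are $o(1)$ in the relevant norms), and $\partial_{\xi_j}\Psi(\xi)$ is controlled by Lemma~\ref{lem der E} and Proposition~\ref{prop der psi}, giving $\|\partial_{\xi_j}\Psi(\xi)\|_{\star,\xi}=o(1)$ and hence a contribution to $M_{ji}$ that is $o(1)$. Therefore $M=-\alpha\,\mathrm{Id}+o(1)$, which is invertible once $\epsilon$ is small enough.

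With $M$ invertible, the linear system $\frac{\partial J_\epsilon}{\partial\xi_j}(\xi)=\sum_i M_{ji}c_i$ shows that $\nabla_\xi J_\epsilon(\xi)=0$ if and only if $(c_1,\dots,c_n)=0$, which is exactly the claim. The main obstacle I expect is the careful justification that all the $\xi$-derivatives (of $\bar u_\xi$, of $\Lambda_\xi$ and $\Pi_\epsilon(\cdot,\xi)$, and of $\Psi(\xi)$) can be taken inside the integrals and are small in the weighted norm uniformly for $\xi$ with $\dist(\xi,\partial\Omega_\epsilon)\gtrsim\delta/\epsilon$; this is where one must invoke the differentiability statements (Proposition~\ref{prop:der}, Proposition~\ref{prop der psi}) and the decay Lemmata~\ref{:decay Z}--\ref{lem:decay D2 Z} to bound the off-diagonal and perturbative terms in $M$. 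Once that bookkeeping is in place, the equivalence is immediate from the invertibility of a near-diagonal matrix.
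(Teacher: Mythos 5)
Your proposal is correct and follows essentially the same route as the paper: differentiate $J_\epsilon$ under the integral (justified by the uniform $\star$-norm estimates and the $C^1$-dependence from Proposition~\ref{prop der psi}), use \eqref{EQ final} to reduce to $\partial_{\xi_j}J_\epsilon=\sum_i c_i M_{ji}$, and then show $M=-\alpha\,\mathrm{Id}+o(1)$ via \eqref{der u meno w}, Lemmata~\ref{lem E} and~\ref{lem der E}, and Corollary~\ref{cor:orto}. The invertibility of $M$ then gives the equivalence, exactly as in the paper's argument.
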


\begin{proof}
We make a preliminary observation.
We write~$\xi=(\xi_1,\ldots,\xi_n)$ and,
for any~$j=1,\ldots,n$, we take the derivative of~$u_\xi$ with respect to~$\xi_j$.

We observe that
\begin{equation}\label{199}
\frac{\partial u_\xi}{\partial\xi_j}=\frac{\partial\bar u_\xi}{\partial\xi_j}+\frac{\partial\Psi(\xi)}{\partial\xi_j}.
\end{equation}
Thanks to~\eqref{der u meno w}, we have that
\begin{equation}\label{299}
\frac{\partial\bar u_\xi}{\partial\xi_j}=
\frac{\partial w_\xi}{\partial\xi_j}+O(\epsilon^{\nu_1}).
\end{equation}
Moreover, from Proposition~\ref{prop der psi} and Lemmata~\ref{lem E}
and~\ref{lem der E}, we obtain that
\begin{equation}\label{399}
\frac{\partial\Psi(\xi)}{\partial\xi_j}=O(\epsilon^{\gamma}),
\end{equation}
where~$\gamma>0$.
Hence,~\eqref{199},~\eqref{299} and~\eqref{399} imply that
$$ \frac{\partial u_\xi}{\partial\xi_j}=\frac{\partial w_\xi}{\partial\xi_j}+O(\epsilon^{\gamma}),$$
which means, recalling~\eqref{Zj} and using the fact that~$\frac{\partial w_\xi}{\partial\xi_j}=-\frac{\partial w_\xi}{\partial x_j}$, that
\begin{equation}\label{uuu}
\frac{\partial u_\xi}{\partial\xi_j}=-Z_j +O(\epsilon^{\gamma}).
\end{equation}
In particular,
\begin{equation}\label{pre u bounded}
\int_{\Omega_\epsilon}Z_i\,
\frac{\partial u_\xi}{\partial\xi_j}\, dx=
\int_{\Omega_\epsilon}Z_i\,\Big( -Z_j +O(\epsilon^{\gamma})\Big)\, dx
=-\int_{\Omega_\epsilon}Z_i\,Z_j \,dx+O(\epsilon^{\gamma})
\end{equation}
and, from Lemma~\ref{:decay Z}, we deduce that
\begin{equation}\label{u bounded}
\left|\frac{\partial u_\xi}{\partial\xi_j}\right|\le C_1(|Z_j|+\epsilon^{\gamma})\le C_2.
\end{equation}
With this, we introduce the matrix~$M\in {\mbox{Mat}}(n\times n)$
whose entries are given by
\begin{equation}\label{D M} M_{ji}:=\int_{\Omega_\epsilon}Z_i\,
\frac{\partial u_\xi}{\partial\xi_j}\, dx.\end{equation}
We claim that
\begin{equation}\label{I M} {\mbox{the matrix $M$
is invertible.}}\end{equation}
To prove this, we use~\eqref{pre u bounded}, Corollary~\ref{cor:orto}
and the fact that~$\alpha>0$ (recall~\eqref{alfaj}): namely we compute
\begin{eqnarray*}
M_{ji}
&=&-\int_{\Omega_\epsilon}Z_i\,Z_j\,dx +O(\epsilon^{\gamma})
\\ &=&-\alpha\delta_{ij}+O(\epsilon^{\gamma}).
\end{eqnarray*}
This says that the matrix~$-\alpha^{-1}M$ is a perturbation
of the identity and therefore it is invertible for~$\epsilon$
sufficiently small, hence~\eqref{I M} readily follows.

Now, we multiply~\eqref{EQ final} by~$\frac{\partial u_\xi}{\partial\xi}$, obtaining that
$$  \left((-\Delta)^s u_\xi +u_\xi -u_\xi^p\right)\frac{\partial u_\xi}{\partial\xi}=\sum_{i=1}^nc_i\,Z_i\frac{\partial u_\xi}{\partial\xi} {\mbox{ in }}\Omega_\epsilon,$$
and therefore
$$ \left|\left((-\Delta)^s u_\xi +u_\xi -u_\xi^p\right)\frac{\partial u_\xi}{\partial\xi}\right|\le \sum_{i=1}^n|c_i|\,|Z_i|\,\left|\frac{\partial u_\xi}{\partial\xi}\right|.$$
This, together with~\eqref{u bounded} and Lemma~\ref{:decay Z}, implies that
the function~$\left((-\Delta)^s u_\xi +u_\xi -u_\xi^p\right)\frac{\partial u_\xi}{\partial\xi}$ is in~$L^\infty(\Omega_\eps)$, and so in~$L^1(\Omega_\eps)$
uniformly with respect to~$\xi$.

This allows us to compute the derivative of~$J_\eps$ with respect to~$\xi_j$
as follows:
\begin{equation}\begin{split}\label{33333}
\frac{\partial J_\epsilon}{\partial\xi_j}(\xi) &= \, \frac{\partial}{\partial\xi_j}I_\epsilon(u_\xi)\\
&=\,\frac{\partial}{\partial\xi_j}\left(\int_{\Omega_\epsilon}\frac12(-\Delta)^su_\xi\,u_\xi +\frac12u_\xi^2-\frac{1}{p+1}u_\xi^{p+1}\,dx\right)\\
&=\, \int_{\Omega_\epsilon} \frac12(-\Delta)^s \frac{\partial u_\xi}{\partial \xi_j}\,u_\xi +\frac12(-\Delta)^s u_\xi \frac{\partial u_\xi}{\partial\xi_j} + \, \frac{\partial u_\xi}{\partial\xi_j}\,u_\xi -u_\xi^{p}\,\frac{\partial u_\xi}{\partial\xi_j}\,dx\\
&=\, \int_{\Omega_\epsilon} \left( (-\Delta)^s u_\xi +u_\xi -u_\xi^{p}\right) \frac{\partial u_\xi}{\partial\xi_j}\,dx\\
&=\, \sum_{i=1}^nc_i\int_{\Omega_\epsilon}Z_i\,
\frac{\partial u_\xi}{\partial\xi_j}\, dx,
\end{split}\end{equation}
where we have used~\eqref{EQ final} in the last step.
Thus, recalling~\eqref{D M}, we can write
$$ \frac{\partial J_\epsilon}{\partial\xi_j}(\xi)=
\sum_{i=1}^n c_i M_{ji},$$
for any~$j\in\{1,\dots,n\}$, that is the vector~$
\frac{\partial J_\epsilon}{\partial\xi}(\xi):=\left(\frac{\partial J_\epsilon}{\partial\xi_1}(\xi),\dots,\frac{\partial J_\epsilon}{\partial\xi_n}(\xi)\right)$
is equal to the product between the matrix~$M$ and the vector~$c:=
(c_1,\dots,c_n)$. {F}rom~\eqref{I M} we obtain that~$\frac{\partial J_\epsilon}{\partial\xi}(\xi)$ is equal to zero if and only if~$c$
is equal to zero, as desired.
\end{proof}

Thanks to Lemma~\ref{lem:reduction}, the problem of finding a solution to~\eqref{EQ eps} reduces to the one of finding critical points of the functional
defined in~\eqref{func red}.To this end, we obtain an expansion of~$J_\epsilon$.

\begin{thm}\label{th:exp}
We have the following expansion of the functional~$J_\eps$:
$$ J_\epsilon(\xi)=I_\epsilon(\bar u_\xi)+o(\epsilon^{n+4s}).$$
\end{thm}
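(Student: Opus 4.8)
The plan is to show that replacing $\bar u_\xi$ by the full solution $u_\xi = \bar u_\xi + \Psi(\xi)$ changes the value of the energy functional $I_\eps$ only by a quantity of order $o(\eps^{n+4s})$. Since $\Psi(\xi)$ has $\star$-norm bounded by $C\eps^{n+2s}$ by Theorem~\ref{th:psi}, and the relevant error terms $E(\Psi(\xi))$ and $N(\Psi(\xi))$ are controlled by Lemma~\ref{lem E} and Lemma~\ref{lemma:N}, the heuristic is that $I_\eps(\bar u_\xi + \Psi) - I_\eps(\bar u_\xi)$ is quadratic in $\Psi$ up to the pairing with the Euler--Lagrange residual of $\bar u_\xi$, and both contributions turn out to be of lower order than $\eps^{n+4s}$.

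\textbf{Main steps.} First I would Taylor-expand the functional: writing $\psi := \Psi(\xi)$,
\[
I_\eps(\bar u_\xi+\psi) = I_\eps(\bar u_\xi) + \langle I_\eps'(\bar u_\xi),\psi\rangle + \frac12\langle I_\eps''(\bar u_\xi)\psi,\psi\rangle + \text{higher order}.
\]
For the linear term, recall from~\eqref{EQ bar u} that $(-\Delta)^s\bar u_\xi + \bar u_\xi = w_\xi^p$ in $\Omega_\eps$, so the Euler--Lagrange residual is $I_\eps'(\bar u_\xi) = w_\xi^p - \bar u_\xi^p$, which equals $-E(0)$ in the notation of~\eqref{N psi}. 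Thus $\langle I_\eps'(\bar u_\xi),\psi\rangle = \int_{\Omega_\eps}(w_\xi^p - \bar u_\xi^p)\psi\,dx$. Using $|w_\xi^p - \bar u_\xi^p| \le C(w_\xi+\bar u_\xi)^{p-1}|w_\xi - \bar u_\xi| \le C\eps^{n+2s}w_\xi^{p-1}$ from~\eqref{u meno w}, together with $|\psi(x)| \le \|\psi\|_{\star,\xi}\rho_\xi(x) \le C\eps^{n+2s}(1+|x-\xi|)^{-\mu}$ and the integrability of $w_\xi^{p-1}(1+|x-\xi|)^{-\mu}$ (here one uses $\mu < n+2s$ and $p>1$ so the product decays fast enough), I would bound this term by $C\eps^{2(n+2s)}$, which is $o(\eps^{n+4s})$ since $2(n+2s) = n+4s + (n)> n+4s$. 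For the quadratic and higher-order terms, I would use that the second variation involves $\int((-\Delta)^{s/2}\psi)^2 + \psi^2 - p\,\bar u_\xi^{p-1}\psi^2$; the Gagliardo and $L^2$ parts are controlled via the equation $\psi = \mathcal T_\xi[E(\psi)+N(\psi)]$ and the a priori estimate~\eqref{est fin}, giving $\|\psi\|_{H^s}^2 \le C\|E(\psi)+N(\psi)\|^2 \le C\eps^{2(n+2s)}$, again $o(\eps^{n+4s})$. The remaining cubic/$p$-th-order remainders in the expansion of $\int u_\xi^{p+1}$ are even smaller, controlled by $\|\psi\|_{\star,\xi}^{p+1}$ times an integrable weight, hence $O(\eps^{(p+1)(n+2s)}) = o(\eps^{n+4s})$.

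\textbf{Obstacle.} The delicate point is making the pairing estimates dimensionally sharp: one must be careful that all the weighted integrals $\int_{\R^n} w_\xi^{p-1}(x)\,\rho_\xi(x)\,dx$, $\int_{\R^n}\rho_\xi^2\,dx$, etc., are genuinely finite with the constraint $n/2 < \mu < n+2s$, and that the exponents of $\eps$ produced — namely $2(n+2s)$ for the linear and quadratic terms and $(p+1)(n+2s)$ for the nonlinear remainder — all strictly exceed $n+4s$. Since $2(n+2s) - (n+4s) = n > 0$, this is comfortable; the only mild care is needed when $p$ is close to $1$, but even then $(p+1)(n+2s) > 2(n+2s) > n+4s$ fails to be an issue because the nonlinear remainder is bounded by $\|\psi\|^{\tilde p +1}$ with $\tilde p = \min\{p,1\}\cdot(\text{something})$—more precisely one invokes Corollary~2.2-type bounds to get a remainder of order $\|\psi\|_{\star,\xi}^{\min\{2,p+1\}}$ times an integrable weight, and $\min\{2,p+1\}(n+2s) = 2(n+2s) > n+4s$ in all cases. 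With these bookkeeping checks in place, collecting all terms yields $J_\eps(\xi) = I_\eps(\bar u_\xi) + o(\eps^{n+4s})$ uniformly for $\xi$ in the admissible range $\dist(\xi,\partial\Omega_\eps) \ge \delta/\eps$.
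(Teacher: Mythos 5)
Your plan is genuinely different from the paper's proof, and the comparison is instructive. The paper does not estimate the linear term $\langle I_\eps'(\bar u_\xi),\Psi(\xi)\rangle$ at all: instead, it rewrites $\bar u_\xi = u_\xi - \Psi(\xi)$, observes that the contributions $\int (-\Delta)^s\Psi\,\Psi + \Psi^2$ cancel between the first and second variation, and is left with $\int_{\Omega_\eps}\big((-\Delta)^s u_\xi + u_\xi - u_\xi^p\big)\Psi(\xi)\,dx$, which \emph{vanishes identically} because $u_\xi$ solves the projected problem~\eqref{EQ final} and $\Psi(\xi)$ is $L^2$-orthogonal to $\mathcal Z$. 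After this exact cancellation, the only surviving term is $\int\big(u_\xi^p-\bar u_\xi^p-p\bar u_\xi^{p-1}\Psi\big)\Psi$, which is bounded by $C\|\Psi\|_{\star,\xi}^2\int_{\Omega_\eps}|\bar u_\xi|^{p-1}\rho_\xi^2\,dx$: this is finite for free since $2\mu>n$ and $\bar u_\xi^{p-1}$ is merely bounded, so the whole argument needs only $\|\Psi\|_{\star,\xi}\le C\eps^{n+2s}$ and $\int\rho_\xi^2<\infty$. You instead keep the expansion centred at $\bar u_\xi$, estimate the linear term as a product of two small factors, and bound the quadratic term via an energy estimate for $\psi$. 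That works, but it is heavier, and at two points you have to use facts the paper's reorganization avoids: your $\|\psi\|_{H^s}^2$ bound requires testing the equation against $\psi$, absorbing $p\int w_\xi^{p-1}\psi^2$, and handling $\sum_i c_i\int Z_i\psi$ (either via the orthogonality condition or via the $c_i$-bounds from Lemma~\ref{lem:ci}).

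There is one concrete slip. You claim $\int_{\R^n}w_\xi^{p-1}\rho_\xi\,dx$ is finite because ``$\mu<n+2s$ and $p>1$''. That is not true in general: the integrand behaves like $(1+|x-\xi|)^{-(p-1)(n+2s)-\mu}$, and with only $\mu>n/2$ and $p>1$ one cannot guarantee $(p-1)(n+2s)+\mu>n$ (take $p$ near $1$ and $\mu$ near $n/2$). This matters precisely because your linear-term estimate pairs one power of $\psi$ against $w_\xi^{p-1}$, so you get $\rho_\xi$ rather than the $\rho_\xi^2$ that appears in the paper (whose integrability \emph{is} automatic from $2\mu>n$). The argument is salvageable: either choose $\mu$ closer to $n+2s$, or use that the integral is over the bounded set $\Omega_\eps$ (diameter $\sim\eps^{-1}$), which yields at worst an extra factor $\eps^{-(n-(p-1)(n+2s)-\mu)_+}$; since $2(n+2s)-(n+4s)=n$ while $(n-(p-1)(n+2s)-\mu)_+<n$, the product remains $o(\eps^{n+4s})$. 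So the conclusion is right, but the stated $C\eps^{2(n+2s)}$ rate and the accompanying integrability justification need this repair; the paper's route sidesteps the issue entirely.
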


\begin{proof}
We know that
$$ J_\epsilon(\xi)=I_\epsilon(\bar u_\xi+\Psi(\xi)).$$
Hence, we can Taylor expand in the vicinity of~$\bar u_\xi$, thus obtaining
\begin{eqnarray*}
J_\epsilon(\xi)&=& I_\epsilon(\bar u_\xi)+ I'_\epsilon(\bar u_\xi)[\Psi(\xi)]+ I''(\bar u_\xi)[\Psi(\xi),\Psi(\xi)]+ O(|\Psi(\xi)|^3)\\
&=&  I_\epsilon(\bar u_\xi)+ \int_{\Omega_\epsilon}(-\Delta)^s\bar u_\xi\,\Psi(\xi)+\bar u_\xi\,\Psi(\xi)- \bar u_\xi^p\,\Psi(\xi)\, dx \\
&&\qquad + \int_{\Omega_\epsilon}(-\Delta)^s\Psi(\xi)\,\Psi(\xi)+\Psi^2(\xi)-p\bar u_\xi^{p-1}\Psi^2(\xi)\, dx+ O(|\Psi(\xi)|^3)\\
&=& I_\epsilon(\bar u_\xi)+ \int_{\Omega_\epsilon}\left((-\Delta)^s u_\xi + u_\xi - u_\xi^p\right)\Psi(\xi)\, dx \\
&&\qquad -\int_{\Omega_\epsilon}\left((-\Delta)^s(u_\xi-\bar u_\xi) + u_\xi-\bar u_\xi - u_\xi^p +\bar u_\xi^p\right)\Psi(\xi)\, dx\\
&&\qquad + \int_{\Omega_\epsilon}(-\Delta)^s\Psi(\xi)\,\Psi(\xi)+\Psi^2(\xi)-p\bar u_\xi^{p-1}\Psi^2(\xi)\, dx+ O(|\Psi(\xi)|^3).
\end{eqnarray*}
Therefore, using~\eqref{def u xi}, we have that
\begin{equation}\begin{split}\label{asqwrfnbgfn}
J_\epsilon(\xi)&=\, I_\epsilon(\bar u_\xi)+ \int_{\Omega_\epsilon}\left((-\Delta)^s u_\xi + u_\xi - u_\xi^p\right)\Psi(\xi)\, dx \\
&\qquad +\int_{\Omega_\epsilon}\left(u_\xi^p-\bar u_\xi^p-p\bar u_\xi^{p-1}\Psi(\xi)\right)\Psi(\xi)\, dx+ O(|\Psi(\xi)|^3).
\end{split}\end{equation}
We notice that
$$ \int_{\Omega_\epsilon}\left((-\Delta)^s u_\xi + u_\xi - u_\xi^p\right)\Psi(\xi)\, dx=0, $$
thanks to~\eqref{EQ final} and the fact that~$\Psi(\xi)$ is orthogonal in~$L^2(\Omega_\epsilon)$ to any function in the space~$\mathcal Z$.
Hence,~\eqref{asqwrfnbgfn} becomes
\begin{equation}\label{ssuduudu00}
J_\epsilon(\xi)= I_\epsilon(\bar u_\xi)+ \int_{\Omega_\epsilon}\left(u_\xi^p-\bar u_\xi^p-p\bar u_\xi^{p-1}\Psi(\xi)\right)\Psi(\xi)\, dx+ O(|\Psi(\xi)|^3).
\end{equation}
Now, we observe that
$$ |u_\xi^p-\bar u_\xi^p-p\bar u_\xi^{p-1}\Psi(\xi)| \le
|u_\xi^p-\bar u_\xi^p|+p\,|\bar u_\xi^{p-1}\Psi(\xi)|\le C\,
|\bar u_\xi^{p-1}\Psi(\xi)|,$$
for a positive constant~$C$, and so, using also~\eqref{u meno w}, we have
\begin{equation}\begin{split}\label{999999}
&\left| \int_{\Omega_\epsilon}\left(u_\xi^p-\bar u_\xi^p-p\bar u_\xi^{p-1}\Psi(\xi)\right)\Psi(\xi)\, dx\right| \\
&\qquad \le  C\int_{\Omega_\epsilon}|\bar u_\xi|^{p-1}|\Psi(\xi)|^2\,dx\\
& \qquad \le  C\|\Psi(\xi)\|_{\star,\xi}^2\int_{\Omega_\epsilon}|\bar u_\xi|^{p-1}\rho_\xi^2\,dx\\
& \qquad \le  C\|\Psi(\xi)\|_{\star,\xi}^2\int_{\Omega_\epsilon}|w_\xi+O(\epsilon^{n+2s})|^{p-1}\rho_\xi^2\,dx\\
& \qquad \le
C\|\Psi(\xi)\|_{\star,\xi}^2\int_{\Omega_\epsilon}|w_\xi|^{p-1}\rho_\xi^2\,dx
+ C\,\epsilon^{(p-1)(n+2s)}\|\Psi(\xi)\|_{\star,\xi}^2
\int_{\Omega_\epsilon}\rho_\xi^2\,dx.
\end{split}\end{equation}
Recalling the definition of~$\rho_\xi$ in~\eqref{rho} and the fact that~$\mu>n/2$,
we have that
\begin{equation}\label{conto int}
\int_{\Omega_\epsilon}\rho_\xi^2\,dx\le C_1,
\end{equation}
for a suitable constant~$C_1>0$.
Moreover, thanks to~\eqref{small norm} (recall also~\eqref{unique}), we obtain
$$ \epsilon^{(p-1)(n+2s)}\|\Psi(\xi)\|_{\star,\xi}^2
\le C_2\,\epsilon^{(p-1)(n+2s)}\,\epsilon^{2(n+2s)}=C_2\,\epsilon^{(p+1)(n+2s)}, $$
which, together with~\eqref{conto int}, says that
\begin{equation}\label{ufffff}
C\,\epsilon^{(p-1)(n+2s)}\|\Psi(\xi)\|_{\star,\xi}^2
\int_{\Omega_\epsilon}\rho_\xi^2\,dx=o(\epsilon^{n+4s}).
\end{equation}
Also, using~\eqref{decay w} we have that
$$ \int_{\Omega_\epsilon}|w_\xi|^{p-1}\rho_\xi^2\,dx\le
C_3 \int_{\Omega_\epsilon}\frac{1}{(1+|x-\xi|)^{(p-1)(n+2s)}}
\frac{1}{(1+|x-\xi|)^{2\mu}}\, dx\le C_4, $$
and so, using also~\eqref{small norm} we have that
$$  C\|\Psi(\xi)\|_{\star,\xi}^2\int_{\Omega_\epsilon}|w_\xi|^{p-1}
\rho_\xi^2\,dx=o(\epsilon^{n+4s}). $$
This,~\eqref{ssuduudu00}, \eqref{999999}
and \eqref{ufffff} give
the desired claim in Theorem~\ref{th:exp}.
\end{proof}

\section{Proof of Theorem \ref{TH}}\label{end:p}

In this section we complete the proof of Theorem~\ref{TH}.
For this, we notice that, thanks to Theorems~\ref{TH expansion} and~\ref{th:exp},
we have that, for any~$\xi\in\Omega_\eps$ with~$\dist(\xi,\partial\Omega_\eps)\ge\delta/\eps$ (for some~$\delta\in(0,1)$),
\begin{equation}\label{fin expa}
J_\eps(\xi)= I(w)+\frac12\mathcal H_\eps(\xi)+o(\eps^{n+4s}),
\end{equation}
where~$J_\eps$ and~$I$ are defined in~\eqref{func red} and~\eqref{entire} respectively
(see also~\eqref{unique}), where~$\mathcal H_\eps$ is given by~\eqref{def H cal}.

Also, we recall the definition of the set~$\Omega_{\eps,\delta}$ given in~\eqref{omega delta}, and we claim that~$J_\eps$
has an interior minimum, namely
\begin{equation}\label{minimo omega delta}
{\mbox{there exists~$\bar\xi\in\Omega_{\eps,\delta}$ such that~$J_\eps(\bar\xi)=
\displaystyle\min_{\xi\in\overline\Omega_{\eps,\delta}}J_\eps(\xi)$.}}
\end{equation}
For this, we observe that~$J_\eps$ is a continuous functional, and therefore
\begin{equation}\label{interno2}
{\mbox{$J_\eps$ admits a minimizer~$\bar\xi\in\overline\Omega_{\eps,\delta}$.}}
\end{equation}
We have that
\begin{equation}\label{interno}
\bar\xi\in\Omega_{\eps,\delta}.
\end{equation}
Indeed, suppose by contradiction that~$\bar\xi\in\partial\Omega_{\eps,\delta}$.
Then, from~\eqref{fin expa}, we have that
\begin{equation}\begin{split}\label{interno1}
J_\eps(\bar\xi)&=\, I(w)+\frac12\mathcal H_\eps(\bar\xi)+o(\eps^{n+4s})\\
& \ge \, I(w)+\frac12\min_{\partial\Omega_{\eps,\delta}}\mathcal H_\eps
+o(\eps^{n+4s}).
\end{split}\end{equation}
On the other hand, by Proposition~\ref{P 3.8},
we know that~$\mathcal H_\eps$ has a strict interior minimum:
more precisely, there
exists~$\xi_o\in\Omega_{\eps,\delta}$ such that
\begin{equation}\label{0-0--1}
{\mathcal{H}}_\eps(\xi_o)=
\min_{\Omega_{\eps,\delta}} {\mathcal{H}}_\eps\le
c_1\eps^{n+4s} \end{equation}
and
\begin{equation}\label{0-0--2}
\min_{\partial\Omega_{\eps,\delta}} {\mathcal{H}}_\eps
\ge c_2\,\left(\frac{\eps}\delta\right)^{n+4s},\end{equation}
for suitable~$c_1$, $c_2>0$.
Also, the minimality of~$\bar\xi$ and~\eqref{fin expa}
say that
\begin{eqnarray*}
J_\eps(\bar\xi)&=&\min_{\xi\in\overline\Omega_{\eps,\delta}}J_\eps(\xi)\\
&\le & J_\eps(\xi_o) \\
&= & I(w)+\frac12 \mathcal H_\eps(\xi_o) +o(\eps^{n+4s}).
\end{eqnarray*}
By comparing this with~\eqref{interno1} and using~\eqref{0-0--1}
and~\eqref{0-0--2} we obtain
\begin{eqnarray*}
\frac{c_2\eps^{n+4s}}{2\,\delta^{n+4s}}+o(\eps^{n+4s})
&\le&
\frac12\min_{\partial\Omega_{\eps,\delta}}\mathcal H_\eps
+o(\eps^{n+4s})
\\ &\le& J_\eps(\bar\xi)-I(w)
\\ &\le& \frac12 \mathcal H_\eps(\xi_o) +o(\eps^{n+4s})
\\ &\le& \frac{c_1\eps^{n+4s}}{2}+o(\eps^{n+4s}).
\end{eqnarray*}
So, a division by~$\eps^{n+4s}$ and a limit argument give that
$$ \frac{c_2}{2\,\delta^{n+4s}} \le
\frac{c_1}{2}.$$
This is a contradiction when~$\delta$ is sufficiently small,
thus~\eqref{interno} is proved.
Hence~\eqref{minimo omega delta} follows from~\eqref{interno2} and~\eqref{interno}.

{F}rom~\eqref{minimo omega delta}, since~$\Omega_{\eps,\delta}$ is open,
we conclude that
$$\frac{\partial J_\eps}{\partial\xi}(\bar\xi)=0.$$
Therefore,
from Lemma~\ref{lem:reduction} we obtain the existence of a solution
to~\eqref{EQ} that satisfies~\eqref{vicina} for~$\eps$ sufficiently small,
and this concludes the proof of Theorem~\ref{TH}.

\section*{Acknowledgements} 
This work has been supported by Fondecyt grants 1110181, 1130360 and Fondo Basal CMM-Chile, 
EPSRC grant EP/K024566/1 ``Monotonicity formula methods for nonlinear PDE'' and 
ERC grant 277749 ``EPSILON Elliptic Pde's and Symmetry of Interfaces and Layers for Odd
Nonlinearities''.

\appendix

\section{Some physical motivation}\label{S:aa}

Equation \eqref{EQ} is a particular case
of the fractional Schr\"odinger equation
\begin{equation}\label{SCH}
i\hbar\partial_t \psi= \hbar^{2s} (-\Delta)^s\psi+V\,\psi,
\end{equation}
in case the wave function $\psi$ is a standing wave (i.e. $\psi(x,t)=
U(x) e^{it/\hbar}$) and the potential $V$ is a suitable power
of the density function (i.e. $V=V(|\psi|)=-|\psi|^{p-1}$).
As usual, $\hbar$ is the Planck's constant (then we write $\eps:=\hbar$
in~\eqref{EQ})
and $\psi=\psi(x,t)$ is the quantum mechanical
probability amplitude for a given
particle (for simplicity, of unit mass)
to have position~$x$ at time~$t$ (the corresponding
probability density is~$|\psi|^2$).

In this setting our Theorem \ref{TH}
describes the confinement of a particle inside a given domain $\Omega$:
for small values of $\hbar$ the wave function
concentrates to a material particle well inside the domain.

Equation~\eqref{SCH} is now quite popular
(say, for instance, popular enough to
have its own page on Wikipedia, see~\cite{WI})
and it is based on the classical
Schr\"odinger equation (corresponding to the case~$s=1$)
in which the Brownian motion of the quantum
paths is replaced by a L\'evy flight.
We refer to~\cites{Las, Las-atom, Las-book} for a throughout physical
discussion and detailed motivation
of equation~\eqref{SCH} (see in particular formula~(18)
in~\cite{Las}), but we present here some very
sketchy heuristics about it.

The idea is that the evolution of the wave function $\psi(x,t)$
from its initial state $\psi_0(x):=\psi(x,0)$
is run by a quantum mechanics kernel (or amplitude) $K$
which produces the forthcoming values of the wave function
by integration with the initial state, i.e.
\begin{equation}\label{0.11}
\psi(x,t)=\int_{\R^n} dy\; K(x,y,t) \,\psi_0(y).
\end{equation}
The main assumption is that such amplitude $K(x,y,t)$
is modulated by an action functional~$S_t$ via the contributions of
all the possible paths~$\gamma$ that join~$x$ to~$y$ in time~$t$,
that is
\begin{equation}\label{0.12}
K(x,y,t)=\int_{{\mathcal{F}}(x,y,t)}\,d\gamma \; e^{-i S_t(\gamma)/\hbar}.
\end{equation}
The above integral denotes the Feynman path integral
over ``all possible histories of the system'', that is
over ``all possible'' continuous paths~$\gamma:[0,t]\rightarrow\R^n$
with~$\gamma(0)=y$ and~$\gamma(t)=x$, see~\cite{Fey1}.
We remark that such integral is indeed a functional integral,
that is the
domain of integration~${\mathcal{F}}(x,y,t)$
is not a region of a finite dimensional space,
but a space of functions.
The mathematical treatment of Feynman path integrals
is by no means trivial: as a matter of fact, the convergence
must rely on the highly oscillatory behavior of the system
which produces the necessary cancellations. In some cases,
a rigorous justification can be provided by the theory
of Wiener spaces, but a complete treatment of this topic is far beyond
the scopes of this appendix (see e.g. \cites{Cam1, Cam2} and~\cites{Gro, Alb}).

The next structural ansatz we take is that
the action functional~$S_t$
is the superposition of a (complex) diffusive operator~$H_0$ and
a potential term~$V$.

Though the diffusion and the potential
operate ``simultaneously'', with some approximation we may suppose that, at each tiny
time step, they operate just one at the time, interchanging
their action\footnote{In a sense, this is the
quantum mechanics version of the
Lie--Trotter product formula $$e^{A+B}=
\lim_{N\rightarrow+\infty} \big(e^{A/N} e^{B/N}\big)^N$$
for $A,B\in \,{\rm Mat}\,(n\times n)$.
The procedure of
disentangling mixed exponentials is indeed
crucial in quantum mechanics computations, see e.g.~\cite{Fey2}.
In our computation,
a more rigorous approximation scheme lies in explicitly
writing $S_t(\gamma)$ as an integral from $0$ to $t$
of the Lagrangian along the path $\gamma$, then one splits
the integral in $N$ time steps of size $t/N$ by supposing
that in each of these time steps the Lagrangian
is, approximatively, constant. One may also suppose
that the Lagrangian involved in the action is a classical
one, i.e. it is the sum of a kinetic term
and the potential $V$. Then the effect
of taking the integral over all the possible paths
averages out the kinetic part reducing it to
a diffusive operator. Since here we are not aiming
at a rigorous justification of all these delicate procedures
(such as infinite dimensional integrals, limit exchanges
and so on), for simplicity
we are just taking $H_0$ to be a diffusive operator from
the beginning. In this spirit, it is also convenient
to suppose that the potential is an operator, that is we identify $V$
with the operation of multiplying a function by $V$.}
at a very high frequency. Namely, we discretize a path $\gamma$
into~$N$ adjacent paths of time range~$t/N$, say
$\gamma_1,\dots,\gamma_N:[0,t/N]\rightarrow\R^n$, with~$\gamma_1(0)=y$
and~$\gamma_N(t/N)=x$, and we suppose that, along each~$\gamma_j$
the action reduces to the subsequent nonoverlapping
superpositions of diffusion and potential terms, according
to the formula
\begin{equation}\label{0.13}
e^{-i S_t(\gamma)/\hbar}=\lim_{N\rightarrow+\infty}
\big(e^{-itH_0/(\hbar N)} e^{-itV/(\hbar N)}\big)^N.
\end{equation}
Once more, we do not indulge into a rigorous mathematical discussion of
such a limit and we just plug~\eqref{0.12} and~\eqref{0.13}
into~\eqref{0.11}. We obtain
\begin{equation}\label{0.14}\begin{split}
\psi(x,t)\,&=\int_{\R^n} dy\; \int_{{\mathcal{F}}(x,y,t)}\,d\gamma \;
e^{-i S_t(\gamma)/\hbar}\,\psi_0(y)\\
&=\lim_{N\rightarrow+\infty}
\int_{\R^n} dy\; \int_{{\mathcal{F}}(x,y,t)}\,d\gamma \;
\big(e^{-itH_0/(\hbar N)} e^{-itV/(\hbar N)}\big)^N
\,\psi_0(y).
\end{split}\end{equation}
Therefore, if we formally\footnote{The disentangling procedure allows to take derivative
of the exponentials of the operators
``as they were commuting ones''. Namely,
by the Zassenhaus formula,
$$e^{t(A+B)}=e^{tA} e^{tB} e^{O(t^2)}=e^{tA} e^{tB}(1+O(t^2))$$
that in our case gives
$$ e^{-itH_0/(\hbar N)} e^{-itV/(\hbar N)}=
e^{-it(H_0+V)/(\hbar N)} \big(1+O(t^2/N^2)\big)$$
and so
$$ \big(e^{-itH_0/(\hbar N)} e^{-itV/(\hbar N)}\big)^N=
e^{-it(H_0+V)/\hbar} \big(1+O(t^2/N^2)\big)$$
Hence
\begin{eqnarray*}
&&\lim_{N\rightarrow+\infty}
\partial_t\big(e^{-itH_0/(\hbar N)} e^{-itV/(\hbar N)}\big)^N
\\ &=&\lim_{N\rightarrow+\infty} -\frac{i(H_0+V)}{\hbar}
e^{-it(H_0+V)/\hbar} \big(1+O(t^2/N^2)\big) +
O(t/N^2)\\
&=& -\frac{i(H_0+V)}{\hbar}.\end{eqnarray*}
Moreover, we point out that a couple of additional
approximations are likely to be hidden in the computation in~\eqref{6C6}.
Namely, first of all, we do not differentiate
the functional domain of the Feynman integral.
This is consistent with the ansatz that the set of the paths
joining two points at a macroscopic scale in time $t$
``does not vary much'' for small variations of $t$.
Furthermore, we replace the action of $H_0$ and $V$
along the infinitesimal paths with their effective action
after averaging, so that we take $(H_0+V)$
outside the integral.} apply the time derivative to~\eqref{0.14}
we obtain that
\begin{equation}\label{6C6}\begin{split}
& i\hbar\partial_t\psi(x,t)\\ =\,&
\lim_{N\rightarrow+\infty}
\int_{\R^n} dy\; \int_{{\mathcal{F}}(x,y,t)}\,d\gamma \;
N\,\left(\frac{H_0}{N}e^{-itH_0/(\hbar N)}
e^{-itV/(\hbar N)}+\frac{V}{N}e^{-itH_0/(\hbar N)}e^{-itV/(\hbar N)}\right)
\\ &\cdot\,
\big(e^{-itH_0/(\hbar N)}e^{-itV/(\hbar N)}\big)^{N-1}
\,\psi_0(y)
\\ =\,&
\lim_{N\rightarrow+\infty}
\int_{\R^n} dy\; \int_{{\mathcal{F}}(x,y,t)}\,d\gamma \;
\left(H_0 e^{-itH_0/(\hbar N)}e^{-itV/(\hbar N)}
+V e^{-itH_0/(\hbar N)}e^{-itV/(\hbar N)}\right)
\\ &\cdot\,
\big(e^{-itH_0/(\hbar N)}e^{-itV/(\hbar N)}\big)^{N-1}
\,\psi_0(y)
\\ =\,&\left(H_0 +V \right)\,
\int_{\R^n} dy\; \int_{{\mathcal{F}}(x,y,t)}\,d\gamma \;
e^{-i S_t(\gamma)/\hbar}
\,\psi_0(y)\\
=\,& \left(H_0 +V \right) \psi
\end{split}\end{equation}
by \eqref{0.11}, \eqref{0.12} and~\eqref{0.13}.
The classical
Schr\"odinger equation follows by taking~$H_0:=-\hbar^2\Delta$,
that is the Gaussian diffusive process,
while~\eqref{SCH} follows by taking~$H_0:=\hbar^{2s}(-\Delta)^s$,
that is the $2s$-stable diffusive process with polynomial tail.

Having given a brief justification of~\eqref{SCH},
we also recall that the fractional
Schr\"odinger case presents interesting differences with
respect to the classical one. For instance,
the energy of a particle of unit mass
is proportional to~$|p|^{2s}$
(instead of~$|p|^2$, see e.g. formula~(12)
in~\cite{Las}). Also
the space/time scaling of the process gives that
the fractal dimension of
the L\'evy paths is~$2s$ (differently from the classical Brownian case in which
it is~$2$), see pages~300--301 of~\cite{Las}.

Now, for completeness, we discuss a nonlocal notion of
canonical quantization, together with the associated
Heisenberg Uncertainty Principle
(see e.g. pages~17--28 of~\cite{Giu}
for the classical canonical quantization and related issues).

For this, we introduce
the canonical operators for $k\in\{1,\dots,n\}$
\begin{equation}\label{CQ}
P_k:=-i\hbar^s\partial_k (-\Delta)^{(s-1)/2} \ {\mbox{
and }} \
Q_k:=x_k.\end{equation}
Notice that $Q_k$ is the classical position operator, namely
the multiplication by the $k$th space coordinate.
On the other hand, $P_k$ is a fractional momentum
operator, that reduces\footnote{Of course, from
the point of view of physical dimensions, the fractional momentum
is not a momentum, since it has physical dimension ${\tt[Planck constant]}^s/
{\tt [length]}^s$, while the classical momentum
has physical dimension~${\tt[Planck constant]}/
{\tt [length]}$. Namely, the physical dimension of
the fractional momentum is a fractional power of
the physical dimension of
the classical momentum. Clearly, the same phenomenon occurs
for the physical dimension of the fractional Laplace
operators in terms of the usual Laplacian.}
to the classical momentum $-i\hbar\partial_k$
when~$s=1$. In this setting, our goal is to check that the
commutator
$$[Q,P]:=\sum_{k=1}^n [Q_k,P_k]$$
does not vanish. For this, we suppose~$0<\sigma<n/2$
and use the Riesz potential
representation of the inverse of the fractional Laplacian
of order~$\sigma$, that is
\begin{equation}\label{L11}
(-\Delta)^{-\sigma} \psi(x)=
c(n,s) \int_{\R^n}\frac{\psi(x-y)}{|y|^{n-2\sigma}}\,dy=
c(n,s) \int_{\R^n}\frac{\psi(y)}{|x-y|^{n-2\sigma}}\,dy,
\end{equation}
for a suitable~$c(n,s)>0$,
see~\cite{LAND}.

In our case we use~\eqref{L11}
with~$\sigma:=(1-s)/2 \in(0,1/2)\subseteq(0,n/2)$.
Then
$$ P_k\psi(x)=
-c(n,s)\,i\hbar^s\partial_k
\int_{\R^n}\frac{\psi(y)}{|x-y|^{n+s-1}}\,dy=
c(n,s)\, i\,\hbar^s \,(n+s-1)\,\int_{\R^n} \frac{(x_k-y_k)\,\psi(y)}{
|x-y|^{n+s+1}}\,dy$$
and so
\begin{eqnarray*}
P_k Q_k\psi (x)= P_k(x_k\psi(x))
=c(n,s)\, i\,\hbar^s \,(n+s-1)\,\int_{\R^n} \frac{(x_k-y_k)\,y_k\,\psi(y)}{
|x-y|^{n+s+1}}\,dy.\end{eqnarray*}
This gives that
\begin{eqnarray*}
&&Q_kP_k\psi-P_kQ_k\psi
\\&=& c(n,s)\, i\,\hbar^s \,(n+s-1)\,\left[
\int_{\R^n} \frac{x_k\,(x_k-y_k)\,\psi(y)}{
|x-y|^{n+s+1}}\,dy
-\int_{\R^n} \frac{(x_k-y_k)\,y_k\,\psi(y)}{
|x-y|^{n+s+1}}\,dy\right]
\\ &=& c(n,s)\, i\,\hbar^s \,(n+s-1)\,
\int_{\R^n} \frac{(x_k-y_k)^2\,\psi(y)}{
|x-y|^{n+s+1}}\,dy ,
\end{eqnarray*}
and so, by summing up\footnote{Alternatively, one can also perform
the commutator calculation in Fourier space
and then reduce to the original variable by an inverse Fourier
transform. This computation can be done easily by using the facts
that the Fourier transform sends products into convolutions and that
(up to constants)
\begin{eqnarray*}
(\hat x_k * g)(\xi)&=& {\mathcal{F}}
\big(x_k {\mathcal{F}}^{-1} g(x)\big) (\xi)
\\ &=& \int_{\R^n} \,dx \int_{\R^n} \,dy \; e^{ix\cdot (y-\xi)} x_k g(y)
\\
&=& i^{-1}
\int_{\R^n} \,dx \int_{\R^n} \,dy \;
\partial_{y_k} e^{ix\cdot (y-\xi)} g(y)\\
&=& i\int_{\R^n} \,dx \int_{\R^n} \,dy \;
e^{ix\cdot (y-\xi)} \partial_k g(y)\\
&=& i
\int_{\R^n} \,dx\;
e^{-ix\cdot \xi} {\mathcal{F}}^{-1}(\partial_k g)(x)\\
&=& i {\mathcal{F}}\big({\mathcal{F}}^{-1}(\partial_k g)\big)(\xi)\\
&=& i \partial_k g(\xi).
\end{eqnarray*}
Then we leave to the reader the computation of ${\mathcal{F}}([Q,P]\psi)(\xi)$.}
and recalling~\eqref{L11}, we conclude that
$$ [Q,P]\psi=
c(n,s)\, i\,\hbar^s \,(n+s-1)\,
\int_{\R^n} \frac{\psi(y)}{
|x-y|^{n+s-1}}\,dy=
i\,(n+s-1)\,\hbar^s (-\Delta)^{(s-1)/2}\psi.$$
Notice that, as $s\rightarrow1$, this formula reduces to the
the classical Heisenberg Uncertainty Principle.

We also point out that a similar computation shows that,
differently from the local quantum momentum,
the $k$th fractional quantum momentum does not commute
with the $m$th spatial coordinates even when $k\ne m$, namely
$[Q_m, P_k]\psi(x)$ is, up to normalizing constants,
$$ i\hbar^s \int_{\R^n} \frac{(x_m-y_m)(x_k-y_k)\,\psi(y)}{|x-y|^{n+s+1}}
\,dy.$$
This Heisenberg Uncertainty Principle
is also compatible with equation~\eqref{SCH},
in the sense that the diffusive operator~$H_0$
is exactly the one obtained by the canonical quantization in~\eqref{CQ}:
indeed
\begin{eqnarray*}
\sum_{k=1}^n P_k^2 &=&\sum_{k=1}^n
\Big( -i\hbar^s\partial_k (-\Delta)^{(s-1)/2}\Big)
\Big( -i\hbar^s\partial_k (-\Delta)^{(s-1)/2}\Big)\\
&=& -\hbar^{2s} \sum_{k=1}^n \partial_k^2 (-\Delta)^{s-1}
\\ &=& -\hbar^{2s} \Delta \,(-\Delta)^{s-1}\\
&=& \hbar^{2s} (-\Delta)^s\\
&=& H_0.
\end{eqnarray*}
Moreover, we mention
that the fractional Laplace operator also arises
naturally in the high energy Hamiltonians of relativistic theories.
For further motivation of the fractional Laplacian in modern physics
see e.g.~\cite{Chen} and references therein.

\begin{bibdiv}
\begin{biblist}

\bib{Alb}{book}{
   author={Albeverio, Sergio A.},
   author={H{\o}egh-Krohn, Raphael J.},
   author={Mazzucchi, Sonia},
   title={Mathematical theory of Feynman path integrals},
   series={Lecture Notes in Mathematics},
   volume={523},
   edition={2},
   publisher={Springer-Verlag},
   place={Berlin},
   date={2008},
   pages={x+177},
   isbn={978-3-540-76954-5},
   review={\MR{2453734 (2010e:58008)}},
   doi={10.1007/978-3-540-76956-9},
}

\bib{AM}{book}{
   author={Ambrosetti, Antonio},
   author={Malchiodi, Andrea},
   title={Perturbation methods and semilinear elliptic problems on ${\bf
   R}^n$},
   series={Progress in Mathematics},
   volume={240},
   publisher={Birkh\"auser Verlag},
   place={Basel},
   date={2006},
   pages={xii+183},
   isbn={978-3-7643-7321-4},
   isbn={3-7643-7321-0},
   review={\MR{2186962 (2007k:35005)}},
}

 \bib{AT} {article}{
   author={Amick, C. J.},
   author={Toland, J. F.},
   title={Uniqueness and related analytic properties for the Benjamin--Ono equation--a nonlinear Neumann problem in the plane},
   journal={Acta Math.},
   volume={167},
   date={1991},
   pages={107--126},
   review={\MR{1111746 (92i:35099)}},

}

\bib{Cam1}{article}{
   author={Cameron, R. H.},
   title={A family of integrals serving to connect the Wiener and Feynman
   integrals},
   journal={J. Math. and Phys.},
   volume={39},
   date={1960/1961},
   pages={126--140},
   review={\MR{0127776 (23 \#B821)}},
   
}

\bib{Cam2}{article}{
   author={Cameron, R. H.},
   author={Storvick, D. A.},
   title={A simple definition of the Feynman integral, with applications},
   journal={Mem. Amer. Math. Soc.},
   volume={46},
   date={1983},
   number={288},
   pages={iv+46},
   issn={0065-9266},
   review={\MR{719157 (86c:81029)}},
   doi={10.1090/memo/0288},
}

\bib{CMSJ}{article}{
   author={Carmona, Ren{\'e}},
   author={Masters, Wen Chen},
   author={Simon, Barry},
   title={Relativistic Schr\"odinger operators: asymptotic behavior of the
   eigenfunctions},
   journal={J. Funct. Anal.},
   volume={91},
   date={1990},
   number={1},
   pages={117--142},
   issn={0022-1236},
   review={\MR{1054115 (91i:35139)}},
   doi={10.1016/0022-1236(90)90049-Q},
}

\bib{Chen}{article}{
   author={Chen, Wei},
   title={Soft matter and fractional mathematics:
insights into mesoscopic quantum and time-space structures},
   journal={Preprint},
   note={http://arxiv.org/abs/cond-mat/0405345},
}

\bib{chen}{article}{
   author={Chen, Guoyuan},
   author={Zheng, Youquan},
   title={Concentration phenomenon for fractional nonlinear Schr\"odinger equations},
   journal={Preprint},
   note={http://arxiv.org/abs/1305.4426},
}

\bib{choi-kim-lee}{article}{
   author={Choi, Woocheol},
   author={Kim,Seunghyeok},
   author={Lee, Ki-Ahm},
   title={Asymptotic behavior of solutions for nonlinear elliptic problems with the fractional Laplacian},
   journal={Preprint},
   note={http://arxiv.org/abs/1308.4026v1},
}

\bib{DDW}{article}{
   author={D{\'a}vila, Juan},
   author={del Pino, Manuel},
   author={Wei, Juncheng},
   title={Concentrating standing waves for the fractional nonlinear
   Schr\"odinger equation},
   journal={J. Differential Equations},
   volume={256},
   date={2014},
   number={2},
   pages={858--892},
   issn={0022-0396},
   review={\MR{3121716}},
   doi={10.1016/j.jde.2013.10.006},
}

\bib{DF}{article}{
   author={Del Pino, Manuel},
   author={Felmer, Patricio L.},
   title={Spike-layered solutions of singularly perturbed elliptic problems
   in a degenerate setting},
   journal={Indiana Univ. Math. J.},
   volume={48},
   date={1999},
   number={3},
   pages={883--898},
   issn={0022-2518},
   review={\MR{1736974 (2001b:35027)}},
   doi={10.1512/iumj.1999.48.1596},
}

\bib{DFW}{article}{
   author={del Pino, Manuel},
   author={Felmer, Patricio L.},
   author={Wei, Juncheng},
   title={On the role of distance function in some singular perturbation
   problems},
   journal={Comm. Partial Differential Equations},
   volume={25},
   date={2000},
   number={1-2},
   pages={155--177},
   issn={0360-5302},
   review={\MR{1737546 (2000m:35017)}},
   doi={10.1080/03605300008821511},

}

\bib{guide}{article}{
   author={Di Nezza, Eleonora},
   author={Palatucci, Giampiero},
   author={Valdinoci, Enrico},
   title={Hitchhiker's guide to the fractional Sobolev spaces},
   journal={Bull. Sci. Math.},
   volume={136},
   date={2012},
   number={5},
   pages={521--573},
   issn={0007-4497},
   review={\MR{2944369}},
   doi={10.1016/j.bulsci.2011.12.004},
}

\bib{DFV}{article}{
   author={Dipierro, Serena},
   author={Figalli, Alessio},
   author={Valdinoci, Enrico},
   title={Strongly nonlocal dislocation dynamics in crystals},
   journal={Preprint},
   note={http://arxiv.org/abs/1311.3549},
}

\bib{DMM}{article}{
   author={Dipierro, Serena},
   author={Mahmoudi, Fethi},
   author={Malchiodi, Andrea},
   title={Concentration at the boundary for non smooth planar domains},
   journal={Preprint},
}

\bib{fall}{article}{
   author={Fall, Mouhamed Moustapha},
   author={Valdinoci, Enrico},
   title={Uniqueness and nondegeneracy of positive solutions of
$(-\Delta)^su + u = u^p$ in $\R^N$ when $s$ is close to $1$},
   journal={Comm. Math. Phys. (to appear)},
}

\bib{FQT}{article}{
   author={Felmer, Patricio},
   author={Quaas, Alexander},
   author={Tan, Jinggang},
   title={Positive solutions of the nonlinear Schr\"odinger equation with
   the fractional Laplacian},
   journal={Proc. Roy. Soc. Edinburgh Sect. A},
   volume={142},
   date={2012},
   number={6},
   pages={1237--1262},
   issn={0308-2105},
   review={\MR{3002595}},
   doi={10.1017/S0308210511000746},
}

\bib{Fey1}{article}{
   author={Feynman, R. P.},
   title={Space-time approach to non-relativistic quantum mechanics},
   journal={Rev. Modern Physics},
   volume={20},
   date={1948},
   pages={367--387},
   issn={0034-6861},
   review={\MR{0026940 (10,224b)}},
}

\bib{Fey2}{article}{
   author={Feynman, Richard P.},
   title={An operator calculus having applications in quantum
   electrodynamics},
   journal={Physical Rev. (2)},
   volume={84},
   date={1951},
   pages={108--128},
   review={\MR{0044379 (13,410e)}},
}
	
\bib{FL}{article}{
   author={Frank, Rupert L.},
   author={Lenzmann, Enno},
   title={Uniqueness of non-linear ground states for fractional Laplacians
   in $\R$},
   journal={Acta Math.},
   volume={210},
   date={2013},
   number={2},
   pages={261--318},
   issn={0001-5962},
   review={\MR{3070568}},
   doi={10.1007/s11511-013-0095-9},
}
		
\bib{FLS}{article}{
   author={Frank, Rupert L.},
   author={Lenzmann, Enno},
   author={Silvestre, Luis},
   journal={Preprint},
   title={Uniqueness of radial solutions for the fractional Laplacian},
   note={http://arxiv.org/abs/1302.2652},
}

\bib{Giu}{article}{
   author={Giulini, Domenico},
   title={That strange procedure called quantisation},
   conference={
      title={Quantum gravity},
   },
   book={
      series={Lecture Notes in Phys.},
      volume={631},
      publisher={Springer},
      place={Berlin},
   },
   date={2003},
   pages={17--40},
   review={\MR{2206994}},
   doi={10.1007/978-3-540-45230-0\_2},
}

\bib{Gro}{book}{
   author={Grosche, C.},
   author={Steiner, F.},
   title={Handbook of Feynman path integrals},
   series={Springer Tracts in Modern Physics},
   volume={145},
   publisher={Springer-Verlag},
   place={Berlin},
   date={1998},
   pages={x+449},
   isbn={3-540-57135-3},
   review={\MR{1689981 (2000f:81103)}},
}

\bib{LAND}{book}{
   author={Landkof, N. S.},
   title={Foundations of modern potential theory},
   note={Translated from the Russian by A. P. Doohovskoy;
   Die Grundlehren der mathematischen Wissenschaften, Band 180},
   publisher={Springer-Verlag},
   place={New York},
   date={1972},
   pages={x+424},
   review={\MR{0350027 (50 \#2520)}},
}

\bib{Las}{article}{
   author={Laskin, Nikolai},
   title={Fractional quantum mechanics and L\'evy path integrals},
   journal={Phys. Lett. A},
   volume={268},
   date={2000},
   number={4-6},
   pages={298--305},
   issn={0375-9601},
   review={\MR{1755089 (2000m:81097)}},
   doi={10.1016/S0375-9601(00)00201-2},
}

\bib{Las-atom}{article}{
    author={Laskin, Nick},
    title={Fractional Schr\"odinger equation},
    journal={Phys. Rev. E (3)},
    volume={66},
    date={2002},
    number={5},
    pages={056108, 7},
    issn={1539-3755},
    review={\MR{1948569 (2003k:81043)}},
    doi={10.1103/PhysRevE.66.056108},
}

\bib{Las-book}{article}{
    author={Laskin, Nick},
    title={Principles of fractional quantum mechanics},
    conference={
       title={Fractional dynamics},
    },
    book={
       publisher={World Sci. Publ., Hackensack, NJ},
    },
    date={2012},
    pages={393--427},
    review={\MR{2932616}},
}

\bib{Li}{article}{
   author={Li, Yanyan},
   author={Nirenberg, Louis},
   title={The Dirichlet problem for singularly perturbed elliptic equations},
   journal={Comm. Pure Appl. Math.},
   volume={51},
   date={1998},
   number={11-12},
   pages={1445--1490},
   issn={0010-3640},
   review={\MR{1639159 (99g:35014)}},

   doi={10.1002/(SICI)1097-0312(199811/12)51:11/12<1445::AID-CPA9>3.3.CO;2-Q},
}

\bib{Ni}{article}{
   author={Ni, Wei-Ming},
   author={Wei, Juncheng},
   title={On the location and profile of spike-layer solutions to singularly
   perturbed semilinear Dirichlet problems},
   journal={Comm. Pure Appl. Math.},
   volume={48},
   date={1995},
   number={7},
   pages={731--768},
   issn={0010-3640},
   review={\MR{1342381 (96g:35077)}},
   doi={10.1002/cpa.3160480704},
}
		
\bib{SerraRos-Dirichlet-regularity}{article}{
   author={Ros-Oton, Xavier},
   author={Serra, Joaquim},
   title={The Pohozaev identity for the fractional Laplacian},
   journal={Arch. Ration. Mech. Anal. (to appear)},
}

\bib{SRO}{article}{
   author={Ros-Oton, Xavier},
   author={Serra, Joaquim},
   title={The Dirichlet problem for the fractional Laplacian: Regularity up
   to the boundary},
   journal={J. Math. Pures Appl. (9)},
   volume={101},
   date={2014},
   number={3},
   pages={275--302},
   issn={0021-7824},
   review={\MR{3168912}},
   doi={10.1016/j.matpur.2013.06.003},
}

\bib{secchi}{article}{
   author={Secchi, Simone},
   title={Ground state solutions for nonlinear fractional Schr\"odinger
   equations in $\R^N$},
   journal={J. Math. Phys.},
   volume={54},
   date={2013},
   number={3},
   pages={031501, 17},
   issn={0022-2488},
   review={\MR{3059423}},
   doi={10.1063/1.4793990},
}
		
\bib{SerVal}{article}{
   author={Servadei, Raffaella},
   author={Valdinoci, Enrico},
   title={Weak and viscosity solutions of the fractional Laplace equation},
   journal={Publ. Mat.},
   volume={58},
   date={2014},
   number={1},
   pages={133--154},
   issn={0214-1493},
   review={\MR{3161511}},
}

\bib{Sthesis}{book}{
   author={Silvestre, Luis Enrique},
   title={Regularity of the obstacle problem for a fractional power of the
   Laplace operator},
   note={Thesis (Ph.D.)--The University of Texas at Austin},
   publisher={ProQuest LLC, Ann Arbor, MI},
   date={2005},
   pages={95},
   isbn={978-0542-25310-2},
   review={\MR{2707618}},
}

\bib{Silvestre}{article}{
   author={Silvestre, Luis},
   title={Regularity of the obstacle problem for a fractional power of the
   Laplace operator},
   journal={Comm. Pure Appl. Math.},
   volume={60},
   date={2007},
   number={1},
   pages={67--112},
   issn={0010-3640},
   review={\MR{2270163 (2008a:35041)}},
   doi={10.1002/cpa.20153},
}

\bib{WI}{misc}{
  author = {Wikipedia},
  title = {Fractional Schr\"odinger equation},
  year = {2013},
  note = {http://en.wikipedia.org/wiki/Fractional{\textunderscore}Schr\"odinger{\textunderscore}e\-qua\-tion},
}

\end{biblist}
\end{bibdiv}

\end{document}